\documentclass{article}
\usepackage{tikz, tikz-3dplot}
\usepackage{graphicx} 
\usepackage{amsmath}
\usepackage{amsfonts}
\usepackage{cases}
\usepackage{amsthm}
\usepackage{comment}
\usepackage{amssymb}
\usepackage{hyperref}
\newtheorem{theorem}{Theorem}
\newtheorem{lemma}[theorem]{Lemma}
\newtheorem{corollary}[theorem]{Corollary}
\newtheorem{prop}{Proposition}
\usepackage{tabularx}
\usepackage{pgfplots}
\usepackage{float}
\pgfplotsset{compat=1.18}
\usepackage{float}
\usepackage{authblk}
\usepackage[most]{tcolorbox}
\usetikzlibrary{calc,arrows.meta,fillbetween}
\pgfdeclarelayer{pre main}
\pgfdeclarelayer{main}
\pgfsetlayers{pre main,main}
\usepackage{subcaption}
\usepackage[para]{footmisc}
\usepgfplotslibrary{fillbetween}
\usepackage[margin=1in]{geometry}
\usepackage[backend=biber,maxnames=99, minnames=99]{biblatex}
\newtheorem{remark}{Remark}
\renewcommand{\d}{\text{d}}
\renewcommand{\dfrac}[2]{\frac{\d #1}{\d #2}}

\renewcommand{\u}{\tilde{u}}

\newcommand{\dpri}{^{\prime\prime}}
\newcommand{\pri}{^{\prime}}

\newcommand{\dd}{\,d}
\newcommand{\gap}{\hspace{0.3 cm}}

\newcommand{\eps}{\varepsilon}
\renewcommand{\epsilon}{\varepsilon}

\counterwithin{equation}{section}

\title{The Riemann Problem for a 3×3 Generalized Chaplygin Gas System with Variable Pressure}
\date{\today}
\author[1]{Thomas Allen \thanks{thallen@ncsu.edu}}
\author[2]{Ella Kim \thanks{ellakim@ucsb.edu}}
\author[3]{Jonathan Nunes \thanks{jonathan.nunes@ufl.edu}}
\author[4]{Rita Xing \thanks{rxing28@amherst.edu}}
\author[5]{Charis Tsikkou \thanks{tsikkou@math.wvu.edu}}
\affil[1]{Department of Mathematics, North Carolina State University, Raleigh, NC 27695, USA}
\affil[2]{Department of Mathematics, University of California, Santa Barbara, Santa Barbara, CA 93106-3080, USA}
\affil[3]{Department of Mathematics, University of Florida, Gainesville, FL 32611-8105, USA}
\affil[4]{Department of Mathematics and Statistics, Amherst College, Amherst, MA 01002, USA}
\affil[5]{School of Mathematical and Data Sciences, West Virginia University, Morgantown, WV 26506, USA}

\begin{document}
\maketitle
\begin{abstract}\noindent
We consider the Riemann problem for a $3\times3$ system of conservation laws with generalized Chaplygin pressure $p(\rho,v)=-\frac{A(v)}{\rho^\alpha}, \quad 0<\alpha\leq 1,$ where the pressure depends on an additional transported variable. We analyze the system's wave structure and classify the Riemann solutions. Whenever classical solutions consisting of shocks, rarefaction waves, and contact discontinuities fail to exist, singular solutions arise. We verify that these satisfy the conservation laws in the distributional sense within the classical Dirac delta framework, and compare them with Nedeljkov's shadow-wave construction \cite{MarkoShadow}, giving two complementary descriptions of the same singular solution. We further study admissibility via the Dafermos maximum entropy dissipation principle, with several examples showing how it selects the physically relevant solution. Lax–Friedrichs simulations illustrate the Riemann wave patterns and provide a comparison with the analytical results.

To construct viscous profiles for the isolated overcompressive $\delta$-shock, we assume $\alpha\in\mathbb{Q}\cap(0,1]$ and apply the Dafermos regularization together with a spherical blow-up. Working in three directional charts, we construct the reduced singular concatenation consisting of the left outer orbit, the middle orbit on the blown-up boundary, and the right outer orbit. We then prove that, for sufficiently small positive viscosity, this singular concatenation perturbs to a heteroclinic orbit. Consequently, the isolated overcompressive $\delta$-shock is realized as the zero-viscosity limit of a family of self-similar Dafermos viscous profiles.
\end{abstract}

\vspace{5mm}

\noindent
{\bf Keywords.} Generalized Chaplygin Gas; Riemann Problem
Delta Shocks; Shadow Waves; Dafermos Regularization; Hyperbolic Conservation Laws; Blow-up Method; Entropy Dissipation

\vspace{5mm}

\noindent
{\bf AMS Subject Classifications.} Primary: 35L65, 35L67. Secondary: 35L80, 34E15, 34C45, 34C37, 65M06, 76N10.
 
\section*{Notation}
We define $\left[\cdot\right] := \left[\cdot\right]_{\text{jump}} = \cdot_R - \cdot_L$.

\section{Introduction} \label{intro}
We study the Riemann problem for a system of three nonlinear hyperbolic conservation laws modeling a generalized Chaplygin gas in one spatial dimension. The system is given by
\begin{align}
    \label{cons1}
    \rho_t+\left(\rho u\right)_x&=0, \\
    \label{cons2}
    (\rho u)_t+\left(\rho u^2-\cfrac{A(v)}{\rho^\alpha}\right)_x&=0, \\
    \label{cons3}
    (\rho v)_t+\left(\rho u v\right)_x&=0,
\end{align}
where $\rho$, $u$ and $v$ denote the density, velocity, and an additional transported scalar variable, respectively. Systems of Chaplygin gas type have been proposed as models of the accelerated expansion of the universe via an exotic fluid with negative pressure that is inversely proportional to the density \cite{Bento, Gorini}. In the classical generalized Chaplygin gas, the pressure coefficient $A$ is assumed constant \cite{Qu, Cheng, DavorMarko}. Here, we consider the more general case in which $A$ is a strictly positive, continuous function of the transported variable $v$, thus coupling the pressure law with the third conservation law. Throughout this work, we assume $0<\alpha\leq 1$, which, together with $A(v)$, determines the rate of cosmic expansion. Although the system is mathematically meaningful for arbitrary values of $\rho$, we restrict our attention to solutions satisfying $\rho>0,$ corresponding to physically meaningful density. 

Unlike the classical generalized Chaplygin gas, where the pressure coefficient is constant, the present model contains an additional conservation law governing the evolution of the transported variable $v$. Through the pressure coefficient $A(v)$, this variable directly influences the momentum equation, introducing a nonlinear coupling between the pressure law and the flow. Consequently, the pressure law evolves together with the solution rather than being prescribed \emph{a priori}. The present model can be viewed as a natural extension of the three-equation generalized Chaplygin gas with a potential studied in \cite{Cheng, DavorMarko}. Replacing the additive potential with a pressure coefficient depending on the transported variable $v$ leads to a different family of Riemann problems in which the pressure law evolves together with the solution. In particular, the transported pressure coefficient changes the geometry of the wave curves and the resulting classification of the Riemann solutions. In addition, the present formulation naturally admits an entropy-dissipation framework based on the Dafermos maximum entropy dissipation principle, which we use to investigate the admissibility of representative singular solutions.

For one-dimensional systems of conservation laws, the Riemann problem plays a central role in the mathematical theory of nonlinear hyperbolic conservation laws and in the numerical approximation of their solutions. Since the pioneering work of Riemann and the subsequent developments of Lax and Glimm, solutions of the Riemann problem have become one of the fundamental tools for describing the local behavior of solutions to the Cauchy problem and serve as the building blocks of many finite-volume and Godunov-type numerical methods \cite{Dafermos,Glimm,Lax,Lev_1,Toro}.

Motivated by these considerations, we consider the Riemann initial data 
\begin{align} \label{riemannData}
    (\rho, u, v)(x, t)\Big|_{t=0}=\begin{cases}
        (\rho_L, u_L, v_L), & x<0, \\
        (\rho_R, u_R, v_R), & x>0,
    \end{cases}
\end{align}
consisting of two constant states separated by a jump discontinuity at the origin $x=0$. 

Generalized Chaplygin gas models have attracted considerable attention from both the mathematical and physical communities. Depending on the initial data, solutions to the Riemann problem consist of combinations of shocks, rarefactions, and contact discontinuities. However, for certain regions of the admissible state space, classical solutions do not exist, and singular solutions arise naturally. 

The generalized Chaplygin gas has provided a particularly rich setting for the study of singular solutions. The Riemann problem has been investigated for several Chaplygin-type systems, including the classical generalized Chaplygin gas, models with source terms, balance laws, and systems with additional transported quantities; see, for example,
\cite{Qu,Cheng,REU2024,DavorMarko,Pang,Li,Zh_1}
and the references therein. The present work extends this line of research by allowing the pressure coefficient itself to evolve through an additional conservation law.

The study of singular solutions in systems of conservation laws originated with the work of Korchinski \cite{Korch} and the singular shock solutions introduced by Keyfitz and Kranzer \cite{Ke_Kr_1,Ke_Kr_2,Ke_Kr_3}. Since then, $\delta$-shock waves have been investigated for a wide variety of systems using generalized Rankine--Hugoniot conditions, vanishing viscosity, weak asymptotic methods, distributional formulations, and shadow wave approximations; see, for example, \cite{ChenLiu,DaShe,Hsu,Ke_4,MarkoShadow,Sc,Shelkovich2002,Yuxi}
and the references therein.   

The main goal of this paper is to provide a complete classification of the Riemann solutions for system \eqref{cons1}--\eqref{cons3} and to understand how the transported pressure coefficient affects the corresponding wave structure. We characterize all admissible classical wave patterns and identify the regions in which singular solutions arise. We verify that these singular solutions satisfy the system in the sense of distributions using the classical Dirac delta framework and compare them with the corresponding shadow wave construction introduced by Nedeljkov \cite{MarkoShadow}. We also investigate the admissibility of representative singular solutions using the Dafermos maximum entropy dissipation principle and establish the existence of the corresponding singular profiles through a Dafermos regularization and spherical blow-up analysis. To the best of our knowledge, this is the first rigorous construction of self-similar Dafermos viscous profiles for this generalized Chaplygin gas model with a transported pressure coefficient and among the first rigorous constructions of self-similar Dafermos viscous profiles for a 3×3 system of conservation laws.

The structure of the Riemann solution depends on the value of $\alpha$. In the case $\alpha=1,$ all characteristic families are linearly degenerate, and the classical solution consists entirely of contact discontinuities. In this regime, the Riemann problem admits two possible wave structures: a classical solution consisting of contact discontinuities and an overcompressive singular solution. The resulting wave structure is comparatively simple and is governed by a single set of inequalities. In contrast, when $0<\alpha<1,$ the $1$- and $3$-characteristic families are genuinely nonlinear, while the $2$-characteristic family is linearly degenerate. The corresponding Riemann solutions consist of combinations of shock waves, rarefaction waves, contact discontinuities, and overcompressive singular solutions in regions of the $(\rho,u,v)$-space where classical wave patterns do not exist. Compared with the constant-pressure case, the transported pressure coefficient modifies the wave curves and changes the partition of the Riemann data into the different solution regimes. Numerical simulations obtained using a Lax--Friedrichs scheme are presented to illustrate the Riemann wave patterns and to compare with the analytical results.

To establish the existence of the corresponding singular profiles, we consider a Dafermos regularization of the system and study the resulting singularly perturbed dynamical system by means of a spherical blow-up. For the blow-up analysis, we assume $\alpha\in\mathbb{Q}$. The dynamics are analyzed in three coordinate charts, where a singular concatenation is constructed and shown to persist as a heteroclinic orbit for sufficiently small values of the regularization parameter. Our analysis follows the general philosophy of Schecter \cite{Sc}, adapted to the present three-equation system. It also builds on our previous work on Dafermos regularizations and singular shock profiles for related systems \cite{REU2025GSPT,Ts}. 

The remainder of the paper is organized as follows. In Section~\ref{sec:classical}, we derive the classical wave curves for the case $0<\alpha<1$, including shock waves, rarefaction waves, and contact discontinuities. Section~\ref{sec:singular} is devoted to the construction and analysis of the corresponding singular solutions using both the classical Dirac delta distribution framework and the shadow wave approach of Nedeljkov \cite{MarkoShadow}. In Section~\ref{sec:combo}, we combine classical and singular wave patterns to obtain a complete classification of the Riemann solutions for the case $0<\alpha<1$. We also investigate the admissibility of the resulting singular solutions using the Dafermos maximum entropy dissipation principle. Section~\ref{sec:numerics} contains numerical simulations that illustrate the theoretical results for a particular choice of $A(v)$. The case $\alpha=1$ is treated separately in Section~\ref{sec:analytic}, where a complete analytical classification of the corresponding Riemann solutions is obtained. Finally, Section~\ref{sec:GSPT} is devoted to Dafermos regularization and blow-up analysis, establishing the existence of the corresponding overcompressive singular profiles.

\section{Classical Waves Curves: Contact Discontinuities, Shocks, and Rarefactions} \label{sec:classical}
We now analyze the classical wave structure of system \eqref{cons1}--\eqref{cons3} for the case $0<\alpha<1.$ We first compute the eigenvalues and eigenvectors of the Jacobian matrix and determine the hyperbolic structure of the system by identifying the genuinely nonlinear and linearly degenerate characteristic families. We then derive the corresponding rarefaction curves, shock curves, and contact discontinuities that form the classical solutions of the Riemann problem.

\subsection{Genuine Nonlinearity and Linear Degeneracy of Eigenvalue Families}
The system may be written in conservation form as
\begin{equation} \label{eq:sys_vec}
\partial_t H+\partial_x G=0,
\end{equation}
where 
\begin{align*}
H =
\begin{pmatrix}
\rho \\
\rho u\\
\rho v
\end{pmatrix},\qquad G =
\begin{pmatrix}
\rho u \\
\rho u^2-\frac{A(v)}{\rho^\alpha} \\
\rho u v
\end{pmatrix}.
\end{align*}
Let $D$ denote the differential operator
$D=\left(\frac{\partial}{\partial \rho},
\frac{\partial}{\partial u},
\frac{\partial}{\partial v}\right).
$ The characteristic speeds are determined by $\det(DG-\lambda DH)=0,$ where 
\begin{align*}
DH = &
\begin{pmatrix}
1 & 0 & 0 \\ 
u & \rho & 0 \\
v & 0 & \rho
\end{pmatrix}, \qquad DG =
\begin{pmatrix}
u & \rho & 0 \\ 
u^2+\frac{\alpha A(v)}{\rho^{\alpha+1}} & 2\rho u & -\frac{A'(v)}{\rho^\alpha} \\ 
uv & \rho v & \rho u
\end{pmatrix}
\end{align*} denote the Jacobian matrices of $H$ and $G$ with respect to $(\rho,u,v).$
The characteristic speeds are given by 
\begin{align}
    \lambda_1&=u-\sqrt{\frac{\alpha A(v)}{\rho^{\alpha + 1}}}, \qquad \lambda_2=u, \qquad \lambda_3=u+\sqrt{\frac{\alpha A(v)}{\rho^{\alpha + 1}}},
\end{align}
where $\lambda_1<\lambda_2<\lambda_3$ for any state $(\rho,u,v)$. Hence, the system is strictly hyperbolic. The corresponding eigenvectors are
\begin{align}
    r_1 = &
\begin{pmatrix}
\rho^\frac{\alpha+3}{2} \\ 
-\sqrt{\alpha A(v)} \\
0
\end{pmatrix}, \qquad r_2 = 
\begin{pmatrix}
\rho A'(v) \\ 
0 \\
\alpha A(v)
\end{pmatrix}, \qquad r_3 =
\begin{pmatrix}
\rho^\frac{\alpha+3}{2} \\ 
\sqrt{\alpha A(v)} \\
0
\end{pmatrix}.
\end{align}
Recall that the $i$-th characteristic family is linearly degenerate if 
$\nabla\lambda_i\cdot r_i=0,$ and genuinely nonlinear otherwise. Linearly degenerate families generate contact discontinuities, whereas genuinely nonlinear families generate shock waves and rarefaction waves. For the case $\alpha=1$, all three characteristic families are linearly degenerate. This case is treated separately in Section~\ref{sec:analytic}. Throughout the remainder of this section and Sections~\ref{sec:singular}--\ref{sec:numerics}, we assume that $0<\alpha<1$. In this case, the second characteristic family is linearly degenerate, while the first and third characteristic families are genuinely nonlinear.

\subsection{Shock Curves}
We now derive the shock curves corresponding to the first and third characteristic families. Define the state vector $U=(\rho, u, v)^T$ and let $$U_-=(\rho_-,u_-,v_-)^T
\quad\text{and}\quad U_+=(\rho_+,u_+,v_+)^T$$
denote the states on the left and right of a shock, respectively. If the shock moves with speed $x'(t)=s$, then the Rankine--Hugoniot conditions are
\begin{equation}
    \begin{cases}
    s[\rho]&=[\rho u] \\[1.5ex]
    s[\rho u]&=[\rho u^2-\frac{A(v)}{\rho^\alpha}] \\[1.5ex]
    s[\rho v]&=[\rho uv]
\end{cases}
\end{equation}
where $[\cdot]$ denotes the jump across the shock. These conditions imply
\begin{align} \label{eq:tandem_shocks}
    s=\frac{[\rho u]}{[\rho]}=\frac{[\rho u^2-\frac{A(v)}{\rho^\alpha}]}{[\rho u]}=\frac{[\rho uv]}{[\rho v]}.
\end{align}
Fix a left state $U_-=(\rho_-,u_-,v_-).$ Solving \eqref{eq:tandem_shocks} gives
$v_+=v_-,$ and
$$u_+=u_-\pm\sqrt{A(v_-)\left(\frac1{\rho_+}-\frac1{\rho_-}\right)\left(\frac1{\rho_+^\alpha}-\frac1{\rho_-^\alpha}\right)}.$$
The admissible branch is determined by the Lax entropy conditions. Consequently, the states that can be connected to the left state $U_-$ by an admissible $1$-shock lie on the curves
\begin{equation}
S_1(U_-):\quad u=u_--\sqrt{A(v_-)\left(\frac{1}{\rho}-\frac{1}{\rho_-}\right)\left(\frac1{\rho^\alpha}-\frac1{\rho_-^\alpha}\right)},
\qquad \rho>\rho_-, \quad v=v_-.
\tag{$S_1$}
\end{equation}
For later use, we also introduce the inverse $1$-shock curve. Fixing a right state $U_+$, we define $IS_1(U_+)=\{V: \ U_+\in S_1(V)\}$, that is, the set of all left states that can be connected to $U_+$ by an admissible $1$-shock. Explicitly,
\begin{equation}
IS_1(U_+):\quad u=u_++\sqrt{A(v_+)\left(\frac{1}{\rho}-\frac{1}{\rho_+}\right)\left(\frac1{\rho^\alpha}-\frac1{\rho_+^\alpha}\right)},
\qquad 0<\rho<\rho_+, \quad v=v_+.
\tag{$IS_1$}
\end{equation}
Similarly, the states that can be connected to $U_-$ by a $3$-shock lie on the curve
\begin{equation}
S_3(U_-):\quad u=u_--\sqrt{A(v_-)\left(\frac{1}{\rho}-\frac{1}{\rho_-}\right)\left(\frac1{\rho^\alpha}-\frac1{\rho_-^\alpha}\right)},\qquad 0<\rho<\rho_-,\quad v=v_-.\tag{$S_3$}
\end{equation}
For later use, we also introduce the inverse $3$-shock curve. Fixing the right state $U_+$ we define $IS_3(U_+)=\{V: U_+\in S_3(V)\},$ that is, the set of all states that can be connected to $U_+$ by an admissible $3$-shock. Explicitly,  
\begin{equation}
IS_3(U_+):\quad u=u_++\sqrt{A(v_+)\left(\frac{1}{\rho}-\frac{1}{\rho_+}\right)\left(\frac1{\rho^\alpha}-\frac1{\rho_+^\alpha}\right)},\qquad \rho>\rho_+,\quad v=v_+.\tag{$IS_3$}
\end{equation}
Throughout the remainder of the paper, we write $S_i(U)$ and $IS_i(U)$ for the $i$-shock and inverse $i$-shock curve associated with an arbitrary state $U$, respectively. 

\subsection{Rarefaction Curve}
We now derive the rarefaction curves corresponding to the first and third characteristic families. Seeking self-similar solutions of the form
$$(\rho,u,v)(x,t)=U(\xi),\qquad
\xi=\frac{x}{t},$$
the system reduces to a system of ordinary differential equations. Along the $i$-th characteristic family,
$$\frac{dU}{d\xi}=r_i(U),$$
where $r_i$ is the right eigenvector associated with $\lambda_i$. Integrating these equations gives the rarefaction curves. The states that can be connected to the left state $U_- = (\rho_-, u_-, v_-)$ by a $1$-rarefaction lie on the curve
\begin{equation}
R_1(U_-):\quad u=u_-+\frac{2\sqrt{\alpha A(v_-)}}{\alpha+1}
\left(\frac{1}{\rho^{\frac{\alpha+1}{2}}}
-\frac{1}{\rho_-^{\frac{\alpha+1}{2}}}
\right),
\qquad 0<\rho<\rho_- \qquad v = v_-.
\tag{$R_1$}
\end{equation}
For later use, we also introduce the inverse $1$-rarefaction curve. Fixing a right state $U_+$, we define $IR_1(U_+)=\{V: \ U_+\in R_1(V)\}$, that is, the set of all left states that can be connected to $U_+$ by an admissible $1$-rarefaction. Explicitly,
\begin{equation}
IR_1(U_+):\quad u=u_++\frac{2\sqrt{\alpha A(v_+)}}{\alpha+1}
\left(\frac{1}{\rho^{\frac{\alpha+1}{2}}}
-\frac{1}{\rho_+^{\frac{\alpha+1}{2}}}
\right),
\qquad \rho>\rho_+ \qquad v = v_+.
\tag{$IR_1$}
\end{equation}
Similarly, the states that can be connected to $U_-$ by a $3$-rarefaction lie on the curve
\begin{equation}
R_3(U_-):\quad
u=u_--\frac{2\sqrt{\alpha A(v_-)}}{\alpha+1}\left(\frac{1}{\rho^{\frac{\alpha+1}{2}}}
-\frac{1}{\rho_-^{\frac{\alpha+1}{2}}}\right),\qquad\rho>\rho_- \qquad v = v_-.\tag{$R3$}
\end{equation}
For later use, we also introduce the inverse $3$-rarefaction curve. Fixing a right state $U_+$, we define $IR_3(U_+)=\{V: \ U_+\in R_3(V)\}$, that is, the set of all left states that can be connected to $U_+$ by an admissible $3$-rarefaction. Explicitly,
\begin{equation}
IR_3(U_+):\quad
u=u_+-\frac{2\sqrt{\alpha A(v_+)}}{\alpha+1}\left(\frac{1}{\rho^{\frac{\alpha+1}{2}}}
-\frac{1}{\rho_+^{\frac{\alpha+1}{2}}}\right),\qquad 0<\rho<\rho_+\tag{$IR_3$}
\end{equation}

\subsection{Contact Discontinuities}
The second characteristic family is linearly degenerate. Therefore, the corresponding elementary waves are contact discontinuities. Imposing the Rankine--Hugoniot condition $\lambda_2(U_-)=s$
yields $u=u_-,$ and $A(v)=\left(\frac{\rho}{\rho_-}\right)^{\alpha}A(v_-).$ Hence, the states that can be connected to $U_-$ by a contact discontinuity lie on the curve
\begin{equation}
C_2(U_-):\quad
u=u_-,
\qquad
A(v)=\left(\frac{\rho}{\rho_-}\right)^{\alpha}A(v_-).
\tag{CD}
\end{equation}

\begin{figure}
    \centering
    \includegraphics[width=0.5\linewidth]{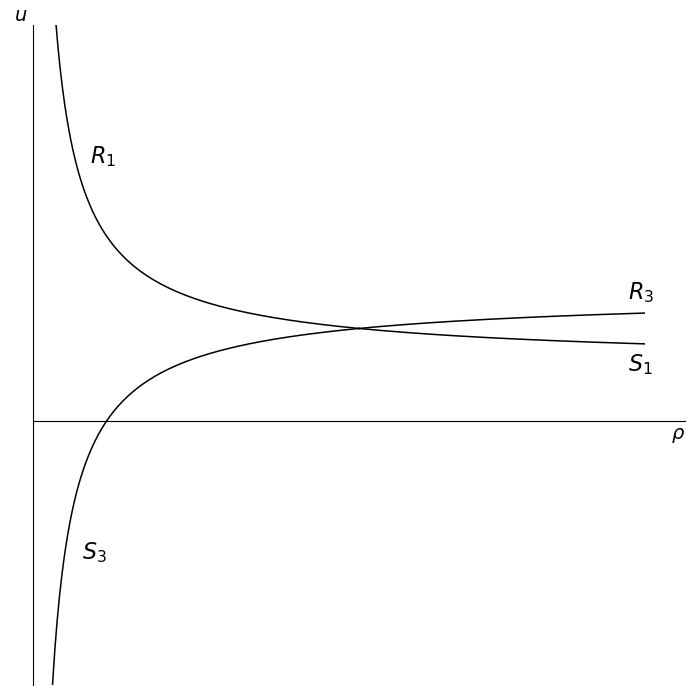}
    \caption{Graphical depiction of shock and rarefaction curve in the $\rho$-$u$ plane for fixed $v$, $A(v)$, and $\alpha$.}
    \label{fig:curves}
\end{figure}

\section{Delta Shocks and Shadow Waves} \label{sec:singular}
For a given left state $(\rho_-, u_-, v_-)$, there are regions of the $(\rho, u, v)$-space that cannot be reached by a combination of shock waves, rarefaction waves, and contact discontinuities. In these regions, we seek overcompressive singular solutions. In this section, we study these solutions using two complementary approaches. We first construct $\delta$-shock solutions in the sense of distributions and then consider the corresponding shadow-wave formulation introduced by Nedeljkov \cite{MarkoShadow}. The two approaches provide equivalent descriptions of the same singular solution. Their existence is established in Section~\ref{sec:GSPT} by means of a Dafermos regularization and spherical blow-up analysis.

We first define a weighted Dirac delta distribution supported on a smooth curve $S=\{(x(s),t(s)):a\leq s\leq b\}$
by
\begin{equation}
\left\langle \omega(\cdot)\delta(S),\varphi\right\rangle
=\int_0^\infty
\omega(t(s))\,\varphi(x(s),t(s))\,ds,\end{equation}
for all test functions
$\varphi\in C_c^\infty(\mathbb R\times(0,\infty)).$

Let $\Omega_-$ and $\Omega_+$ denote the regions to the left and right of the smooth curve $S: x=x(t)$, respectively. Following \cite{Cheng} and motivated by numerical evidence indicating a Dirac delta concentration in $\rho$ alone, we seek a $\delta$-shock solution of system
\eqref{cons1}--\eqref{cons3} of the form
\begin{equation}\label{eq:delta_sys_def}
\left\{
\begin{aligned}
&\rho(x,t)=\rho_0(x, t)+w(t)\,\delta\bigl(x-x(t)\bigr), \\[2mm]
&(u,v)(x,t)
=
\begin{cases}
(u_-,v_-)(x,t), & (x, t)\in \Omega_-,\\[1mm]
(u_+,v_+)(x,t), & (x, t)\in\Omega_+,
\end{cases}
\end{aligned}
\right.
\end{equation}
where
\begin{align}
    \rho_0(x, t)=
\begin{cases}
\rho_-(x,t), & (x, t)\in \Omega_-,\\[1mm]
\rho_+(x,t), & (x, t)\in \Omega_+.
\end{cases}
\end{align}
and the quantities $u_\delta$ and $v_\delta$ denote the velocity and transported state of the singular measure concentrated on the curve $x=x(t)$, respectively. The above distributions are required to satisfy the system
\eqref{cons1}--\eqref{cons3} in the sense of distributions, that is, 
\begin{align}
\left\langle \rho,\varphi_t\right\rangle
+\left\langle \rho u,\varphi_x\right\rangle
&=0, \label{weak1}\\
\left\langle \rho u,\varphi_t\right\rangle
+\left\langle \rho u^2-\frac{A(v)}{\rho^\alpha},\varphi_x\right\rangle
&=0, \label{weak2}\\
\left\langle \rho v,\varphi_t\right\rangle
+\left\langle \rho u v,\varphi_x\right\rangle
&=0, \label{weak3}
\end{align} for all test functions $\varphi\in C_c^\infty(\mathbb R\times(0,\infty)).$ Since the pressure term contains a negative power of the density, the expression $\frac{A(v)}{\rho^\alpha}$
is not interpreted by applying the nonlinear function directly to the measure-valued density. Instead, it is understood through regular approximations of the concentrated density. If \(\rho_\varepsilon=O(\varepsilon^{-1})\) in a layer of width \(O(\varepsilon)\) and \(v_\varepsilon=O(1)\), then
\[
\frac{A(v_\varepsilon)}{\rho_\varepsilon^\alpha}=O(\varepsilon^\alpha)\to0,
\qquad 0<\alpha\le1.
\]
Thus, the pressure contribution from the concentrated part vanishes in the distributional limit. Let $\frac{dx}{dt}=u_{\delta}(t)$ so that, using Green's Theorem and the compact support of $\varphi$, equations \eqref{weak1}--\eqref{weak3} become
\begin{align*}
    &\oint_{x(t)}\rho_- u_-\varphi\dd t-\oint_{x(t)}\rho_-\varphi\dd x+\oint_{-x(t)}\rho_+ u_+\varphi\dd t-\oint_{-x(t)}\rho_+\varphi\dd x+\int_0^\infty w(t)\dd \varphi=0,
    \\&\oint_{x(t)}\left(\rho_- u_-^2-\frac{A(v_-)}{\rho^\alpha_-}\right)\varphi\dd t-\oint_{x(t)}\left(\rho_-u_-\right)\varphi\dd x+\oint_{-x(t)}\left(\rho_+ u_+^2-\frac{A(v_+)}{\rho^\alpha_+}\right)\varphi\dd t\\&\gap-\oint_{-x(t)}\left(\rho_+u_+\right)\varphi\dd x+\int_0^\infty u_\delta(t)w(t)\dd\varphi=0,
    \\&\oint_{x(t)}\rho_- u_-v_-\varphi\dd t-\oint_{x(t)}\rho_-v_-\varphi\dd x+\oint_{-x(t)}\rho_+ u_+v_+\varphi\dd t-\oint_{-x(t)}\rho_+v_+\varphi\dd x+\int_0^\infty v_\delta w(t)\dd \varphi=0.
\end{align*}
Taken together, these equations yield the generalized Rankine--Hugoniot conditions
\begin{equation} \label{delta_ODEs}
\left\{
\begin{aligned}
\frac{dw(t)}{dt} &= [\rho]u_\delta(t)-[\rho u],\\[1mm]
\frac{\ d\bigl(w(t)u_\delta(t)\bigr)}{dt}
&= [\rho u]u_\delta(t)-\left[\rho u^2-\frac{A(v)}{\rho^\alpha}\right],\\[1mm]
\frac{d \bigl(w(t)v_\delta(t)\bigr)}{dt}
&= [\rho v]u_\delta(t)-[\rho u v].
\end{aligned}
\right.
\end{equation}
We now consider the corresponding shadow-wave construction of Nedeljkov \cite{Marko,Marko2,MarkoShadow,Daw-Marko,DavorMarko}. In this approach, the singular solution is obtained as the distributional limit of a family of piecewise constant approximations.
Define a shadow wave as the distributional limit, as $\eps\to0^+$, of
\begin{align}
(\rho_\eps,u_\eps,v_\eps)=
\begin{cases}
(\rho_-,u_-,v_-), & x<(c-\eps)t,\\[0.5ex]
(\rho_{-,\eps},u_{-,\eps},v_{-,\eps}), & (c-\eps)t<x<ct,\\[0.5ex]
(\rho_{+,\eps},u_{+,\eps},v_{+,\eps}), & ct<x<(c+\eps)t,\\[0.5ex]
(\rho_+,u_+,v_+), & x>(c+\eps)t,
\end{cases}
\end{align}
where $\eps>0$ and $c$ is the speed of the shadow wave. We assume that
$$
\rho_{-,\varepsilon},\,\rho_{+,\varepsilon}=O(\varepsilon^{-1}),\qquad
u_{-,\varepsilon},\,u_{+,\varepsilon}=O(1),\qquad
v_{-,\varepsilon},\,v_{+,\varepsilon}=O(1),
$$
and that the limits
\begin{align*}
    \xi_-&:=\lim_{\varepsilon\to0^+}\varepsilon\rho_{-,\varepsilon},
    &
    \xi_+&:=\lim_{\varepsilon\to0^+}\varepsilon\rho_{+,\varepsilon},\\
    \bar u_-&:=\lim_{\varepsilon\to0^+}u_{-,\varepsilon},
    &
    \bar u_+&:=\lim_{\varepsilon\to0^+}u_{+,\varepsilon},\\
    \bar v_-&:=\lim_{\varepsilon\to0^+}v_{-,\varepsilon},
    &
    \bar v_+&:=\lim_{\varepsilon\to0^+}v_{+,\varepsilon},
\end{align*}
exist.

Using $H$ to denote the Heaviside function, we may write
\begin{align*}
\rho_\eps
&={}
\rho_-\bigg(1-H(x-(c-\eps)t)\bigg)+\rho_{-,\eps}
\bigg(H(x-(c-\eps)t)-H(x-ct)\bigg)\\
&+\rho_{+,\eps}
\bigg(H(x-ct)-H(x-(c+\eps)t)\bigg)+\rho_+\bigg(H(x-(c+\eps)t)\bigg).
\end{align*}
Similar formulas hold for $u_\eps$ and $v_\eps$. Differentiating in the sense of distributions gives
\begin{align*}
\partial_t\rho_\varepsilon
={}&
(c-\varepsilon)(\rho_--\rho_{-,\varepsilon})
\delta(x-(c-\varepsilon)t)\\
&+c(\rho_{-,\varepsilon}-\rho_{+,\varepsilon})
\delta(x-ct)\\
&+(c+\varepsilon)(\rho_{+,\varepsilon}-\rho_+)
\delta(x-(c+\varepsilon)t).
\end{align*}
Hence, for every test function $\varphi\in C_c^\infty(\mathbb R\times\mathbb (0, \infty)),$
we obtain
\begin{align*}
\langle\partial_t\rho_\varepsilon,\varphi\rangle
={}&
(c-\varepsilon)\int_0^\infty
(\rho_--\rho_{-,\varepsilon})
\varphi((c-\varepsilon)t,t)\,dt\\
&+c\int_0^\infty
(\rho_{-,\varepsilon}-\rho_{+,\varepsilon})
\varphi(ct,t)\,dt\\
&+(c+\varepsilon)\int_0^\infty
(\rho_{+,\varepsilon}-\rho_+)
\varphi((c+\varepsilon)t,t)\,dt.
\end{align*}
Expanding the first and third terms about the interface $x=x(t)$ using Taylor's formula,
\[
\varphi(ct\pm\varepsilon t,t)
=
\varphi(ct,t)
\pm
\varepsilon t\,\partial_x\varphi(ct,t)+O(\varepsilon^2),
\]
gives
\begin{align*}
\left\langle\partial_t\rho_\varepsilon,\varphi\right\rangle
={}&
\int_0^\infty
\bigg(c(\rho_--\rho_+)
+\varepsilon(\rho_{-,\varepsilon}+\rho_{+,\varepsilon})\bigg)
\varphi(ct,t)\,dt+
\varepsilon
\int_0^\infty
(\rho_{-,\varepsilon}+\rho_{+,\varepsilon})
ct\,\partial_x\varphi(ct,t)\,dt
+O(\varepsilon),
\end{align*}
where we have kept only the terms that contribute in the limit $\varepsilon\to0^+$ and absorbed the remaining terms into $O(\varepsilon)$. Hence,
\begin{align*}
\left\langle\partial_t\rho_\varepsilon,\varphi\right\rangle
=
\left\langle
\big(c(\rho_--\rho_+)
+\varepsilon(\rho_{-,\varepsilon}+\rho_{+,\varepsilon})\big)\delta(x-ct)
-\varepsilon ct(\rho_{-,\varepsilon}+\rho_{+,\varepsilon})\delta'(x-ct),
\varphi
\right\rangle
+O(\varepsilon).
\end{align*}
Repeating the above calculations for the remaining conserved quantities gives
\begin{align*}
\left\langle\partial_x(\rho_\varepsilon u_\varepsilon),\varphi\right\rangle
={}&
\left\langle
(\rho_+u_+-\rho_-u_-)\delta(x-ct)
+\varepsilon t(\rho_{-,\varepsilon}u_{-,\varepsilon}
+\rho_{+,\varepsilon}u_{+,\varepsilon})
\delta'(x-ct),
\varphi
\right\rangle+O(\varepsilon),
\\
\left\langle\partial_t(\rho_\varepsilon u_\varepsilon),\varphi\right\rangle
={}&
\Big\langle
\big(
c(\rho_-u_--\rho_+u_+)
+\varepsilon(\rho_{-,\varepsilon}u_{-,\varepsilon}
+\rho_{+,\varepsilon}u_{+,\varepsilon})
\big)
\delta(x-ct)
\\
&\qquad
-\varepsilon ct
(\rho_{-,\varepsilon}u_{-,\varepsilon}
+\rho_{+,\varepsilon}u_{+,\varepsilon})
\delta'(x-ct),
\varphi
\Big\rangle+O(\varepsilon),
\\
\left\langle
\partial_x
\left(
\rho_\varepsilon u_\varepsilon^2
-\frac{A(v_\varepsilon)}{\rho_\varepsilon^\alpha}
\right),
\varphi
\right\rangle
={}&
\Big\langle
\left(
\rho_+u_+^2
-\frac{A(v_+)}{\rho_+^\alpha}
-\rho_-u_-^2
+\frac{A(v_-)}{\rho_-^\alpha}
\right)
\delta(x-ct)
\\
&\qquad
+\varepsilon t
\left(
\rho_{-,\varepsilon}u_{-,\varepsilon}^2
-\frac{A(v_{-,\varepsilon})}{\rho_{-,\varepsilon}^\alpha}
+\rho_{+,\varepsilon}u_{+,\varepsilon}^2
-\frac{A(v_{+,\varepsilon})}{\rho_{+,\varepsilon}^\alpha}
\right)
\delta'(x-ct),
\varphi
\Big\rangle+O(\varepsilon),
\end{align*}
\begin{align*}
\left\langle
\partial_t(\rho_\varepsilon v_\varepsilon),
\varphi
\right\rangle
={}&
\Big\langle
\big(
c(\rho_-v_--\rho_+v_+)
+\varepsilon(\rho_{-,\varepsilon}v_{-,\varepsilon}
+\rho_{+,\varepsilon}v_{+,\varepsilon})
\big)
\delta(x-ct)
\\
&\qquad
-\varepsilon ct
(\rho_{-,\varepsilon}v_{-,\varepsilon}
+\rho_{+,\varepsilon}v_{+,\varepsilon})
\delta'(x-ct),
\varphi
\Big\rangle+O(\varepsilon),
\\
\left\langle
\partial_x(\rho_\varepsilon u_\varepsilon v_\varepsilon),
\varphi
\right\rangle
={}&
\Big\langle
(\rho_+u_+v_+-\rho_-u_-v_-)\delta(x-ct)
+\varepsilon t
(\rho_{-,\varepsilon}u_{-,\varepsilon}v_{-,\varepsilon}
+\rho_{+,\varepsilon}u_{+,\varepsilon}v_{+,\varepsilon})
\delta'(x-ct),
\varphi
\Big\rangle+O(\varepsilon).
\end{align*}
Passing to the limit as $\varepsilon\to0^+$ and equating the coefficients of
$\delta(x-ct)$ and $\delta'(x-ct)$ in system \eqref{cons1}--\eqref{cons3}, we obtain
\begin{align}\label{shadow_system}
\begin{cases}
\xi_-+\xi_+=c[\rho]-[\rho u],\\[1mm]
\xi_-\bar u_-+\xi_+\bar u_+=c(\xi_-+\xi_+),\\[1mm]
\xi_-\bar u_-+\xi_+\bar u_+=c[\rho u]
-\left[\rho u^2-\frac{A(v)}{\rho^\alpha}\right],\\[2mm]
\xi_-\bar u_-^2+\xi_+\bar u_+^2
=c(\xi_-\bar u_-+\xi_+\bar u_+),\\[1mm]
\xi_-\bar v_-+\xi_+\bar v_+=c[\rho v]-[\rho uv],\\[1mm]
\xi_-\bar u_-\bar v_-+\xi_+\bar u_+\bar v_+
=c(\xi_-\bar v_-+\xi_+\bar v_+).
\end{cases}
\end{align}
Define
$$
k_1:=\xi_-+\xi_+,\qquad
k_2:=\xi_-\bar u_-+\xi_+\bar u_+,\qquad
k_3:=\xi_-\bar v_-+\xi_+\bar v_+.
$$
Then \eqref{shadow_system} gives
\begin{equation}
    \begin{cases}
        k_1=c[\rho]-[\rho u] \\[1ex]
        k_2=ck_1 \\[1ex]
        k_2=c[\rho u]-\left[\rho u^2-\frac{A(v)}{\rho^\alpha}\right] \\[1ex]
        k_3=c[\rho v]-[\rho u v]
    \end{cases} \label{rel_ki}
\end{equation}
Eliminating $k_1$ and $k_2$ gives
\begin{equation}\label{shadow_speed_eq}
c^2[\rho]-2c[\rho u]
+\left[\rho u^2-\frac{A(v)}{\rho^\alpha}\right]=0.
\end{equation}
Throughout this paper, $c$ denotes the propagation speed of the
singular solution (equivalently, the corresponding shadow wave)
connecting the left and right states $U_-$ and $U_+$. When it is
helpful to emphasize the dependence on the end states, we write
$$
c=s_{\mathrm{SDW}}(U_-,U_+)=s_{\mathrm{SDW}}(\rho_-,u_-,v_-,\rho_+,u_+,v_+).
$$
If $[\rho]\neq0$, it follows from \eqref{shadow_speed_eq} that
$$
c=\frac{[\rho u]\pm\sqrt{\Delta}}{[\rho]}=:\sigma_\pm,
$$
where
$$\Delta=[\rho u]^2-[\rho]\left[\rho u^2-\frac{A(v)}{\rho^\alpha}\right].$$
If $[\rho]=0$, then
$$c=\frac{\left[\rho u^2-\frac{A(v)}{\rho^\alpha}\right]}{2[\rho u]}=:\sigma_0.$$
The total mass contained in the two intermediate layers is
$$\int_{(c-\varepsilon)t}^{ct}\rho_{-,\varepsilon}\,dx
+\int_{ct}^{(c+\varepsilon)t}\rho_{+,\varepsilon}\,dx
=\varepsilon t(\rho_{-,\varepsilon}+\rho_{+,\varepsilon}).
$$
Passing to the limit as $\varepsilon\to0^+$ gives
$$w(t)=t(\xi_-+\xi_+)=tk_1.$$
Thus, $k_1$ is the mass accumulation rate at the singular interface. 
Since
$k_1=c[\rho]-[\rho u],$
we obtain
$$
k_1=
\begin{cases}
\sqrt{\Delta},& c=\sigma_+,\\
-\sqrt{\Delta},& c=\sigma_-.
\end{cases}
$$
Therefore, $k_1>0$ selects $\sigma_+$ (and $\sigma_0$ when $[\rho]=0$).

We now show that the aggregate quantities associated with the shadow wave satisfy the generalized Rankine--Hugoniot conditions \eqref{delta_ODEs} derived for the $\delta$-shock. Since the shadow wave propagates with constant speed $c$, we have
$\frac{dx}{dt}=c.$
Moreover, using \eqref{rel_ki}, the system \eqref{delta_ODEs} becomes
$$
\frac{dw}{dt}=k_1,\qquad
\frac{d(wu_\delta)}{dt}=k_2,\qquad
\frac{d(wv_\delta)}{dt}=k_3.
$$
Since
$\frac{dw}{dt}=k_1$
and $w(0)=0$, we recover $w(t)=tk_1,$ in agreement with the mass contained in the shadow wave. Because $u_\delta$ and $v_\delta$ are constant along the interface, $u_\delta=\frac{k_2}{k_1},\
v_\delta=\frac{k_3}{k_1}.$
Using the shadow-wave identities $k_2=ck_1$ and $k_3=\xi_-\bar v_-+\xi_+\bar v_+$,
we conclude that
$$
u_\delta=c,\qquad
v_\delta=\frac{k_3}{k_1}
=\frac{\xi_-\bar v_-+\xi_+\bar v_+}{\xi_-+\xi_+}.
$$
Thus, the aggregate quantities obtained from the shadow-wave construction satisfy exactly the generalized Rankine--Hugoniot conditions governing the corresponding $\delta$-shock. These identities will be used in Section~\ref{sec:GSPT} to identify the distributional limit of the Dafermos regularization.

The shadow-wave construction provides a refinement of the $\delta$-shock description. Although generalized Rankine--Hugoniot conditions determine only aggregate quantities $w$, $u_\delta$, and $v_\delta$, the shadow-wave formulation additionally specifies how the concentrated mass is distributed between the left and right intermediate states through $(\xi_-,\xi_+,\bar u_-,\bar u_+,\bar v_-,\bar v_+)$. Consequently, every admissible shadow wave determines a corresponding $\delta$-shock. Conversely, a $\delta$-shock specifies only the aggregate quantities $w, u_\delta, v_\delta$, and therefore does not uniquely determine the intermediate shadow-wave states.
\subsection{Overcompressive $\delta$-Shocks}\label{sec:singular_overcomp}
A $\delta$-shock (equivalently, its corresponding shadow wave) is said to be overcompressive if its propagation speed $c$ satisfies the overcompressibility condition:
\begin{align}\label{overcomp_general}
    u_--\sqrt{\frac{\alpha A(v_-)}{\rho_-^{\alpha+1}}}=\lambda_1(U_-)\geq c \geq \lambda_3(U_+)=u_++\sqrt{\frac{\alpha A(v_+)}{\rho_+^{\alpha+1}}}
\end{align}
The overcompressibility condition provides an admissibility criterion for singular solutions. However, as illustrated in \cite{MarkoShadow}, it is not always sufficient to select a unique admissible singular solution. This motivates the use of additional admissibility criteria, such as the Dafermos maximum entropy dissipation principle considered in Section~\ref{sec:combo}.

Consider first the case where $[\rho]\neq0$; then there exists a $\delta$-wave with speed $\sigma_+ =\frac{[\rho u]+\sqrt{\Delta}}{[\rho]}$. Then the first inequality in \eqref{overcomp_general} becomes
\begin{align}
    u_--\sqrt{\frac{\alpha A(v_-)}{\rho_-^{\alpha+1}}}&\geq \frac{[\rho u]+\sqrt{\Delta}}{[\rho]} \label{first_overcomp}
\end{align}
Using the simplification
\begin{align}
    \frac{[\rho u]}{[\rho]}  = \frac{\rho_+ u_+ - \rho_+ u_- + \rho_+u_- - \rho_- u_-}{[\rho]} = \frac{\rho_+[u]}{[\rho]}+ \frac{u_-[\rho]}{[\rho]} = u_- + \rho_+ \frac{[u]}{[\rho]} \label{overcomp_simp_left}
\end{align}
we can rewrite \eqref{first_overcomp} as
\begin{align}
    -\sqrt{\frac{\alpha A(v_-)}{\rho_-^{\alpha+1}}}&\geq\rho_+\frac{[u]}{[\rho]}+\frac{\sqrt{\Delta}}{[\rho]}\label{first_overcomp_result}
\end{align}
The second inequality in \eqref{overcomp_general} becomes
\begin{align}
    u_++\sqrt{\frac{\alpha A(v_+)}{\rho_+^{\alpha+1}}}&\leq \frac{[\rho u]+\sqrt{\Delta}}{[\rho]} \label{second_overcomp}
\end{align}
Using a simplification similar to \eqref{overcomp_simp_left}, we have 
\begin{align}
    \frac{[\rho u]}{[\rho]}  = \frac{\rho_+ u_+ - \rho_- u_+ + \rho_-u_+ - \rho_- u_-}{[\rho]} = \frac{u_+[\rho]}{[\rho]} + \frac{\rho_-[u]}{[\rho]}  = u_+ + \rho_- \frac{[u]}{[\rho]} \label{overcomp_simp_right}
\end{align}
which allows us to rewrite \eqref{second_overcomp} as
\begin{align}
    \sqrt{\frac{\alpha A(v_+)}{\rho_+^{\alpha+1}}}&\leq\rho_-\frac{[u]}{[\rho]}+\frac{\sqrt{\Delta}}{[\rho]}\label{second_overcomp_result}
\end{align}
If $[\rho]>0$, then \eqref{first_overcomp_result} implies $[u]<0$ since we require the right-hand side to be strictly negative. If instead $[\rho]<0$, then \eqref{second_overcomp_result} likewise implies $[u]<0$. Direct computation shows that \eqref{first_overcomp_result} is satisfied exactly when 
\begin{equation}\label{eq:overcomp__rhopositive_contradiction_1}
    u_+\geq u_--\sqrt{\frac{\alpha A(v_-)}{\rho_-^{\alpha+1}}}+\frac{1}{\sqrt{\rho_+}}\sqrt{\frac{(\alpha-1)A(v_-)}{\rho_-^\alpha}+\frac{A(v_+)}{\rho_+^\alpha}}
\end{equation}
or 
\begin{equation} \label{eq:overcomp__rhopositive_result_1}
    u_+\leq u_--\sqrt{\frac{\alpha A(v_-)}{\rho_-^{\alpha+1}}}-\frac{1}{\sqrt{\rho_+}}\sqrt{\frac{(\alpha-1)A(v_-)}{\rho_-^\alpha}+\frac{A(v_+)}{\rho_+^\alpha}}
\end{equation}
and \eqref{second_overcomp_result} is satisfied exactly when 
\begin{equation} \label{eq:overcomp__rhopositive_contradiction_2}
u_+\geq u_--\sqrt{\frac{\alpha A(v_+)}{\rho_+^{\alpha+1}}}+\frac{1}{\sqrt{\rho_-}}\sqrt{\frac{(\alpha-1)A(v_+)}{\rho_+^\alpha}+\frac{A(v_-)}{\rho_-^\alpha}}
\end{equation}
or
\begin{equation} \label{eq:overcomp__rhopositive_result_2}
u_+\leq u_--\sqrt{\frac{\alpha A(v_+)}{\rho_+^{\alpha+1}}}-\frac{1}{\sqrt{\rho_-}}\sqrt{\frac{(\alpha-1)A(v_+)}{\rho_+^\alpha}+\frac{A(v_-)}{\rho_-^\alpha}}
\end{equation}
However, inequalities \eqref{eq:overcomp__rhopositive_contradiction_1} and \eqref{eq:overcomp__rhopositive_contradiction_2} are incompatible with inequality $\lambda_1(U_-)\geq\lambda_3(U_+)$, which is necessary for \eqref{overcomp_general} to hold. Therefore, inequalities \eqref{eq:overcomp__rhopositive_result_1} and \eqref{eq:overcomp__rhopositive_result_2} characterize the overcompressive region where $\delta$-wave solutions with speed $\sigma_+$ are admissible. In the case $[\rho]=0$, a similar line of computation demonstrates that \eqref{eq:overcomp__rhopositive_result_1} and \eqref{eq:overcomp__rhopositive_result_2} also characterize the overcompressive region for $\delta$-waves of speed $\sigma_0$. 
\subsubsection{The Overcompressive Region}
Rather than expressing the overcompressive region as the intersection of two inequalities, \eqref{eq:overcomp__rhopositive_result_1} and \eqref{eq:overcomp__rhopositive_result_2}, a simpler yet equivalent characterization expresses it as a single inequality in $u$ on either side of a boundary surface depending on $\rho$ and $A(v)$. With that motivation, the remainder of this section is devoted to demonstrating that the overcompressive region can be expressed as 
\begin{equation}
    u\leq u_--\sqrt{\frac{\alpha A(v_-)}{\rho_-^{\alpha+1}}}-\frac{1}{\sqrt{\rho}}\sqrt{\frac{(\alpha-1)A(v_-)}{\rho_-^\alpha}+\frac{A(v)}{\rho^\alpha}}~,~\rho<\left(\frac{A(v)}{A(v_-)}\right)^\frac{1}{\alpha}\rho_-\label{eq:OC_Expression_1},
\end{equation}
\begin{equation}
u\leq u_--\sqrt{\frac{\alpha A(v)}{\rho^{\alpha+1}}}-\frac{1}{\sqrt{\rho_-}}\sqrt{\frac{(\alpha-1)A(v)}{\rho^\alpha}+\frac{A(v_-)}{\rho_-^\alpha}}~,~\rho\geq\left(\frac{A(v)}{A(v_-)}\right)^\frac{1}{\alpha}\rho_-\label{eq:OC_Expression_2}.
\end{equation}
Consider first the interval $0<\rho<\left(\frac{A(v)}{A(v_-)}\right)^\frac{1}{\alpha}\rho_-$ for $\delta$-shocks of speed $\sigma_+$. By \eqref{eq:overcomp__rhopositive_result_1}, $c\leq\lambda_1(U_-)$ for any $U$ with $u\leq u_--\sqrt{\frac{\alpha A(v_-)}{\rho_-^{\alpha+1}}}-\frac{1}{\sqrt{\rho}}\sqrt{\frac{(\alpha-1)A(v_-)}{\rho_-^\alpha}+\frac{A(v)}{\rho^\alpha}}$. Therefore, to show overcompressibility, it suffices to show $c\geq\lambda_3(U)$ for the desired interval. It can be shown by direct computation that 
\begin{equation*}
u_--\sqrt{\frac{\alpha A(v_-)}{\rho_-^{\alpha+1}}}-\frac{1}{\sqrt{\rho}}\sqrt{\frac{(\alpha-1)A(v_-)}{\rho_-^\alpha}+\frac{A(v)}{\rho^\alpha}}\leq u_--\sqrt{\frac{\alpha A(v)}{\rho^{\alpha+1}}}-\frac{1}{\sqrt{\rho_-}}\sqrt{\frac{(\alpha-1)A(v)}{\rho^\alpha}+\frac{A(v_-)}{\rho_-^\alpha}},
\end{equation*}
for all $\rho$ with $\left(\frac{(1-\alpha)A(v)}{A(v_-)}\right)^\frac{1}{\alpha}\rho_-\leq\rho<\left(\frac{A(v)}{A(v_-)}\right)^\frac{1}{\alpha}\rho_-$. In this region, \eqref{eq:overcomp__rhopositive_result_2} gives $c\geq\lambda_3(U)$. Now, consider $\rho<\left(\frac{(1-\alpha)A(v)}{A(v_-)}\right)^\frac{1}{\alpha}\rho_-$. 
\newline Let 
\begin{equation*}
\Phi(U)=c-\lambda_3(U),
\end{equation*}
so that
\begin{equation}
\Phi(U)=\frac{[\rho u]+\sqrt{\Delta}}{[\rho]}-u-\sqrt{\frac{\alpha A(v)}{\rho^{\alpha+1}}}.
\end{equation}
we can now rewrite $\Phi$ using the identity $\Delta=\rho\rho_-[u]^2+[\rho][\frac{A(v)}{\rho^\alpha}]$, giving us 
\begin{equation*}
\Phi(U)=\frac{(\rho u-\rho_-u_-)+\sqrt{\rho\rho_-(u-u_-)^2+(\rho-\rho_-)(\frac{A(v)}{\rho^\alpha}-\frac{A(v_-)}{\rho_-^\alpha})}}{\rho-\rho_-}-u-\sqrt{\frac{\alpha A(v)}{\rho^{\alpha+1}}}
\end{equation*}
Now, we will prove $\Phi$ is positive in the desired region. 
First, note that
\begin{equation}
    \partial_u\Phi=\frac{\rho_-}{[\rho]}\left(1+\frac{\rho(u-u_-)}{\sqrt{\Delta}}\right)\label{eq:partial_u_Phi}.
\end{equation}
Setting \eqref{eq:partial_u_Phi} equal to 0, we obtain the critical boundaries 
\begin{equation*}
    u(\rho,v)=u_-\pm\frac{1}{\sqrt{\rho}}\sqrt{\left[\frac{A(v)}{\rho^\alpha}\right]}.
\end{equation*}
As we require $[u]<0$, we choose the minus branch
\begin{equation}
    u_b(\rho,v)=u_--\frac{1}{\sqrt{\rho}}\sqrt{\left[\frac{A(v)}{\rho^\alpha}\right]}.
\end{equation}
The sign of $\Phi$ on this boundary surface can be determined
\begin{equation*}
\Phi(\rho,u_b(\rho,v),v)=-\frac{\rho_-}{\rho-\rho_-}\left(\frac{1}{\sqrt{\rho}}\sqrt{\left[\frac{A(v)}{\rho^\alpha}\right]}\right)+\frac{\sqrt{\rho\left[\frac{A(v)}{\rho^\alpha}\right]}}{\rho-\rho_-}-\frac{1}{\sqrt{\rho}}\sqrt{\frac{\alpha A(v)}{\rho^\alpha}}
\newline =\frac{1}{\sqrt{\rho}}\left(\sqrt{\left[\frac{A(v)}{\rho^\alpha}\right]}-\sqrt{\frac{\alpha A(v)}{\rho^\alpha}}\right)>0,
\end{equation*}
for $\rho<\left(\frac{(1-\alpha)A(v)}{A(v_-)}\right)^\frac{1}{\alpha}\rho_-$. All that remains now is to determine the sign of $\Phi$ for values of $u$ under the boundary. Taking $u\to-\infty$, 
\begin{equation} 
\Phi(U)\sim\frac{\rho-\sqrt{\rho\rho_-}}{\rho-\rho_-}u-u=\frac{\sqrt{\rho}(\sqrt{\rho}-\sqrt{\rho_-})}{(\sqrt{\rho}-\sqrt{\rho_-})(\sqrt{\rho}+\sqrt{\rho_-})}u-u=\left(\frac{\sqrt{\rho}}{\sqrt{\rho_-}+\sqrt{\rho}}-1\right)u\to\infty.
\end{equation}
This limit tells us that, for $u<u_b$, $\Phi$ is monotonically decreasing in $u$. Noting that 
\begin{equation}
u_--\sqrt{\frac{\alpha A(v_-)}{\rho_-^{\alpha+1}}}-\frac{1}{\sqrt{\rho}}\sqrt{\frac{(\alpha-1)A(v_-)}{\rho_-^\alpha}+\frac{A(v)}{\rho^\alpha}}<u_b,
\end{equation}
we have 
\begin{equation}
u\leq u_--\sqrt{\frac{\alpha A(v_-)}{\rho_-^{\alpha+1}}}-\frac{1}{\sqrt{\rho}}\sqrt{\frac{(\alpha-1)A(v_-)}{\rho_-^\alpha}+\frac{A(v)}{\rho^\alpha}}<u_b
\implies \Phi(\rho,u,v)>\Phi(\rho,u_b,v)>0
\end{equation}
on $\rho<\left(\frac{(1-\alpha)A(v)}{A(v_-)}\right)^\frac{1}{\alpha}\rho_L$. Therefore, 
\begin{equation}\lambda_3(U)\leq c\leq\lambda_1(U_-)\iff u\leq u_--\sqrt{\frac{\alpha A(v_-)}{\rho_L^{\alpha+1}}}-\frac{1}{\sqrt{\rho}}\sqrt{\frac{(\alpha-1)A(v_-)}{\rho_L^\alpha}+\frac{A(v)}{\rho^\alpha}}~,~\rho<\left(\frac{A(v)}{A(v_-)}\right)^\frac{1}{\alpha}\rho_L
\end{equation} proving \eqref{eq:OC_Expression_1}.
For $\delta$-waves of speed $\sigma_+$, \eqref{eq:OC_Expression_2} can be justified in a similar manner. In the case $[\rho]=0$, an analogous line of computation produces the same results. 

\section{Wave Combinations and Dissipation} \label{sec:combo}

\subsection{Solutions to the Riemann Problem} \label{subsec:combo_analytic}
In this section, we give the necessary and sufficient conditions for the existence and uniqueness of Riemann solutions connecting prescribed left and right states $U_L$ and $U_R$. These solutions consist of combinations of shocks, rarefaction waves, contact discontinuities, and, where appropriate, singular waves.

Throughout this section, we denote the shock and rarefaction curves of the $i$-th characteristic family by $S_i$ and $R_i$, respectively. Their inverse curves are denoted by $IS_i$ and $IR_i$, and the contact discontinuity associated with the second characteristic family is denoted by $CD$.

\subsubsection{$R_1 + CD + R_3$}
For this case we seek $U_{M_1},~U_{M_2}$ such that $U_{M_1}\in R_1(U_L),~U_{M_2}\in IR_3(U_R).$ In other words, we require $\rho_{M_1}<\rho_L,~\rho_{M_2}<\rho_R,$
\begin{equation}
\begin{aligned}
u_{M_1}
=
u_L
+\frac{2\sqrt{\alpha A(v_L)}}{\alpha+1}
\left(
\frac{1}{\rho_{M_1}^{\frac{\alpha+1}{2}}}
-
\frac{1}{\rho_L^{\frac{\alpha+1}{2}}}
\right), \qquad v_{M_1} = v_L.
\end{aligned}
\end{equation}
and that 
\begin{equation}
\begin{aligned}
u_{M_2}
=
u_R
-\frac{2\sqrt{\alpha A(v_{R})}}{\alpha+1}
\left(
\frac{1}{\rho_{M_2}^{\frac{\alpha+1}{2}}}
-
\frac{1}{\rho_R^{\frac{\alpha+1}{2}}}
\right), \qquad v_R = v_{M_2}.
\end{aligned}
\end{equation}
Due to the contact discontinuity, we further require 
\begin{equation}
\rho_{M_1}
=
\left(\frac{A(v_{M_1})}{A(v_{M_2})}\right)^{\frac{1}{\alpha}}
\rho_{M_2}, \qquad u_{M_2}=u_{M_1}. \label{classical_CD_condition}
\end{equation}
This gives us another inequality
$$\rho_{M_2} < \left(\frac{A(v_{M_2})}{A(v_{M_1})} \right)^{\frac{1}{\alpha}} \rho_L$$
which gives us a permitted interval of $\rho_{M_2}$, that is
\begin{equation}
    \rho_{M_2} \in (0, \rho_{\text{min}}) \label{rho_interval_R1_R3}
\end{equation}
where
$$\rho_{\text{min}} = \min\left\{\rho_R,
\left(\frac{A(v_{M_2})}{A(v_{M_1})}\right)^{\frac{1}{\alpha}}\rho_L\right\}$$
Combining these conditions gives us 
\begin{equation}
\begin{aligned}
u_L
+\frac{2\sqrt{\alpha A(v_L)}}{\alpha+1}
\left(
\frac{1}{\left(\frac{A(v_L)}{A(v_R)}\right)^{\frac{\alpha+1}{2\alpha}}
\rho_{M_2}^{\frac{\alpha+1}{2}}}
-
\frac{1}{\rho_L^{\frac{\alpha+1}{2}}}
\right)
=
u_R
-\frac{2\sqrt{\alpha A(v_{R})}}{\alpha+1}
\left(
\frac{1}{\rho_{M_2}^{\frac{\alpha+1}{2}}}
-
\frac{1}{\rho_R^{\frac{\alpha+1}{2}}}
\right).
\end{aligned}
\end{equation}
Consider 
\begin{equation}
\begin{aligned}
f(\rho_{M_2})
=
u_R-u_L
-\frac{2\sqrt{\alpha A(v_R)}}{\alpha+1}
\left(
\frac{1}{\rho_{M_2}^{\frac{\alpha+1}{2}}}
-
\frac{1}{\rho_R^{\frac{\alpha+1}{2}}}
\right)
-
\frac{2\sqrt{\alpha A(v_L)}}{\alpha+1}
\left(
\frac{1}{\left(\frac{A(v_L)}{A(v_R)}\right)^{\frac{\alpha+1}{2\alpha}}
\rho_{M_2}^{\frac{\alpha+1}{2}}}
-
\frac{1}{\rho_L^{\frac{\alpha+1}{2}}}
\right) \label{f_R1_R3}.
\end{aligned}
\end{equation}
We can verify that for values of $\rho_{M_2}$ in the interval \eqref{rho_interval_R1_R3}, we get $f' > 0$. Hence, for a solution to exist and be unique in the admissible interval, it is necessary and sufficient to have $\lim_{\rho_{M_2} \to 0}f(\rho_{M_2}) < 0$ and $f(\rho_{\text{min}}) > 0$. Trivially, $f(\rho_{M_2})\to-\infty$ as $\rho_{M_2}\to 0$. Substituting $\rho_{\text{min}}$ into \eqref{f_R1_R3}, we obtain the following desired inequality
\begin{equation}
\begin{aligned}
u_R
>
u_L
+\frac{2\sqrt{\alpha A(v_R)}}{\alpha+1}
\left(
\frac{1}{\rho_{\text{min}}^{\frac{\alpha+1}{2}}}
-
\frac{1}{\rho_R^{\frac{\alpha+1}{2}}}
\right)
+
\frac{2\sqrt{\alpha A(v_L)}}{\alpha+1}
\left(
\frac{1}{\left(\frac{A(v_L)}{A(v_R)}\right)^{\frac{\alpha+1}{2\alpha}}
\rho_{\text{min}}^{\frac{\alpha+1}{2}}}
-
\frac{1}{\rho_L^{\frac{\alpha+1}{2}}}
\right).
\end{aligned}
\end{equation}
If this inequality holds, then by the Intermediate Value Theorem, the function $f$ has a unique zero in the interval \eqref{rho_interval_R1_R3}, and hence there exists a unique intermediate state.

\subsubsection{$R_1+CD+S_3$}

For this case, we seek $U_{M_1}$, $U_{M_2}$ such that $U_{M_1}\in R_1(U_L), ~U_{M_2}\in IS_3(U_R).$ In other words, we require $\rho_{M_1} < \rho_L, ~\rho_{M_2} > \rho_R.$
\begin{equation}
    u_{M_1} = u_L + \frac{2\sqrt{\alpha A(v_L)}}{\alpha + 1} \Bigg(\frac{1}{\rho_{M_1}^{\frac{\alpha + 1}{2}}} - \frac{1}{\rho_L^{\frac{\alpha + 1}{2}}} \Bigg), \qquad v_{M_1} = v_L.
\end{equation}
and that
\begin{equation}
    u_{M_2} = u_R +\sqrt{A(v_R)\left(\frac{1}{\rho_{M_2}}-\frac{1}{\rho_R}\right)\left(\frac{1}{\rho_{M_2}^\alpha}-\frac{1}{\rho_R^\alpha}\right)}, \ \ v_{M_2}=v_R.
\end{equation}
Due to the contact discontinuity, \eqref{classical_CD_condition} still holds. This then gives us a permitted interval that $\rho_{M_2}$ can take values from, that is,
\begin{equation}
    \rho_{M_2} \in \left(\rho_R,  \bigg(\frac{A(v_{M_2})}{A(v_{M_1})}\bigg)^{\frac{1}{\alpha}} \rho_L \right) \label{rho_interval_R1_S3}
\end{equation}
Combining these conditions give us
\begin{equation}
    u_L + \frac{2\sqrt{\alpha A(v_L)}}{\alpha + 1} \bigg(\frac{1}{\left(\frac{A(v_L)}{A(v_R)}\right)^{\frac{\alpha + 1}{2\alpha}}\rho_{M_2}^{\frac{\alpha + 1}{2}}} - \frac{1}{\rho_L^{\frac{\alpha + 1}{2}}} \bigg) = u_R +\sqrt{A(v_R)\left(\frac{1}{\rho_{M_2}}-\frac{1}{\rho_R}\right)\left(\frac{1}{\rho_{M_2}^\alpha}-\frac{1}{\rho_R^\alpha}\right)}.
\end{equation}
Consider
\begin{equation}
    f(\rho_{M_2}) = u_R-u_L +\sqrt{A(v_R)\left(\frac{1}{\rho_{M_2}} -\frac{1}{\rho_R}\right)\left(\frac{1}{\rho_{M_2}^\alpha}-\frac{1}{\rho_R^\alpha}\right)} - \frac{2\sqrt{\alpha A(v_L)}}{\alpha+1}\left(\frac{1}{\left(\frac{A(v_L)}{A(v_R)}\right)^\frac{\alpha+1}{2\alpha}\rho_{M_2}^\frac{\alpha+1}{2}}-\frac{1}{\rho_L^\frac{\alpha+1}{2}}\right) \label{f_R1_S3}.
\end{equation}
We can verify that for values of $\rho_{M_2}$ in the interval \eqref{rho_interval_R1_S3}, we get $f'>0$. Hence, for a solution to exist and be unique in the admissible interval, it is necessary and sufficient to have $f(\rho_R) < 0$ and $f\left( \left(\frac{A(v_R)}{A(v_L)} \right)^{\frac{1}{\alpha}} \rho_L \right) > 0$. Substituting the endpoint values of $\rho_{M_2}$ into \eqref{f_R1_S3}, we obtain the following desired inequalities
\begin{equation}
\begin{aligned}
u_R < u_L + \frac{2\sqrt{\alpha A(v_L)}}{\alpha+1}
\left(\frac{1}{\left(\frac{A(v_L)}{A(v_R)}\right)^{\frac{\alpha+1}{2\alpha}}\rho_R^{\frac{\alpha+1}{2}}} - \frac{1}{\rho_L^{\frac{\alpha+1}{2}}}\right).
\end{aligned}
\end{equation}
and
\begin{equation}
\begin{aligned}
u_R>u_L-\sqrt{A(v_R)\bigg(\frac{1}{\big(\frac{A(v_R)}{A(v_L)}\big)^\frac{1}{\alpha}\rho_L}-\frac{1}{\rho_R}\bigg)\bigg(\frac{1}{\big(\frac{A(v_R)}{A(v_L)}\big)\rho_L^\alpha}-\frac{1}{\rho_R^\alpha}\bigg)}.
\end{aligned}
\end{equation}
If both inequalities hold, then by the Intermediate Value Theorem, the function $f$ has a unique zero in the interval \eqref{rho_interval_R1_S3}, and hence there exists a unique intermediate state. 

\subsubsection{$S_1+CD+R_3$}
For this case, we seek $U_{M_1}$, $U_{M_2}$ such that $U_{M_1}\in S_1(U_L), ~U_{M_2}\in IR_3(U_R).$ In other words, we require
$\rho_L < \rho_{M_1}, ~\rho_{M_2} < \rho_R.$
\begin{equation}
    u_{M_1} = u_L-\sqrt{A(v_L)\left(\frac{1}{\rho_{M_1}}-\frac{1}{\rho_L}\right)\left(\frac{1}{\rho_{M_1}^\alpha}-\frac{1}{\rho_L^\alpha}\right)}, \qquad v_{M_1} =v_L.
\end{equation}
and that
\begin{equation}
    u_{M_2}
=
u_R
-\frac{2\sqrt{\alpha A(v_{R})}}{\alpha+1}
\left(
\frac{1}{\rho_{M_2}^{\frac{\alpha+1}{2}}}
-
\frac{1}{\rho_R^{\frac{\alpha+1}{2}}}
\right), \qquad v_R=v_{M_2}.
\end{equation}
Due to the contact discontinuity, \eqref{classical_CD_condition} still holds. This then gives us a permitted interval that $\rho_{M_2}$ can take values from, that is,
\begin{equation}
    \rho_{M_2} \in \left(\bigg(\frac{A(v_{M_2})}{A(v_{M_1})}\bigg)^{\frac{1}{\alpha}} \rho_L, \rho_R \right) \label{rho_interval_S1_R3}
\end{equation}
Combining these conditions give us
\begin{equation}
    u_L - \sqrt{A(v_L) \left(\frac{1}{\left(\frac{A(v_L)}{A(v_R)} \right)^{\frac{1}{\alpha}}\rho_{M_2}} - \frac{1}{\rho_L} \right) \left(\frac{1}{\left(\frac{A(v_L)}{A(v_R)} \right)\rho_{M_2}^\alpha} - \frac{1}{\rho_L^\alpha} \right)} = u_R - \frac{2\sqrt{\alpha A(v_R)}}{\alpha + 1} \left( \frac{1}{\rho_{M_2}^{\frac{\alpha + 1}{2}}} - \frac{1}{\rho_R^{\frac{\alpha + 1}{2}}} \right).
\end{equation}
Consider
\begin{equation}
    f(\rho_{M_2})=u_R-u_L+\sqrt{A(v_L)\left(\frac{1}{\left(\frac{A(v_L)}{A(v_R)}\right)^\frac{1}{\alpha}\rho_{M_2}}-\frac{1}{\rho_L}\right)\left(\frac{1}{\left(\frac{A(v_L)}{A(v_R)}\right)\rho_{M_2}^\alpha}-\frac{1}{\rho_L^\alpha}\right)} \\ -  
\frac{2\sqrt{\alpha A(v_R)}}{\alpha+1}\left(\frac{1}{\rho_{M_2}^{\frac{\alpha+1}{2}}}-\frac{1}{\rho_R^\frac{\alpha+1}{2}}\right) \label{f_S1_R3}.
\end{equation}
We can verify that for values of $\rho_{M_2}$ in the interval \eqref{rho_interval_S1_R3}, we get $f' > 0$. Hence, for a solution to exist and be unique in the admissible interval, it is necessary and sufficient to have $f\left(\left(\frac{A(v_R)}{A(v_L)} \right)^{\frac{1}{\alpha}} \rho_L \right) < 0$ and $f(\rho_R) > 0$. Substituting the endpoint values of $\rho_{M_2}$ into \eqref{f_S1_R3}, we obtain the following desired inequalities
\begin{equation}
    u_R<u_L+\frac{2\sqrt{\alpha A(v_R)}}{\alpha+1}\left(\frac{1}{\left(\frac{A(v_R)}{A(v_L)}\right)^{\frac{\alpha+1}{2\alpha}}\rho_L^\frac{\alpha+1}{2}}-\frac{1}{\rho_R^{\frac{\alpha+1}{2}}}\right).
\end{equation}
and
\begin{equation}
    u_R>u_L-\sqrt{A(v_L)\left(\frac{1}{\left(\frac{A(v_L)}{A(v_R}\right)^{\frac{1}{\alpha}}\rho_R}-\frac{1}{\rho_L}\right)\left(\frac{1}{\left(\frac{A(v_L)}{A(v_R}\right)\rho_R^\alpha}-\frac{1}{\rho_L^\alpha}\right)}.
\end{equation}
If both inequalities hold, then by the Intermediate Value Theorem, the function $f$ has a unique zero in the interval \eqref{rho_interval_S1_R3}, and hence there exists a unique intermediate state.
\subsubsection{$S_1+CD+S_3$}
For this case, we seek $U_{M_1}, U_{M_2}$ such that $U_{M_1} \in S_1(U_L), ~U_{M_2}\in IS_3(U_R).$ In other words, we require
$\rho_{M_1} > \rho_L, ~\rho_{M_2} > \rho_R.$
\begin{equation}
    u_{M_1} = u_L - \sqrt{A(v_L)\left(\frac{1}{\rho_{M_1}}-\frac{1}{\rho_L}\right)\left(\frac{1}{\rho_{M_1}^\alpha}-\frac{1}{\rho_L^\alpha}\right)}, \qquad v_{M_1} =v_L.
\end{equation}
and that
\begin{equation}
    u_{M_2} = u_R + \sqrt{A(v_R)\left(\frac{1}{\rho_{M_2}}-\frac{1}{\rho_R}\right)\left(\frac{1}{\rho_{M_2}^\alpha}-\frac{1}{\rho_R^\alpha}\right)}, \qquad v_{M_2}=v_R.
\end{equation}
Due to the contact discontinuity, \eqref{classical_CD_condition} still holds. This then gives us another inequality
\[
    \rho_{M_2} > \left(\frac{A(v_{M_2})}{A(v_{M_1})} \right)^{\frac{1}{\alpha}} \rho_L
\]
which gives us a permitted interval of $\rho_{M_2}$, that is,
\begin{equation}
    \rho_{M_2} \in (\rho_{\text{max}}, \infty) \label{rho_interval_S1_S3}
\end{equation}
where $\rho_{\text{max}} = \max\left\{\rho_R,
\left(\frac{A(v_{M_2})}{A(v_{M_1})}\right)^{\frac{1}{\alpha}}\rho_L\right\}.$ Combining these conditions gives us
\begin{equation}
    u_L - \sqrt{A(v_L) \left(\frac{1}{\left(\frac{A(v_L)}{A(v_R)} \right)^{\frac{1}{\alpha}}\rho_{M_2}} - \frac{1}{\rho_L} \right) \left(\frac{1}{\left(\frac{A(v_L)}{A(v_R)}\right)\rho_{M_2}^\alpha } - \frac{1}{\rho_L^\alpha} \right)} = u_R + \sqrt{A(v_R)\left(\frac{1}{\rho_{M_2}}-\frac{1}{\rho_R}\right)\left(\frac{1}{\rho_{M_2}^\alpha}-\frac{1}{\rho_R^\alpha}\right)}.
\end{equation}
Consider
\begin{align}
    f(\rho_{M_2})=& u_R - u_L + \sqrt{A(v_R)\left(\frac{1}{\rho_{M_2}} - \frac{1}{\rho_R}\right)\left(\frac{1}{\rho_{M_2}^\alpha} - \frac{1}{\rho_R^\alpha}\right)} \\
    +& \sqrt{A(v_L)\left(\frac{1}{\left(\frac{A(v_L)}{A(v_R)}\right)^\frac{1}{\alpha}\rho_{M_2}} - \frac{1}{\rho_L}\right)\left(\frac{1}{\left(\frac{A(v_L)}{A(v_R)}\right)\rho_{M_2}^\alpha} - \frac{1}{\rho_L^\alpha}\right)} \label{f_S1_S3}.
    \end{align}
One can easily verify that for values of $\rho_{M_2}$ in the interval \eqref{rho_interval_S1_S3}, one gets $f' > 0$. Hence, for a unique solution to exist in the admissible interval, it is necessary and sufficient to have $f(\rho_{\text{max}}) < 0$ and $\lim_{\rho_{M_2} \to \infty} f(\rho_{M_2}) > 0$. Substituting the endpoint values of $\rho_{M_2}$ into \eqref{f_S1_S3}, we obtain the following desired inequalities
\begin{equation}
    u_R > u_L - \sqrt{\frac{A(v_R)}{\rho_R^{\alpha+1}}} - \sqrt{\frac{A(v_L)}{\rho_L^{\alpha+1}}}
\end{equation}
and
\begin{equation}
    u_R < u_L - \sqrt{ A(v_R) \left( \frac{1}{\rho_{\text{max}}}-\frac{1}{\rho_R} \right) \left( \frac{1}{\rho_{\text{max}}^\alpha}-\frac{1}{\rho_R^\alpha} \right)} - \sqrt{ A(v_L) \left( \frac{1}{ \left(\frac{A(v_L)}{A(v_R)}\right)^{\frac1\alpha}\rho_{\text{max}} } -\frac{1}{\rho_L} \right) \left( \frac{1} {\left(\frac{A(v_L)}{A(v_R)}\right)\rho_{\text{max}}^\alpha} -\frac{1}{\rho_L^\alpha} \right)}.
\end{equation}
If both inequalities hold, then by the Intermediate Value Theorem, the function $f$ has a unique zero in the interval \eqref{rho_interval_S1_S3}, and hence there exists a unique intermediate state.

Note that these regions are disjoint. Therefore, for a fixed left state $U_L$, each right state $U_R$ belongs to at most one region corresponding to a classical wave combination. Consequently, there exists at most one classical wave combination connecting $U_L$ to $U_R$. Furthermore, whenever such a classical solution exists, the intermediate states $U_{M_1}$ and $U_{M_2}$ are uniquely determined by the corresponding wave curves. The remaining regions of the phase space, where no classical wave combination exists, will be treated in the next section through singular solutions. The combinations
$S_1+\mathrm{SDW},\
\mathrm{SDW}+S_3,\
CD+\mathrm{SDW},\
\mathrm{SDW}+CD,$ cannot satisfy the geometric wave-curve conditions, the ordering of self-similar wave speeds, and the overcompressibility condition for the singular shock. Consequently, the only nontrivial wave combinations involving an overcompressive singular shock that remain to be considered are
$R_1+\mathrm{SDW}
\ \ \text{and}\ \ 
\mathrm{SDW}+R_3,$ together with the isolated singular shock.

The surface
\begin{equation}
u=u_L-\sqrt{\frac{A(v)}{\rho^{\alpha+1}}}
-\sqrt{\frac{A(v_L)}{\rho_L^{\alpha+1}}}
\label{eq:classical_boundary}
\end{equation}
separates regions with different types of solutions to the Riemann problem, see Figure~\ref{fig:overcompressive}. When
$u>u_L-\sqrt{\frac{A(v)}{\rho^{\alpha+1}}}
-\sqrt{\frac{A(v_L)}{\rho_L^{\alpha+1}}}$,
the Riemann problem admits classical solutions. In contrast, when
$u<u_L-\sqrt{\frac{A(v)}{\rho^{\alpha+1}}}
-\sqrt{\frac{A(v_L)}{\rho_L^{\alpha+1}}}$,
the Riemann problem admits no classical solution, and any solution must necessarily be nonclassical, namely a $\delta$-shock or, equivalently, a shadow wave. Figure~\ref{fig:overcompressive} also shows the overcompressive region, indicated by the shaded area, which extends above the surface \eqref{eq:classical_boundary}. Consequently, within the shaded region above the surface, the Riemann problem admits both classical and nonclassical solutions. In contrast, every state below the surface belongs to the overcompressive region, and the Riemann problem admits only nonclassical solutions.

\begin{figure}[H]
    \centering
    \includegraphics[width=0.5\linewidth]{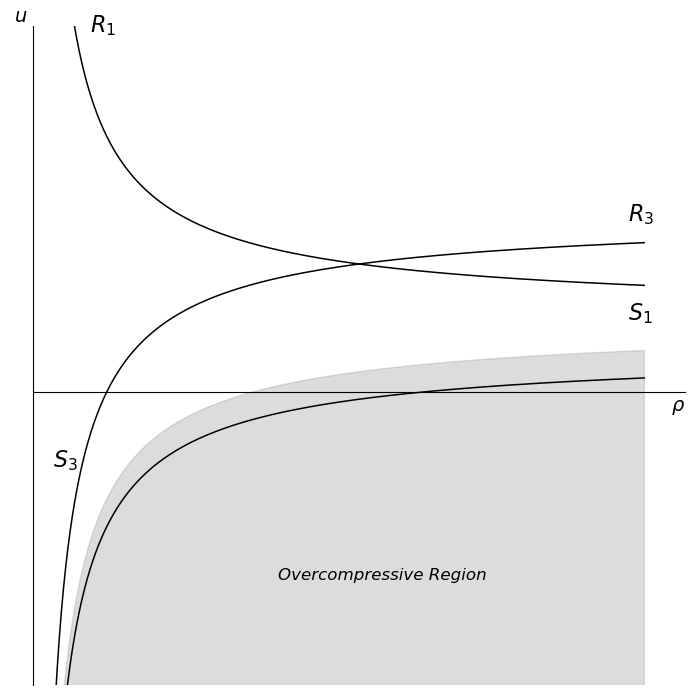}
    \caption{Graphical depiction of overcompressive region in the $\rho$-$u$ plane for fixed $v$, $A(v)$, and $\alpha$. The lowermost black surface is given by (\ref{eq:classical_boundary}).}
    \label{fig:overcompressive}
\end{figure}

\subsubsection{$\mathrm{SDW}+R_3$}
For wave combinations of this form, we seek $U_M$ such that 
\begin{equation} 
\begin{aligned}
c=\lambda_3(U_M)<\lambda_1(U_L)
\end{aligned} \label{ineq_R3_SDW}
\end{equation}
and $U_M\in IR_3(U_R)$ which immediately gives 
$v_M=v_R,$ $\rho_M<\rho_R.$ This means 
\begin{equation}
\begin{aligned}
u_M=u_R-\frac{2\sqrt{\alpha A(v_R)}}{\alpha+1}\left(\frac{1}{\rho_M^\frac{\alpha+1}{2}}-\frac{1}{\rho_R^\frac{\alpha+1}{2}}\right)
\end{aligned} \label{u_M_SDW_R3}
\end{equation}
and 
\begin{equation}
\begin{aligned}
u_M+\sqrt{\frac{\alpha A(v_R)}{\rho_M^{\alpha+1}}}=\frac{[\rho u]+\sqrt{[\rho u]^2-[\rho][\rho u^2-\frac{A(v)}{\rho^\alpha}]}}{[\rho]}
\end{aligned} \label{SDW_R3_speed}
\end{equation}
Expanding \eqref{SDW_R3_speed} and solving the quadratic for $u_M$ gives us 
\begin{equation}
\begin{aligned}
u_M=u_L-\sqrt{\frac{\alpha A(v_R)}{\rho_M^{\alpha+1}}}\pm\frac{1}{\sqrt{\rho_L}}\sqrt{\frac{(\alpha-1)A(v_R)}{\rho_M^\alpha}+\frac{A(v_L)}{\rho_L^\alpha}}
\end{aligned} \label{SDW_R3_2branch}
\end{equation}
When $[\rho]=0$, the speed formula in
\eqref{SDW_R3_speed} is replaced by $\sigma_0=\frac{\left[\rho u^2-\frac{A(v)}{\rho^\alpha}\right]}{2[\rho u]}.$ Substitution into the relation defining $U_M$ yields the same equation for the intermediate state. We will now use \eqref{SDW_R3_2branch} together with \eqref{ineq_R3_SDW} to select a branch. 

\begin{align}
\lambda_3(U_M) < \lambda_1(U_L) \notag &\implies u_M+\sqrt{\frac{\alpha A(v_R)}{\rho_M^{\alpha+1}}} < u_L-\sqrt{\frac{\alpha A(v_L)}{\rho_L^{\alpha+1}}} \notag\\ 
&\implies u_L\pm\frac{1}{\sqrt{\rho_L}}\sqrt{\frac{(\alpha-1)A(v_R)}{\rho_M^\alpha}+\frac{A(v_L)}{\rho_L^\alpha}} < u_L-\sqrt{\frac{\alpha A(v_L)}{\rho_L^{\alpha+1}}} \notag \\
&\implies \pm\frac{1}{\sqrt{\rho_L}}\sqrt{\frac{(\alpha-1)A(v_R)}{\rho_M^\alpha}+\frac{A(v_L)}{\rho_L^\alpha}} < -\sqrt{\frac{\alpha A(v_L)}{\rho_L^{\alpha+1}}} \label{SDW_R3_determination}
\end{align}

By \eqref{SDW_R3_determination}, we see that the positive branch is impossible. For the negative branch, we notice
\begin{align*}
    -\frac{1}{\sqrt{\rho_L}}\sqrt{\frac{(\alpha-1)A(v_R)}{\rho_M^\alpha}+\frac{A(v_L)}{\rho_L^\alpha}} < -\sqrt{\frac{\alpha A(v_L)}{\rho_L^{\alpha+1}}} \notag &\implies \frac{(\alpha-1)A(v_R)}{\rho_M^\alpha}+\frac{A(v_L)}{\rho_L^\alpha} > \frac{\alpha A(v_L)}{\rho_L^\alpha} \notag \\
   & \implies (\alpha-1)\left(\frac{A(v_R)}{\rho_M^\alpha}-\frac{A(v_L)}{\rho_L^\alpha}\right) > 0 \notag \\ 
   & \implies \rho_M > \left(\frac{A(v_R)}{A(v_L)}\right)^\frac{1}{\alpha}\rho_L
\end{align*}
This gives us an admissible interval for $\rho_M$, that is,
\begin{equation}
    \rho_M \in \left(\left(\frac{A(v_R)}{A(v_L)}\right)^\frac{1}{\alpha}\rho_L, \rho_R \right) \label{rho_interval_SDW_R3}
\end{equation}
Now consider 
\begin{equation}
\begin{aligned}
f(\rho_M)=u_R-u_L-\frac{2\sqrt{\alpha A(v_R)}}{\alpha+1}\left(\frac{1}{\rho_M^\frac{\alpha+1}{2}}-\frac{1}{\rho_R^\frac{\alpha+1}{2}}\right)+\sqrt{\frac{\alpha A(v_R)}{\rho_M^{\alpha+1}}}+\frac{1}{\sqrt{\rho_L}}\sqrt{\frac{(\alpha-1)A(v_R)}{\rho_M^\alpha}+\frac{A(v_L)}{\rho_L^\alpha}}
\end{aligned} \label{f_SDW_R3}
\end{equation}
It is easily shown that $f'>0$ for $\rho_M$ in the interval \eqref{rho_interval_SDW_R3}. To guarantee the existence of a unique solution within the admissible interval, we need $f\left(\left(\frac{A(v_R)}{A(v_L)}\right)^{\frac{1}{\alpha}}\rho_L\right)<0$ and $f(\rho_R)>0$. Evaluating \eqref{f_SDW_R3} at the endpoint values of $\rho_M$, we obtain the following desired inequalities
\begin{equation}
\begin{aligned}
u_R<u_L+\frac{2\sqrt{\alpha A(v_R)}}{\alpha+1}\left(\frac{1}{\left(\frac{A(v_R)}{A(v_L)}\right)^{\frac{\alpha+1}{2\alpha}}\rho_L^{\frac{\alpha+1}{2}}}-\frac{1}{\rho_R^{\frac{\alpha+1}{2}}}\right) -\frac{\sqrt{\alpha A(v_R)}}{\left(\frac{A(v_R)}{A(v_L)}\right)^{\frac{\alpha+1}{2\alpha}}\rho_L^{\frac{\alpha+1}{2}}}-\sqrt{\frac{\alpha A(v_L)}{\rho_L^{\alpha+1}}}.
\end{aligned}
\end{equation}
and
\begin{equation}
\begin{aligned}
u_R>u_L
-\sqrt{\frac{\alpha A(v_R)}{\rho_R^{\alpha+1}}}
-\frac{1}{\sqrt{\rho_L}}
\sqrt{
\frac{(\alpha-1)A(v_R)}{\rho_R^\alpha}
+\frac{A(v_L)}{\rho_L^\alpha}
}
\end{aligned}
\end{equation}
If both inequalities hold, then by the Intermediate Value Theorem, the function $f$ has a unique zero in the interval \eqref{rho_interval_SDW_R3}, and hence there exists a unique intermediate state.

\subsubsection{$R_1+\mathrm{SDW}$}
The analysis of the $R_1+\mathrm{SDW}$ configuration is analogous to that of the $\mathrm{SDW}+R_3$ case. Such a wave combination requires an intermediate state $U_M$ satisfying
\begin{equation}
    \lambda_1(U_M)=c>\lambda_3(U_R) \label{ineq_R1_SDW}
\end{equation}
and $U_M\in R_1(U_L)$ which immediately gives
$v_L = v_M,$ $\rho_M < \rho_L.$ This means
\begin{equation}
\begin{aligned}
u_M=u_L+\frac{2\sqrt{\alpha A(v_L)}}{\alpha+1}\left(\frac{1}{\rho_M^\frac{\alpha+1}{2}}-\frac{1}{\rho_L^\frac{\alpha+1}{2}}\right)
\end{aligned}
\end{equation}
and
\begin{equation}
\begin{aligned}
    u_M - \sqrt{\frac{\alpha A(v_L)}{\rho_M^{\alpha+1}}}=\frac{[\rho u]+\sqrt{[\rho u]^2-[\rho][\rho u^2-\frac{A(v)}{\rho^\alpha}]}}{[\rho]}
\end{aligned} \label{R1_SDW_speed}
\end{equation}
Expanding \eqref{R1_SDW_speed} and solving for the quadratic for $u_M$ gives us
\begin{equation}
\begin{aligned}
u_M=u_R+\sqrt{\frac{\alpha A(v_L)}{\rho_M^{\alpha+1}}}\pm\frac{1}{\sqrt{\rho_R}}\sqrt{\frac{(\alpha-1)A(v_L)}{\rho_M^\alpha}+\frac{A(v_R)}{\rho_R^\alpha}}
\end{aligned} \label{R1_SDW_2branch}
\end{equation}
When $[\rho]=0$, the speed formula in
\eqref{R1_SDW_speed} is replaced by $\sigma_0.$ Substitution into the relation defining $U_M$ produces the same equation for the intermediate state. We will now use \eqref{R1_SDW_2branch} together with \eqref{ineq_R1_SDW} to select a branch.
\begin{equation*}
\begin{aligned}
u_M-\sqrt{\frac{\alpha A(v_L)}{\rho_M^{\alpha+1}}}>u_R+\sqrt{\frac{\alpha A(v_R)}{\rho_R^{\alpha+1}}} 
\end{aligned}
\end{equation*}
\begin{equation}
\begin{aligned}
\implies \pm\frac{1}{\sqrt{\rho_R}}\sqrt{\frac{(\alpha-1)A(v_L)}{\rho_M^\alpha}+\frac{A(v_R)}{\rho_R^\alpha}}>\sqrt{\frac{\alpha A(v_R)}{\rho_R^{\alpha+1}}}
\end{aligned} \label{R1_SDW_determination}
\end{equation}
Equation~\eqref{R1_SDW_determination} shows that only the positive branch is admissible. Moreover, it gives us an admissible interval for $\rho_M$, that is,
\begin{equation}
\begin{aligned}
\rho_M \in \left(\left(\frac{A(v_L)}{A(v_R)}\right)^\frac{1}{\alpha}\rho_R, \rho_L \right)
\end{aligned} \label{rho_interval_R1_SDW}
\end{equation}
Now consider
\begin{equation}
\begin{aligned}
f(\rho_M)=u_R-u_L+\sqrt{\frac{\alpha A(v_L)}{\rho_M^{\alpha+1}}}+\frac{1}{\sqrt{\rho_R}}\sqrt{\frac{(\alpha-1)A(v_L)}{\rho_M^\alpha}+\frac{A(v_R)}{\rho_R^\alpha}}-\frac{2\sqrt{\alpha A(v_L)}}{\alpha+1}\left(\frac{1}{\rho_M^\frac{\alpha+1}{2}}-\frac{1}{\rho_L^\frac{\alpha+1}{2}}\right)
\end{aligned} \label{f_R1_SDW}
\end{equation}
As before, we have $f' > 0$ for $\rho_M$ in the interval \eqref{rho_interval_R1_SDW}. To guarantee the existence of a unique solution in the admissible interval, we need $f\left(\left(\frac{A(v_L)}{A(v_R)}\right)^\frac{1}{\alpha}\rho_R\right)<0$ and $f(\rho_L)>0$. Substituting the endpoint values of $\rho_M$ into \eqref{f_R1_SDW}, we obtain the following desired inequalities
\begin{equation}
\begin{aligned}
u_R<u_L-\frac{\sqrt{\alpha A(v_L)}}{\left(\frac{A(v_L)}{A(v_R)}\right)^\frac{\alpha+1}{2\alpha}\rho_R^\frac{\alpha+1}{2}}-\sqrt{\frac{{\alpha A(v_R)}}{\rho_R^{\alpha+1}}} +\frac{2\sqrt{\alpha A(v_L)}}{\alpha+1}\left(\frac{1}{\left(\frac{A(v_L)}{A(v_R)}\right)^\frac{\alpha+1}{2\alpha}\rho_R^\frac{\alpha+1}{2}}-\frac{1}{\rho_L^\frac{\alpha+1}{2}}\right).
\end{aligned}
\end{equation}
and
\begin{equation}
\begin{aligned}
u_R>u_L-\sqrt{\frac{\alpha A(v_L)}{\rho_L^{\alpha+1}}}-\frac{1}{\sqrt{\rho_R}}\sqrt{\frac{(\alpha-1)A(v_L)}{\rho_L^\alpha}+\frac{A(v_R)}{\rho_R^\alpha}}.
\end{aligned}
\end{equation}
If both inequalities hold, then by the Intermediate Value Theorem, the function $f$ has a unique zero in the interval \eqref{rho_interval_R1_SDW}, and hence there exists a unique intermediate state.
\subsection{Dissipation and the Dafermos Entropy Criterion} \label{subsec:dissipation}
The classification obtained in Section~\ref{subsec:combo_analytic} shows that the classical solution, whenever it exists, is unique. However, the same Riemann data may also admit a single overcompressive shadow wave (or $\delta$-shock), an $R_1+\mathrm{SDW}$ solution, a $\mathrm{SDW}+R_3$ solution, or several of these nonclassical wave patterns simultaneously. This is precisely where an additional admissibility criterion is needed to select a canonical solution. Throughout this section, we employ the Dafermos maximum entropy dissipation principle, which selects the admissible wave pattern maximizing the total entropy production \cite{Dafermos1973,Hsiao}.

An entropy--entropy flux pair $(\eta,q)$ satisfies
\begin{align}\label{eq:entropyEquivDef}
\nabla q(U)=\nabla\eta(U)D(G(U)),
\end{align}
where $G$ is the flux from equation \eqref{eq:sys_vec}. 
Consequently, $\eta(U)_t+q(U)_x=0$ for every smooth solution. For system \eqref{cons1}--\eqref{cons3}, the functions
\begin{align}\label{eq:entropy}
    \eta(U)&=\frac{\rho u^2}{2}+\frac{A(v)}{(1+\alpha)\rho^\alpha}, \\
    q(U)&=\frac{\rho u^3}{2}-\frac{\alpha u A(v)}{(1+\alpha)\rho^\alpha},\label{eq:entropyFlux}
\end{align}
satisfy \eqref{eq:entropyEquivDef}, as can be verified by direct computation. In order to ensure the entropy $\eta$ is convex---an important condition for applying Dafermos's admissibility criterion---one can compute the Hessian of $\eta$ and apply Sylvester's Criterion. It is sufficient then to assume
\begin{align}\label{eq:convex1}
    A\dpri(v)&\geq0, \\
    \alpha A(v)A\dpri(v)&\geq(1+\alpha)(A\pri(v))^2,\label{eq:convex2}
\end{align}
for the convexity of $\eta$. Therefore, for any $A(v)$ satisfying conditions \eqref{eq:convex1} and \eqref{eq:convex2}, the functions \eqref{eq:entropy} and \eqref{eq:entropyFlux} are valid entropy and entropy flux functions, respectively, for the system. 

A weak solution is said to satisfy the entropy condition if
$\eta(U)_t+q(U)_x\le0$ in the sense of distributions. This is a necessary condition for the admissibility of a weak solution, though it is not sufficiently strict to select a unique solution \cite{Dafermos1973}. For a classical discontinuity propagating with speed $s$, integrating the entropy inequality over a spacetime control volume enclosing the wave, applying Green's theorem, and shrinking the control volume onto the discontinuity yields $[q]-s[\eta]\le0.$ Equivalently, $D=s[\eta]-[q]\ge0,$ where $D$ denotes the entropy production across the wave. For a shadow wave, the entropy production consists of two contributions. The first is the classical jump contribution $c[\eta]-[q],$ where $c$ denotes the propagation speed of the shadow wave. The second contribution arises from the entropy concentrated inside the singular layer. Let
\begin{equation}
E_{\delta,\varepsilon}(t)
=
\int_{ct-\varepsilon t}^{ct+\varepsilon t}
\eta(U_\varepsilon(x,t))\,dx
\end{equation}
denote the entropy contained in the approximate shadow-wave layer. Passing to the limit as $\varepsilon\to0^+$ gives the concentrated entropy $E_\delta(t)=\frac12c^2k_1t,$ where $k_1$ is the weight of the $\delta$-measure as computed in section \ref{sec:singular}. Hence
$\frac{d}{dt}E_\delta(t)
=\frac12c^2k_1.$ The concentrated entropy contributes with the opposite sign to the entropy production, yielding
\begin{equation}
D_{\mathrm{SDW}}
=
c[\eta]-[q]
-
\frac12c^2k_1.
\end{equation} Consequently, the entropy production associated with each elementary wave is
\begin{align}
D_{R_i}=0,\ \ D_{CD}=0,\ \ D_{S_i}=s_i[\eta]-[q], \ \ D_{\mathrm{SDW}}
=c[\eta]-[q]-\frac12c^2k_1,
\end{align}
where $s_i$ is the shock speed determined by the Rankine-Hugoniot conditions and $c$ is the propagation speed of the $\delta$-shock. For a wave pattern consisting of several waves, the total entropy production is defined as the sum of the entropy productions of its constituent waves. Whenever more than one admissible wave pattern connects the same left and right states, the Dafermos maximum entropy dissipation principle selects the one with the largest total entropy production.

The geometry of the admissible regions depends on the structure of $A(v)$, making a complete analytical comparison of the entropy production of all competing wave patterns intractable for a general choice of $A(v)$. For the numerical examples in Section~\ref{sec:numerics} ($A(v)=\frac{1}{(1-v)^p}$ with $\alpha=0.66$, $p=0.146$, $v<1$), we computed the entropy production for every admissible classical, singular, and combination wave pattern. 

Our computational study revealed a clear distinction between the right states that initially lie outside and inside the overcompressive region. Outside the overcompressive region, when the density jumps across the waves were nonzero, the corresponding classical wave pattern generally produced larger entropy dissipation than the competing singular-wave solutions. In contrast, when the density jump vanished, i.e., $\rho_M=\rho_R$, the singular-wave solution typically exhibited larger entropy dissipation, with only a few exceptions, namely for sufficiently small values of $\rho_R$ in Region~7 of Case~2 and in the unshaded $\rho<\rho_L$ part of Region~5 of Case~1, where the classical solution initially produced larger entropy dissipation before the singular solution became dominant. Within the overcompressive region, the behavior was more intricate and depended on the parameter region. In Case~1, the shaded portion of Region~5 with $\rho>\rho_L$ exhibited three successive regimes as the right state varied. First, when the density jump across the shadow wave was nonzero, the singular-wave solution produced a larger entropy dissipation. Next, the classical solution became dominant. Then the $R_1+\mathrm{SDW}$ solution disappears, and subsequently the classical solutions also disappear, so the $\mathrm{SDW}$ solution eventually dominates. In the shaded portion of Region~5 with $\rho<\rho_L$, the singular-wave solution again initially produced larger entropy dissipation when the density jump was nonzero, followed by a regime where the classical solution became dominant, and finally by a regime with $\rho_M=\rho_R$, where the singular-wave solution again produced the larger entropy dissipation. In Region~6 of Case~1, where $\rho>\rho_L$, the singular-wave solution was the only admissible solution, whereas for $\rho<\rho_L$ the singular-wave solution consistently produced a greater dissipation of the entropy.
In Case~2, the shaded portion of In Region~5 with $\rho>\rho_L$, the singular-wave solution initially produced a greater dissipation of entropy when the density jump was nonzero, followed by a regime where the classical solution became dominant. Then, classical solutions and solutions $R_1+\mathrm{SDW}$ disappear, eventually leaving $\mathrm{SDW}$ as the only admissible solution. In Region~5 with $\rho<\rho_L$, the singular-wave solution initially produced larger entropy dissipation when the density jump was nonzero, followed by a regime where the classical solution became dominant and finally by a regime with $\rho_M=\rho_R$, where the singular-wave solution had higher dissipation. In the shaded portion of Region~6 with $\rho<\rho_L$, when the density jump was nonzero, the classical solution initially appeared to produce a greater dissipation of entropy, while the dissipation of entropy of the singular-wave solution was not defined due to negative radicands. Subsequently, as the jump in density becomes zero, the classical solution disappears, thus leaving the singular-wave solution as its only admissible solution. In Region~6 with $\rho>\rho_L$, only singular-wave solutions were admissible.

Despite these computational predictions, the numerical simulations of Section~\ref{sec:numerics} always selected the corresponding classical wave pattern whenever both classical and singular solutions coexisted. Explaining the discrepancy between the entropy dissipation criterion and the numerical approximation remains an interesting open question.
\section{Numerical Solutions} \label{sec:numerics}
\subsection{Numerical Preliminaries}
We utilize the standard Lax-Friedrichs (LF) method to obtain numerical approximations of our solution space and validate our analytical findings. The LF scheme is a foundational, non-oscillatory, first-order accurate method widely used to approximate solutions of hyperbolic partial differential equations \cite{Toro}. 

The method is structured by discretizing the spatial and temporal domains into a uniform grid mesh. We define a discrete mesh point $(x_j, t_n)$ by: $x_j = j h, \ (j \in \mathbb{Z}), \ \ 
    t_n = n k, \ (n \in \mathbb{N}_0)$ where $h := \Delta x$ represents the spatial width of the grid and $k := \Delta t$ represents the temporal step size. The numerical approximation of the conserved vector at this discrete point is denoted as $U^n_j \approx U(x_j, t_n)$. 

To advance the solution to the next time step, the LF scheme employs a central spatial difference combined with a localized averaging stabilizer. The update formula is given by:
\begin{equation}
    U^{n+1}_j = \frac{1}{2}\left(U^{n}_{j-1} + U^{n}_{j+1}\right) + \frac{k}{2h}\left(F^{n}_{j-1} - F^{n}_{j+1}\right)
\end{equation}
where $F^n_j = F(U^n_j)$ represents the physical flux vector. Here, $\lambda$ is the global maximum wave speed computed from the absolute maximum eigenvalue of the Jacobian matrix of flux of the system at the time step $n$. 

The Courant number, defined as $\text{CFL} = \frac{\lambda k}{h}$, scales the time step dynamically. To guarantee numerical stability and satisfy the Courant-Friedrichs-Lewy condition on this specific spatial stencil, we enforce that $\text{CFL} \leq 1$. Further discussion on the LF method can be found in \cite{Lev_1}. 
\subsection{Regions for the Solution to the Riemann Problem}
As stated above, we consider Riemann data consisting of left and right states $(\rho_L,u_L,v_L)$ and $(\rho_R,u_R,v_R)$. The admissible solution depends on
the relative position of these states in phase space. Consequently, the space of right states is partitioned into regions, each corresponding to a wave combination connecting the fixed left state $U_L$ to the prescribed right
state $U_R$.

We analyze the behavior of the solution regions for two representative choices of the left state, referred to as Case~1 and Case~2. In both cases, we take $A(v)=\frac{1}{(1-v)^p}$ for $v<1$, with $\alpha=0.66$ and $p=0.146$. This choice of $A(v)$ ensures that the entropy function \eqref{eq:entropy} is strictly convex throughout its domain, allowing the application of the Dafermos maximum entropy dissipation principle discussed in Section~\ref{subsec:dissipation} \cite{Dafermos1973,Hsiao}. For Case~1, the left state is $(\rho_L,u_L,v_L)=(4,0.7,-1.9)$, whereas for Case~2 it is $(\rho_L,u_L,v_L)=(4,0.7,0.96)$. Thus, the two cases differ only in the value of $v_L$. Nevertheless, this change dramatically alters the geometry of the admissible solution regions due to the singular behavior of $A(v)$ as $v\to1^{-}$.

For the initial data of Case 1, there are 9 identifiable solution regions. As shown in Figure \ref{fig:case1}, we label the regions for each case in the $(\rho, u)$ plane for ease of viewing; however, the true solution space is three-dimensional. We incorporate the $v$ dimension by notating the regions as follows: a $\to$ symbol denotes a shift from a higher $v_R$ state to a lower $v_R$ state. For example, $R_1+CD+S_3 \to S_1+CD+S_3$ indicates that the solution exhibits $R_1+CD+S_3$ near $v_R = 1$ and transitions to $S_1+CD+S_3$ as $v_R$ decreases in value. The regions for Case 1 are listed below:

\begin{figure}[H]
    \centering
    \includegraphics[width=0.5\linewidth]{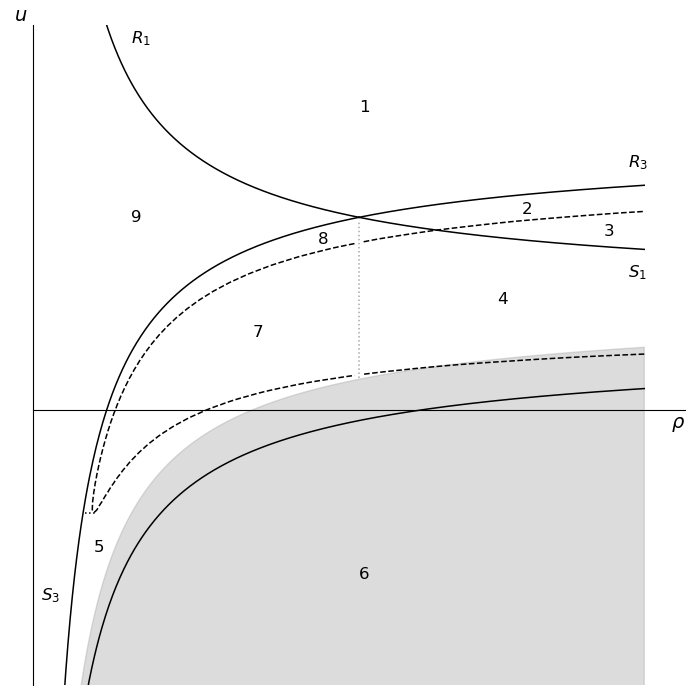}
    \caption[Case 1]{\textbf{Case 1 ($a = 0.66$, $p = 0.146$)} with left state $(4, 0.7, -1.9)$. The dashed black line represents the surface $S_{\mathrm{SDW}}(\rho_L, u_L, v_L, \rho, u, v) = \lambda_3(\rho, u, v)$. The lowermost black surface is given by (\ref{eq:classical_boundary}).}
    \label{fig:case1}
\end{figure}

\begin{itemize}
   \item Region 1: $R_1+CD+S_3\to R_1+CD+R_3\to S_1+CD+R_3$ \\
    When the right state lies in this region, the Riemann solution evolves through three distinct classical wave configurations as $v_R$ decreases. For values of $v_R$ sufficiently close to one, the solution consists of $R_1+CD+S_3$. As $v_R$ decreases, the $3$-shock is replaced by a $3$-rarefaction, yielding the configuration $R_1+CD+R_3.$ For sufficiently negative values of $v_R$, the $1$-rarefaction is subsequently replaced by a $1$-shock, resulting in the wave pattern $S_1+CD+R_3$. The waves appear in the order determined by the characteristic speeds.
    We note that the nonclassical solution $R_1+\mathrm{SDW}$ is admissible when $v_R$ is sufficiently close to $1$, and $\mathrm{SDW} + R3$ is admissible when $v_R$ is sufficiently negative. Thus, there are certain intervals of $v_R$ in this region when both classical and nonclassical solutions coexist. In Section~\ref{subsec:dissipation}, we show that Dafermos' dissipation criterion selects, among these competing solutions, the one with the largest dissipation.     
 \item Region 2: $R_1+CD+S_3 \to R_1+CD+R_3 \to S_1+CD+R_3$ \\
   When the right state lies in this region, the Riemann solution consists of classical waves, the exact types of which depend on the value of $v_R$ as described above. In addition, the nonclassical solution $R_1+\mathrm{SDW}$ is admissible when $v_R$ is sufficiently close to $1$, and $\mathrm{SDW} + R3$ is admissible when $v_R$ is sufficiently negative. We note that in a narrow neighborhood of the $S_1$ curve, the Riemann problem instead has the behavior $R_1+CD+S_3\to S_1+CD+S_3\to S_1+CD+R_3$; in this neighborhood, it also admits the nonclassical configurations $R_1+\mathrm{SDW}$ and $\mathrm{SDW} + R_3$, under the same conditions as other points in this region. 
    \item Region 3: $R_1+CD+S_3 \to R_1+CD+R_3 \to S_1+CD+R_3$ \\
   When the right state lies in this region, the Riemann solution consists of classical waves, the exact types of which depend on the value of $v_R$ as described above. In addition, the nonclassical solution $R_1+\mathrm{SDW}$ is admissible when $v_R$ is sufficiently close to $1$, and $\mathrm{SDW} + R3$ is admissible when $v_R$ is sufficiently negative. We note that in a narrow neighborhood of the $S_1$ curve, the Riemann problem instead has the behavior $R_1+CD+S_3\to S_1+CD+S_3\to S_1+CD+R_3$; in this neighborhood, it also admits the nonclassical configurations $R_1+\mathrm{SDW}$ and $\mathrm{SDW} + R_3$, under the same conditions as other points in this region. 
\item Region 4: $R_1+CD+S_3\to S_1+CD+S_3\to S_1+CD+R_3$ \\
The classical solution type changes from $R_1+CD+S_3$ for large $v_R$ to $S_1+CD+S_3$ (see Figure \ref{fig:region4}) for intermediate $v_R$ and finally to $S_1+CD+R_3$ for very small $v_R$. The Riemann problem also admits the nonclassical configuration $R_1+\mathrm{SDW}$ when $v_R$ is sufficiently big and the nonclassical solution $\mathrm{SDW}+R_3$ when $v_R$ is sufficiently negative.
 \item Region 5: $R_1+CD+S_3\to S_1+CD+S3 \to \text{No Classical}$\\
When the right state lies in this region, the classical Riemann solution initially consists of the wave pattern $R_1+CD+S_3$ for values of $v_R$ sufficiently close to $1$. As $v_R$ decreases, this transitions to the solution type $S_1+CD+S_3$. The admissible solutions for this region also depend on whether $\rho < \rho_L$ or $\rho > \rho_L$ and whether the initial right state is chosen outside of the overcompressive region or inside. When $\rho < \rho_L$ and the initial right state lies inside the overcompressibive region, then both $R_1 + \mathrm{SDW}$ and $\mathrm{SDW}$ are admissible solutions; when $\rho < \rho_L$ and the initial right state lies outside the overcompressive region, then $R_1 + \mathrm{SDW}$ is the only admissible solution; when $\rho > \rho_L$ and the initial right state lies inside the overcompressive region, then $\mathrm{SDW}$ is the only admissible solution; when $\rho > \rho_L$ and the initial right state lies outside the overcompressive region, no classical solutions, $\mathrm{SDW}$, $R_1 + \mathrm{SDW}$, or $\mathrm{SDW} + R_3$ solutions are admissible, hence making undercompressive shadow waves a likely candidate solution. Further investigation of undercompressive shadow waves will be done in future research. 
\item Region 6: $\mathrm{SDW}$ \\
When the right state lies in this region, the Riemann solution consists entirely of a singular delta wave (see Figure \ref{fig:region6}). For $\rho<\rho_L$, the solution instead exhibits $R_1+CD+S_3$ for $v_R$ close to one and then the nonclassical configuration $R_1 + \mathrm{SDW}$ for smaller $v_R$, in addition to $\mathrm{SDW}$.
\item Region 7: $R_1 + CD + S_3 \to S_1 + CD + S_3$ or $R_1 + \mathrm{SDW}$ \\
When the right state lies in this region, the wave structure differs between the upper and lower portions. In the upper portion, the classical Riemann solution initially consists of the wave pattern $R_1+CD+S_3$ for values of $v_R$ sufficiently close to $1$. As $v_R$ decreases, the solution transitions to $S_1+CD+S_3$ and subsequently to $S_1+CD+R_3$. In addition, the Riemann problem also admits the nonclassical solution $R_1+\mathrm{SDW}$ and, for smaller values of $v_R$, the nonclassical solution $\mathrm{SDW}+R_3$. In the lower portion, the Riemann problem admits the nonclassical solutions $R_1+\mathrm{SDW}$ and, for smaller values of $v_R$, $\mathrm{SDW}+R_3$. When restricting to classical wave patterns, the solution consists of $R_1+CD+S_3$ for values of $v_R$ sufficiently close to one and transitions to $S_1+CD+S_3$ as $v_R$ decreases, until the classical solution ceases to exist and is replaced by the nonclassical $R_1+\mathrm{SDW}.$
\item Region 8: $R_1+CD+S_3 \to S_1+CD+S_3 \to S_1 + CD + R_3$ or $R_1+\mathrm{SDW}$ \\
When the right state lies in this region, the classical Riemann solution initially consists of the wave pattern $R_1+CD+S_3$ for values of $v_R$ sufficiently close to one. As $v_R$ decreases, the solution transitions to $S_1+CD+S_3$ and subsequently to $S_1 + CD + R_3$ (see Figures~\ref{fig:region8part1} and \ref{fig:region8part2}). In addition, the Riemann problem also admits the nonclassical solution $R_1+\mathrm{SDW}$ and, for smaller values of $v_R$, $\mathrm{SDW} + R_3$.
\item Region 9: $R_1+CD+S_3\to S_1+CD+R_3$ or $R_1+\mathrm{SDW}$ \\
When the right state lies in this region, the classical Riemann solution has the wave pattern $R_1+CD+S_3$ and transitions to the pattern $S_1+CD+R_3$ for sufficiently small $v_R$. In addition, the Riemann problem also admits the nonclassical solution $R_1+\mathrm{SDW}$ and, for sufficiently small values of $v_R$, $\mathrm{SDW} + R_3$.
\end{itemize}
Figures~\ref{fig:region4} through \ref{fig:region8part2} demonstrate that representative points selected within each distinct region numerically validate the analytic behavior derived above. Specifically, these figures illustrate the spatial-temporal evolution of the variables $\rho$, $u$, and $v$ as a function of the self-similar variable $x/t$. 

As an example, consider the regions where $\rho$ and $u$ experience simultaneous decreases. This behavior provides numerical confirmation that the solution is progressing along the $S_3$ shock/rarefaction curve, in agreement with the analytical predictions. Although we verified all regions to confirm the results, we present only a representative selection of figures illustrating the agreement between the analytical theory and the numerical simulations. 
\begin{figure}[H]
    \centering
    \begin{minipage}[b]{0.48\textwidth}
        \centering
        \includegraphics[width=\linewidth]{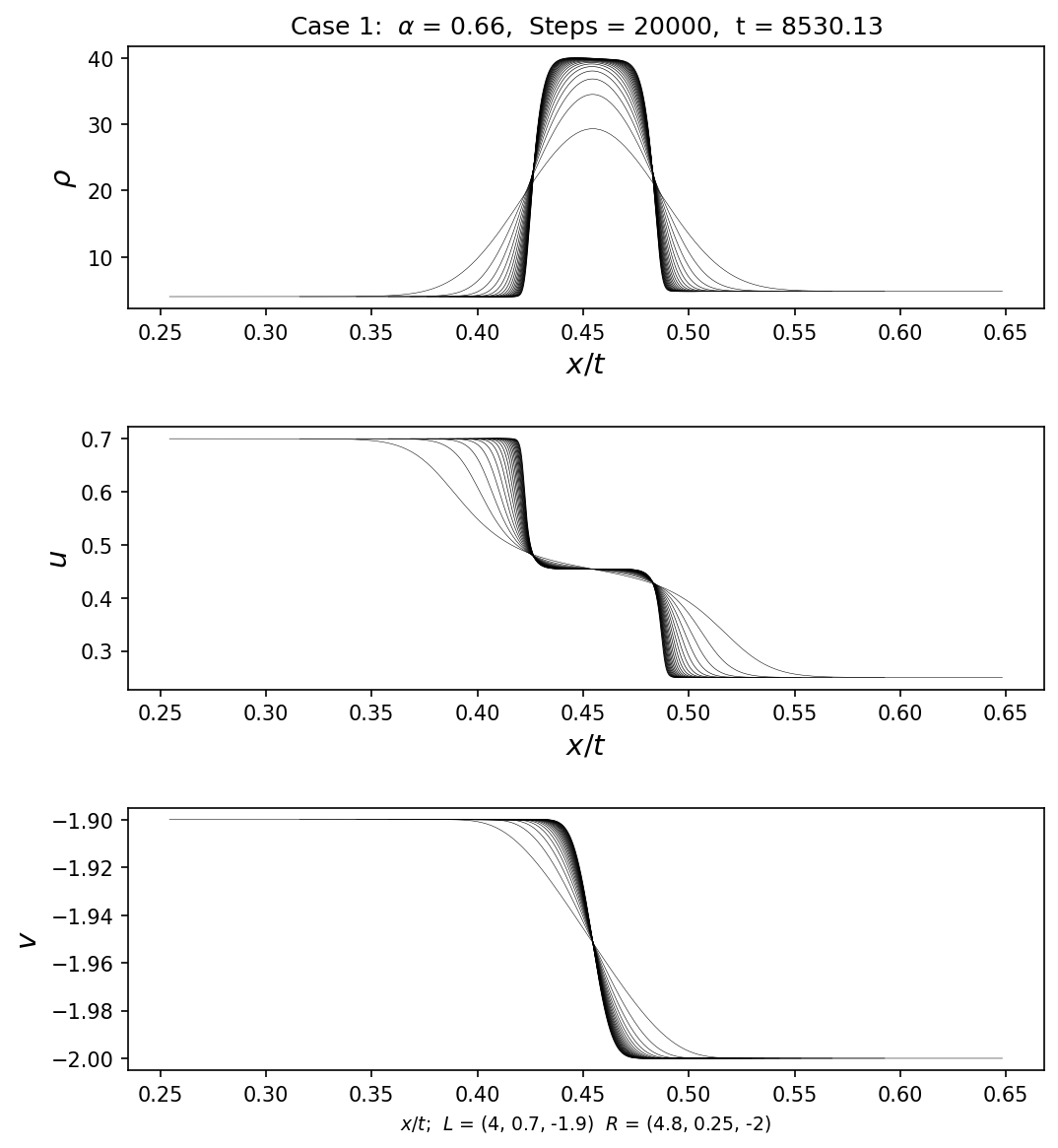}
        \caption{Region 4 in \textbf{Case 1}.}
        \label{fig:region4}
    \end{minipage}
    \hfill
    \begin{minipage}[b]{0.48\textwidth}
        \centering
        \includegraphics[width=\linewidth]{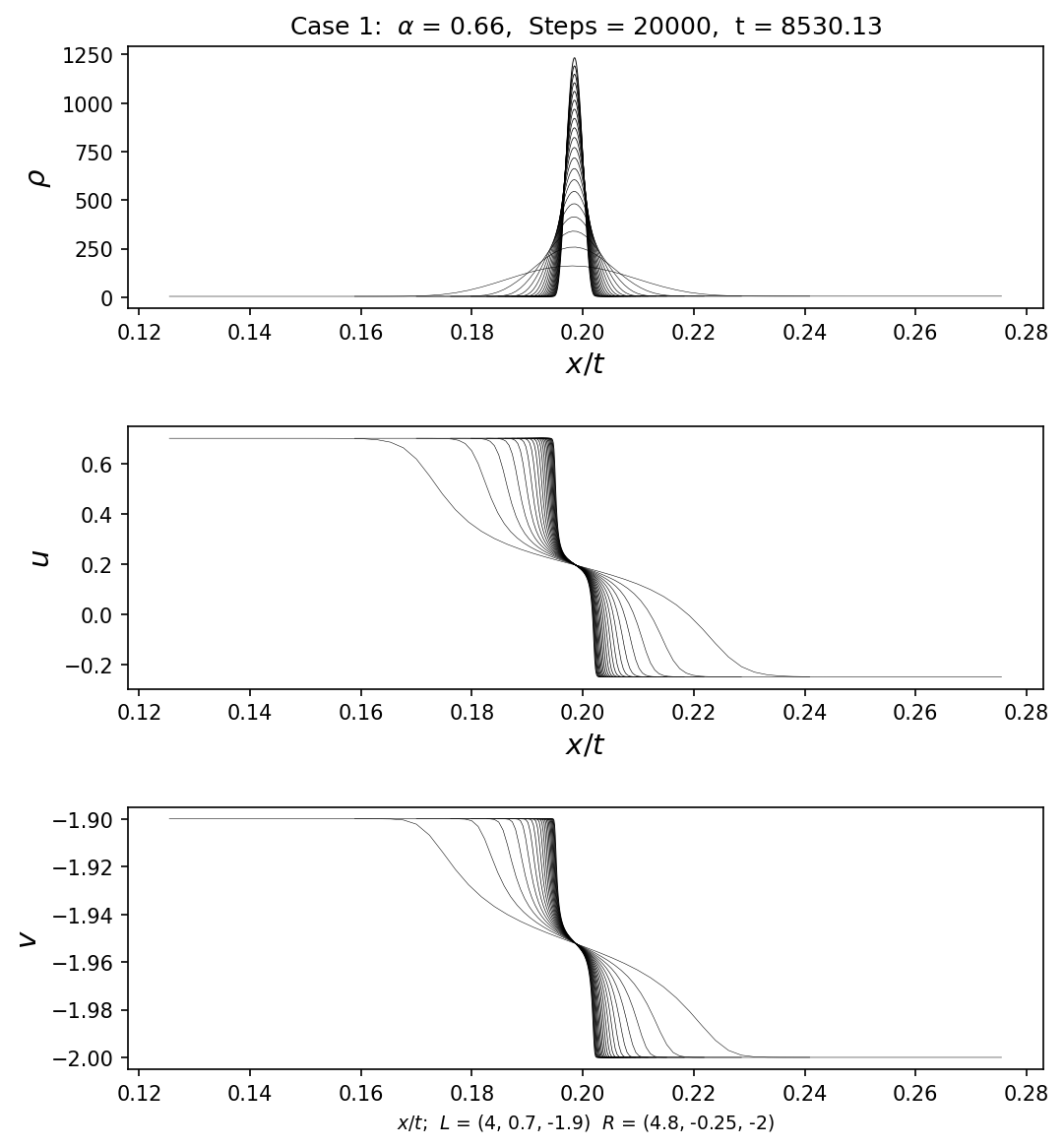}
        \caption{Region 6 in \textbf{Case 1}.}
        \label{fig:region6}
    \end{minipage}
\end{figure}

\begin{figure}[H]
    \centering
    \begin{minipage}[b]{0.48\textwidth}
        \centering
        \includegraphics[width=\linewidth]{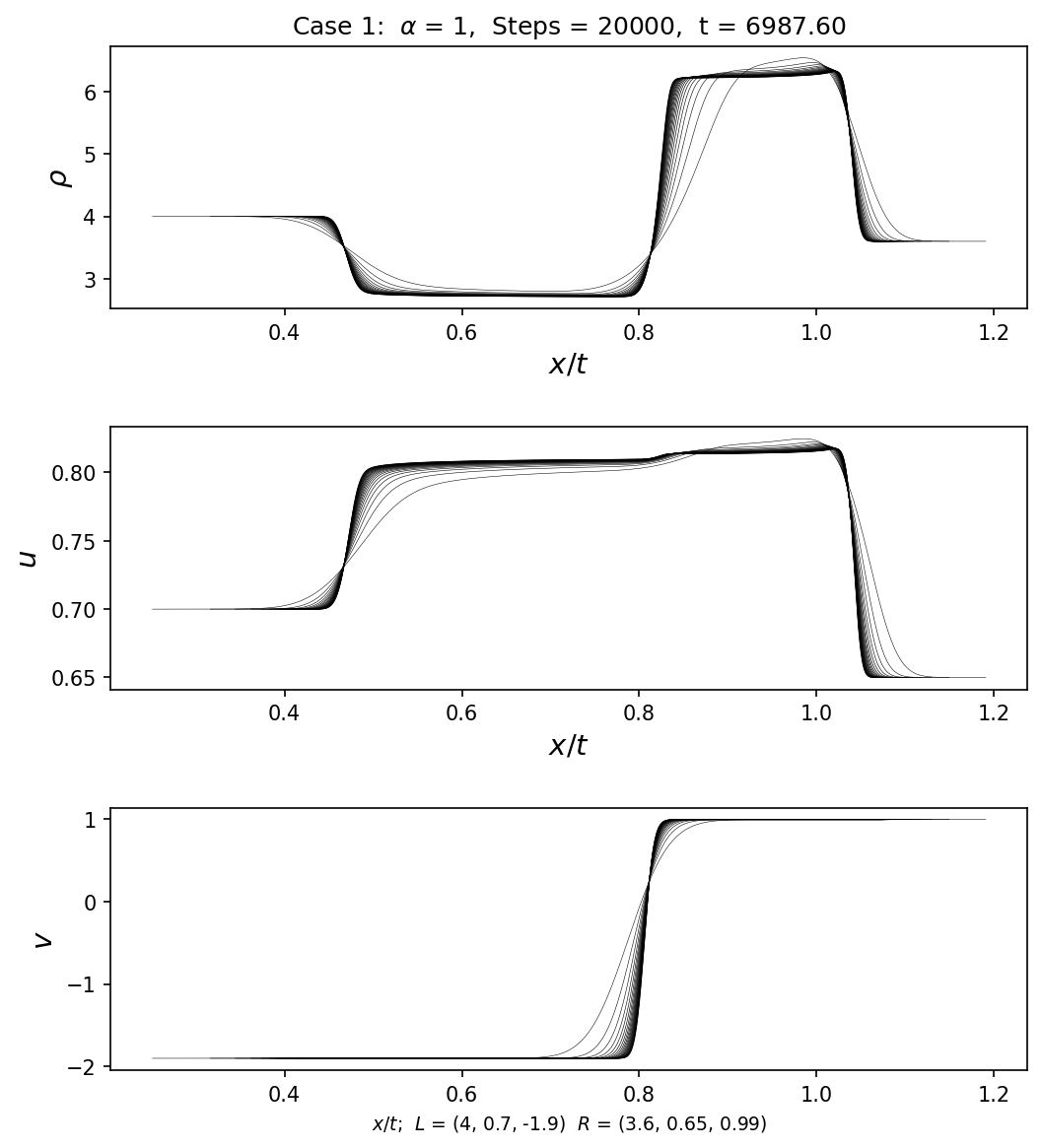}
    \caption{Region 8 with $v_R$ close to 1 in \textbf{Case 1}.}
    \label{fig:region8part1}
    \end{minipage}
    \hfill
    \begin{minipage}[b]{0.48\textwidth}
        \centering
        \includegraphics[width=\linewidth]{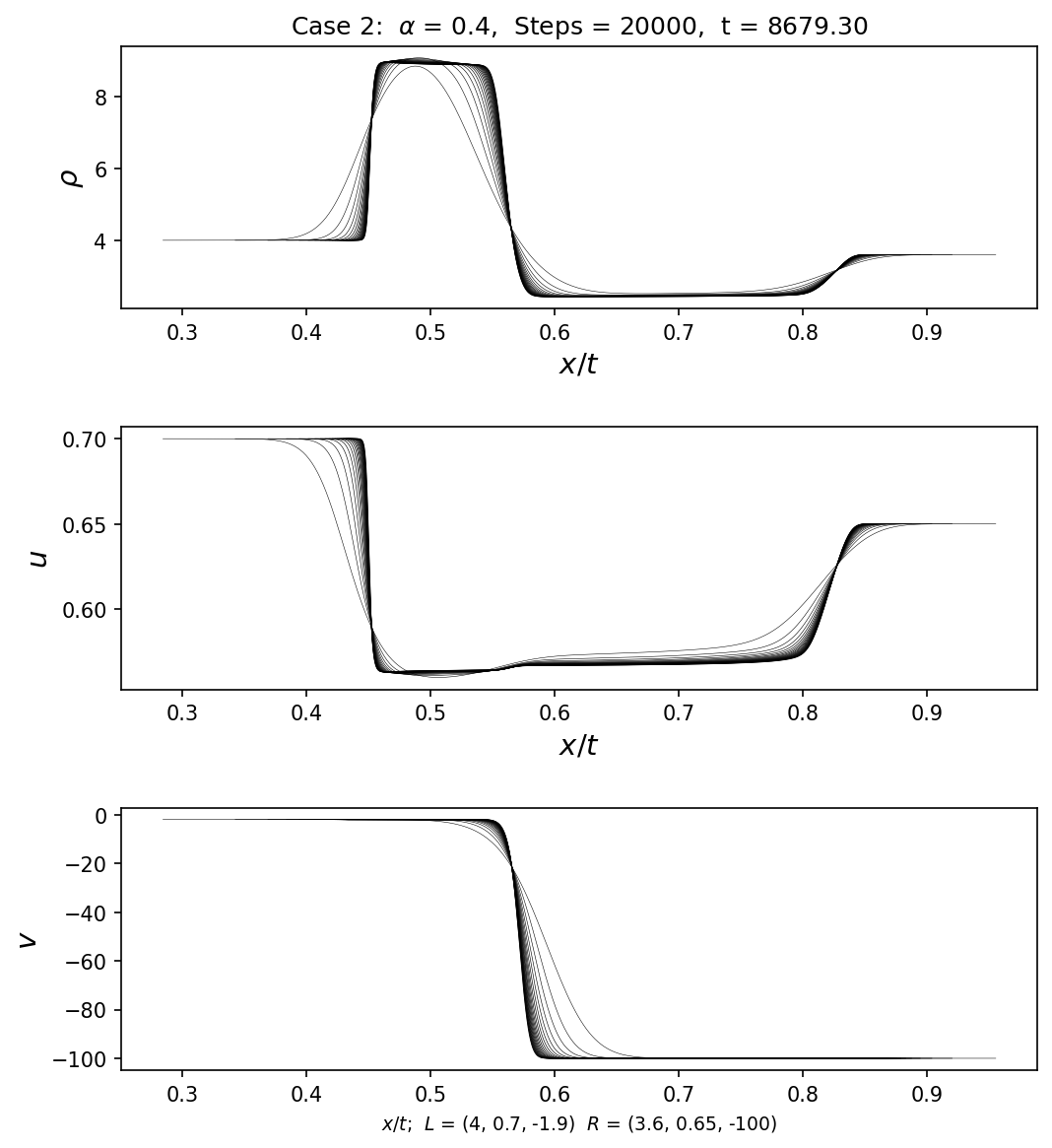}
        \caption{Region 8 with negative $v_R$ in \textbf{Case 1}.}
        \label{fig:region8part2}
    \end{minipage}
\end{figure}

The regions for Case 2 are listed below and shown in Figure \ref{fig:case2}: 

\begin{figure}[H]
    \centering
    \includegraphics[width=0.5\linewidth]{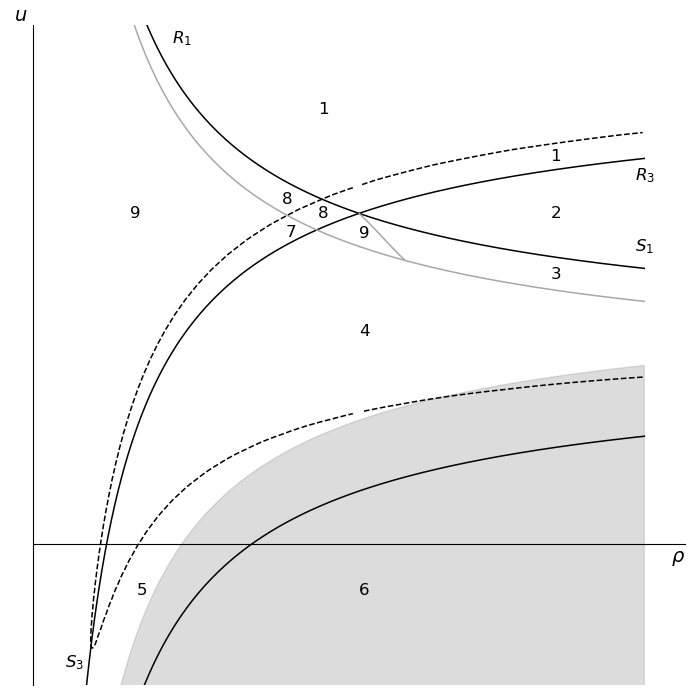}
    \caption[Case 2]{\textbf{Case 2 ($a = 0.66$, $p = 0.146$)} with left state $(4, 0.7, 0.96)$. The dashed black line and lowermost solid black line are defined as in Case 1. The light gray solid lines represent the surfaces $s_{\mathrm{SDW}}(u_L, u) = \lambda_1(u)$ and $s_{\mathrm{SDW}}(u_L, u) = \lambda_1(u_L)$, ordered from left to right.}
    \label{fig:case2}
\end{figure}

\begin{itemize}
    \item Region 1: $R_1 + CD + S_3 \to R_1+CD+R_3 \to S_1 + CD + R_3$ \\
    In this region, the Riemann solution evolves through three distinct classical wave configurations as $v_R$ decreases. For values of $v_R$ sufficiently close to $1$, $R_1 + CD + S_3$ is an admissible solution. As $v_R$ decreases, the solution changes to $R_1 + CD + R_3$ and subsequently to $S_1 + CD + R_3$. We also note that the nonclassical solution $R_1 + \mathrm{SDW}$ is admissible when $v_R$ is sufficiently close to $1$, and $\mathrm{SDW} + R_3$ is admissible when $v_R$ is sufficiently negative. Thus, there are certain intervals of $v_R$ in this region when both classical and nonclassical solutions coexist.
    \item Region 2: $R_1+CD+S_3 \to S_1+CD+R_3$ \\
    In this region, the Riemann solution begins as $R_1+CD+S_3$ for $v_R$ sufficiently close to $1$ and transitions to $S_1+CD+R_3$ solutions as $v_R$ decreases. In addition, for $v_R$ sufficiently close to $1$, the nonclassical solution $R_1 + \mathrm{SDW}$ is also admissible; for $v_R$ sufficiently negative, $\mathrm{SDW} + R_3$ is admissible.
    \item Region 3: $R_1+CD+S_3\to S_1+CD+S_3 \to S_1+CD+R_3$ \\
    In this region, the Riemann solution begins as $R_1+CD+S_3$ for $v_R$ sufficiently close to $1$ and transitions to $S_1+CD+S_3$ and then to $S_1+CD+R_3$ solutions as $v_R$ decreases. In addition, for $v_R$ sufficiently close to $1$, the nonclassical solution $R_1 + \mathrm{SDW}$ is also admissible; for $v_R$ sufficiently small, $\mathrm{SDW} + R_3$ is admissible.
    \item Region 4: $R_1+CD+S_3\to S_1+CD+S_3 \to \text{No Classical}$ \\
     In this region, the Riemann solution begins as $R_1+CD+S_3$ for $v_R$ sufficiently close to $1$ and transitions to $S_1+CD+S_3$ solutions as $v_R$ decreases. We note that for sufficiently small $v_R$, there are no classical solutions; rather, solutions depend on the value of $\rho$. When $\rho < \rho_L$, then $R_1 + \mathrm{SDW}$ is the only admissible solution. When $\rho > \rho_L$, no classical, $\mathrm{SDW}$, $R_1 + \mathrm{SDW}$, or $\mathrm{SDW} + R_3$ solutions are admissible, hence again making undercompressive shadow wave solutions a likely candidate. 
    \item Region 5: $R_1+CD+S_3\to S_1+CD+S_3 \to \text{No Classical}$ \\
    In this region, the Riemann solution begins as $R_1+CD+S_3$ for $v_R$ sufficiently close to $1$ and transitions to $S_1+CD+S_3$ solutions as $v_R$ decreases. We note that for sufficiently small $v_R$, there are no classical solutions; rather, solutions take the form $R_1+\mathrm{SDW}$ or $\mathrm{SDW}$ when $\rho < \rho_L$, and the form $\mathrm{SDW}$ when $\rho > \rho_L$. In addition, we note that when $\rho < \rho_L$ and the point is outside of the overcompressive region, the nonclassical solution $\mathrm{SDW} + R_3$ is also admissible for a short interval of $v_R$ close to $1$.
    \item Region 6: $\mathrm{SDW}$\\
    In this region, the Riemann solution is divided into two sub-regions depending whether $\rho > \rho_L$ or $\rho < \rho_L$. For $\rho>\rho_L$, the solution has the form $\mathrm{SDW}$. For $\rho<\rho_L$, the solution begins as $R_1+CD+S_3$ and transitions to $R_1 + \mathrm{SDW}$ for smaller $v_R$, in addition to $\mathrm{SDW}$.  
    \item Region 7: $R_1 + CD + S_3 \to S_1 + CD + S_3$ or $R_1 + \mathrm{SDW}$\\
    In this region, the Riemann solution is divided into two sub-regions by a threshold density $\rho_0$. For $\rho > \rho_0$, the wave pattern transitions as $R_1+\text{CD}+S_3 \rightarrow S_1+\text{CD}+S_3 \rightarrow S_1+\text{CD}+R_3$ as $v_R$ decreases, along with $R_1 + \mathrm{SDW}$ being an admissible solution for all values of $v_R$ and $\mathrm{SDW} + R_3$ being admissible for sufficiently small values of $v_R$. In contrast, for $\rho < \rho_0$, the sequence shifts to $R_1+\text{CD}+S_3 \rightarrow S_1+\text{CD}+S_3$. As $v_R$ becomes sufficiently small, classical solutions cease to exist. Instead, $R_1 + \mathrm{SDW}$ and, for a short interval of small $v_R$, $\mathrm{SDW} + R_3$ are admissible solutions.
    \item Region 8: $R_1+CD+S_3 \to R_1+CD+R_3 \to S_1+CD+R_3$ or $R_1 + \mathrm{SDW}$ \\
    In this region, the Riemann solution begins with $R_1 + CD + S_3$ for $v_R$ sufficiently close to $1$, then transitions to $R_1 + CD + R_3$ and subsequently to $S_1 + CD + R_3$ as $v_R$ decreases. In addition, the nonclassical solution $R_1+\mathrm{SDW}$ is admissible for all values of $v_R$, and $\mathrm{SDW} + R3$ is admissible when $v_R$ is sufficiently negative.  
    \item Region 9: $R_1+CD+S_3 \to S_1+CD+S_3 \to S_1+CD+R_3$ or $R_1 + \mathrm{SDW}$ \\
    In this region, the Riemann solution exhibits $R_1+CD+S_3$ for $v_R$ values close to $1$, then transitions to $S_1 + CD + S_3$ and subsequently to $S_1 + CD + R_3$ for decreasing $v_R$. In addition, the nonclassical solution $R_1 + \mathrm{SDW}$ is admissible for all values of $v_R$, and $\mathrm{SDW} + R_3$ is admissible for $v_R$ smaller than a certain threshold value.
\end{itemize}
We note that the location of the right state relative to the overcompressive region depends on the chosen value of $v_R$. For values of $v_R$ for which the right state lies inside the overcompressive region, the admissible solution is either an $R_1+\mathrm{SDW}$ or an isolated $\mathrm{SDW}$. If the right state initially lies outside the overcompressive region, then, as $v_R$ is varied, it may pass through a region where an admissible solution has the form $\mathrm{SDW}+R_3$, typically for smaller values of $v_R$.

\section{Fully Linearly Degenerate Case: $\alpha=1$} \label{sec:analytic}

We now consider the case $\alpha=1$. As noted in Section~\ref{sec:classical}, all three characteristic families are linearly degenerate. As before, let $U_-=(\rho_-,u_-,v_-)^T$ denote a fixed left state for the system. Applying the Rankine--Hugoniot conditions yields the following contact discontinuity curves:
\begin{align}
    CD_1&: \begin{cases}
        v=v_- \\
        u=u_-+\sqrt{A(v_-)}\Big(\frac{1}{\rho}-\frac{1}{\rho_-}\Big)
    \end{cases} \\
    CD_2&: \begin{cases}
        A(v)=\frac{\rho A(v_-)}{\rho_-} \\
        u=u_-
    \end{cases} \\
    CD_3&: \begin{cases}
        v=v_- \\
        u=u_--\sqrt{A(v_-)}\Big(\frac{1}{\rho}-\frac{1}{\rho_-}\Big)
    \end{cases}
\end{align}
We seek a self-similar solution consisting of three contact discontinuities connecting a given left state $U_-$ to a given right state $U_+$. A direct calculation shows that such a solution exists if and only if
\begin{equation}
     u_++\frac{\sqrt{A(v_+)}}{\rho_+}>u_--\frac{\sqrt{A(v_-)}}{\rho_-} \label{conditions}
\end{equation}
holds. In this case, the unique solution is given by
\begin{equation}
    (\rho, u, v)(x, t)=\begin{cases}
        (\rho_L, u_L, v_L), \gap-\infty<\frac{x}{t}<\tau_1 \\
        (\rho_{M_1}, u_{M_1}, v_{M_1}), \gap \tau_1<\frac{x}{t}<\tau_2 \\
        (\rho_{M_2}, u_{M_2}, v_{M_2}), \gap\tau_2<\frac{x}{t}<\tau_3 \\
        (\rho_R, u_R, v_R), \gap \tau_3<\frac{x}{t}<\infty.
    \end{cases}
\end{equation}
The propagation speeds of the three contact discontinuities are given by \begin{align}
    \tau_1= u_L-\frac{\sqrt{A(v_L)}}{\rho_L},\ \ 
    \tau_2=u_{M_1}, \ \
    \tau_3=u_R+\frac{\sqrt{A(v_R)}}{\rho_R}.
\end{align}
The intermediate states are
\begin{equation}
    \begin{cases}
        \rho_{M_1}=\cfrac{\sqrt{A(v_L)}+\frac{A(v_L)}{\sqrt{A(v_R)}}}{u_R-u_L+\frac{\sqrt{A(v_R)}}{\rho_R}+\frac{\sqrt{A(v_L)}}{\rho_L}} \\[4ex]
        u_{M_1}=u_L-\frac{\sqrt{A(v_L)}}{\rho_L}+ \cfrac{\sqrt{A(v_L)}\left(u_R-u_L+{\frac{\sqrt{A(v_R)}}{\rho_R}}+\frac{\sqrt{A(v_L)}}{\rho_L}\right)}{\sqrt{A(v_L)}+\frac{A(v_L)}{\sqrt{A(v_R)}}}\\[2.5ex]
        v_{M_1}= v_L
    \end{cases}
\end{equation}
and
\begin{equation}
    \begin{cases}
        \rho_{M_2}=\cfrac{A(v_R)\left(\sqrt{A(v_L)}+\frac{A(v_L)}{\sqrt{A(v_R)}}\right)}{A(v_L)\left(u_R-u_L+\frac{\sqrt{A(v_R)}}{\rho_R}+\frac{\sqrt{A(v_L)}}{\rho_L}\right)} \\[4ex]
        u_{M_2}=u_L-\frac{\sqrt{A(v_L)}}{\rho_L}+ \cfrac{\sqrt{A(v_L)}\left(u_R-u_L+{\frac{\sqrt{A(v_R)}}{\rho_R}}+\frac{\sqrt{A(v_L)}}{\rho_L}\right)}{\sqrt{A(v_L)}+\frac{A(v_L)}{\sqrt{A(v_R)}}}\\[2.5ex]
        v_{M_2}= v_R
    \end{cases}
\end{equation}
Therefore, condition~\eqref{conditions} completely characterizes the region of right states for which the Riemann problem admits a unique classical solution consisting of three contact discontinuities.
\subsection{Overcompressive Delta Shocks} \label{analytic_singular}
Following the calculations from Section \ref{sec:singular}, one can verify that admissible $\delta$-wave speeds satisfy
\begin{equation}
\frac{[\rho]}{2}x^2(t)
-[\rho u]\,t\,x(t)
+\frac{1}{2}
\left[\rho u^2-\frac{A(v)}{\rho}\right]t^2
=0.
\end{equation}
If $[\rho]=0$, this equation is linear in $\frac{x(t)}{t}$ and has the unique solution 
\begin{equation}
    x(t)=\cfrac{[\rho u^2-\frac{A(v)}{\rho}]}{2[\rho u]}t=:\sigma_0 t
\end{equation}
If $[\rho]\neq0$, then the equation is quadratic in $\frac{x(t)}{t}$ with discriminant
\begin{equation}
    \Delta:=[\rho u]^2-[\rho][\rho u^2-\frac{A(v)}{\rho}]
\end{equation}
and solutions 
\begin{equation}
    x(t)=\frac{[\rho u]\pm\sqrt{\Delta}}{[\rho]}t=:\sigma_\pm t
\end{equation}
If a $\delta$-wave with speed $\sigma$ satisfies the overcompressibility condition
\begin{equation} \label{overcompression}
    u_--\frac{\sqrt{A(v_-)}}{\rho_-}=\lambda_1(U_-)\geq \sigma \geq \lambda_3(U_+)=u_++\frac{\sqrt{A(v_+)}}{\rho_+}
\end{equation}
then the corresponding $\delta$-wave is an admissible overcompressive $\delta$-shock. We now characterize the regions of state space in which the above $\delta$-waves satisfy the overcompressibility condition. For convenience, introduce the abbreviations
\begin{align}
    c_-:=\frac{\sqrt{A(v_-)}}{\rho_-},\ \
    c_+:=\frac{\sqrt{A(v_+)}}{\rho_+},\ \
    L:=c_+[\rho]-\rho_-[u], \ \
    U:=-c_-[\rho]-\rho_+[u].
\end{align}
\subsubsection{Delta Shocks with Speed $\sigma_+$} \label{analytic_singular_plus}
Assume first that $[\rho]\neq0$ and that the corresponding $\delta$-shock wave travels at speed $\sigma_+ =\frac{[\rho u]+\sqrt{\Delta}}{[\rho]}$. The first inequality in \eqref{overcompression} implies
\begin{align*}
    u_--c_-&\geq \frac{\rho_+u_+-\rho_- u_-}{\rho_+ -\rho_-}+\frac{\sqrt{\Delta}}{[\rho]} \\
    u_--c_-&\geq u_-+\rho_+\frac{[u]}{[\rho]}+\frac{\sqrt{\Delta}}{[\rho]}\\
    -c_-&\geq \rho_+\frac{[u]}{[\rho]}+\frac{\sqrt{\Delta}}{[\rho]}
\end{align*}
Similarly, the second overcompressive inequality implies
\begin{align*}
    u_++c_+&\leq \frac{\rho_+u_+-\rho_- u_-}{\rho_+ -\rho_-}+\frac{\sqrt{\Delta}}{[\rho]} \\
    u_++c_+&\leq u_++\rho_-\frac{[u]}{[\rho]}+\frac{\sqrt{\Delta}}{[\rho]} \\
    c_+&\leq \rho_-\frac{[u]}{[\rho]}+\frac{\sqrt{\Delta}}{[\rho]}
\end{align*} 
Combining these inequalities yields
\begin{align}
   c_+-\rho_-\frac{[u]}{[\rho]}&\leq\frac{\sqrt{\Delta}}{[\rho]}\leq -c_--\rho_+\frac{[u]}{[\rho]}
\end{align}
Suppose first that $[\rho]>0$. Since
$\sqrt{\Delta}/[\rho]\ge0$, the upper bound implies
\begin{equation}
-c_--\rho_+\frac{[u]}{[\rho]}\ge0,
\end{equation}
and therefore $[u]<0$. Recalling the definitions
\begin{equation}
L=c_+[\rho]-\rho_-[u],
\qquad
U=-c_-[\rho]-\rho_+[u],
\end{equation}
multiplying by $[\rho]>0$ gives $L\le\sqrt{\Delta}\le U.$ In this case, $L>0$ and $U>0$, so the overcompressibility condition is equivalent to
\begin{equation}
L^2\le\Delta\le U^2.
\label{Delta_Ineq_Plus}
\end{equation}
Define $X:=[u]+c_++c_-.$ A direct computation shows that \eqref{Delta_Ineq_Plus} is equivalent to
\begin{align}
    \begin{cases}
        X(X-2c_+)\geq 0 \\
        X(X-2c_-)\geq0
    \end{cases}
\end{align}
which, given that $[u]<0$, has the unique solution $X\leq0$, which is equivalent to 
\begin{align} \label{overcomp_plus}
    u_++\frac{\sqrt{A(v_+)}}{\rho_+}\leq u_--\frac{\sqrt{A(v_-)}}{\rho_-}
\end{align}
If instead $[\rho]<0$, the same computation yields the same condition. Consequently, whenever \eqref{overcomp_plus} holds, the Riemann problem admits an overcompressive $\delta$-shock with
\begin{align}
    \begin{cases}
        x(t)=t\sigma_+
        \\w(t)=t\sqrt{\Delta}
        \\v_\delta(t)=\cfrac{\sigma_+[\rho v]-[\rho u v]}{\sqrt{\Delta}}
        \\u_\delta(t)=\sigma_+
    \end{cases}
\end{align}
where the values $w(t)$, $v_\delta$, and $u_\delta(t)$ are derived from system \eqref{delta_ODEs} by a procedure analogous to Section \ref{sec:singular}. 
\subsubsection{Delta Shocks with Speed $\sigma_-$} \label{analytic_singular_minus}
Assume again that $[\rho]\neq0$, and consider the $\delta$-wave with speed $\sigma_-=x(t)=\frac{[\rho u]-\sqrt{\Delta}}{[\rho]}t$. Inequalities \eqref{overcompression} imply
\begin{align}\label{overcomp_minus_one}
    -c_-&\geq \rho_+\frac{[u]}{[\rho]}-\frac{\sqrt{\Delta}}{[\rho]}\\
    \label{overcomp_minus_two}
    c_+&\leq \rho_-\frac{[u]}{[\rho]}-\frac{\sqrt{\Delta}}{[\rho]}
\end{align}
Suppose first that $[\rho]>0$. Then, because the left-hand side of \eqref{overcomp_minus_two} is nonnegative, it follows that $\frac{[u]}{[\rho]}>0$, which implies $[u]>0$. This implies
\begin{align}
    u_++\frac{\sqrt{A(v_+)}}{\rho_+}>u_--\frac{\sqrt{A(v_-)}}{\rho_-}
\end{align}
contradicting the overcompressibility condition. Similarly, if $[\rho]<0$, we have $[u]>0$ by reasoning similar to above, again leading to the same contradiction. Thus, no admissible overcompressive $\delta$-shock can propagate with speed $\sigma_-$.

\subsubsection{Delta Shocks with Speed $\sigma_0$} \label{analytic_singular_null}
Finally, suppose $[\rho]=0$ and $[u]\neq0$. The unique $\delta$-wave speed is $\sigma_0=\cfrac{[\rho u^2-\frac{A(v)}{\rho}]}{2[\rho u]}$. Inequalities \eqref{overcompression} imply
\begin{align*}
    \sigma_0-\lambda_1(U_-)&=\cfrac{[\rho u^2-\frac{A(v)}{\rho}]-2[\rho u]u_-+2[\rho u]c_-}{2[\rho u]}\\
    &=\cfrac{\big([u]+c_--c_+\big)\big([u]+c_-+c_+\big)}{2[u]} \leq 0
\end{align*}
and
\begin{align*}
    \sigma_0-\lambda_3(U_+)&=\cfrac{[\rho u^2-\frac{A(v)}{\rho}]-2[\rho u]u_+-2[\rho u]c_+}{2[\rho u]}\\
    &=-\cfrac{\big([u]-c_-+c_+\big)\big([u]+c_-+c_+\big)}{2[u]}\geq 0
\end{align*}
Note first that $[u]>0$ is impossible because this contradicts \eqref{overcompression}, so consider the case $[u]<0$. Then we have the system
\begin{align}
    \begin{cases}
        X(X-2c_+)\geq0 \\
        X(X-2c_-)\geq 0
    \end{cases}
\end{align}
which has the sole solution $X\leq 0$; therefore, in the case $[\rho]=0$, there exists an overcompressive $\delta$-shock if and only if \eqref{overcomp_plus} is satisfied, and this $\delta$-shock is given by 
\begin{align}
    \begin{cases}
        x(t)=\sigma_0 t
        \\w(t)=-\rho[u]t
        \\v_\delta(t)=v_++\cfrac{[v]}{[u]}(u_--\sigma_0)
        \\u_\delta(t)=\sigma_0
    \end{cases}
\end{align}
where the values $w(t)$, $v_\delta$, and $u_\delta(t)$ are again derived from system \eqref{delta_ODEs} by following a similar line of reasoning to Section \ref{sec:singular}.
Thus, if the initial data satisfy \eqref{overcomp_plus}, the solution to the Riemann problem is solvable via a single $\delta$-shock wave
\begin{equation}
\left\{
\begin{aligned}
\rho(x,t)
=
\begin{cases}
\rho_-, & x<x(t),\\[1mm]
w(t)\,\delta\bigl(x-x(t)\bigr), & x=x(t),\\[1mm]
\rho_+, & x>x(t),
\end{cases}
\\[4mm]
(u,v)(x,t)
=
\begin{cases}
(u_-,v_-), & x<x(t),\\[1mm]
(u_+,v_+), & x>x(t)
\end{cases}
\end{aligned}
\right.
\end{equation}
\subsection{Solution of the Riemann Problem}
Combining the results of the previous subsections yields the following complete characterization of the Riemann problem.

\begin{theorem}
Suppose $\alpha=1$. If $u_++\frac{\sqrt{A(v_+)}}{\rho_+}>
u_--\frac{\sqrt{A(v_-)}}{\rho_-},$
then the unique solution to the Riemann problem consists of three contact discontinuities. If $u_++\frac{\sqrt{A(v_+)}}{\rho_+}
\le
u_--\frac{\sqrt{A(v_-)}}{\rho_-},$
then the unique solution to the Riemann problem is the overcompressive $\delta$-shock constructed in Section~\ref{analytic_singular_plus} (or Section~\ref{analytic_singular_null} when $[\rho]=0$).
\end{theorem}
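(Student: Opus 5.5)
The theorem combines two dichotomous results from this section, and the plan is to check that they partition the space of Riemann data and that uniqueness holds within each class. Set $\Lambda := (u_--c_-)-(u_++c_+) = \lambda_1(U_-)-\lambda_3(U_+)$, with $c_\pm=\sqrt{A(v_\pm)}/\rho_\pm$ as in Section~\ref{analytic_singular}. Condition \eqref{conditions} is exactly $\Lambda<0$. The overcompressibility criterion \eqref{overcomp_plus} is exactly $\Lambda\ge 0$. The two hypotheses of the theorem are therefore complementary, and the proof splits into these two cases.

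\emph{Case $\Lambda<0$.} We first show existence and uniqueness of the classical solution. We solve $U_{M_1}\in CD_1(U_L)$, $U_{M_2}\in CD_3^{-1}(U_R)$ and $U_{M_2}\in CD_2(U_{M_1})$. This forces $u_{M_1}=u_{M_2}$ and $\rho_{M_2}=\rho_{M_1}A(v_R)/A(v_L)$. Eliminating variables leaves a single linear equation in $1/\rho_{M_1}$, namely
\[
\frac{\sqrt{A(v_L)}+A(v_L)/\sqrt{A(v_R)}}{\rho_{M_1}} = u_R-u_L+c_R+c_L = -\Lambda .
\]
It has a unique solution, and that solution is positive precisely when $\Lambda<0$. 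Its value reproduces the displayed intermediate states. We then verify the speed ordering $\tau_1<\tau_2<\tau_3$. Using $u_{M_1}=\tau_1+\sqrt{A(v_L)}/\rho_{M_1}$, we get $\tau_2-\tau_1=\sqrt{A(v_L)}/\rho_{M_1}>0$. The analogous identity on the right, with $\rho_{M_2}$, gives $\tau_3-\tau_2=\sqrt{A(v_R)}/\rho_{M_2}>0$. Next, we exclude a $\delta$-shock in this case. Sections~\ref{analytic_singular_plus}--\ref{analytic_singular_null} show that any overcompressive $\delta$-shock (speed $\sigma_+$ or $\sigma_0$) forces $\Lambda\ge0$, and that speed $\sigma_-$ is never admissible. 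Hence no admissible singular solution exists. We would also note that no combination of a singular wave with contact discontinuities can occur. A $\delta$-shock adjacent to a linearly degenerate wave of a family $i\in\{1,3\}$ would need $\lambda_1(U_-)\ge c\ge\lambda_3(U_+)$ together with $c$ equal to a characteristic speed of the neighbouring constant state, in the right order. We would argue this is incompatible, in the same way as the corresponding exclusions ($CD+\mathrm{SDW}$, etc.) stated for $0<\alpha<1$.

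\emph{Case $\Lambda\ge0$.} The linear equation above now gives $\rho_{M_1}\le 0$ (or no solution when $\Lambda=0$), so no classical three-contact solution exists. Existence of the overcompressive $\delta$-shock follows directly from Section~\ref{analytic_singular_plus} when $[\rho]\neq0$, and from Section~\ref{analytic_singular_null} when $[\rho]=0$. For $[\rho]=0$ we also need $[u]\ne0$; this holds since $[\rho]=0$ and $\Lambda\ge0$ give $[u]\le -c_--c_+<0$. Uniqueness requires two checks. First, the speed is unique: $\sigma_-$ is excluded by Section~\ref{analytic_singular_minus}, and when $[\rho]=0$ the speed $\sigma_0$ is the only root. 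Second, the weights $w, u_\delta, v_\delta$ are then uniquely determined by the generalized Rankine--Hugoniot system \eqref{delta_ODEs} with $w(0)=0$. We also need $\Delta\ge0$ so that $\sigma_+$ is real; this follows from $L^2\le\Delta$ in \eqref{Delta_Ineq_Plus}.

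The main obstacle is the uniqueness claim. It is really uniqueness within the class of self-similar solutions built from contact discontinuities and a single $\delta$-shock, and it requires ruling out mixed patterns such as $CD_1+\delta$ or $\delta+CD_3$. My approach is to impose overcompressivity at the interface state, for example $\lambda_1(U_M)\ge c\ge\lambda_3(U_R)$ with $c=\lambda_1(U_M)$ for $CD_1+\delta$. Since all families are linearly degenerate, $\lambda_1$ is constant along $CD_1$. I would then show that the condition collapses to $\Lambda\ge0$ with $U_M=U_L$, which makes the pattern degenerate to the pure $\delta$-shock. If that computation resists, I would state uniqueness explicitly within the classes of pure three-contact solutions and single overcompressive $\delta$-shocks.
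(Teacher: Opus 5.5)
Your proposal is correct and follows the paper's route. The theorem is simply the combination of the three-contact criterion \eqref{conditions} with the overcompressibility analysis of Sections~\ref{analytic_singular_plus}--\ref{analytic_singular_null}, which are complementary in the sign of $\lambda_1(U_-)-\lambda_3(U_+)$; your added checks only make explicit what the paper leaves implicit. These are the speed ordering and the exclusion of mixed patterns, and the latter actually force $c=\lambda_1(U_L)$ or $c=\lambda_3(U_R)$, so the contact merges with the $\delta$-shock rather than giving $U_M=U_L$. One small fix: $L^2\le\Delta$ yields $\Delta\ge0$ only when $[\rho]>0$ (for $[\rho]<0$ the relevant bound is $U^2\le\Delta$), but more simply $\Lambda\ge0$ gives $\Delta=\rho_+\rho_-[u]^2+[\rho]\bigl[A(v)/\rho\bigr]\ge\bigl(\sqrt{A(v_+)}+\sqrt{A(v_-)}\bigr)^2>0$ directly.
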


\section{Dafermos Regularization and Blow-Up Analysis} \label{sec:GSPT}
In this section, we establish the existence of singular profiles corresponding to isolated overcompressive $\delta$-shocks (or, equivalently, shadow waves). We do not consider composite Riemann solutions that involve both classical and singular waves. We consider a Dafermos regularization (to preserve self-similarity) of the system and rewrite the resulting equations as a first-order dynamical system. We then use a spherical blow-up on the hypersphere $S^3$ to analyze the dynamics near the singularity and construct the desired connecting orbit.
\subsection{Dafermos Regularization}
Following Dafermos \cite{Dafermos}, we consider the self-similar viscous regularization of system \eqref{cons1}--\eqref{cons3}. 
\begin{equation}
    \begin{cases}
        \rho_t + (\rho u)_x = \eps t\rho_{xx}, \\\displaystyle
        (\rho u)_t + \left(\rho u^2 - \frac{A(v)}{\rho^\alpha}\right)_x = \eps t(\rho u)_{xx}, \\\displaystyle
        (\rho v)_t + (\rho u v)_x = \eps t(\rho v)_{xx}.
    \end{cases} \label{GSPT_sys}
\end{equation}
Introducing the self-similar variable $\xi = \frac{x}{t}$ and seeking solutions depending only on $\xi,$ we obtain $$
U_x=\frac{1}{t}U_\xi,\qquad
U_{xx}=\frac{1}{t^2}U_{\xi\xi},\qquad
U_t=-\frac{\xi}{t}U_\xi.
$$ Substituting these expressions into \eqref{GSPT_sys} yields 
\begin{equation}
\begin{cases}\displaystyle
    -\xi\rho_\xi+(\rho u)_\xi=\epsilon \rho_{\xi\xi}, \\\displaystyle
    -\xi(\rho u)_\xi+\left(\rho u^2-\frac{A(v)}{\rho^{\alpha}}\right)_\xi=\epsilon (\rho u)_{\xi\xi}, \\\displaystyle
    -\xi(\rho v)_\xi+(\rho u v)_\xi=\epsilon (\rho v)_{\xi\xi}.
\end{cases}\label{eq:selfsimilar}
\end{equation}
The left and right states of the Riemann problem are
$U_L=(\rho_L,u_L,v_L),\ U_R=(\rho_R,u_R,v_R),$
and we seek a profile satisfying
$$\lim_{\xi\to-\infty}(\rho,u,v)(\xi)=U_L,
\qquad \lim_{\xi\to+\infty}(\rho,u,v)(\xi)=U_R.$$
To rewrite \eqref{eq:selfsimilar} as a first-order system, we introduce
\begin{equation}
    V = \begin{pmatrix}
        v_1 \\ v_2 \\ v_3
    \end{pmatrix} = \begin{pmatrix}
        \eps \frac{d\rho}{d\xi} \\ \eps \frac{d}{d\xi} (\rho u) \\ \eps \frac{d}{d\xi} (\rho v)
    \end{pmatrix}= \eps\frac{d}{d\xi}\begin{pmatrix}
    \rho \\ \rho u\\\rho v \end{pmatrix}
\end{equation}
and shift the self-similar variable by setting $\theta = \xi - c$
where $c$ denotes the speed of the singular shock (i.e. $\sigma_+$ or $\sigma_0$ depending on the value of $[\rho]$, see Section \ref{sec:singular}). This shift centers the profile on the $\delta$-shock, allowing us to study the singular behavior of the system more easily. With the additional variables $m = \rho u, \ n = \rho v,$
the flux takes the form $F = \begin{pmatrix}
    m \\ \frac{m^2}{\rho} - \frac{A(\frac{n}{\rho})}{\rho^\alpha} \\ \frac{mn}{\rho}
\end{pmatrix},$ which is essentially $G$ but with the change of variables. The regularized system \begin{equation}
    \begin{cases}\displaystyle
        \rho_t + m_x = \eps t \rho_{xx}, \\\displaystyle
        m_t + \left(\frac{m^2}{\rho} - A\left(\frac{n}{\rho}\right) \frac{1}{\rho^\alpha}\right)_x = \eps t m_{xx}, \\\displaystyle
        n_t + \left(\frac{mn}{\rho}\right)_x = \eps t n_{xx}.
    \end{cases}
\end{equation} becomes 
\begin{align}\label{GSPT_slow_sys}
        \epsilon\frac{dH}{d\theta}=V, \ \ \
        \epsilon\frac{dV}{d\theta}=(DF-\xi I)V, \ \ \
        \frac{d\xi}{d\theta}=1.
\end{align}
\subsection{Fast and Slow Systems}
We define the fast time variable
$\tau=\frac{\theta}{\varepsilon}.$
For $\varepsilon>0$, the slow system \eqref{GSPT_slow_sys} is equivalent to
\begin{align} \label{GSPT_fast_sys}
        \frac{dH}{d\tau}=V, \ \ \
        \frac{dV}{d\tau}=(DF-\xi I)V, \ \ \
        \frac{d\xi}{d\tau}=\epsilon.
\end{align}
When $\varepsilon=0$, the set
$S=\{(H,V,\xi):V=0\}$
is a four-dimensional manifold of equilibria. The linearization at points of $S$ has four zero eigenvectors tangent to $S$, while the three normal eigenvalues are 
$\lambda_i(H)-\xi,\ i=1,2,3,$
where $\lambda_i(H)$ are the characteristic speeds of the original system. Thus $S$ is normally hyperbolic away from the surfaces $\xi=\lambda_i(H).$ We define
$S_0=\{(H,V,\xi):V=0,\ \xi<\lambda_1(H)\},$
and
$S_2=\{(H,V,\xi):V=0,\ \xi>\lambda_3(H)\}.$
On $S_0$, all nonzero eigenvalues are positive, while on $S_2$, all nonzero eigenvalues are negative. Therefore, $S_0$ and $S_2$ are normally hyperbolic invariant manifolds. Consequently, for sufficiently small $\varepsilon>0$, they admits locally invariant perturbations together with their associated local stable and unstable manifolds.

For the fixed left state $H_L$, we define
$S_0(H_L)=\{(H,V,\xi):H=H_L,V=0,\xi<\lambda_1(H_L)\}.$
For sufficiently small $\varepsilon$, Fenichel theory yields a
four-dimensional unstable manifold
$\mathcal W_\varepsilon^u(S_0(H_L))$,
which is a smooth perturbation of
$\mathcal W_0^u(S_0(H_L))
=\Bigl\{(H,V,\xi):H\in\Omega_\xi,V=V(H),
\xi<\lambda_1(H_L)\Bigr\},$
where $\Omega_\xi$ is an open neighborhood of $H_L$ depending on $\xi$, and $V(H)$ is determined by the fast system \eqref{GSPT_fast_sys}.
Similarly, for the fixed right state $H_R$, we define
$S_2(H_R)=\{(H,V,\xi):H=H_R,\;V=0,\;\xi>\lambda_3(H_R)\}.$
For sufficiently small $\varepsilon$, Fenichel theory yields a
four-dimensional stable manifold
$\mathcal W_\varepsilon^s(S_2(H_R))$,
which is a smooth perturbation of
$\mathcal W_0^s(S_2(H_R))
=\Bigl\{(H,V,\xi):H\in\Omega_\xi,\;V=V(H),\;
\xi>\lambda_3(H_R)\Bigr\}.$
Since $\varepsilon$ is treated as a state variable in the blown-up
system, we also use the extended manifolds
\begin{equation}
\widehat{\mathcal W}_L^u
=\bigcup_{0\le\varepsilon\le\varepsilon_0}
\mathcal W_\varepsilon^u(S_0(H_L)),\qquad\widehat{\mathcal W}_R^s
=\bigcup_{0\le\varepsilon\le\varepsilon_0}\mathcal W_\varepsilon^s(S_2(H_R)).
\label{eq:extended-outer-manifolds}
\end{equation}
Each is a five-dimensional invariant manifold. Its intersection with a
fixed level set $\{\varepsilon=\mathrm{constant}\}$ is the corresponding
four-dimensional stable or unstable manifold.
\subsection{Blow-Up Transformation}
Following \cite{REU2025GSPT,Sc}, we introduce
$W=F(H)-\xi H-V,$
where $W=(w_1,w_2,w_3)^T$. This change of variables, inspired by the Rankine--Hugoniot relation, yields an equivalent first-order formulation of the self-similar system that is convenient for the subsequent blow-up analysis. Treating \(\varepsilon\) as an additional state variable, system \eqref{GSPT_fast_sys} becomes
\begin{equation}
    \frac{dH}{d\tau} = (F(H) - \xi H) - W, \ \ \ 
        \frac{dW}{d\tau} = -\epsilon H, \ \ \ 
        \frac{d\xi}{d\tau} = \epsilon, \ \ \
        \frac{d\epsilon}{d\tau} = 0.
\end{equation}
Expressed in the variables
$(\rho,u,v,w_1,w_2,w_3,\xi,\varepsilon)$,
this system is
\begin{equation}
    \begin{cases}
    \displaystyle
        \frac{d\rho}{d\tau} = \rho u - \xi \rho  - w_1, \\[1.5ex]\displaystyle
        \frac{du}{d\tau} = \frac{uw_1 - w_2}{\rho} - \frac{A(v)}{\rho^{\alpha + 1}}, \\[1.5ex]\displaystyle
        \frac{dv}{d\tau} = \frac{vw_1 - w_3}{\rho}, \\[1.5ex]\displaystyle
        \frac{dw_1}{d\tau} = -\eps \rho, \\[1.5ex]\displaystyle
        \frac{dw_2}{d\tau} = -\epsilon \rho u, \\[1.5ex]\displaystyle
        \frac{dw_3}{d\tau} = -\eps \rho v, \\[1.5ex]\displaystyle
        \frac{d\xi}{d\tau} = \eps, \\[1.5ex]\displaystyle
        \frac{d\eps}{d\tau} = 0.
    \end{cases} \label{GSPT_diffeq_sys}
\end{equation}
At $\varepsilon=0$, equations $\frac{dw_1}{d\tau}=0, \ \frac{dw_2}{d\tau}=0, \ \frac{dw_3}{d\tau}=0, \ \frac{d\xi}{d\tau}=0$
show that the reduced unstable manifold associated with the left state
has the local representation
\begin{equation}
\mathcal W_0^u(S_0(H_L))=\left\{\left(H,\,F(H_L)-\xi H_L,\,\xi,\,0\right):H\in\Omega_\xi\right\}.
\label{eq:left-outer-manifold-explicit}
\end{equation}
Consequently, its tangent space is generated by the three independent
$H$-variation vectors $E_j^L=\left(e_j,\,0,\,0,\,0\right),$ $\ j=1,2,3,$
and the equilibrium-line vector $E_\xi^L=\left(0,\,-H_L,\,1,\,0\right).$
Here the entries are ordered as $(H,W,\xi,\varepsilon)$.
The extended manifold has one additional tangent vector obtained by
variation of $\varepsilon$. Similarly,
\begin{equation}
\mathcal W_0^s(S_2(H_R))=\left\{\left(H,\,F(H_R)-\xi H_R,\,\xi,\,0\right):H\in\Omega_\xi\right\},
\label{eq:right-outer-manifold-explicit}
\end{equation}
with the corresponding tangent generators $E_j^R=(e_j,0,0,0),\ E_\xi^R=(0,-H_R,1,0).$

Our objective is to construct an orbit of the blown-up system joining the unstable manifold associated with the left state to the stable manifold associated with the right state. Such an orbit is a heteroclinic connection satisfying $H(\tau)\to H_L$ as $\tau\to-\infty$ and $H(\tau)\to H_R$ as $\tau\to+\infty$, and therefore corresponds to a self-similar viscous profile of the Dafermos regularization.

The analysis of the singular shock in Section~\ref{sec:singular} shows that the density along the singular profile satisfies
$\rho=O(\varepsilon^{-1})$ as $\varepsilon\to0^+,$ therefore, we introduce the rescaled density $\rho_0=\varepsilon\rho,$ which remains bounded in the singular limit. In terms of the variables $(\rho_0,u,v,w_1,w_2,w_3,\xi,\varepsilon)$,
system \eqref{GSPT_diffeq_sys} becomes
\begin{equation}
    \begin{cases}
    \displaystyle
        \frac{d\rho_0}{d\tau} = \rho_0 u - \xi \rho_0  - \eps w_1, \\[1.5ex]\displaystyle
        \frac{du}{d\tau} = \frac{(uw_1 - w_2)\eps}{\rho_0} - \frac{A(v)\eps^{\alpha + 1}}{\rho_0^{\alpha + 1}}, \\[1.5ex]\displaystyle
        \frac{dv}{d\tau} = \frac{(vw_1 - w_3)\eps}{\rho_0}, \\[1.5ex]\displaystyle
        \frac{dw_1}{d\tau} = -\rho_0, \\[1.5ex]\displaystyle
        \frac{dw_2}{d\tau} = -\rho_0 u, \\[1.5ex]\displaystyle
        \frac{dw_3}{d\tau} = -\rho_0 v, \\[1.5ex]\displaystyle
        \frac{d\xi}{d\tau} = \eps, \\[1.5ex]\displaystyle
        \frac{d\eps}{d\tau} = 0.
    \end{cases} \label{GSPT_eps_sys}
\end{equation}
The introduction of the variable $\rho_0$ removes the unbounded growth of the density and allows the vector field to be desingularized by multiplying the equations by a suitable power of $\rho_0$. Nevertheless, the resulting system has a nonhyperbolic equilibrium set at
$(\rho_0,\varepsilon)=(0,0),$
where the linearization contains additional zero eigenvalues. Consequently, the standard invariant manifold theory cannot be applied directly in this region. To resolve this loss of normal hyperbolicity, we perform a spherical blow-up following \cite{Sc},
\begin{equation}
    \rho_0 = r\overline{\rho}, \quad u - \xi = r\overline{u}, \quad v - \frac{k_3}{k_1} = r\overline{v}, \quad \sqrt{\eps} = r\overline{\eps}
\end{equation}
such that they lie on the hypersphere $S^3$, i.e. $    \overline{\rho}^2 + \overline{u}^2 + \overline{v}^2 + \overline{\eps}^2 = 1 \label{spherical}.$ The shift in the variables $u$ and $v$ centers the blow-up at the singular shock, where $u=u_\delta = \frac{k_2}{k_1} = \xi$ and $v=v_\delta = \frac{k_3}{k_1}$, respectively.

The blow-up replaces the singular point $(\rho_0,\varepsilon)=(0,0)$ by the hypersphere $S^3$, thereby magnifying the neighborhood of the concentrated shock layer and separating the different directions along which trajectories approach the singularity. This transformation resolves the singularity and enables analysis of the internal structure of the viscous profile via the associated reduced dynamical system. The reduced system obtained by setting $r=0$ describes the leading-order dynamics of the inner viscous profile corresponding to the singular shock.

The blow-up separates the analysis into two distinct parts. Away from $r=0$, the dynamics are governed by the persistent unstable and stable manifolds $\mathcal W_\varepsilon^u(S_0(H_L))$ and
$\mathcal W_\varepsilon^s(S_2(H_R))$, whose existence follows from
Fenichel theory. The role of the blow-up is solely to resolve the
loss of normal hyperbolicity occurring as $\rho\rightarrow\infty$, corresponding to $(\rho_0,\varepsilon)=(0,0)$. The local dynamics in this neighborhood are analyzed in three directional charts. The corresponding transition maps describe the passage between the incoming and outgoing outer trajectories through the blown-up neighborhood.

Substituting the blow-up transformation into system \eqref{GSPT_eps_sys} and differentiating with respect to $\tau$ yields
\begin{equation}
\renewcommand{\arraystretch}{1.5}
\begin{cases}
\displaystyle
\frac{d\overline{\rho}}{d\tau}=-\frac{\overline{\rho}}{r}\frac{dr}{d\tau}
+r\overline{\rho}\,\overline{u}-r\overline{\eps}^{2}w_{1},
\\\displaystyle
\frac{d\overline{u}}{d\tau}=-\frac{\overline{u}}{r}\frac{dr}{d\tau}
+\frac{\big[(\xi+r\overline{u})w_{1}-w_{2}\big]\overline{\eps}^{2}}{\overline{\rho}}-\frac{A\!\left(\frac{k_{3}}{k_{1}}+r\overline{v}\right)r^{\alpha}\overline{\eps}^{2\alpha+2}}{\overline{\rho}^{\alpha+1}}-r\overline{\eps}^{2},\\
\displaystyle
\frac{d\overline{v}}{d\tau}=
-\frac{\overline{v}}{r}\frac{dr}{d\tau}
+\frac{\big[\left(\frac{k_{3}}{k_{1}}+r\overline{v}\right)w_{1}-w_{3}\big]\overline{\eps}^{2}}{\overline{\rho}},
\\
\displaystyle
\frac{d\overline{\eps}}{d\tau}=-\frac{\overline{\eps}}{r}\frac{dr}{d\tau}.
\end{cases}
\end{equation}
By \eqref{spherical}, the derivatives of $\overline{\rho}$, $\overline{u}$, $\overline{v}$, $\overline{\eps}$ should satisfy
$2\overline{\rho} \cdot \frac{d\overline{\rho}}{d\tau} + 2\overline{u} \cdot \frac{d\overline{u}}{d\tau} + 2\overline{v} \cdot \frac{d\overline{v}}{d\tau} + 2\overline{\eps} \cdot \frac{d\overline{\eps}}{d\tau} = 0$
allowing us to solve for $\frac{dr}{d\tau}$ explicitly. After doing so, we have the following system of equations, in which $r$ appears as a dependent variable
\begin{align} \label{general_GSPT_long}
\left\{
\begin{aligned}
\frac{dr}{d\tau}
&= r^2\bar{\rho}^2\bar{u}-r^2\bar{\rho}\bar{\eps}^2w_1
+\frac{r\bar{u}\bar{\eps}^2[(\xi+r\bar{u})w_1-w_2]}{\bar{\rho}}
-\frac{r^{\alpha+1}\bar{u}\bar{\eps}^{2\alpha+2}A(\frac{k_3}{k_1}+r\bar{v})}{\bar{\rho}^{\alpha+1}}
\\
&\qquad
-r^2\bar{\eps}^2\bar{u}
+\frac{r\bar{v}\bar{\eps}^2[(\frac{k_3}{k_1}+r\bar{v})w_1-w_3]}{\bar{\rho}},
\\[0.7ex]
\frac{d\bar{\rho}}{d\tau}
&=-r\bar{\rho}^3\bar{u}
+r\bar{\rho}^2\bar{\eps}^2w_1
-\bar{u}\bar{\eps}^2[(\xi+r\bar{u})w_1-w_2]
+\frac{r^\alpha\bar{u}\bar{\eps}^{2\alpha+2}A(\frac{k_3}{k_1}+r\bar{v})}{\bar{\rho}^\alpha}
\\
&\qquad
+\bar{\rho}r\bar{\eps}^2\bar{u}
-\bar{\eps}^2\bar{v}\!\left[(\tfrac{k_3}{k_1}+r\bar{v})w_1-w_3\right]
+r\bar{\rho}\bar{u}-r\bar{\eps}^2w_1,
\\[0.7ex]
\frac{d\bar{u}}{d\tau}
&=-r\bar{u}^2\bar{\rho}^2
+r\bar{u}\bar{\rho}\bar{\eps}^2w_1
-\frac{\bar{u}^2\bar{\eps}^2[(\xi+r\bar{u})w_1-w_2]}{\bar{\rho}}
+\frac{r^\alpha\bar{u}^2\bar{\eps}^{2\alpha+2}A(\frac{k_3}{k_1}+r\bar{v})}{\bar{\rho}^{\alpha+1}}
+r\bar{\eps}^2\bar{u}^2
\\
&\qquad
-\frac{\bar{u}\bar{v}\bar{\eps}^2[(\frac{k_3}{k_1}+r\bar{v})w_1-w_3]}{\bar{\rho}}
+\frac{\bar{\eps}^2[(\xi+r\bar{u})w_1-w_2]}{\bar{\rho}}
-\frac{r^\alpha\bar{\eps}^{2\alpha+2}A(\frac{k_3}{k_1}+r\bar{v})}{\bar{\rho}^{\alpha+1}}
-r\bar{\eps}^2,
\\[0.7ex]
\frac{d\bar{v}}{d\tau}
&=-r\bar{v}\bar{\rho}^2\bar{u}
+r\bar{v}\bar{\rho}\bar{\eps}^2w_1
-\frac{\bar{v}\bar{u}\bar{\eps}^2[(\xi+r\bar{u})w_1-w_2]}{\bar{\rho}}
+\frac{r^\alpha\bar{u}\bar{v}\bar{\eps}^{2\alpha+2}A(\frac{k_3}{k_1}+r\bar{v})}{\bar{\rho}^{\alpha+1}}
+r\bar{v}\bar{\eps}^2\bar{u}
\\
&\qquad
-\frac{\bar{\eps}^2\bar{v}^2[(\frac{k_3}{k_1}+r\bar{v})w_1-w_3]}{\bar{\rho}}
+\frac{\bar{\eps}^2[(\frac{k_3}{k_1}+r\bar{v})w_1-w_3]}{\bar{\rho}},
\\[0.7ex]
\frac{d\bar{\eps}}{d\tau}
&=-r\bar{\rho}^2\bar{u}\bar{\eps}
+r\bar{\rho}\bar{\eps}^3w_1
-\frac{\bar{u}\bar{\eps}^3[(\xi+r\bar{u})w_1-w_2]}{\bar{\rho}}
+\frac{r^\alpha\bar{u}\bar{\eps}^{2\alpha+3}A(\frac{k_3}{k_1}+r\bar{v})}{\bar{\rho}^{\alpha+1}}
+r\bar{\eps}^3\bar{u}
\\
&\qquad
-\frac{\bar{\eps}^3\bar{v}[(\frac{k_3}{k_1}+r\bar{v})w_1-w_3]}{\bar{\rho}},
\\[0.7ex]
\
\frac{dw_1}{d\tau}
&=-r\bar{\rho},
\qquad
\frac{dw_2}{d\tau}
=-r\bar{\rho}(\xi+r\bar{u}),
\\
\frac{dw_3}{d\tau}
&=-r\bar{\rho}\left(\frac{k_3}{k_1}+r\bar{v}\right),
\qquad
\frac{d\xi}{d\tau}
=r^2\bar{\eps}^2.
\end{aligned}
\right.
\end{align}
We next identify the points on the blown-up boundary selected by the
left and right reduced outer trajectories. Define
\begin{equation}
D_L:=cw_{1,L}-w_{2,L}=-B_L,
\qquad
\Gamma_L:=v_{\delta}w_{1,L}-w_{3,L},
\qquad v_{\delta}=\frac{k_3}{k_1} \qquad
R_L:=\sqrt{D_L^2+\Gamma_L^2},
\label{eq:left-direction-constants}
\end{equation}
where
\begin{equation}
W_L=(w_{1,L},w_{2,L},w_{3,L})^T
=
F(H_L)-cH_L.
\end{equation}
Similarly, define
\begin{equation}
D_R:=cw_{1,R}-w_{2,R}=-B_R,
\qquad
\Gamma_R:=v_{\delta}w_{1,R}-w_{3,R},
\qquad v_{\delta}=\frac{k_3}{k_1}\qquad
R_R:=\sqrt{D_R^2+\Gamma_R^2},
\label{eq:right-direction-constants}
\end{equation}
where
\begin{equation}
W_R=(w_{1,R},w_{2,R},w_{3,R})^T
=
F(H_R)-cH_R.
\end{equation}
\begin{lemma}[Generalized Rankine--Hugoniot relation for $W$]
\label{lem:outer-W-RH}
Let
$$
W_L=F(H_L)-cH_L,
\qquad
W_R=F(H_R)-cH_R.
$$
Then the generalized Rankine--Hugoniot conditions imply
\begin{equation}
W_R-W_L
=
-k_1
\begin{pmatrix}
1\\
c\\
v_\delta
\end{pmatrix},
\label{eq:outer-W-RH}
\end{equation}
where $k_1=c[\rho]-[\rho u]$ is the mass accumulation rate of the singular shock, see Section \ref{sec:singular}. Moreover, 
$D_L=D_R,
\ \ 
\Gamma_L=\Gamma_R$
\end{lemma}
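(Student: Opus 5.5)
The plan is to compute $W_R-W_L=(F(H_R)-F(H_L))-c(H_R-H_L)$ componentwise and compare it with the relations \eqref{rel_ki} from Section~\ref{sec:singular}. Since $H=(\rho,\rho u,\rho v)^T$ and $F$ is the flux $G$ written in the conserved variables, we have $F(H_R)-F(H_L)=[G]$ and $H_R-H_L=[H]$. Hence
\[
W_R-W_L=\begin{pmatrix}[\rho u]-c[\rho]\\ \left[\rho u^2-\frac{A(v)}{\rho^\alpha}\right]-c[\rho u]\\ [\rho uv]-c[\rho v]\end{pmatrix}.
\]

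The three components are then identified in turn. The first component is $-k_1$ by the definition $k_1=c[\rho]-[\rho u]$. For the second, \eqref{rel_ki} gives $k_2=c[\rho u]-[\rho u^2-A(v)/\rho^\alpha]$ together with $k_2=ck_1$, so the component equals $-ck_1$. This step is exactly where we use that $c$ is a root of \eqref{shadow_speed_eq}, i.e.\ $c=\sigma_+$ or $\sigma_0$. For the third, $k_3=c[\rho v]-[\rho uv]$, and $v_\delta=k_3/k_1$ by definition, provided $k_1\neq0$. This gives $-k_3=-k_1v_\delta$, and \eqref{eq:outer-W-RH} follows.

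For the second assertion, subtract the left and right definitions:
\[
D_R-D_L=c(w_{1,R}-w_{1,L})-(w_{2,R}-w_{2,L})=c(-k_1)-(-ck_1)=0,
\]
\[
\Gamma_R-\Gamma_L=v_\delta(-k_1)-(-k_1v_\delta)=0.
\]
Consequently $R_L=R_R$ as well.

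The only point requiring care is the nondegeneracy $k_1\neq0$, which makes $v_\delta=k_3/k_1$ well defined. For the overcompressive shock we have $k_1=\sqrt{\Delta}$ when $[\rho]\neq0$, and $\Delta>0$ must hold. One can argue that $\Delta=0$ would force $\sigma_+$ to be a classical Rankine--Hugoniot speed with zero mass, which is excluded for a genuinely singular wave. In the case $[\rho]=0$ we have $k_1=-[\rho u]=-\rho[u]>0$, since $[u]<0$ there. Apart from this, the argument is a direct algebraic substitution.
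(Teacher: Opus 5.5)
Your proposal is correct and follows the paper's proof essentially line by line. You expand $W_R-W_L=[F(H)]-c[H]$ componentwise, identify the entries as $-k_1$, $-k_2=-ck_1$ and $-k_3=-v_\delta k_1$, and subtract to get $D_R=D_L$ and $\Gamma_R=\Gamma_L$. Your extra check that $k_1\neq0$ is not made in the paper, and it can be done more cleanly than with the ``zero mass'' heuristic: write $k_1=c[\rho]-[\rho u]=\rho_R(c-u_R)+\rho_L(u_L-c)$, which is strictly positive in both cases $[\rho]\neq0$ and $[\rho]=0$, since overcompressivity gives $u_R<\lambda_3(U_R)\le c\le\lambda_1(U_L)<u_L$.
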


\begin{proof}
By definition, $W_R-W_L
=
[F(H)]-c[H].$
Hence
\begin{equation}
W_R-W_L
=
\begin{pmatrix}
[\rho u]-c[\rho]\\[1mm]
\left[\rho u^2-\dfrac{A(v)}{\rho^\alpha}\right]
-c[\rho u]\\[2mm]
[\rho uv]-c[\rho v]
\end{pmatrix}.
\end{equation}
The generalized Rankine--Hugoniot conditions give
$
k_1=c[\rho]-[\rho u],
\
k_2=ck_1,
\
k_3=v_\delta k_1.
$ Therefore,
\begin{equation}
[\rho u]-c[\rho]=-k_1,
\end{equation}
\begin{equation}
\left[\rho u^2-\frac{A(v)}{\rho^\alpha}\right]
-c[\rho u]
=
-k_2
=
-ck_1,
\end{equation}
and
\begin{equation}
[\rho uv]-c[\rho v]
=
-k_3
=
-v_\delta k_1.
\end{equation}
This proves \eqref{eq:outer-W-RH}. Writing \eqref{eq:outer-W-RH} componentwise gives
\begin{equation}
w_{1,R}-w_{1,L}=-k_1, \ w_{2,R}-w_{2,L}=-ck_1, \ w_{3,R}-w_{3,L}=-v_\delta k_1.
\end{equation}
Consequently,
\begin{align}
D_R-D_L=
c(w_{1,R}-w_{1,L})
-
(w_{2,R}-w_{2,L})=
-c k_1+c k_1=
0,
\end{align}
and similarly,
\begin{align}
\Gamma_R-\Gamma_L=
v_\delta(w_{1,R}-w_{1,L})
-
(w_{3,R}-w_{3,L})=
-v_\delta k_1+v_\delta k_1=
0.
\end{align}
Thus $D_L=D_R,
\ \Gamma_L=\Gamma_R.$
\end{proof}
\subsection{Outline of the geometric construction}
The construction of the viscous profile naturally separates into two parts. We first construct a singular concatenated orbit of the reduced blown-up problem corresponding to $\varepsilon=0$. It consists of three pieces:
\begin{enumerate}
\item a left reduced outer orbit issuing from the equilibrium manifold associated with the left state $H_L$ and approaching the blown-up boundary, where $\rho\to+\infty$, $u\to c$, and $v\to v_\delta$;
\item   an explicit reduced orbit lying on the blown-up boundary and connecting the limiting point selected by the left reduced outer orbit to the limiting point selected by the right reduced outer orbit;
\item a right reduced outer orbit leaving the singular region and converging to the equilibrium manifold associated with the right state $H_R$.
\end{enumerate}

The left and right reduced outer orbits are obtained from the reduced system corresponding to $\varepsilon=0$, for which $\xi=c$ and $W$ remains constant along trajectories. The middle orbit is obtained from the reduced dynamics on the blown-up boundary. By Lemma~\ref{lem:outer-W-RH}, the left and right reduced outer trajectories approach limiting directions on the blown-up boundary that share the same values of the invariants $D:=cw_1-w_2$ and $\Gamma:=v_{\delta}w_1-w_3$. By the weighted blow-down relations, the asymptotic limits of the reduced outer trajectories are expressed in the weighted coordinates $(\beta,y,h)$, where they converge to the incoming and outgoing equilibria of the weighted corner system. The remaining mismatch is encoded solely by the scalar quantity $k_L-k_R$, which is determined by the middle matching condition. Consequently, the reduced middle orbit provides the continuation of the left reduced outer trajectory to the right reduced outer trajectory through the blown-up boundary. The construction of the reduced outer orbits is first carried out for $0<\alpha<1$. The endpoint case $\alpha=1$ requires a separate argument because some of the strict-concavity properties used become affine.

For positive $\varepsilon$, the outer orbit segments persist up to
fixed sections away from the singular corner by Fenichel theory and
smooth dependence of the flow. The entry and exit charts resolve the
approach to and departure from the blown-up boundary. The distinguished
middle orbit is constructed in the inner scaling and represented in the
positive-density chart $K_\rho$. The central singular orbit is nonhyperbolic in the positive-density coordinates. Consequently, its positive-$\varepsilon$ persistence cannot be obtained from ordinary transverse heteroclinic persistence. A weighted central-corner analysis is required to identify incoming and outgoing invariant curves, and determine the associated tangent spaces to resolve the two endpoint transitions and provide the setting for the application of Schecter's Corner Lemma.

Persistence will be established for $\alpha\in\mathbb{Q}\cap(0,1].$ For $\alpha=\frac12$ and $\alpha=1$, the weighted central-corner
vector field is smooth without any additional change of variables.
For a general rational exponent, smoothness is restored by the
ramified weighted coordinate introduced in
Subsection~\ref{subsubsec:weighted-central-corner}. The outer-to-corner transversality conditions are verified in
Subsection~\ref{subsubsec:outer-corner-transversality}. Schecter's
Corner Lemma then propagates the incoming and outgoing manifolds
through the two corner neighborhoods. Finally, the nondegeneracy of
the scalar middle matching condition permits the two propagated
manifolds to be matched for sufficiently small positive
$\varepsilon$.
\begin{figure}[H]
    \centering
    \begin{minipage}[b]{0.48\textwidth}
        \centering
    \includegraphics[width=\linewidth]{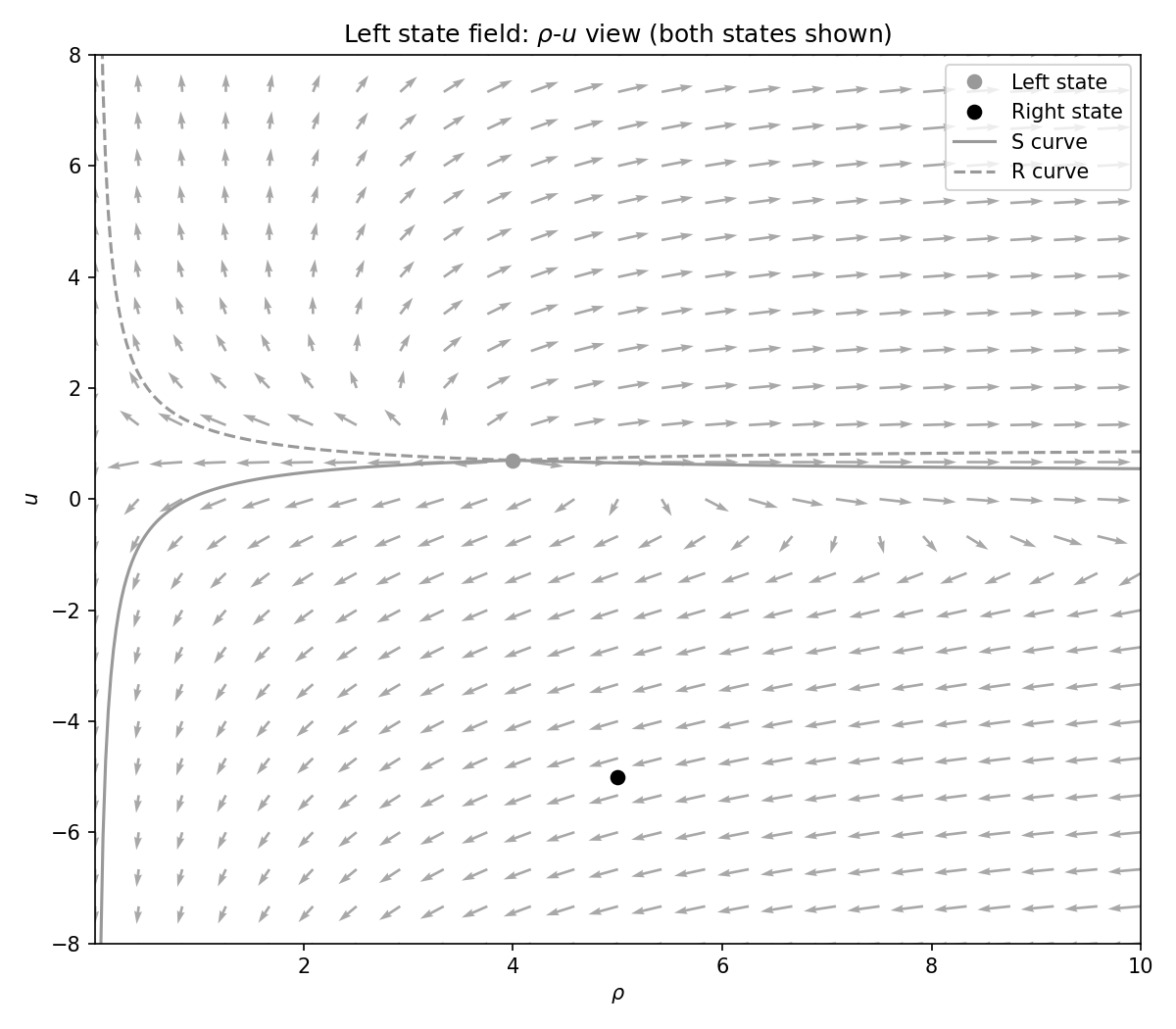}
    \caption{Left state invariant region of Region 6, Case 1: $(u_L,  \rho_L, v_L) = (0.7, 4, -1.9)$.}
    \label{fig:combinedleft}
    \end{minipage}
    \hfill
    \begin{minipage}[b]{0.48\textwidth}
        \centering
        \includegraphics[width=\linewidth]{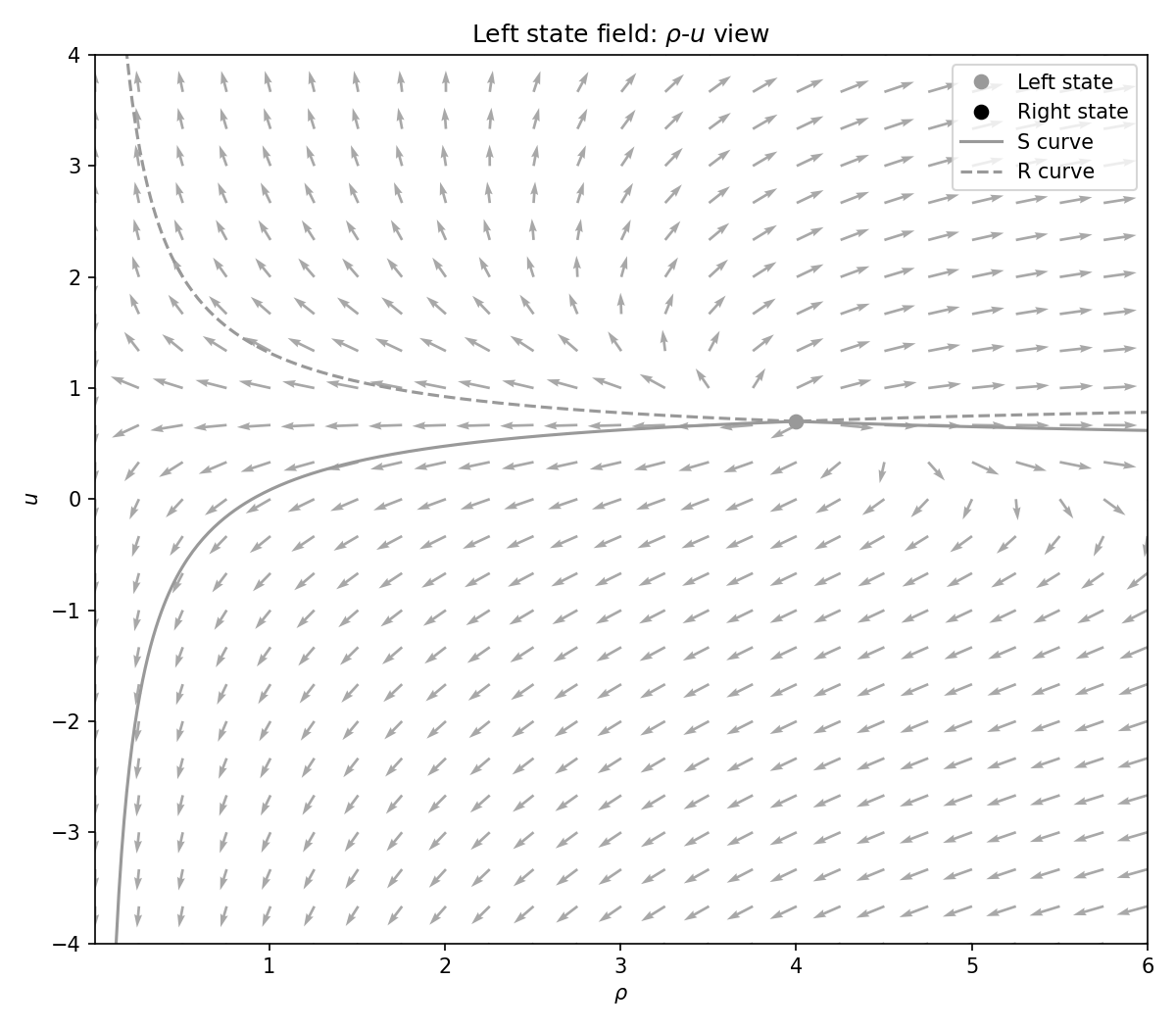}
        \caption{Zoom view of left state invariant region of Case 1.}
        \label{fig:zoomleft}
    \end{minipage}
\end{figure}
\begin{lemma}
\label{lem:sign_xiw}

Assume that the singular-shock speed $c$ satisfies the stronger condition:
\begin{equation}
u_R+\sqrt{\frac{A(v_R)}{\rho_R^{\alpha+1}}}<c<
u_L-\sqrt{\frac{A(v_L)}{\rho_L^{\alpha+1}}}.
\label{eq:overcompressive-speed}
\end{equation}
Since $0<\alpha\le1$, this condition implies the overcompressivity
inequalities
$\lambda_3(H_R)<c<\lambda_1(H_L)$.
Let $W_L=F(H_L)-cH_L, \ W_R=F(H_R)-cH_R,$ and define $D_L=cw_{1,L}-w_{2,L}, \ D_R=cw_{1,R}-w_{2,R}.$ Then $D_L=-\rho_L(u_L-c)^2
+\frac{A(v_L)}{\rho_L^\alpha}<0$ and $D_R=-\rho_R(u_R-c)^2+
\frac{A(v_R)}{\rho_R^\alpha}<0.$ Equivalently, the quantities $B_L
:=\rho_L(u_L-c)^2
-\frac{A(v_L)}{\rho_L^\alpha}$ and $B_R
:=\rho_R(u_R-c)^2
-\frac{A(v_R)}{\rho_R^\alpha}$ are strictly positive.
\end{lemma}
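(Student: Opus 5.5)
The plan is to compute $D_L$ and $D_R$ directly from the definition $W=F(H)-cH$ and then read off their signs from the strengthened speed condition \eqref{eq:overcompressive-speed}.

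First I would check the implication claimed in the statement. Since $0<\alpha\le1$, we have $\sqrt{\alpha A(v)/\rho^{\alpha+1}}\le\sqrt{A(v)/\rho^{\alpha+1}}$. Therefore $\lambda_1(H_L)\ge u_L-\sqrt{A(v_L)/\rho_L^{\alpha+1}}>c$ and $\lambda_3(H_R)\le u_R+\sqrt{A(v_R)/\rho_R^{\alpha+1}}<c$.

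Next comes the algebraic identity. With $H=(\rho,\rho u,\rho v)$ we have
$$w_1=\rho u-c\rho=\rho(u-c),\qquad w_2=\rho u^2-\frac{A(v)}{\rho^\alpha}-c\rho u=\rho u(u-c)-\frac{A(v)}{\rho^\alpha}.$$
Hence
$$cw_1-w_2=c\rho(u-c)-\rho u(u-c)+\frac{A(v)}{\rho^\alpha}=-\rho(u-c)^2+\frac{A(v)}{\rho^\alpha}.$$
Evaluating at $H_L$ and at $H_R$ gives the stated formulas for $D_L$ and $D_R$, and it also shows $B_{L,R}=-D_{L,R}$.

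For the sign on the left, \eqref{eq:overcompressive-speed} gives $u_L-c>\sqrt{A(v_L)/\rho_L^{\alpha+1}}>0$. Squaring and multiplying by $\rho_L>0$ yields $\rho_L(u_L-c)^2>A(v_L)/\rho_L^\alpha$, so $B_L>0$. For the sign on the right, the condition gives $c-u_R>\sqrt{A(v_R)/\rho_R^{\alpha+1}}>0$. Squaring, which is legitimate because both sides are positive, gives $\rho_R(u_R-c)^2>A(v_R)/\rho_R^\alpha$, so $B_R>0$.

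No step is a real obstacle. The only points needing care are the positivity of both sides before squaring, which the strict inequalities supply, and remembering that we need $\rho>0$ and $A>0$. As a consistency check, Lemma~\ref{lem:outer-W-RH} gives $D_L=D_R$, which is compatible with, but not needed for, the sign conclusion.
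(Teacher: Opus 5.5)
Your proposal is correct and follows the paper's proof. You compute $w_1=\rho(u-c)$ and $w_2=\rho u(u-c)-A(v)/\rho^\alpha$ from $W=F(H)-cH$, obtain $D=-\rho(u-c)^2+A(v)/\rho^\alpha$, and square the strict inequalities in \eqref{eq:overcompressive-speed} before multiplying by $\rho>0$, with your explicit check that $\alpha\le 1$ yields $\lambda_3(H_R)<c<\lambda_1(H_L)$ being a small addition the paper only asserts in the statement.
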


\begin{proof}
Since
\begin{equation}
H_L
=
\begin{pmatrix}
\rho_L\\
\rho_Lu_L\\
\rho_Lv_L
\end{pmatrix}\ \ 
\text{and} \ \ 
F(H_L)
=
\begin{pmatrix}
\rho_Lu_L\\
\rho_Lu_L^2-\dfrac{A(v_L)}{\rho_L^\alpha}\\
\rho_Lu_Lv_L
\end{pmatrix},
\end{equation}
the identity $W_L=F(H_L)-cH_L$ gives $w_{1,L}
=\rho_L(u_L-c)$ and $w_{2,L}
=\rho_Lu_L(u_L-c)
-\frac{A(v_L)}{\rho_L^\alpha}.$ Therefore,
\begin{align}
D_L&=cw_{1,L}-w_{2,L}=
c\rho_L(u_L-c)
-\rho_Lu_L(u_L-c)
+\frac{A(v_L)}{\rho_L^\alpha}=
-\rho_L(u_L-c)^2
+\frac{A(v_L)}{\rho_L^\alpha}.
\label{eq:DL-formula}
\end{align}
The right-hand inequality in \eqref{eq:overcompressive-speed} gives
$u_L-c>
\sqrt{\frac{A(v_L)}{\rho_L^{\alpha+1}}}.$ Both sides are positive, thus squaring and multiplying by $\rho_L>0$ yields $\rho_L(u_L-c)^2
>\frac{A(v_L)}{\rho_L^\alpha}.$ Consequently, \eqref{eq:DL-formula} implies $D_L<0.$ Equivalently, $$B_L=\rho_L(u_L-c)^2
-\frac{A(v_L)}{\rho_L^\alpha}
=-D_L>0.$$ The corresponding computation at the right state gives
$w_{1,R}=
\rho_R(u_R-c)$ and $w_{2,R}
=\rho_Ru_R(u_R-c)-\frac{A(v_R)}{\rho_R^\alpha}.$ Hence
\begin{align}
D_R
&=cw_{1,R}-w_{2,R}=
c\rho_R(u_R-c)
-\rho_Ru_R(u_R-c)
+\frac{A(v_R)}{\rho_R^\alpha}=
-\rho_R(u_R-c)^2
+\frac{A(v_R)}{\rho_R^\alpha}.
\label{eq:DR-formula}
\end{align}
The left-hand inequality in \eqref{eq:overcompressive-speed} gives
$c-u_R>\sqrt{\frac{A(v_R)}{\rho_R^{\alpha+1}}}.$ Since $(u_R-c)^2=(c-u_R)^2$, squaring and multiplying by $\rho_R>0$
gives $\rho_R(u_R-c)^2
>\frac{A(v_R)}{\rho_R^\alpha}.$ It follows from \eqref{eq:DR-formula} that $D_R<0.$ Equivalently,
\begin{equation}
B_R=\rho_R(u_R-c)^2
-\frac{A(v_R)}{\rho_R^\alpha}
=-D_R>0.
\end{equation}
\end{proof}
The existence of a speed $c$ satisfying
\eqref{eq:overcompressive-speed} is equivalent to the strict inequality
\begin{equation}
u_R
+\sqrt{\frac{A(v_R)}{\rho_R^{\alpha+1}}}
<u_L-\sqrt{\frac{A(v_L)}{\rho_L^{\alpha+1}}}\implies
u_R<u_L-
\sqrt{\frac{A(v_L)}{\rho_L^{\alpha+1}}}
-
\sqrt{\frac{A(v_R)}{\rho_R^{\alpha+1}}}.
\label{eq:overcompressive-right-state-region}
\end{equation}
Thus, for a fixed left state $U_L$, the overcompressive right states lie
strictly below the surface
\begin{equation}
u=u_L-\sqrt{\frac{A(v_L)}{\rho_L^{\alpha+1}}}-\sqrt{\frac{A(v)}{\rho^{\alpha+1}}},
\end{equation}
shown in Figure~\ref{fig:case1}.
\begin{prop}[Existence of the left reduced outer orbit]
\label{prop:left-reduced-orbit}
Assume that $0<\alpha\leq1$ and
\begin{equation}
A(v)=(1-v)^{-p},
\qquad
0<p\leq\alpha,
\label{eq:A-form-left}
\end{equation}
with $v_L<1$ and $v_\delta<1$. Suppose that
\begin{equation}
y_L:=u_L-c>0
\label{eq:yL-positive}
\end{equation}
and
\begin{equation}
B_L
:=
\rho_L(u_L-c)^2
-
\frac{A(v_L)}{\rho_L^\alpha}
>0.
\label{eq:BL-positive}
\end{equation}
The latter inequality follows from the conditions in Lemma~\ref{lem:sign_xiw}. Let $W_L=F(H_L)-cH_L,$ then the reduced system
\begin{equation}
\frac{dH}{d\tau}
=
F(H)-cH-W_L
\label{eq:left-reduced-H}
\end{equation}
has a trajectory
\begin{equation}
H_L(\zeta)
=
\bigl(\rho_L(\zeta),u_L(\zeta),v_L(\zeta)\bigr),
\qquad
-\infty<\zeta<0,
\end{equation}
such that
\begin{equation}
H_L(\zeta)\longrightarrow
(\rho_L,u_L,v_L)
\qquad
\text{as }\zeta\to-\infty.
\label{eq:left-H-limit}
\end{equation}
Moreover,
\begin{equation}
\rho_L(\zeta)\longrightarrow+\infty,
\qquad
u_L(\zeta)\longrightarrow c,
\qquad
v_L(\zeta)\longrightarrow v_\delta
\qquad
\text{as }\zeta\to0^-.
\label{eq:left-inner-limit-original}
\end{equation}
More precisely,
\begin{equation}
u_L(\zeta)-c
=
-B_L\zeta+o(|\zeta|),
\qquad
\frac{1}{\rho_L(\zeta)}
=
\frac{B_L}{2}\zeta^2+o(\zeta^2),
\label{eq:left-inner-asymptotics-rho-u}
\end{equation}
and
\begin{equation}
v_L(\zeta)-v_\delta
=
\Gamma_L\zeta+o(|\zeta|)
\label{eq:left-inner-asymptotics-v}
\end{equation}
as $\zeta\to0^-$, where
\begin{equation}
\Gamma_L
=
v_\delta w_{1,L}-w_{3,L}
=
\rho_L(u_L-c)(v_\delta-v_L).
\label{eq:left-Gamma-identity}
\end{equation}
\end{prop}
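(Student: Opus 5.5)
The plan is to pass to coordinates adapted to the blown-up corner and to shoot \emph{backward} from the corner point $(\rho^{-1},u,v)=(0,c,v_\delta)$ toward $H_L$. With $\xi=c$ frozen and $W=W_L$, the reduced system \eqref{eq:left-reduced-H} in the variables $(\rho,u,v)$ reads
\begin{equation*}
\rho'=\rho(u-c)-w_{1,L},\qquad
u'=\frac{(u-c)w_{1,L}+D_L}{\rho}-\frac{A(v)}{\rho^{\alpha+1}},\qquad
v'=\frac{(v-v_\delta)w_{1,L}+\Gamma_L}{\rho}.
\end{equation*}
The second and third equations use the identities $uw_{1,L}-w_{2,L}=(u-c)w_{1,L}+D_L$ and $vw_{1,L}-w_{3,L}=(v-v_\delta)w_{1,L}+\Gamma_L$.

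I will introduce $y=u-c$, $s=1/\rho$ and the new time $d\zeta=d\tau/\rho$. The point is that $\rho\to\infty$ in finite $\zeta$-time while $\tau\to\infty$. This gives
\begin{equation*}
\frac{ds}{d\zeta}=w_{1,L}s-y,\qquad
\frac{dy}{d\zeta}=w_{1,L}y-B_L-A(v)s^{\alpha},\qquad
\frac{dv}{d\zeta}=w_{1,L}(v-v_L).
\end{equation*}
Here I have used $D_L=-B_L$ and $\Gamma_L=w_{1,L}(v_\delta-v_L)$, which follows from $w_{3,L}=\rho_L u_L v_L-c\rho_Lv_L=w_{1,L}v_L$ and gives \eqref{eq:left-Gamma-identity}.

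The $v$-equation decouples and integrates explicitly. Normalizing $v(0)=v_\delta$, I get $v(\zeta)=v_L+(v_\delta-v_L)e^{w_{1,L}\zeta}$. Since $w_{1,L}=\rho_L y_L>0$ by \eqref{eq:yL-positive}, $v\to v_L$ as $\zeta\to-\infty$, and $v(\zeta)-v_\delta=\Gamma_L\zeta+O(\zeta^2)$, which is \eqref{eq:left-inner-asymptotics-v}. Moreover $v$ stays between $v_L$ and $v_\delta$, hence $<1$, so $A(v)$ is bounded along the orbit.

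What remains is a planar nonautonomous system for $(s,y)$. I will solve it backward from $(s,y)(0)=(0,0)$. The right-hand side is Lipschitz in $y$ and continuous in $s\ge0$; the only nonsmooth term is $s^\alpha$. Local existence follows from Peano, and I will get uniqueness from the sign structure: $s$ is monotone near $0$, so I can use $s$ as a parameter. Near $\zeta=0$ we have $dy/d\zeta=-B_L+O(|\zeta|+s^\alpha)$ with $B_L>0$ by \eqref{eq:BL-positive}. Hence $y=-B_L\zeta+o(|\zeta|)>0$ for $\zeta<0$, and then $ds/d\zeta=-y+O(s)$ integrates to $s=\tfrac{B_L}{2}\zeta^2+o(\zeta^2)$. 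Translating back gives \eqref{eq:left-inner-asymptotics-rho-u} and \eqref{eq:left-inner-limit-original}. Positivity of $s$ for $\zeta<0$ is exactly what makes $\rho_L(\zeta)>0$ and finite.

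The main obstacle is the global backward statement \eqref{eq:left-H-limit}: the backward orbit must exist for all $\zeta<0$, stay in $\{s>0,\ y>0\}$, and converge to $(1/\rho_L,y_L)$ rather than escape or approach another equilibrium. The limiting autonomous system ($v\equiv v_L$) has equilibria given by $y=w_{1,L}s$ together with
\begin{equation*}
g(s):=w_{1,L}^2s-A(v_L)s^{\alpha}=B_L.
\end{equation*}
One root is $s_L=1/\rho_L$. At this root the overcompressive condition $c<\lambda_1(H_L)$ makes $H_L$ a source in forward time (all eigenvalues of $DF-cI$ are positive), so it is a backward attractor. A second root may exist, and I expect it to be a saddle.

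I will build a backward-invariant region $\mathcal R$ in the $(s,y)$ plane, bounded by the nullcline $y=w_{1,L}s$ and the curve $w_{1,L}y=B_L+A(v)s^\alpha$. This is the region suggested by Figures~\ref{fig:combinedleft}--\ref{fig:zoomleft}. The region should contain the shooting orbit for $\zeta<0$ and no equilibrium other than $s_L$.

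The hypothesis $p\le\alpha$, with $A(v)=(1-v)^{-p}$, is where I expect to spend effort. The $v$-dependence moves the curved boundary of $\mathcal R$ in time, since $A(v)s^\alpha$ varies monotonically as $v$ goes from $v_\delta$ to $v_L$. I would first try to check the sign of the vector field on $\partial\mathcal R$ using the concavity of $s\mapsto A s^\alpha$, together with a bound of the form
\begin{equation*}
|A'(v)|\,|v-v_L|\le \tfrac{p}{\alpha}\,A(v)\cdot\text{(growth factor)},
\end{equation*}
so that the drift term is dominated by the $\alpha$-power term.

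Once invariance is established, the limit set as $\zeta\to-\infty$ is controlled by asymptotically autonomous theory (Markus--Thieme). It lies in $\mathcal R$ and is an invariant set of the limiting planar system. Closed orbits I will exclude with a Dulac function (divergence $2w_{1,L}>0$ already suffices), and this forces convergence to the unique equilibrium $(s_L,y_L)$ in $\mathcal R$. For $\alpha=1$ the same steps go through, with $A s^\alpha$ now linear.
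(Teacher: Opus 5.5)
Your route matches the paper's: the time change $d\tau=\rho\,d\zeta$, the variables $1/\rho$ and $u-c$, the explicit formula for $v$, Peano plus the local expansions at the corner, a trapping region, and then the asymptotically autonomous limit with a Bendixson--Dulac exclusion of cycles. The gap is in the step you leave open, the trapping region: the region you propose fails when $v_\delta>v_L$. Put $\eta=-\zeta$, so $\eta$ increases along your backward flow. Then $v$ runs from $v_\delta$ down to $v_L$, $A(v)$ decreases, and the curve $w_{1,L}y=B_L+A(v)s^\alpha$ sinks. Set $F:=B_L+A(v)s^\alpha-w_{1,L}y$. You have $dy/d\eta=F$ and $ds/d\eta=y-w_{1,L}s$, so on $\{F=0\}$
\[
\frac{dF}{d\eta}=s^\alpha\,\frac{d}{d\eta}A\bigl(v(\eta)\bigr)+\alpha A(v)\,s^{\alpha-1}\bigl(y-w_{1,L}s\bigr).
\]
Consider the point where this curve meets your other boundary $y=w_{1,L}s$; it exists for every $\eta$ when $\alpha<1$, and for all large $\eta$ when $\alpha=1$. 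There the second term vanishes and the first is strictly negative, so orbits exit $\mathcal R$ near that corner. No bound on $A'$, and no use of $p\le\alpha$, can repair this, because the compensating term is identically zero at that point.

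The missing idea is to replace the moving $y$-nullcline by the fixed line $y=y_L$ and to add a moving right wall. In your notation the paper uses
\[
\mathcal R_L=\Bigl\{0\le s\le M,\ \ w_{1,L}s\le y\le y_L\Bigr\},\qquad M=\frac{1}{\rho_L}\Bigl(\frac{A(v_L)}{A(v)}\Bigr)^{1/\alpha},
\]
chosen so that $A(v)M^\alpha=A(v_L)\rho_L^{-\alpha}$. The three boundary checks are as follows.

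\begin{itemize}
\item On the top, $dy/d\eta=A(v)s^\alpha-A(v_L)\rho_L^{-\alpha}\le0$.
\item On the bottom, $\frac{d}{d\eta}(y-w_{1,L}s)=B_L+A(v)s^\alpha-w_{1,L}^2s$ is concave in $s$. It equals $B_L>0$ at $s=0$ and $w_{1,L}^2(\rho_L^{-1}-M)\ge0$ at $s=M$, so it is nonnegative on $[0,M]$.
\item On the wall, let $q=p/\alpha$ and $x=(1-v)/(1-v_L)\in(0,1]$. Then $M=x^q/\rho_L$ and $dx/d\eta=w_{1,L}(1-x)$. Using $y\le y_L$, inwardness reduces to $qx^{q-1}(1-x)\ge1-x^q$, the tangent-line inequality for the concave map $x\mapsto x^q$. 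This is exactly where $p\le\alpha$ is used.
\end{itemize}

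When $v_\delta\le v_L$ your lens does work, since the $y$-nullcline then rises and the region only grows; the paper uses the rectangle $[0,\rho_L^{-1}]\times[0,y_L]$ there. Finally, there is no second equilibrium to worry about. The function $g(s)=w_{1,L}^2s-A(v_L)s^\alpha$ is convex with $g(0)=0<B_L$, so $g=B_L$ has the single positive root $1/\rho_L$.
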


\begin{proof}
Writing \eqref{eq:left-reduced-H} in primitive variables gives
\begin{align}\label{eq:left-rho-fast}
\frac{d\rho}{d\tau}
=\rho(u-c)-w_{1,L},\ \ \
\frac{du}{d\tau}=
\frac{uw_{1,L}-w_{2,L}}{\rho}
-\frac{A(v)}{\rho^{\alpha+1}},\ \ \
\frac{dv}{d\tau}
=\frac{vw_{1,L}-w_{3,L}}{\rho}.
\end{align}
Since $W_L=F(H_L)-cH_L$, we have
\begin{equation}
w_{1,L}
=
\rho_L(u_L-c)
=
\rho_Ly_L.
\label{eq:left-w1}
\end{equation}
Furthermore, by the definition of $B_L$, $cw_{1,L}-w_{2,L}=-B_L.$
Consequently,
\begin{align}
uw_{1,L}-w_{2,L}=
(u-c)w_{1,L}
+cw_{1,L}-w_{2,L}=
\rho_Ly_L(u-c)-B_L.
\label{eq:left-momentum-reduction}
\end{align}
Similarly,
\begin{align}
v_\delta w_{1,L}-w_{3,L}=
\rho_L(u_L-c)(v_\delta-v_L)=
\rho_Ly_L(v_\delta-v_L)
=\Gamma_L.
\label{eq:left-Gamma-computation}
\end{align}
It follows that 
\begin{align}\label{eq:left-rho-tau}
\frac{d\rho}{d\tau}
=\rho(u-c)-\rho_Ly_L,
\ \  \ \frac{du}{d\tau}=
\frac{\rho_Ly_L(u-c)-B_L}{\rho}
-\frac{A(v)}{\rho^{\alpha+1}},\ \ \ 
\frac{dv}{d\tau}
=\frac{\rho_Ly_L(v-v_L)}{\rho}.
\end{align}
We now introduce a new independent variable $\zeta$ by $\frac{d\tau}{d\zeta}=\rho.$ Since $\rho>0$, this change of independent variable preserves the orientation of trajectories. Equivalently, $\frac{d}{d\zeta}
=\rho\frac{d}{d\tau}.$ The system \eqref{eq:left-rho-tau} therefore becomes
\begin{align}\label{eq:left-rho-zeta}
\frac{d\rho}{d\zeta}=
\rho^2(u-c)-\rho\rho_Ly_L,\ \ \ 
\frac{du}{d\zeta}
=\rho_Ly_L(u-c)-B_L
-\frac{A(v)}{\rho^\alpha},\ \ \ 
\frac{dv}{d\zeta}
=\rho_Ly_L(v-v_L).
\end{align}
The third equation is linear and the solution satisfying $v(0)=v_\delta$ is
\begin{equation}
v(\zeta)
=
v_L+(v_\delta-v_L)
e^{\rho_Ly_L\zeta}.
\label{eq:left-v-zeta-explicit}
\end{equation}
Thus, $v(\zeta)\longrightarrow v_L
\quad \text{as }\zeta\to-\infty,$ and
$v(\zeta)\longrightarrow v_\delta
\quad \text{as }\zeta\to0^-.$ Introduce $\mu=\frac{1}{\rho},
\quad y=u-c, \quad \mu_L=\frac{1}{\rho_L}.$ We reverse the independent variable by setting $\eta=-\zeta.$ Finally, define
\begin{equation}
z=
e^{\rho_Ly_L\zeta}=
e^{-\frac{y_L}{\mu_L}\eta}.
\label{eq:left-z-definition}
\end{equation}
Then $v(z)
=v_L+(v_\delta-v_L)z,$ and
\begin{equation}
B_L
=
\frac{y_L^2}{\mu_L}
-
A(v_L)\mu_L^\alpha.
\label{eq:left-B-new}
\end{equation}
From \eqref{eq:left-rho-zeta}, we obtain the transformed system is
\begin{equation}
\frac{d\mu}{d\eta}
=
y-\frac{y_L}{\mu_L}\mu,
\ \ \ 
\frac{dy}{d\eta}
=B_L+A(v(z))\mu^\alpha
-
\frac{y_L}{\mu_L}y,\ \ \ 
\frac{dz}{d\eta}
=-\frac{y_L}{\mu_L}z.
\label{eq:left-reversed-system}
\end{equation}
The desired left reduced orbit is therefore equivalent to a solution of
\eqref{eq:left-reversed-system} satisfying
\begin{equation}
(\mu(0),y(0),z(0))=(0,0,1)
\label{eq:left-reversed-initial}
\end{equation}
and
\begin{equation}
(\mu(\eta),y(\eta),z(\eta))
\longrightarrow
(\mu_L,y_L,0)
\qquad
\text{as }\eta\to+\infty.
\label{eq:left-reversed-target}
\end{equation}

The vector field in \eqref{eq:left-reversed-system} is continuous on

\begin{equation}
\mathcal{D}
=
\left\{
(\mu,y,z):
\mu\geq0,\quad 0\leq z\leq1
\right\}.
\end{equation}
For $0<\alpha<1$, the map $\mu\mapsto\mu^\alpha$ is not locally
Lipschitz at $\mu=0$. Thus uniqueness of solutions through
\eqref{eq:left-reversed-initial} does not follow from the standard
Picard--Lindel\"of theorem. Nevertheless, Peano's theorem gives at least one local solution through \eqref{eq:left-reversed-initial}, which is sufficient for the present existence argument. At $\eta=0$, system \eqref{eq:left-reversed-system} gives
\begin{equation}
\frac{d\mu}{d\eta}\bigg|_{\eta=0}=0,
\qquad
\frac{dy}{d\eta}\bigg|_{\eta=0}=B_L>0,
\qquad
\frac{dz}{d\eta}\bigg|_{\eta=0}
=
-\frac{y_L}{\mu_L}<0.
\label{eq:left-initial-derivatives}
\end{equation}
Consequently,
\begin{equation}
y(\eta)=B_L\eta+o(\eta)
\label{eq:left-local-y}
\end{equation}
and
\begin{equation}
z(\eta)
=
1-\frac{y_L}{\mu_L}\eta+o(\eta)
\label{eq:left-local-z}
\end{equation}
as $\eta\to0^+$. Substituting the expansion for $y$ into the first
equation of \eqref{eq:left-reversed-system} yields
\begin{equation}
\mu(\eta)
=
\frac{B_L}{2}\eta^2+o(\eta^2).
\label{eq:left-local-mu}
\end{equation}
In particular,
\begin{align}
y(\eta)-\frac{y_L}{\mu_L}\mu(\eta)
&=
B_L\eta
-
\frac{y_LB_L}{2\mu_L}\eta^2
+
o(\eta^2)>0
\label{eq:left-local-entry}
\end{align}
for all sufficiently small $\eta>0$. We now construct a bounded positively invariant region. The appropriate
region depends on the relative positions of $v_L$ and $v_\delta$.

\medskip

\noindent
\textit{Case 1: $v_\delta>v_L$.}
Define
\begin{equation}
M(z)
=
\mu_L
\left(
\frac{A(v_L)}{A(v(z))}
\right)^{1/\alpha}.
\label{eq:left-M-definition}
\end{equation}
Since $A$ is increasing and
\begin{equation}
v_L\leq v(z)\leq v_\delta,
\qquad
0\leq z\leq1,
\end{equation}
we have
\begin{equation}
0<M(z)\leq\mu_L.
\label{eq:left-M-bounds}
\end{equation}
Moreover,
\begin{equation}
A(v(z))M(z)^\alpha
=
A(v_L)\mu_L^\alpha.
\label{eq:left-M-identity}
\end{equation}
Consider
\begin{equation}
\mathcal{R}_L
=
\left\{
(\mu,y,z):
0\leq z\leq1,\quad
0\leq\mu\leq M(z),\quad
\frac{y_L}{\mu_L}\mu\leq y\leq y_L
\right\}.
\label{eq:left-region-one}
\end{equation}
We verify positive invariance by examining each boundary component. On the lower boundary $y=\frac{y_L}{\mu_L}\mu,$ define $G(\mu,y)
=y-\frac{y_L}{\mu_L}\mu.$ Along $G=0$,
\begin{align}
\frac{dG}{d\eta}
&=
\frac{dy}{d\eta}
-
\frac{y_L}{\mu_L}\frac{d\mu}{d\eta}=
B_L+A(v(z))\mu^\alpha
-
\frac{y_L^2}{\mu_L^2}\mu.
\label{eq:left-G-derivative}
\end{align}
For fixed $z\in[0,1]$, set $\Phi_z(\mu)
=B_L+A(v(z))\mu^\alpha-\frac{y_L^2}{\mu_L^2}\mu.$ At the endpoints of the interval $[0,M(z)]$, $\Phi_z(0)=B_L>0$ and, by \eqref{eq:left-M-identity} and \eqref{eq:left-B-new},
\begin{align}
\Phi_z(M(z))
&=
B_L+A(v_L)\mu_L^\alpha
-
\frac{y_L^2}{\mu_L^2}M(z)=
\frac{y_L^2}{\mu_L^2}
\bigl(\mu_L-M(z)\bigr)
\geq0.
\label{eq:left-Phi-M}
\end{align}
For $0<\alpha<1$,
\begin{equation}
\Phi_z''(\mu)
=
\alpha(\alpha-1)A(v(z))\mu^{\alpha-2}
<0,
\qquad
\mu>0,
\end{equation}
so $\Phi_z$ is strictly concave. Since a concave function lies above the
chord joining any two points of its graph, and since
$$
\Phi_z(0)=B_L>0,
\qquad
\Phi_z(M(z))
=
\frac{y_L^2}{\mu_L^2}
\bigl(\mu_L-M(z)\bigr)
\ge0,
$$
we obtain $\Phi_z(\mu)\ge0,
\quad
0\le\mu\le M(z).$ For $\alpha=1$, the function $\Phi_z$ is affine. Therefore, it coincides
with the chord joining its endpoint values. Since both endpoint values
are nonnegative, we again obtain
\begin{equation}
\Phi_z(\mu)\ge0,
\qquad
0\le\mu\le M(z).
\label{eq:left-Phi-positive}
\end{equation}
Hence $\frac{dG}{d\eta}\geq0$ on the boundary $G=0$. On the upper boundary $y=y_L$, we have
\begin{align}
\frac{dy}{d\eta}=
B_L+A(v(z))\mu^\alpha
-
\frac{y_L^2}{\mu_L}=
A(v(z))\mu^\alpha
-
A(v_L)\mu_L^\alpha\leq0,
\label{eq:left-upper-y-inward}
\end{align}
where the last inequality follows from $\mu\leq M(z)$ and
\eqref{eq:left-M-identity}.
On $\mu=0$,
\begin{equation}
\frac{d\mu}{d\eta}=y\geq0.
\label{eq:left-mu-zero}
\end{equation}
It remains to consider the moving boundary $\mu=M(z)$. Define $J(\mu,z)=M(z)-\mu.$ The region corresponds to $J\geq0$. Along $J=0$,
\begin{align}
\frac{dJ}{d\eta}
&=
M'(z)\frac{dz}{d\eta}
-
\frac{d\mu}{d\eta}=
M'(z)\frac{dz}{d\eta}
-y+\frac{y_L}{\mu_L}M(z).
\label{eq:left-J-derivative}
\end{align}
Since $y\leq y_L$, it is sufficient to prove
\begin{equation}
M'(z)\frac{dz}{d\eta}
\geq
\frac{y_L}{\mu_L}
\bigl(\mu_L-M(z)\bigr).
\label{eq:left-moving-sufficient}
\end{equation}
Using $A(v)=(1-v)^{-p}$, define
\begin{equation}
q:=\frac{p}{\alpha},
\qquad
x(z):=\frac{1-v(z)}{1-v_L}.
\label{eq:left-q-x-definition}
\end{equation}
Then $0<q\leq1$ and $M(z)=\mu_Lx(z)^q.$ Since $v(z)=v_L+(v_\delta-v_L)z,$ we have $x(z)
=1-\frac{v_\delta-v_L}{1-v_L}z$ and therefore

\begin{equation}
1-x(z)
=
\frac{v_\delta-v_L}{1-v_L}z.
\label{eq:left-one-minus-x}
\end{equation}
Using $\frac{dz}{d\eta}
=-\frac{y_L}{\mu_L}z,$ a direct calculation gives
\begin{equation}
M'(z)\frac{dz}{d\eta}
=
y_Lq(1-x)x^{q-1}.
\label{eq:left-M-eta}
\end{equation}
On the other hand,
\begin{equation}
\frac{y_L}{\mu_L}
\bigl(\mu_L-M(z)\bigr)
=
y_L(1-x^q).
\label{eq:left-muL-minus-M}
\end{equation}
Because $s\mapsto s^q$ is concave for $0<q\leq1$, its tangent line at
$s=x$ lies above its graph. Evaluating this tangent-line inequality at
$s=1$ yields $1\leq
x^q+qx^{q-1}(1-x),$ or equivalently,
\begin{equation}
q(1-x)x^{q-1}
\geq
1-x^q.
\label{eq:left-key-moving}
\end{equation}
Equations \eqref{eq:left-M-eta},
\eqref{eq:left-muL-minus-M}, and
\eqref{eq:left-key-moving} imply
\eqref{eq:left-moving-sufficient}. Hence, $\frac{dJ}{d\eta}\geq0$
on $\mu=M(z)$. Finally, on $z=0$ we have $dz/d\eta=0$, whereas on $z=1$,
\begin{equation}
\frac{dz}{d\eta}
=
-\frac{y_L}{\mu_L}<0.
\end{equation}
Thus the interval $0\leq z\leq1$ is positively invariant. In view of
\eqref{eq:left-local-entry}, the local solution enters
$\mathcal{R}_L$ and remains there for as long as it exists.

\medskip

\noindent
\textit{Case 2: $v_\delta\leq v_L$.} In this case,
\begin{equation}
v_\delta\leq v(z)\leq v_L,
\qquad
0\leq z\leq1.
\end{equation}
Consider the rectangular region
\begin{equation}
\widetilde{\mathcal{R}}_L
=
\left\{
(\mu,y,z):
0\leq z\leq1,\quad
0\leq\mu\leq\mu_L,\quad
0\leq y\leq y_L
\right\}.
\label{eq:left-region-two}
\end{equation}
On $\mu=0$, $\frac{d\mu}{d\eta}=y\geq0,$ whereas on $\mu=\mu_L$, $\frac{d\mu}{d\eta}
=y-y_L
\leq0.$ On $y=0$, $\frac{dy}{d\eta}
=
B_L+A(v(z))\mu^\alpha
>0.$ On $y=y_L$,
\begin{align}
\frac{dy}{d\eta}=
B_L+A(v(z))\mu^\alpha
-
\frac{y_L^2}{\mu_L}=
A(v(z))\mu^\alpha
-
A(v_L)\mu_L^\alpha\leq0,
\end{align}
because $A(v(z))\leq A(v_L)$ and $\mu\leq\mu_L$. As in the first case, the interval $0\leq z\leq1$ is positively invariant. Therefore, $\widetilde{\mathcal{R}}_L$ is positively invariant.

In either case, the solution remains in a compact subset of the domain
on which the vector field is continuous. Since the vector field is
locally bounded there, the solution can be continued for every
$\eta\geq0$.

The third equation of \eqref{eq:left-reversed-system} has the explicit solution $z(\eta)
=\exp\left(
-\frac{y_L}{\mu_L}\eta
\right).$ Consequently,
\begin{equation}
z(\eta)\longrightarrow0
\qquad
\text{as }\eta\to+\infty,
\label{eq:left-z-to-zero}
\end{equation}
and hence
\begin{equation}
v(z(\eta))\longrightarrow v_L
\qquad
\text{as }\eta\to+\infty.
\label{eq:left-v-to-vL}
\end{equation}
The limiting planar system is
\begin{equation}
\begin{cases}
\displaystyle
\frac{d\mu}{d\eta}
=
y-\frac{y_L}{\mu_L}\mu,
\\[1.2ex]
\displaystyle
\frac{dy}{d\eta}
=
B_L+A(v_L)\mu^\alpha
-
\frac{y_L}{\mu_L}y.
\end{cases}
\label{eq:left-limiting-planar}
\end{equation}
At an equilibrium, $y=\frac{y_L}{\mu_L}\mu,$ and therefore $\mu$ must satisfy
\begin{equation}
\mathcal{H}(\mu)
:=
B_L+A(v_L)\mu^\alpha
-
\frac{y_L^2}{\mu_L^2}\mu
=
0.
\label{eq:left-H-function}
\end{equation}
By \eqref{eq:left-B-new},
\begin{equation}
\mathcal{H}(0)=B_L>0,
\qquad
\mathcal{H}(\mu_L)=0.
\label{eq:left-H-endpoint-values}
\end{equation}
For $0<\alpha<1$,
\begin{equation}
\mathcal{H}''(\mu)
=
\alpha(\alpha-1)A(v_L)\mu^{\alpha-2}
<0,
\qquad
\mu>0,
\end{equation}
so $\mathcal{H}$ is strictly concave. For $\alpha=1$, $\mathcal{H}$ is affine with $\mathcal{H}'(\mu)
=A(v_L)-\frac{y_L^2}{\mu_L^2}<0$ by \eqref{eq:left-stability-inequality}. Since $\mathcal{H}(0)=B_L>0,
\ \mathcal{H}(\mu_L)=0,$ it follows in either case that
$
\mathcal{H}(\mu)>0,
\
0<\mu<\mu_L.
$
The inequality $B_L>0$ gives
\begin{equation}
\frac{y_L^2}{\mu_L^2}
>A(v_L)\mu_L^{\alpha-1}
>\alpha A(v_L)\mu_L^{\alpha-1},
\label{eq:left-stability-inequality}
\end{equation}
and hence
\begin{equation}
\mathcal{H}'(\mu_L)
=
\alpha A(v_L)\mu_L^{\alpha-1}
-
\frac{y_L^2}{\mu_L^2}
<0.
\label{eq:left-H-prime-muL}
\end{equation}
For $0<\alpha<1$, the derivative $\mathcal{H}'$ is strictly decreasing,
whereas for $\alpha=1$ it is constant. In either case, $\mathcal{H}'(\mu)\le \mathcal{H}'(\mu_L)<0,
\ \mu\ge\mu_L,$ so $\mathcal{H}$ is strictly decreasing on $[\mu_L,\infty)$. Therefore,
$\mu_L$ is the unique positive zero of $\mathcal{H}$. It follows that $(\mu,y)=(\mu_L,y_L)$ is the unique equilibrium of the limiting system in either trapping region. The Jacobian matrix of \eqref{eq:left-limiting-planar} at this
equilibrium is
\begin{equation}
\mathcal{J}_L
=
\begin{pmatrix}
-\frac{y_L}{\mu_L} & 1
\\[1.2ex]
\alpha A(v_L)\mu_L^{\alpha-1}
&
-\frac{y_L}{\mu_L}
\end{pmatrix}.
\label{eq:left-Jacobian}
\end{equation}
Its eigenvalues are
\begin{equation}
\lambda_\pm
=
-\frac{y_L}{\mu_L}
\pm
\sqrt{\alpha A(v_L)\mu_L^{\alpha-1}}.
\label{eq:left-eigenvalues}
\end{equation}
By \eqref{eq:left-stability-inequality}, $\lambda_-<\lambda_+<0.$
Thus $(\mu_L,y_L)$ is a hyperbolic sink. The divergence of the limiting vector field is
\begin{equation}
-\frac{2y_L}{\mu_L}<0.
\label{eq:left-negative-divergence}
\end{equation}
Hence the Bendixson--Dulac criterion excludes nonconstant periodic
orbits in the trapping region. Since $z(\eta)\to0$, system \eqref{eq:left-reversed-system} is
asymptotically autonomous with limiting system
\eqref{eq:left-limiting-planar}. The bounded trajectory therefore has a nonempty, compact, connected omega-limit set that is invariant under the limiting planar flow. If this omega-limit set contained no equilibrium, the Poincar\'e--Bendixson theorem would imply that it is a periodic orbit, contradicting \eqref{eq:left-negative-divergence}. It must
therefore contain an equilibrium of the limiting system. Since
$(\mu_L,y_L)$ is the only such equilibrium in the trapping region, the omega-limit set contains $(\mu_L,y_L)$.

Because $(\mu_L,y_L)$ is asymptotically stable, there exists a
positively invariant neighborhood contained in its basin of attraction.
The trajectory enters this neighborhood along a sequence of times
tending to infinity and thereafter remains in it. Consequently,
\begin{equation}
(\mu(\eta),y(\eta))
\longrightarrow
(\mu_L,y_L)
\qquad
\text{as }\eta\to+\infty.
\label{eq:left-mu-y-convergence}
\end{equation}
Together with \eqref{eq:left-z-to-zero}, this proves
\begin{equation}
(\mu(\eta),y(\eta),z(\eta))
\longrightarrow
(\mu_L,y_L,0)
\qquad
\text{as }\eta\to+\infty.
\label{eq:left-full-convergence}
\end{equation}
Since $\eta=-\zeta$, the limit $\eta\to+\infty$ corresponds to
$\zeta\to-\infty$. Thus
\begin{equation}
\rho(\zeta)\longrightarrow\rho_L,
\qquad
u(\zeta)\longrightarrow u_L,
\qquad
v(\zeta)\longrightarrow v_L
\qquad
\text{as }\zeta\to-\infty.
\end{equation}
This proves \eqref{eq:left-H-limit}. Finally, $\zeta\to0^-$ corresponds to $\eta\to0^+$. From \eqref{eq:left-local-y}, \eqref{eq:left-local-mu}, and $\eta=-\zeta$,
we obtain
\begin{equation}
u(\zeta)-c
=-B_L\zeta+o(|\zeta|) \ \  \text{and} \ \  
\frac{1}{\rho(\zeta)}
=
\frac{B_L}{2}\zeta^2+o(\zeta^2).
\end{equation}
Moreover, by \eqref{eq:left-v-zeta-explicit},
\begin{align}
v(\zeta)-v_\delta=
(v_\delta-v_L)
\left(
e^{\rho_Ly_L\zeta}-1
\right)=
\rho_Ly_L(v_\delta-v_L)\zeta
+
o(|\zeta|)
=
\Gamma_L\zeta+o(|\zeta|).
\label{eq:left-v-asymptotic-proof}
\end{align}
Therefore,
$
\rho(\zeta)\longrightarrow+\infty,
\
u(\zeta)\longrightarrow c,
\
v(\zeta)\longrightarrow v_\delta
\
\text{as }\zeta\to0^-,
$
and the asymptotic relations
\eqref{eq:left-inner-asymptotics-rho-u} and
\eqref{eq:left-inner-asymptotics-v} follow. This completes the proof.
\end{proof}
\begin{figure}[H]
    \centering
    \begin{minipage}[b]{0.48\textwidth}
        \centering
        \includegraphics[width=\linewidth]{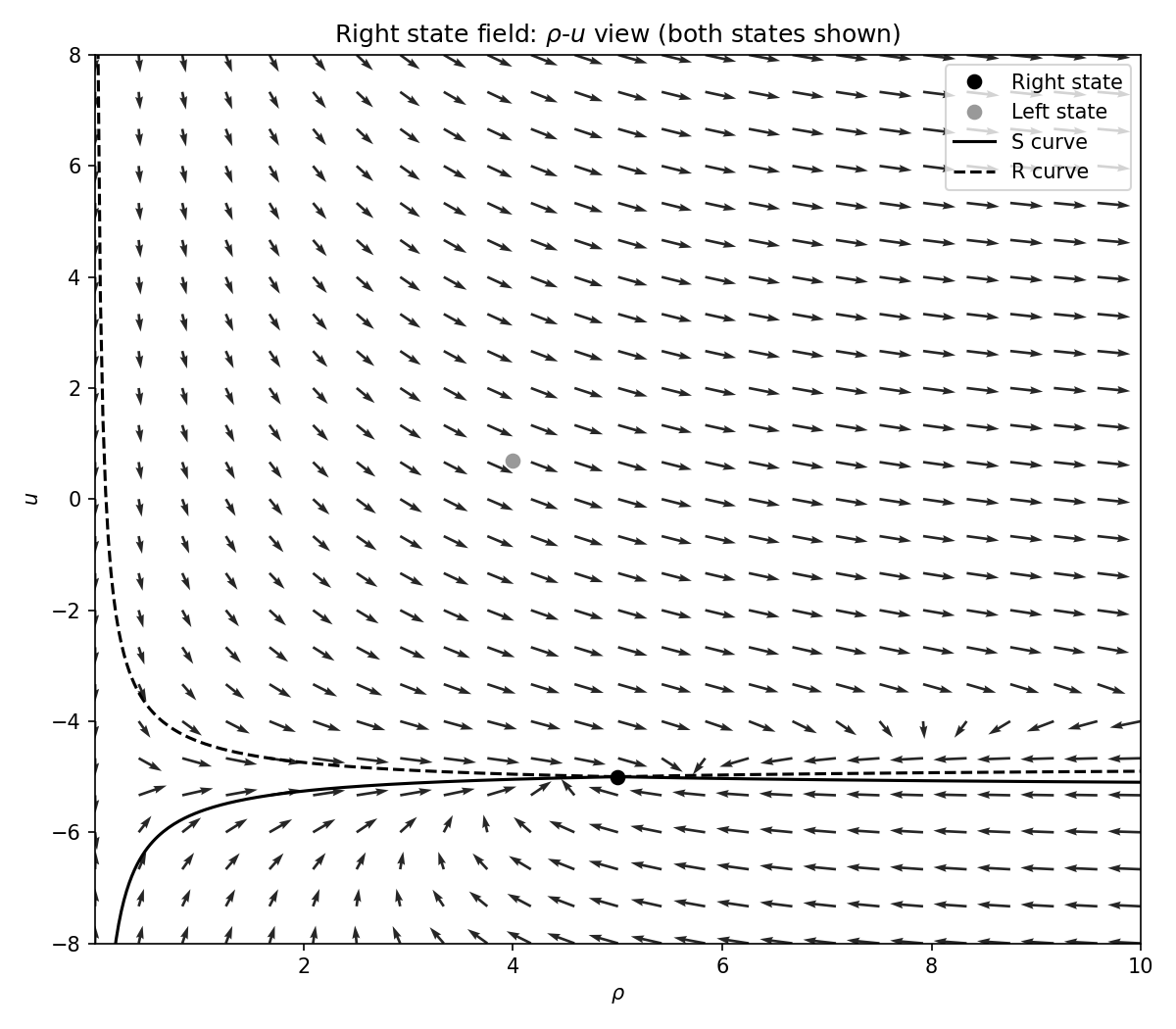}
        \caption{Right state invariant region of Region 6, Case 1: $(u_R,  \rho_R, v_R) = (-5, 5, -5)$.}
        \label{fig:combinedright}
    \end{minipage}
    \hfill
    \begin{minipage}[b]{0.48\textwidth}
        \centering
        \includegraphics[width=\linewidth]{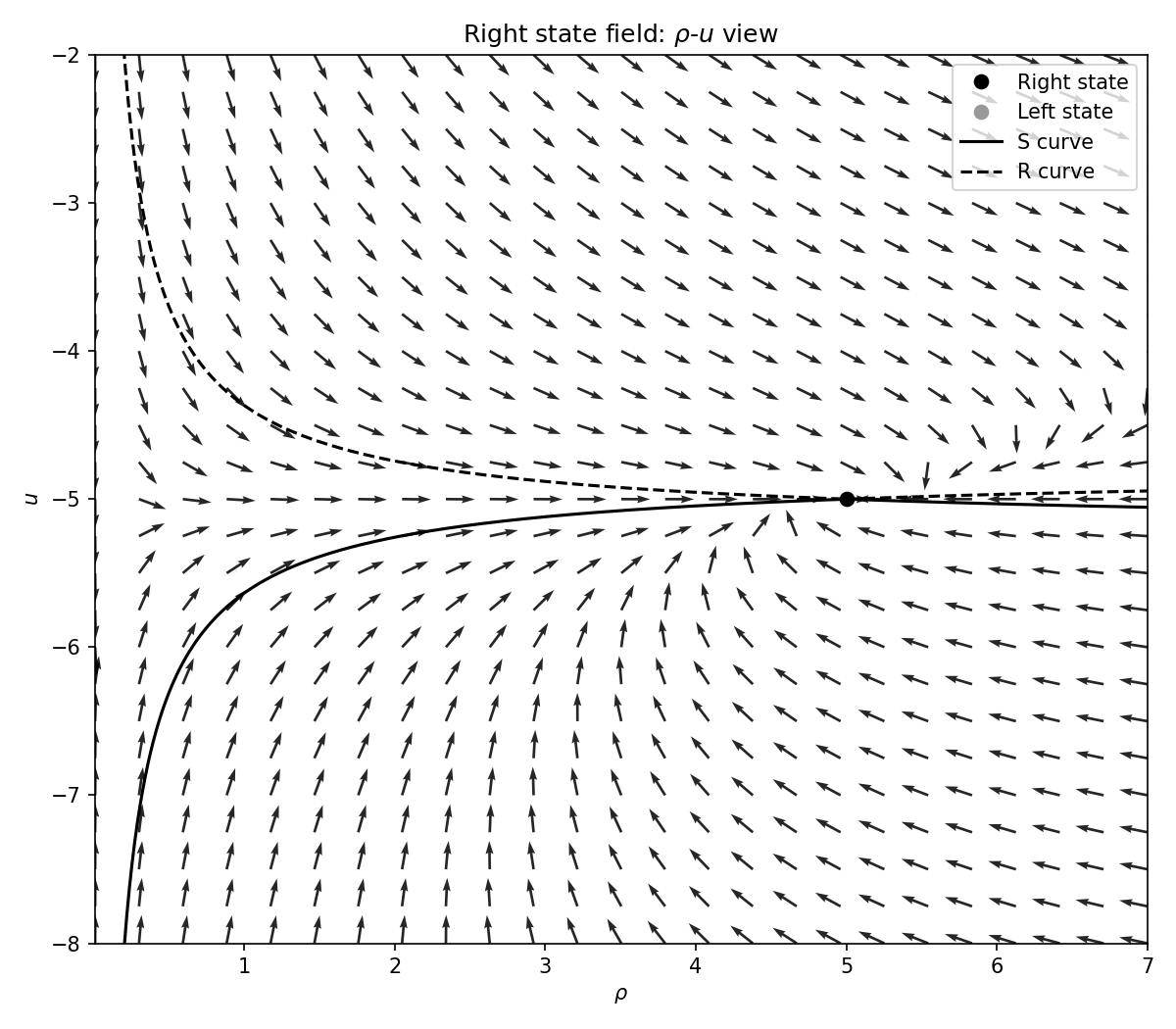}
        \caption{Zoom view of right state invariant region of Case 1.}
        \label{fig:zoomright}
    \end{minipage}
\end{figure}
\begin{prop}[Existence of the right reduced outer orbit]
\label{prop:right-reduced-orbit}
Assume that $0<\alpha\leq1$ and
\begin{equation}
A(v)=(1-v)^{-p},
\qquad
0<p\leq\alpha,
\label{eq:A-form-right}
\end{equation}
with $v_R<1$ and $v_\delta<1$. Suppose that $y_R:=c-u_R>0$ and
\begin{equation}
B_R
:=
\rho_R(c-u_R)^2
-
\frac{A(v_R)}{\rho_R^\alpha}
>0.
\label{eq:BR-positive}
\end{equation}
The latter inequality follows from condition in Lemma~\ref{lem:sign_xiw}. Let $
W_R=F(H_R)-cH_R,$ then the reduced system
\begin{equation}
\frac{dH}{d\tau}
=
F(H)-cH-W_R
\label{eq:right-reduced-H}
\end{equation}
has a trajectory
\begin{equation}
H_R(\zeta)
=
\bigl(\rho_R(\zeta),u_R(\zeta),v_R(\zeta)\bigr),
\qquad
0<\zeta<+\infty,
\end{equation}
such that
\begin{equation}
H_R(\zeta)\longrightarrow
(\rho_R,u_R,v_R)
\qquad
\text{as }\zeta\to+\infty.
\label{eq:right-H-limit}
\end{equation}
Moreover,
\begin{equation}
\rho_R(\zeta)\longrightarrow+\infty,
\qquad
u_R(\zeta)\longrightarrow c,
\qquad
v_R(\zeta)\longrightarrow v_\delta
\qquad
\text{as }\zeta\to0^+.
\label{eq:right-inner-limit-original}
\end{equation}
More precisely,
\begin{equation}
c-u_R(\zeta)
=
B_R\zeta+o(\zeta),
\qquad
\frac{1}{\rho_R(\zeta)}
=
\frac{B_R}{2}\zeta^2+o(\zeta^2),
\label{eq:right-inner-asymptotics-rho-u}
\end{equation}
and
\begin{equation}
v_R(\zeta)-v_\delta
=
\Gamma_R\zeta+o(\zeta)
\label{eq:right-inner-asymptotics-v}
\end{equation}
as $\zeta\to0^+$, where
\begin{equation}
\Gamma_R
=
v_\delta w_{1,R}-w_{3,R}
=
\rho_R(u_R-c)(v_\delta-v_R).
\label{eq:right-Gamma-identity}
\end{equation}

\end{prop}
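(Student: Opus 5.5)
The plan is to mirror the proof of Proposition~\ref{prop:left-reduced-orbit}, with the time direction handled so that the reduced trajectory is an \emph{incoming} one for the right state. First I will write \eqref{eq:right-reduced-H} in primitive variables. Using $W_R=F(H_R)-cH_R$, I get $w_{1,R}=\rho_R(u_R-c)=-\rho_Ry_R$. By Lemma~\ref{lem:sign_xiw}, $cw_{1,R}-w_{2,R}=D_R=-B_R$. Hence $uw_{1,R}-w_{2,R}=-\rho_Ry_R(u-c)-B_R$ and $vw_{1,R}-w_{3,R}=-\rho_Ry_R(v-v_R)$. I will then change the independent variable via $d\tau/d\zeta=\rho$, which preserves orientation, to obtain
\begin{equation*}
\frac{d\rho}{d\zeta}=\rho^2(u-c)+\rho\rho_Ry_R,\qquad
\frac{du}{d\zeta}=-\rho_Ry_R(u-c)-B_R-\frac{A(v)}{\rho^\alpha},\qquad
\frac{dv}{d\zeta}=-\rho_Ry_R(v-v_R).
\end{equation*}
The $v$-equation integrates explicitly to $v=v_R+(v_\delta-v_R)e^{-\rho_Ry_R\zeta}$, which gives $v(0)=v_\delta$ and $v\to v_R$ as $\zeta\to+\infty$.

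Next I set $\mu=1/\rho$, $y=c-u$, $\mu_R=1/\rho_R$ and $z=e^{-(y_R/\mu_R)\zeta}$, keeping $\zeta$ itself as the forward time since the target now lies at $\zeta=+\infty$. A direct computation should give
\begin{equation*}
\frac{d\mu}{d\zeta}=y-\frac{y_R}{\mu_R}\mu,\qquad
\frac{dy}{d\zeta}=B_R+A(v(z))\mu^\alpha-\frac{y_R}{\mu_R}y,\qquad
\frac{dz}{d\zeta}=-\frac{y_R}{\mu_R}z,
\end{equation*}
together with $B_R=y_R^2/\mu_R-A(v_R)\mu_R^\alpha$. This is exactly system \eqref{eq:left-reversed-system} with $L$ replaced by $R$. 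The problem then reduces to finding a solution with $(\mu,y,z)(0)=(0,0,1)$ and $(\mu,y,z)\to(\mu_R,y_R,0)$ as $\zeta\to+\infty$.

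From here the left argument should carry over verbatim. Peano's theorem gives a local solution, and the Taylor expansions give $y=B_R\zeta+o(\zeta)$ and $\mu=\frac{B_R}{2}\zeta^2+o(\zeta^2)$, so the trajectory enters the trapping region. For the trapping region I split into two cases. If $v_\delta>v_R$, I use the curvilinear region with $M(z)=\mu_R(A(v_R)/A(v(z)))^{1/\alpha}$; the moving-boundary estimate needs the tangent-line inequality for $s^q$ with $q=p/\alpha\le1$. If $v_\delta\le v_R$, the rectangle $[0,\mu_R]\times[0,y_R]\times[0,1]$ suffices. In both cases boundedness gives global existence. Since $z\to0$, the system is asymptotically autonomous. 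The limiting planar system has the unique equilibrium $(\mu_R,y_R)$, which is a hyperbolic sink with eigenvalues $-y_R/\mu_R\pm\sqrt{\alpha A(v_R)\mu_R^{\alpha-1}}<0$ because $B_R>0$. Its divergence is negative, so Bendixson--Dulac and Poincar\'e--Bendixson give convergence, exactly as before.

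The asymptotics at $\zeta\to0^+$ then read off directly: $c-u=y=B_R\zeta+o(\zeta)$ and $1/\rho=\frac{B_R}{2}\zeta^2+o(\zeta^2)$. Also $v-v_\delta=(v_\delta-v_R)(e^{-\rho_Ry_R\zeta}-1)=-\rho_Ry_R(v_\delta-v_R)\zeta+o(\zeta)=\rho_R(u_R-c)(v_\delta-v_R)\zeta+o(\zeta)=\Gamma_R\zeta+o(\zeta)$, which matches \eqref{eq:right-Gamma-identity}. The main obstacle I expect is bookkeeping the signs: checking that $y=c-u$ rather than $u-c$, combined with forward $\zeta$, yields the same signs as the left system. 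In particular, the sign of $B_R$ in the $y$-equation must come out positive. I will verify this first, since everything else depends on it. If the signs do not align, I will instead reverse $\zeta$ and treat $\zeta=0$ as the target endpoint.
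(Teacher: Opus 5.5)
Your proposal is correct and follows the paper's proof essentially step for step. It uses the same primitive-variable reduction with $w_{1,R}=-\rho_Ry_R$ and $cw_{1,R}-w_{2,R}=-B_R$, the same time change $d\tau/d\zeta=\rho$, the substitutions $\mu=1/\rho$, $y=c-u$, $z=e^{-\rho_Ry_R\zeta}$ with $\zeta$ kept as forward time, and the identification with the left transformed system so that the trapping-region and asymptotically-autonomous arguments carry over. Your sign bookkeeping is right, with $B_R$ entering the $y$-equation with a plus sign, so the contingency of reversing $\zeta$ is not needed.
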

\begin{proof}

Writing \eqref{eq:right-reduced-H} in primitive variables gives
\begin{align}\label{eq:right-rho-fast}
\displaystyle
\frac{d\rho}{d\tau}
=
\rho(u-c)-w_{1,R},
\ \ \ 
\frac{du}{d\tau}
=
\frac{uw_{1,R}-w_{2,R}}{\rho}
-\frac{A(v)}{\rho^{\alpha+1}},
\ \ \ 
\frac{dv}{d\tau}=
\frac{vw_{1,R}-w_{3,R}}{\rho}.
\end{align}
Since $W_R=F(H_R)-cH_R,$ we have
\begin{equation}
w_{1,R}
=
\rho_R(u_R-c)
=
-\rho_Ry_R.
\label{eq:right-w1}
\end{equation}
Moreover, by the definition of $B_R$,
\begin{equation}
cw_{1,R}-w_{2,R}=-B_R.
\label{eq:right-D-minus-B}
\end{equation}
Consequently,
\begin{align}
uw_{1,R}-w_{2,R}=
(u-c)w_{1,R}
+cw_{1,R}-w_{2,R}
=-\rho_Ry_R(u-c)-B_R=
\rho_Ry_R(c-u)-B_R.
\label{eq:right-momentum-reduction}
\end{align}
Similarly,
\begin{align}
v_\delta w_{1,R}-w_{3,R}
&=
\rho_R(u_R-c)(v_\delta-v_R)=
-\rho_Ry_R(v_\delta-v_R)=
\Gamma_R.
\label{eq:right-Gamma-computation}
\end{align}
Using
\begin{equation}
vw_{1,R}-w_{3,R}
=
(v-v_\delta)w_{1,R}
+
v_\delta w_{1,R}-w_{3,R},
\end{equation}
together with \eqref{eq:right-Gamma-computation}, we obtain
\begin{align}
vw_{1,R}-w_{3,R}=
-\rho_Ry_R(v-v_\delta)+\Gamma_R
=-\rho_Ry_R(v-v_R).
\label{eq:right-transport-reduction}
\end{align}
Therefore \eqref{eq:right-rho-fast} becomes
\begin{align}\label{eq:right-rho-tau}
\frac{d\rho}{d\tau}=
\rho(u-c)+\rho_Ry_R,
\ \ \
\frac{du}{d\tau}=
\frac{\rho_Ry_R(c-u)-B_R}{\rho}
-\frac{A(v)}{\rho^{\alpha+1}},\ \ \ 
\frac{dv}{d\tau}=-\frac{\rho_Ry_R(v-v_R)}{\rho}.
\end{align}
Introduce a new independent variable $\zeta$ by
\begin{equation}
\frac{d\tau}{d\zeta}=\rho.
\label{eq:right-time-rescaling}
\end{equation}
Since $\rho>0$, this change preserves the orientation of trajectories.
Equivalently,
\begin{equation}
\frac{d}{d\zeta}
=
\rho\frac{d}{d\tau}.
\end{equation}
Equations \eqref{eq:right-rho-tau} therefore become
\begin{align}\label{eq:right-rho-zeta}
\frac{d\rho}{d\zeta}
=\rho^2(u-c)+\rho\rho_Ry_R,
\ \ \
\frac{du}{d\zeta}
=\rho_Ry_R(c-u)-B_R
-\frac{A(v)}{\rho^\alpha},
\ \ \
\frac{dv}{d\zeta}=
-\rho_Ry_R(v-v_R).
\end{align}
The third equation is linear. The solution satisfying $v(0)=v_\delta$
is
\begin{equation}
v(\zeta)
=
v_R+(v_\delta-v_R)e^{-\rho_Ry_R\zeta}.
\label{eq:right-v-zeta-explicit}
\end{equation}
In particular, $v(\zeta)\longrightarrow v_\delta
\qquad
\text{as }\zeta\to0^+,$ and $v(\zeta)\longrightarrow v_R
\qquad
\text{as }\zeta\to+\infty.$ We now introduce
\begin{equation}
\mu=\frac{1}{\rho},
\qquad
y=c-u,
\qquad
\mu_R=\frac{1}{\rho_R},
\label{eq:right-variable-change}
\end{equation}
and define
\begin{equation}
z
=
e^{-\rho_Ry_R\zeta}
=
e^{-\frac{y_R}{\mu_R}\zeta}.
\label{eq:right-z-definition}
\end{equation}
Then
\begin{equation}
v(z)
=
v_R+(v_\delta-v_R)z
\label{eq:right-v-of-z}
\end{equation}
and
\begin{equation}
B_R
=
\frac{y_R^2}{\mu_R}
-
A(v_R)\mu_R^\alpha.
\label{eq:right-B-new}
\end{equation}
From \eqref{eq:right-rho-zeta}, we find
\begin{align}
\frac{d\mu}{d\zeta}
&=
-\frac{1}{\rho^2}\frac{d\rho}{d\zeta}=
c-u-\rho_Ry_R\mu=
y-\frac{y_R}{\mu_R}\mu.
\label{eq:right-mu-zeta}
\end{align}
Since $y=c-u$, equation \eqref{eq:right-rho-zeta} gives
\begin{align}
\frac{dy}{d\zeta}
&=
-\frac{du}{d\zeta}=
B_R+A(v(z))\mu^\alpha
-
\frac{y_R}{\mu_R}y.
\label{eq:right-y-zeta}
\end{align}
Finally, $\frac{dz}{d\zeta}
=
-\frac{y_R}{\mu_R}z.$
Thus the transformed right system is
\begin{equation}
\frac{d\mu}{d\zeta}
=y-\frac{y_R}{\mu_R}\mu,
\ \ \ 
\displaystyle
\frac{dy}{d\zeta}
=B_R+A(v(z))\mu^\alpha
-\frac{y_R}{\mu_R}y,
\ \ \
\displaystyle
\frac{dz}{d\zeta}
=-\frac{y_R}{\mu_R}z.
\label{eq:right-transformed-system}
\end{equation}
The singular endpoint corresponds to
\begin{equation}
(\mu(0),y(0),z(0))=(0,0,1),
\label{eq:right-transformed-initial}
\end{equation}
whereas the prescribed right state corresponds to
\begin{equation}
(\mu(\zeta),y(\zeta),z(\zeta))
\longrightarrow
(\mu_R,y_R,0)
\qquad
\text{as }\zeta\to+\infty.
\label{eq:right-transformed-target}
\end{equation}
System \eqref{eq:right-transformed-system} is identical in form to the left transformed system \eqref{eq:left-reversed-system} under the substitution
\begin{equation}
(\mu_L,y_L,B_L,v_L,\eta)
\longmapsto
(\mu_R,y_R,B_R,v_R,\zeta).
\label{eq:left-right-substitution}
\end{equation}
Indeed, under \eqref{eq:left-right-substitution}, the initial condition
\eqref{eq:left-reversed-initial} becomes
\eqref{eq:right-transformed-initial}, the target equilibrium
\eqref{eq:left-reversed-target} becomes
\eqref{eq:right-transformed-target}, and the function $v_L+(v_\delta-v_L)z$ appearing in the left system becomes precisely $v_R+(v_\delta-v_R)z.$ The local existence argument, the construction of the appropriate positively invariant region, the global continuation argument, and the asymptotically autonomous convergence argument in the proof of Proposition~\ref{prop:left-reduced-orbit} therefore apply verbatim.

More precisely, when $v_\delta>v_R$, the substitution
\eqref{eq:left-right-substitution} transforms the corresponding left
trapping region into
\begin{equation}
\mathcal{R}_R
=
\left\{
(\mu,y,z):
0\leq z\leq1,\quad
0\leq\mu\leq M_R(z),\quad
\frac{y_R}{\mu_R}\mu\leq y\leq y_R
\right\},
\label{eq:right-region-one}
\end{equation}
where $M_R(z)
=\mu_R
\left(\frac{A(v_R)}{A(v(z))}
\right)^{1/\alpha}.$ When $v_\delta\leq v_R$, it transforms the corresponding rectangular
region into
\begin{equation}
\widetilde{\mathcal{R}}_R
=
\left\{
(\mu,y,z):
0\leq z\leq1,\quad
0\leq\mu\leq\mu_R,\quad
0\leq y\leq y_R
\right\}.
\label{eq:right-region-two}
\end{equation}
The inward-pointing calculations established in the proof of
Proposition~\ref{prop:left-reduced-orbit} remain unchanged after
\eqref{eq:left-right-substitution}. Hence at least one local solution through \eqref{eq:right-transformed-initial} enters the appropriate
trapping region and remains there for all $\zeta\geq0$. Because the
solution remains in a compact subset of the domain of the vector field, it extends globally for $\zeta\geq0$. Furthermore,
\begin{equation}
z(\zeta)
=
\exp\left(
-\frac{y_R}{\mu_R}\zeta
\right)
\longrightarrow0
\qquad
\text{as }\zeta\to+\infty.
\end{equation}
The limiting planar system is
\begin{equation}
\frac{d\mu}{d\zeta}
=y-\frac{y_R}{\mu_R}\mu,
\ \ \ 
\frac{dy}{d\zeta}
=B_R+A(v_R)\mu^\alpha
-\frac{y_R}{\mu_R}y.
\label{eq:right-limiting-planar}
\end{equation}
Under \eqref{eq:left-right-substitution}, this is the same limiting
planar system considered in the proof of
Proposition~\ref{prop:left-reduced-orbit}. That proof shows that
$(\mu_R,y_R)$ is its unique equilibrium in the relevant trapping
region, that no periodic orbit can occur there, and that the
omega-limit set of the asymptotically autonomous system consists of
this equilibrium alone. Consequently,
\begin{equation}
(\mu(\zeta),y(\zeta),z(\zeta))
\longrightarrow
(\mu_R,y_R,0)
\qquad
\text{as }\zeta\to+\infty.
\end{equation}
Returning to the original variables gives
\begin{equation}
\rho(\zeta)\longrightarrow\rho_R,
\qquad
u(\zeta)\longrightarrow u_R,
\qquad
v(\zeta)\longrightarrow v_R
\qquad
\text{as }\zeta\to+\infty.
\end{equation}
This proves \eqref{eq:right-H-limit}. It remains to establish the asymptotic behavior at the singular
endpoint. Evaluating \eqref{eq:right-transformed-system} at
$(\mu,y,z)=(0,0,1)$ gives
\begin{equation}
\frac{d\mu}{d\zeta}\bigg|_{\zeta=0}=0,
\qquad
\frac{dy}{d\zeta}\bigg|_{\zeta=0}=B_R,
\qquad
\frac{dz}{d\zeta}\bigg|_{\zeta=0}
=
-\frac{y_R}{\mu_R}.
\end{equation}
Therefore, $y(\zeta)=B_R\zeta+o(\zeta)$ and $z(\zeta)
=1-\frac{y_R}{\mu_R}\zeta+o(\zeta)$ as $\zeta\to0^+$. Since
\begin{equation}
\frac{d\mu}{d\zeta}
=
y-\frac{y_R}{\mu_R}\mu
\end{equation}
and $\mu(0)=0$, integration of the preceding expansion for $y$ yields
\begin{equation}
\mu(\zeta)
=
\frac{B_R}{2}\zeta^2+o(\zeta^2).
\label{eq:right-local-mu}
\end{equation}
Recalling that $y=c-u$ and $\mu=1/\rho$, we obtain
\begin{equation}
c-u(\zeta)
=
B_R\zeta+o(\zeta),
\qquad
\frac{1}{\rho(\zeta)}
=
\frac{B_R}{2}\zeta^2+o(\zeta^2).
\end{equation}
Finally, from \eqref{eq:right-v-zeta-explicit},
\begin{align}
v(\zeta)-v_\delta
&=
(v_\delta-v_R)
\left(
e^{-\rho_Ry_R\zeta}-1
\right)=
-\rho_Ry_R(v_\delta-v_R)\zeta
+
o(\zeta)=
\rho_R(u_R-c)(v_\delta-v_R)\zeta
+
o(\zeta)
=
\Gamma_R\zeta+o(\zeta).
\end{align}
Thus \eqref{eq:right-inner-limit-original},
\eqref{eq:right-inner-asymptotics-rho-u}, and
\eqref{eq:right-inner-asymptotics-v} hold. This completes the proof.
\end{proof}

\subsection{The singular reduced configuration}
\label{subsec:singular-reduced-configuration}

Propositions~\ref{prop:left-reduced-orbit} and
\ref{prop:right-reduced-orbit} provide reduced trajectories connecting the
constant left and right states to the singular limiting state
\begin{equation}
\rho\longrightarrow+\infty,
\qquad
u\longrightarrow c,
\qquad
v\longrightarrow v_\delta.
\end{equation}
We now construct the middle component of the singular configuration. This
component describes the passage through the region in which the physical
density becomes unbounded and the variable $W$ undergoes an order-one change from $W_L$ to $W_R$, while the combinations $D$ and $\Gamma$ remain invariant by Lemma~\ref{lem:outer-W-RH}. The construction proceeds in five steps. First, we introduce a distinguished scaling near the singular region. Second, we derive the reduced middle system obtained in the singular limit. Third, we solve this reduced system explicitly. Fourth, we define entry and exit sections away
from the singular boundary and compute the corresponding transition.
Finally, we reconstruct the change in $W$ and verify that it has the
direction prescribed by the generalized Rankine--Hugoniot relations.

The analysis in this subsection concerns the reduced problem. The
persistence of the resulting singular configuration for positive values
of $\varepsilon$ will be addressed separately.

\subsubsection{Distinguished inner scaling}
\label{subsubsec:distinguished-inner-scaling}
Recall that $\rho_0=\varepsilon\rho$ is the rescaled density introduced before the spherical blow-up. The
spherical blow-up
\begin{equation}
\rho_0=r\bar\rho,
\qquad
u-\xi=r\bar u,
\qquad
v-v_\delta=r\bar v,
\qquad
\sqrt{\varepsilon}=r\bar\varepsilon
\label{eq:spherical-blowup-recalled}
\end{equation}
resolves the common singular set
\begin{equation}
\rho_0=0,
\qquad
u-\xi=0,
\qquad
v-v_\delta=0,
\qquad
\varepsilon=0.
\end{equation}

The scaling introduced below does not replace
\eqref{eq:spherical-blowup-recalled}. It is an additional inner
rescaling used to describe the middle passage between the entry and exit
points on the blown-up boundary.

The outer trajectories satisfy $\frac{1}{\rho}=O(\zeta^2),
\quad u-c=O(\zeta),
\quad v-v_\delta=O(\zeta)$ near the singular endpoint. This quadratic-linear relation motivates
the use of a squared variable for the rescaled density. We introduce
\begin{equation}
\varepsilon=\delta^3,
\qquad
\rho_0=\delta z^2,
\qquad
u-\xi=\delta\frac{y}{z},
\qquad
v-v_\delta=\delta\frac{h}{z},
\label{eq:distinguished-scaling}
\end{equation}
together with the stretched independent variable $s=\delta\tau$, where
$\delta=\varepsilon^{1/3}$ is constant along each trajectory. The choice
of powers in \eqref{eq:distinguished-scaling} is justified by the
transformed equations derived below. In particular, it yields $\frac{dk}{ds}=-z^2,$ where $k=w_1$, so that $k$ undergoes an order-one change over an $O(1)$ interval in the stretched variable $s$.

The relation with the spherical blow-up follows directly from
\eqref{eq:distinguished-scaling}. When $z$, $y/z$, and $h/z$ remain
bounded,
\begin{equation}
\rho_0=O(\delta),
\qquad
u-\xi=O(\delta),
\qquad
v-v_\delta=O(\delta),
\qquad
\sqrt{\varepsilon}=O(\delta^{3/2}).
\end{equation}
Consequently, the radial variable in
\eqref{eq:spherical-blowup-recalled} satisfies $r=O(\delta),$
whereas $\bar\varepsilon
=\frac{\sqrt{\varepsilon}}{r}
=O(\delta^{1/2})
\longrightarrow0.$
Thus the reduced middle orbit lies on the part of the blown-up boundary
given by $r=0,\ \bar\varepsilon=0.$

The positive-$\varepsilon$ passage approaches this boundary orbit as
$\delta\to0^+$. The directional coordinates introduced later are needed
to analyze the entry and exit endpoints, where $z\to0$ and the inner
coordinates in \eqref{eq:distinguished-scaling} become singular.

Since $\varepsilon=\delta^3$, the singular limit
$\varepsilon\to0^+$ is equivalent to $\delta\to0^+$. Define
\begin{equation}
k=w_1,
\qquad
D=\xi w_1-w_2,
\qquad
\Gamma=v_\delta w_1-w_3.
\label{eq:KDGamma-def}
\end{equation}
Next, introduce the stretched variable $s$ by
\begin{equation}
\frac{d}{d\tau}
=
\delta\frac{d}{ds},
\label{eq:tau-s-derivative}
\end{equation}
which desingularizes the vector field. A prime will denote
differentiation with respect to $s$. On the region $z>0$, where the change of variables
(\ref{eq:distinguished-scaling}) is regular, direct substitution of into \eqref{GSPT_eps_sys}, followed by the above time rescaling,
yields the desingularized system
\begin{equation}
\begin{cases}\displaystyle
z'
&=
\frac{y}{2}
-
\frac{\delta k}{2z},
\\[0.8ex]\displaystyle
y'
&=
\frac{D+\frac12y^2}{z}
+
\frac{\delta ky}{2z^2}
-
\delta z
-
\delta^{2\alpha}
\frac{
A\!\left(v_\delta+\delta h/z\right)
}{
z^{2\alpha+1}
},
\\[0.8ex]\displaystyle
h'
&=
\frac{\Gamma+\frac12hy}{z}
+
\frac{\delta kh}{2z^2},
\\[0.8ex]\displaystyle
k'
&=
-z^2,
\\\displaystyle
D'
&=
\delta zy+\delta^2k,
\\[0.8ex]\displaystyle
\Gamma'
&=
\delta zh,
\\[0.8ex]\displaystyle
\xi'
&=
\delta^2,
\\[0.8ex]\displaystyle
\delta'
&=
0.
\end{cases}
\label{eq:full-middle-system}
\end{equation}
For each fixed compact subset of the region $z>0$, the right-hand side of \eqref{eq:full-middle-system} has a well-defined limit as
$\delta\to0^+$. The resulting reduced system describes the interior of the middle passage. The limiting behavior as $z\to0$ is not obtained by setting $\delta=0$ uniformly in \eqref{eq:full-middle-system}; it will instead be recovered from the explicit reduced orbit and subsequently analyzed in directional blow-up coordinates.

\subsubsection{The reduced middle connection}
\label{subsubsec:reduced-middle-connection}
Setting $\delta=0$ in \eqref{eq:full-middle-system}, on the interior
region $z>0$, gives
\begin{equation}
z'=\frac{y}{2},
\ \ \ 
y'=\frac{D+\frac12y^2}{z},
\ \ \ 
h'=\frac{\Gamma+\frac12hy}{z},
\ \ \ 
k'=-z^2,
\ \ \ 
D'=0,
\ \ \ 
\Gamma'=0,
\ \ \
\xi'=0.
\label{eq:reduced-middle-system}
\end{equation}
Thus $D$, $\Gamma$, and $\xi$ are constant along every reduced middle trajectory. By Lemma~\ref{lem:sign_xiw}, $D<0.$ Define
\begin{equation}
a:=\sqrt{-2D}>0.
\label{eq:middle-a-definition}
\end{equation}
We now construct the reduced orbit joining the incoming and outgoing
directions selected by the outer trajectories.
\begin{prop}[Singular middle connection]
\label{prop:singular-middle-connection}

Assume that $D<0$, and let $a$ be defined by
\eqref{eq:middle-a-definition}. For every $\lambda>0$, the reduced
middle system \eqref{eq:reduced-middle-system} has, up to
translation in $s$, a unique trajectory satisfying
\begin{equation}
y^2+\lambda^2z^2=a^2
\label{eq:middle-ellipse}
\end{equation}
and
\begin{equation}
Dh-\Gamma y=0.
\label{eq:middle-invariant-relation}
\end{equation}
This trajectory joins, in its $(z,y,h)$-components, the points
\begin{equation}
P_{\mathrm{in}}^0
=
\left(
0,\,
a,\,
\frac{\Gamma}{D}a
\right)
\label{eq:middle-entry-zero}
\end{equation}
and
\begin{equation}
P_{\mathrm{out}}^0
=
\left(
0,\,
-a,\,
-\frac{\Gamma}{D}a
\right).
\label{eq:middle-exit-zero}
\end{equation}
More precisely, after fixing $s_-\in\mathbb{R}$ and setting $s_+:=s_-+\frac{2\pi}{\lambda},$ the trajectory is given for $s_-\leq s\leq s_+$ by
\begin{align}
z(s)
&=
\frac{a}{\lambda}
\sin\left(
\frac{\lambda}{2}(s-s_-)
\right),
\label{eq:middle-z-explicit}
\\
y(s)
&=
a
\cos\left(
\frac{\lambda}{2}(s-s_-)
\right),
\label{eq:middle-y-explicit}
\\
h(s)
&=
\frac{\Gamma}{D}a
\cos\left(
\frac{\lambda}{2}(s-s_-)
\right).
\label{eq:middle-h-explicit}
\end{align}
For every
\begin{equation}
0<\eta<\frac{a}{\lambda},
\label{eq:middle-eta-range}
\end{equation}
define the transverse sections $\Sigma_{\mathrm{in}}^\eta
=\{z=\eta,\ y>0\}$ and $\Sigma_{\mathrm{out}}^\eta=\{z=\eta,\ y<0\}.$
If
\begin{equation}
\theta_\eta
:=
\arcsin\left(
\frac{\lambda\eta}{a}
\right)
\in
\left(
0,\frac{\pi}{2}
\right),
\label{eq:middle-theta-eta}
\end{equation}
then the corresponding entry and exit points have $(z,y,h)$-components
\begin{equation}
P_{\mathrm{in}}^\eta
=
\left(
\eta,\,
\sqrt{a^2-\lambda^2\eta^2},\,
\frac{\Gamma}{D}
\sqrt{a^2-\lambda^2\eta^2}
\right)
\label{eq:middle-entry-eta}
\end{equation}
and
\begin{equation}
P_{\mathrm{out}}^\eta
=
\left(
\eta,\,
-\sqrt{a^2-\lambda^2\eta^2},\,
-\frac{\Gamma}{D}
\sqrt{a^2-\lambda^2\eta^2}
\right).
\label{eq:middle-exit-eta}
\end{equation}
The change in $k$ along the truncated orbit from
$P_{\mathrm{in}}^\eta$ to $P_{\mathrm{out}}^\eta$ is
\begin{equation}
k_{\mathrm{in}}^\eta-k_{\mathrm{out}}^\eta
=
\frac{-2D}{\lambda^3}
\left(
\pi-2\theta_\eta+\sin(2\theta_\eta)
\right).
\label{eq:middle-truncated-k-jump}
\end{equation}
Letting $\eta\to0^+$ gives
\begin{equation}
k_L-k_R
=
\frac{-2\pi D}{\lambda^3}.
\label{eq:middle-complete-k-jump}
\end{equation}
Consequently, whenever $k_L-k_R>0$, the unique value of $\lambda$
producing the prescribed change in $k$ is
\begin{equation}
\lambda_*
=
\left(
\frac{-2\pi D}{k_L-k_R}
\right)^{1/3}.
\label{eq:middle-lambda-selection}
\end{equation}
Moreover, this matching condition is nondegenerate:
\begin{equation}
\left.
\frac{d}{d\lambda}
\left(
\frac{-2\pi D}{\lambda^3}
\right)
\right|_{\lambda=\lambda_*}
=
\frac{6\pi D}{\lambda_*^4}
\neq0.
\label{eq:middle-lambda-nondegeneracy}
\end{equation}
Finally, the reconstructed variable $W$ satisfies
\begin{equation}
W_R-W_L
=
(k_R-k_L)
\begin{pmatrix}
1\\
c\\
v_\delta
\end{pmatrix}.
\label{eq:middle-W-jump}
\end{equation}

\end{prop}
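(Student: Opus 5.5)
The plan is to reduce the planar $(z,y)$ part of \eqref{eq:reduced-middle-system} to a first integral, then solve the linear $h$- and $k$-equations by quadrature along the resulting curve. Since $D$, $\Gamma$, $\xi$ are constant and the $(z,y)$-equations decouple from $h$ and $k$, the analysis has three pieces: the planar subsystem, the linear equation for $h$, and the integral for $k$.

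\emph{Planar subsystem.} On $z>0$, $y\neq0$ I would write the $(z,y)$-orbits as graphs,
$$\frac{dy}{dz}=\frac{2D+y^2}{zy},\qquad\text{so}\qquad \frac{d(y^2)}{dz}=\frac{2(2D+y^2)}{z}.$$
Integrating gives $2D+y^2=Cz^2$. With $a^2=-2D$ and $C=-\lambda^2$, this is exactly the ellipse \eqref{eq:middle-ellipse}. Conversely, for each $\lambda>0$ I would verify directly that \eqref{eq:middle-z-explicit}--\eqref{eq:middle-y-explicit} solve the system. Indeed, $z'=y/2$ holds immediately. For the second equation, $y'=-\tfrac{a\lambda}{2}\sin(\cdot)=-\tfrac{\lambda^2}{2}z$, and also
$$\frac{D+\tfrac12 y^2}{z}=\frac{D+\tfrac12(a^2-\lambda^2z^2)}{z}=-\frac{\lambda^2 z}{2}.$$
The solution stays in $z>0$ for $s_-<s<s_+=s_-+2\pi/\lambda$ and tends to $(0,\pm a)$ at the endpoints. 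Uniqueness up to translation follows because on $z>0$ the vector field is smooth and the ellipse arc $\{z>0\}$ is a single orbit.

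\emph{The $h$-component.} The equation for $h$ is linear. I would set $g:=Dh-\Gamma y$ and compute, using the $y'$ equation,
$$g'=\frac{y}{2z}\,g.$$
Since $(\log z)'=y/(2z)$, this gives $g=\text{const}\cdot z$. Thus the relation \eqref{eq:middle-invariant-relation} is invariant, and imposing it (const $=0$) selects a unique $h$, namely $h=\tfrac{\Gamma}{D}y$, which yields \eqref{eq:middle-h-explicit}. The endpoints $P^0_{\mathrm{in}}$, $P^0_{\mathrm{out}}$ and the section points $P^\eta_{\mathrm{in/out}}$ then follow from solving $\sin\phi=\lambda\eta/a$ with $\phi=\tfrac\lambda2(s-s_-)\in\{\theta_\eta,\pi-\theta_\eta\}$. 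Here $\cos\phi=\pm\sqrt{1-\lambda^2\eta^2/a^2}$, and $y$ is monotone in $s$, so the sections are crossed transversally.

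\emph{The $k$-jump.} From $k'=-z^2$ and the substitution $ds=2\,d\phi/\lambda$,
$$k^\eta_{\mathrm{in}}-k^\eta_{\mathrm{out}}=\frac{2a^2}{\lambda^3}\int_{\theta_\eta}^{\pi-\theta_\eta}\sin^2\phi\,d\phi=\frac{a^2}{\lambda^3}\bigl(\pi-2\theta_\eta+\sin 2\theta_\eta\bigr),$$
which is \eqref{eq:middle-truncated-k-jump} because $a^2=-2D$. The limit $\eta\to0^+$ gives \eqref{eq:middle-complete-k-jump}. The map $\lambda\mapsto-2\pi D/\lambda^3$ is a strictly decreasing bijection of $(0,\infty)$ onto $(0,\infty)$ with derivative $6\pi D/\lambda^4\neq0$. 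This gives \eqref{eq:middle-lambda-selection} and the nondegeneracy statement \eqref{eq:middle-lambda-nondegeneracy}.

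\emph{Reconstructing $W$.} I would use $w_1=k$, $w_2=\xi k-D$, $w_3=v_\delta k-\Gamma$, which follow from \eqref{eq:KDGamma-def}. With $D$, $\Gamma$ constant and $\xi=c$, the differences give \eqref{eq:middle-W-jump}. This is consistent with Lemma~\ref{lem:outer-W-RH}, where $k_L-k_R=k_1>0$.

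The main obstacle is not computational. It lies in justifying the endpoint limits at $z=0$, where the reduced field is singular. I would handle this by taking limits along the explicit solution only, rather than by any uniform $\delta=0$ argument, since the paper defers the uniform treatment to the directional charts.
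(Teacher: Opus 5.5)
Your proposal is correct and follows essentially the same route as the paper: the first integral $2D+y^2=Cz^2$ with $C=-\lambda^2$, direct verification of the trigonometric parametrization, the invariant $Dh-\Gamma y$ satisfying $(Dh-\Gamma y)'=\frac{y}{2z}(Dh-\Gamma y)$, the $\sin^2$ quadrature for the $k$-jump, and the reconstruction of $W$ from $w_1=k$, $w_2=\xi k-D$, $w_3=v_\delta k-\Gamma$. One small correction: transversality of the crossings of $\{z=\eta\}$ comes from $z'=y/2=\pm\frac12\sqrt{a^2-\lambda^2\eta^2}\neq0$, whereas the monotonicity of $y$ only gives uniqueness of each crossing point.
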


\begin{proof}
Since $D$ is constant, the first two equations of
\eqref{eq:reduced-middle-system} imply
\begin{align}
\frac{d}{ds}\left(2D+y^2\right)=
2yy'=
\frac{2y}{z}
\left(
D+\frac12y^2
\right)=
\frac{y}{z}
\left(
2D+y^2
\right),
\label{eq:middle-first-integral-numerator}
\end{align}
whereas
\begin{equation}
\frac{d}{ds}(z^2)=zy.
\label{eq:middle-z-square-derivative}
\end{equation}
It follows that
\begin{equation}
\frac{d}{ds}
\left(
\frac{2D+y^2}{z^2}
\right)
=
0
\label{eq:middle-first-integral-ratio}
\end{equation}
on every interval on which $z>0$. Hence
\begin{equation}
2D+y^2=Cz^2
\label{eq:middle-first-integral}
\end{equation}
for some constant $C$. The desired orbit passes from $y=a$ to $y=-a$ and therefore contains a
point at which $y=0$. At such a point, $z>0$, and
\eqref{eq:middle-first-integral} gives $Cz^2=2D<0.$ Thus $C<0$. We may therefore write $C=-\lambda^2,
\ \lambda>0.$ Since $a^2=-2D$, equation \eqref{eq:middle-first-integral} becomes $y^2+\lambda^2z^2=a^2.$ Introduce $\theta
=\frac{\lambda}{2}(s-s_-).$ The upper half of the ellipse $z\geq0$ is parametrized by
\begin{equation}
z=\frac{a}{\lambda}\sin\theta,
\qquad
y=a\cos\theta,
\qquad
0\leq\theta\leq\pi.
\end{equation}
Because $d\theta/ds=\lambda/2$, we have $z'
=\frac{a}{2}\cos\theta
=\frac{y}{2}.$ Furthermore, using $D=-a^2/2$,
\begin{align}
\frac{D+\frac12y^2}{z}
&=
\frac{
-\frac12a^2+\frac12a^2\cos^2\theta
}{
(a/\lambda)\sin\theta
}=
-\frac{a\lambda}{2}\sin\theta=
y'.
\end{align}
Thus \eqref{eq:middle-z-explicit} and
\eqref{eq:middle-y-explicit} solve the first two equations of
\eqref{eq:reduced-middle-system}. They satisfy
\begin{equation}
(z,y)(s_-)=(0,a),
\qquad
(z,y)(s_+)=(0,-a).
\end{equation}
Next, define
\begin{equation}
Q:=Dh-\Gamma y.
\end{equation}
Since $D$ and $\Gamma$ are constant, the reduced equations give
\begin{align}
Q'
&=
Dh'-\Gamma y'=
D\frac{\Gamma+\frac12hy}{z}
-
\Gamma\frac{D+\frac12y^2}{z}
=
\frac{y}{2z}
\left(
Dh-\Gamma y
\right)=
\frac{y}{2z}Q.
\label{eq:middle-Q-equation}
\end{align}
Therefore the hypersurface $Q=0$, equivalently
\begin{equation}
Dh-\Gamma y=0,
\end{equation}
is invariant. Since $D\neq0$, it follows that $h=\frac{\Gamma}{D}y,$
which gives \eqref{eq:middle-h-explicit} and the endpoint values
\eqref{eq:middle-entry-zero}--\eqref{eq:middle-exit-zero}.

For a fixed $\eta$ satisfying \eqref{eq:middle-eta-range}, the entry
intersection occurs at $\theta=\theta_\eta$, whereas the exit
intersection occurs at $\theta=\pi-\theta_\eta$. Substitution into the explicit solution gives \eqref{eq:middle-entry-eta} and
\eqref{eq:middle-exit-eta}. Since $z'
=\frac{y}{2}$ is nonzero at both intersections, the sections
$\Sigma_{\mathrm{in}}^\eta$ and $\Sigma_{\mathrm{out}}^\eta$ are
transverse to the reduced flow. Along the middle orbit, $k'=-z^2.$ Using
\begin{equation}
z=\frac{a}{\lambda}\sin\theta,
\qquad
ds=\frac{2}{\lambda}\,d\theta,
\end{equation}
we obtain
\begin{align}
k_{\mathrm{in}}^\eta-k_{\mathrm{out}}^\eta
&=
\int_{s_{\mathrm{in}}}^{s_{\mathrm{out}}}
z(s)^2\,ds
=
\frac{2a^2}{\lambda^3}
\int_{\theta_\eta}^{\pi-\theta_\eta}
\sin^2\theta\,d\theta
=
\frac{a^2}{\lambda^3}
\left(
\pi-2\theta_\eta+\sin(2\theta_\eta)
\right).
\end{align}
Since $a^2=-2D$, this proves
\eqref{eq:middle-truncated-k-jump}. Letting $\eta\to0^+$ proves
\eqref{eq:middle-complete-k-jump}. Because $D<0$, the right-hand side
of \eqref{eq:middle-complete-k-jump} is strictly decreasing from
$+\infty$ to $0$ as $\lambda$ increases from $0$ to $+\infty$.
Therefore, if $k_L-k_R>0$, equation
\eqref{eq:middle-complete-k-jump} has the unique solution
\eqref{eq:middle-lambda-selection}. Differentiation gives
\eqref{eq:middle-lambda-nondegeneracy}.
Finally, from the definitions of $k$, $D$, and $\Gamma$,
\begin{equation}
w_1=k,
\qquad
w_2=\xi k-D,
\qquad
w_3=v_\delta k-\Gamma.
\label{eq:middle-W-reconstruction}
\end{equation}
Since $\xi=c$, $D$, and $\Gamma$ remain constant along the reduced
middle orbit,
\begin{align}
W_R-W_L
&=
\begin{pmatrix}
k_R-k_L\\
c(k_R-k_L)\\
v_\delta(k_R-k_L)
\end{pmatrix}
=
(k_R-k_L)
\begin{pmatrix}
1\\
c\\
v_\delta
\end{pmatrix}.
\end{align}
This proves \eqref{eq:middle-W-jump} and completes the proof.
\end{proof}
\begin{prop}[Limits of the outer trajectories on the blown-up boundary]
\label{prop:outer-sphere-limits}
The left reduced outer trajectory, expressed in the blow-up coordinates,
converges as $\zeta\to0^-$ to
\begin{equation}
P_L
=
\left(
0,\,
0,\,
-\frac{D}{R},\,
-\frac{\Gamma}{R},\,
0,\,
W_L,\,
c
\right).
\label{eq:P_L}
\end{equation}
The right reduced outer trajectory, expressed in the blow-up coordinates,
converges as $\zeta\to0^+$ to
\begin{equation}
P_R
=
\left(
0,\,
0,\,
\frac{D}{R},\,
\frac{\Gamma}{R},\,
0,\,
W_R,\,
c
\right).
\label{eq:P_R}
\end{equation}
The coordinates in \eqref{eq:P_L} and \eqref{eq:P_R} are ordered as
$(r,\bar\rho,\bar u,\bar v,\bar\varepsilon,W,\xi).$
\end{prop}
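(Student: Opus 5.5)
The plan is a direct computation. We substitute the asymptotic expansions of Propositions~\ref{prop:left-reduced-orbit} and \ref{prop:right-reduced-orbit} into the blow-up relations \eqref{eq:spherical-blowup-recalled}, evaluated on the reduced level set $\varepsilon=0$. Along either reduced outer trajectory we have $\varepsilon=0$, $\xi=c$, and $W\equiv W_L$ (respectively $W\equiv W_R$). These follow from \eqref{GSPT_diffeq_sys}: at $\varepsilon=0$ the equations for $W$ and $\xi$ are trivial, and the reduced systems \eqref{eq:left-reduced-H}, \eqref{eq:right-reduced-H} use $W_L$, $W_R$ with $\xi=c$. Hence the $W$- and $\xi$-entries of $P_L$ and $P_R$ are immediate. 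Moreover $\bar\varepsilon=\sqrt{\varepsilon}/r=0$ for every $r>0$, so the $\bar\varepsilon$-entry vanishes identically along the trajectories.

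For the remaining entries we use $\rho_0=\varepsilon\rho$. On the reduced level this gives $\rho_0=0$, so the radial variable reduces to
\[
r=\sqrt{(u-c)^2+(v-v_\delta)^2}.
\]
Then $\bar\rho=\rho_0/r=0$, and the blow-up formulas give $\bar u=(u-c)/r$ and $\bar v=(v-v_\delta)/r$.

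On the left, as $\zeta\to0^-$, we insert $u-c=-B_L\zeta+o(|\zeta|)$ and $v-v_\delta=\Gamma_L\zeta+o(|\zeta|)$ from \eqref{eq:left-inner-asymptotics-rho-u}--\eqref{eq:left-inner-asymptotics-v}. This gives $r=|\zeta|R_L+o(|\zeta|)$, where $R_L=\sqrt{B_L^2+\Gamma_L^2}>0$ because $B_L>0$ by Lemma~\ref{lem:sign_xiw}. In particular $r\to0$, and $r$ is comparable to $|\zeta|$, so dividing the $o(|\zeta|)$ remainders by $r$ leaves $o(1)$. Since $\zeta/|\zeta|=-1$ and $B_L=-D_L$, we get
\[
\bar u\to \frac{B_L}{R_L}=-\frac{D_L}{R_L},\qquad \bar v\to-\frac{\Gamma_L}{R_L}.
\]

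On the right, as $\zeta\to0^+$, \eqref{eq:right-inner-asymptotics-rho-u}--\eqref{eq:right-inner-asymptotics-v} give $u-c=-B_R\zeta+o(\zeta)$ and $v-v_\delta=\Gamma_R\zeta+o(\zeta)$. Therefore
\[
\bar u\to-\frac{B_R}{R_R}=\frac{D_R}{R_R},\qquad \bar v\to\frac{\Gamma_R}{R_R}.
\]

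Finally, Lemma~\ref{lem:outer-W-RH} gives $D_L=D_R=:D$ and $\Gamma_L=\Gamma_R=:\Gamma$, hence $R_L=R_R=:R$. This produces exactly \eqref{eq:P_L} and \eqref{eq:P_R}. As a consistency check, $\bar u^2+\bar v^2=1$, so both limits lie on $S^3$ in the equator $\bar\rho=\bar\varepsilon=0$.

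The main delicate point is the meaning of $\bar\rho$ along a reduced ($\varepsilon=0$) trajectory, on which $\rho\to\infty$. I would handle it by observing that the reduced trajectories lie in the invariant set $\{\varepsilon=0\}$ of the blown-up system, where $\rho_0=\varepsilon\rho$ vanishes identically for every finite $\zeta$. The limit is therefore taken within that set, after setting $\varepsilon=0$ and before letting $\zeta\to0$, and not as a joint limit. The positive-$\varepsilon$ behaviour near $P_L$ and $P_R$, where $\rho_0/r$ need not be small, is deferred to the directional charts. The only other point requiring care is the uniform control of the $o(|\zeta|)$ remainders against $r\sim R|\zeta|$, which is supplied by $R>0$.
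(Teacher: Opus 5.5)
Your proposal is correct and follows essentially the same route as the paper. On the reduced level $\varepsilon=0$ you take $\rho_0=0$, so $\bar\rho=\bar\varepsilon=0$, and $r=\sqrt{(u-c)^2+(v-v_\delta)^2}\sim R|\zeta|$; substituting the outer asymptotics and using the sign of $\zeta$ gives the $\pm D/R,\pm\Gamma/R$ entries, and Lemma~\ref{lem:outer-W-RH} identifies $D_L=D_R$ and $\Gamma_L=\Gamma_R$. Your explicit note that $R>0$ is what allows the $o(|\zeta|)$ remainders to be divided by $r$ is a small point the paper leaves implicit.
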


\begin{proof}
By Lemma~\ref{lem:outer-W-RH},
$D_L=D_R=D$ and $\Gamma_L=\Gamma_R=\Gamma$. Hence, as
$\zeta\to0^-$,
\begin{align}
u(\zeta)-c
&=
D\zeta+o(|\zeta|),
\\
v(\zeta)-v_\delta
&=
\Gamma\zeta+o(|\zeta|).
\end{align}
Moreover,
\begin{equation}
W(\zeta)\equiv W_L,
\qquad
\xi(\zeta)\equiv c,
\qquad
\varepsilon=0.
\label{eq:left-outer-constants}
\end{equation}
Since $\rho_0=\varepsilon\rho=0$ for every $\zeta\neq0$ and 
$r(\zeta)>0$ away from the limiting point, the blow-up relation
$\rho_0=r\bar\rho$ gives
\begin{equation}
\bar\rho(\zeta)
=\frac{\rho_0(\zeta)}{r(\zeta)}
=0,
\qquad\bar\varepsilon(\zeta)
=\frac{\sqrt{\varepsilon}}{r(\zeta)}
=0,
\end{equation}
for every $\zeta<0$ sufficiently close to zero. The blow-up transformation therefore
gives
\begin{equation}
r^2
=
\left(u-c\right)^2
+
\left(v-\frac{k_3}{k_1}\right)^2.
\end{equation}
Using \eqref{eq:left-inner-asymptotics-rho-u} and
\eqref{eq:left-inner-asymptotics-v}, we obtain
\begin{align}
r(\zeta)^2
&=
\left(D_L^2+\Gamma_L^2\right)\zeta^2
+
o(\zeta^2)=
R_L^2\zeta^2+o(\zeta^2).
\end{align}
Hence
\begin{equation}
r(\zeta)
=
R_L|\zeta|+o(|\zeta|).
\end{equation}
Because $\zeta\to0^-$, one has $|\zeta|=-\zeta$. Consequently,
\begin{align}
\bar u(\zeta)
&=
\frac{u(\zeta)-c}{r(\zeta)}
=
\frac{D_L\zeta+o(|\zeta|)}
     {R_L|\zeta|+o(|\zeta|)}
\longrightarrow
-\frac{D_L}{R_L},
\\
\bar v(\zeta)
&=
\frac{v(\zeta)-k_3/k_1}{r(\zeta)}
=
\frac{\Gamma_L\zeta+o(|\zeta|)}
     {R_L|\zeta|+o(|\zeta|)}
\longrightarrow
-\frac{\Gamma_L}{R_L}.
\end{align}
Together with \eqref{eq:left-outer-constants}, these limits give
\eqref{eq:P_L}. For the right reduced outer trajectory,
Proposition~\ref{prop:right-reduced-orbit} gives, as $\zeta\to0^+$,
\begin{align}
u(\zeta)-c
&=
D_R\zeta+o(\zeta),
\label{eq:right-u-asymptotic}
\\
v(\zeta)-\frac{k_3}{k_1}
&=
\Gamma_R\zeta+o(\zeta).
\label{eq:right-v-asymptotic}
\end{align}
Also, $W(\zeta)\equiv W_R, \ \xi(\zeta)\equiv c, \ \rho_0(\zeta)=0, \ \varepsilon=0.$
It follows that $r(\zeta)
=R_R\zeta+o(\zeta),$ because $\zeta>0$ on the right. Therefore,
\begin{align}
\bar u(\zeta)
&=
\frac{u(\zeta)-c}{r(\zeta)}
\longrightarrow
\frac{D_R}{R_R},
\\
\bar v(\zeta)
&=
\frac{v(\zeta)-k_3/k_1}{r(\zeta)}
\longrightarrow
\frac{\Gamma_R}{R_R},
\end{align}
while $\bar\rho(\zeta)=0,
\ \bar\varepsilon(\zeta)=0$ for every $\zeta>0$ sufficiently close to zero. Therefore the lifted right outer trajectory converges to $P_R$, which proves
\eqref{eq:P_R}.
\end{proof}

\begin{prop}[Reduced outer--middle compatibility]
\label{prop:reduced-outer-middle-compatibility}
Let $P_L$ and $P_R$ be the limiting points of the lifted left and right
outer trajectories. Under the generalized Rankine--Hugoniot relation,
their directional components are $(\bar u_L,\bar v_L)
=
\left(
-\frac{D}{R},
-\frac{\Gamma}{R}
\right)$ and
$(\bar u_R,\bar v_R)
=
\left(
\frac{D}{R},
\frac{\Gamma}{R}
\right).$ In particular, $D\bar v_L-\Gamma\bar u_L=0,
\ D\bar v_R-\Gamma\bar u_R=0.$ Thus both outer limiting directions lie on the relation $D\bar v-\Gamma\bar u=0$ selected by the reduced middle orbit.
\end{prop}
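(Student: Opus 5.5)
This is essentially a corollary of Proposition~\ref{prop:outer-sphere-limits} combined with Lemma~\ref{lem:outer-W-RH}, so I will argue directly.

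First, I invoke Lemma~\ref{lem:outer-W-RH}. It gives $D_L=D_R=:D$ and $\Gamma_L=\Gamma_R=:\Gamma$, and therefore $R_L=R_R=:R$. By Lemma~\ref{lem:sign_xiw}, $D<0$, so $R\ge |D|>0$ and the quotients $D/R$, $\Gamma/R$ are well defined. Proposition~\ref{prop:outer-sphere-limits} then gives the directional components of $P_L$ and $P_R$ exactly as stated, namely $(\bar u_L,\bar v_L)=(-D/R,-\Gamma/R)$ and $(\bar u_R,\bar v_R)=(D/R,\Gamma/R)$. Note that the generalized Rankine--Hugoniot relation is what allows both limits to be written with the same constants $D$, $\Gamma$, $R$.

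Second, the relations follow by direct substitution:
\[
D\bar v_L-\Gamma\bar u_L=-\frac{D\Gamma}{R}+\frac{\Gamma D}{R}=0,\qquad D\bar v_R-\Gamma\bar u_R=\frac{D\Gamma}{R}-\frac{\Gamma D}{R}=0.
\]

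Third, I connect this to the middle orbit. In Proposition~\ref{prop:singular-middle-connection}, the reduced middle trajectory lies on the invariant set $Q=Dh-\Gamma y=0$. Under the distinguished scaling \eqref{eq:distinguished-scaling}, we have $u-\xi=\delta y/z$ and $v-v_\delta=\delta h/z$, so the direction of $(u-\xi,v-v_\delta)$ is proportional to $(y,h)$. The relation $Dh-\Gamma y=0$ is therefore exactly the relation $D\bar v-\Gamma\bar u=0$ in the spherical directional coordinates. Moreover, the endpoints $P^0_{\mathrm{in}}$ and $P^0_{\mathrm{out}}$ have $(y,h)$ proportional to $(a,\Gamma a/D)$ and $(-a,-\Gamma a/D)$. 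These directions are parallel to $(D,\Gamma)$, and they match $P_L$ and $P_R$ up to the sign of the common orientation.

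The only point requiring care, and the main obstacle, is this identification of directions between the inner $(y,h)$ coordinates and the spherical $(\bar u,\bar v)$ coordinates. The factor $\delta/z$ is positive, so it preserves proportionality, and the linear homogeneous relation is invariant under positive rescaling. The remaining steps are purely algebraic.
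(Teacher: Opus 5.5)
Your proof is correct and matches the paper's, whose proof consists only of your second step: it substitutes the directional components from Proposition~\ref{prop:outer-sphere-limits}, which use $D_L=D_R$ and $\Gamma_L=\Gamma_R$ from Lemma~\ref{lem:outer-W-RH}, into $D\bar v-\Gamma\bar u$. Your third step is what the paper places in the separate Lemma~\ref{lem:middle-coordinate-compatibility}, and your positive-rescaling argument for it is sound; in fact, since $a/D<0$, the endpoints $P^0_{\mathrm{in}}$ and $P^0_{\mathrm{out}}$ match $P_L$ and $P_R$ with the same orientation, not merely up to sign.
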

\begin{proof}
At the left endpoint,
\begin{align}
D\bar v_L-\Gamma\bar u_L
&=D\left(-\frac{\Gamma}{R}\right)
-\Gamma\left(-\frac{D}{R}\right)=0.
\end{align}
At the right endpoint,
\begin{align}
D\bar v_R-\Gamma\bar u_R
&=D\left(\frac{\Gamma}{R}\right)-
\Gamma\left(\frac{D}{R}\right)=0.
\end{align}
Therefore both limiting points lie on the relation determined
by $D$ and $\Gamma$.
\end{proof}
\begin{lemma}[Coordinate compatibility]
\label{lem:middle-coordinate-compatibility}
On the overlap between the spherical blow-up coordinates and the
distinguished middle coordinates, one has $\frac{\bar v}{\bar u}
=\frac{h}{y}$ whenever $\bar u\neq0$ and $y\neq0$. Consequently, the relation $D\bar v-\Gamma\bar u=0$ is equivalent to $Dh-\Gamma y=0.$
\end{lemma}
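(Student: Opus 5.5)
Both coordinate systems describe the same physical quantities $u-\xi$ and $v-v_\delta$, so the plan is to equate the two descriptions on their common domain and divide. On the overlap both changes of variables are regular: the spherical blow-up \eqref{eq:spherical-blowup-recalled} with $r>0$, and the distinguished scaling \eqref{eq:distinguished-scaling} with $\delta>0$ and $z>0$. There each set of coordinates is a diffeomorphic image of the original variables $(\rho_0,u,v,\varepsilon)$. First I will write down the identities
\begin{equation*}
u-\xi=r\bar u=\delta\frac{y}{z},\qquad v-v_\delta=r\bar v=\delta\frac{h}{z}.
\end{equation*}
These hold because both are just re-expressions of the same two quantities.

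Assuming $\bar u\neq0$ and $y\neq0$, I divide the second identity by the first. The common factors $r$ and $\delta/z$ cancel, since both are nonzero on the overlap. This gives $\bar v/\bar u=(v-v_\delta)/(u-\xi)=h/y$, which is the first claim.

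For the equivalence of the two relations, with $\bar u\neq0$ I rewrite $D\bar v-\Gamma\bar u=0$ as $\bar u\,(D\bar v/\bar u-\Gamma)=0$, that is, $D\,\bar v/\bar u=\Gamma$. By the ratio identity this is the same as $D\,h/y=\Gamma$. Multiplying by $y\neq0$ gives $Dh-\Gamma y=0$, and every step reverses.

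The degenerate cases need a separate remark. When $\bar u=0$ on the overlap, the first identity gives $u-\xi=0$, hence $y=0$, and the converse holds as well, so the two vanishing sets coincide. In that case the relation $D\bar v-\Gamma\bar u=0$ reduces to $D\bar v=0$, and $Dh-\Gamma y=0$ reduces to $Dh=0$. Since $\bar v$ and $h$ are positive multiples of each other, with factor $\delta/(zr)>0$, these two are also equivalent, using $D\neq0$ from Lemma~\ref{lem:sign_xiw}.

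The only real obstacle is justifying that the overlap is nonempty and that $\delta/(zr)$ is a well-defined positive factor there. I will state it as the region $r>0$, $z>0$, $\delta>0$, where both transformations are invertible. The relation then passes to the limiting boundary points by continuity, which is what Proposition~\ref{prop:reduced-outer-middle-compatibility} uses.
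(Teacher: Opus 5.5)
Your proposal is correct and follows the paper's proof: equate $r\bar u=\delta y/z$ and $r\bar v=\delta h/z$, divide to get $\bar v/\bar u=h/y$, and then use $y\bar v=\bar u h$ to pass between the two relations. Your extra treatment of the case $\bar u=0$ (equivalently $y=0$) also holds. In fact, the identity $(\bar u,\bar v)=\frac{\delta}{zr}(y,h)$ gives $D\bar v-\Gamma\bar u=\frac{\delta}{zr}(Dh-\Gamma y)$ on the whole overlap, so you do not need $D\neq0$ there.
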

\begin{proof}
Equating the two representations of $u-\xi$ and $v-v_\delta$ gives
$r\bar u=\delta\frac{y}{z},
\ r\bar v=\delta\frac{h}{z}.$ Dividing the second identity by the first yields $\frac{\bar v}{\bar u}=\frac{h}{y}.$
Cross-multiplication gives $y\bar v=\bar u h.$ Therefore,
\begin{align}
y\left(D\bar v-\Gamma\bar u\right)
&=
D y\bar v-\Gamma y\bar u=
D\bar u h-\Gamma y\bar u=
\bar u\left(Dh-\Gamma y\right).
\end{align}
Hence, whenever $\bar u\neq0$ and $y\neq0$, $D\bar v-\Gamma\bar u=0$
if and only if $Dh-\Gamma y=0.$
\end{proof}
By Propositions~\ref{prop:left-reduced-orbit}
and~\ref{prop:right-reduced-orbit}, together with
Lemma~\ref{lem:outer-W-RH},
the left and right reduced outer trajectories determine the same values
of the invariants $D$ and $\Gamma$. In
Subsection~\ref{subsubsec:outer-corner-transversality}, we show that,
under the weighted blow-down transformation, the asymptotic limits of
the reduced outer trajectories correspond to the incoming and outgoing
corner equilibria of the weighted middle problem. The remaining
mismatch is therefore encoded solely by the scalar variable $k$.
Since the reduced middle orbit satisfies $k_L-k_R=\frac{-2\pi D}{\lambda^3},$ there exists a unique positive value $\lambda=
\left(
\frac{-2\pi D}{k_L-k_R}
\right)^{1/3},$ for which the reduced middle orbit joins the two limiting equilibria. Consequently, the left reduced outer orbit, the reduced middle orbit, and the right reduced outer orbit form the desired singular concatenated orbit.
\subsection{Equilibrium branches on the blown-up boundary}
\label{subsec:equilibrium-branches}
Along the reduced flow on the blown-up sphere, the variables
$w_1,w_2,w_3,$ and $\xi$ remain constant. Define
\begin{equation}
D:=\xi w_1-w_2,
\qquad
\Gamma:=v_\delta w_1-w_3,
\qquad
R:=\sqrt{D^2+\Gamma^2},
\label{eq:D-Gamma-R}
\end{equation}
where $v_\delta=k_3/k_1$. We restrict attention to the region
$R>0$. Introduce
\begin{equation}
g:=\Gamma\bar u-D\bar v.
\label{eq:g-reduced}
\end{equation}

\begin{lemma}
\label{lem:reduced-invariant-relation}
The set
\begin{equation}
\mathcal M_0
=
\left\{
r=0,\;
\Gamma\bar u-D\bar v=0
\right\}
\label{eq:M0}
\end{equation}
is invariant under the reduced flow on the blown-up sphere.
\end{lemma}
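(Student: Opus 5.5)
The plan is to restrict the blown-up system \eqref{general_GSPT_long} to $r=0$ and show that $g=\Gamma\bar u-D\bar v$ obeys a scalar linear equation $g'=\kappa\, g$ along the reduced flow. Invariance of $\{g=0\}$ then follows from uniqueness, or from a Gronwall estimate.

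\emph{Step 1: the reduced equations.} Set $r=0$ in \eqref{general_GSPT_long}. Since $0<\alpha\le1$, every term carrying a factor $r$ or $r^\alpha$ vanishes, including the pressure terms $r^\alpha\bar\varepsilon^{2\alpha+2}A(\cdot)/\bar\rho^{\alpha+1}$. Also $dr/d\tau=0$, so $\{r=0\}$ is itself invariant. The equations for $w_1,w_2,w_3,\xi$ have right-hand sides proportional to $r$, so these variables are constant. Hence $D=\xi w_1-w_2$ and $\Gamma=v_\delta w_1-w_3$ are constants of the reduced flow. Reading off the surviving terms, I expect
\[
\bar u'=\frac{\bar\varepsilon^2}{\bar\rho}\Bigl(D-\bar u\,(D\bar u+\Gamma\bar v)\Bigr),\qquad
\bar v'=\frac{\bar\varepsilon^2}{\bar\rho}\Bigl(\Gamma-\bar v\,(D\bar u+\Gamma\bar v)\Bigr).
\]
These are the tangential projections onto $S^3$ of the vector $(\bar\varepsilon^2/\bar\rho)(D,\Gamma)$. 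The $\bar\rho$- and $\bar\varepsilon$-equations are not needed.

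\emph{Step 2: the equation for $g$.} Since $D$ and $\Gamma$ are constant,
\[
g'=\Gamma\bar u'-D\bar v'=\frac{\bar\varepsilon^2}{\bar\rho}\Bigl(\Gamma D-D\Gamma-(D\bar u+\Gamma\bar v)(\Gamma\bar u-D\bar v)\Bigr)=-\frac{\bar\varepsilon^2}{\bar\rho}\,(D\bar u+\Gamma\bar v)\,g .
\]
This is the same mechanism as the identity $Q'=\frac{y}{2z}Q$ in the proof of Proposition~\ref{prop:singular-middle-connection}.

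\emph{Step 3: invariance.} On any solution segment with $\bar\rho>0$, the coefficient $\kappa=-\bar\varepsilon^2(D\bar u+\Gamma\bar v)/\bar\rho$ is continuous along the trajectory. Therefore $g(\tau)=g(\tau_0)\exp\bigl(\int_{\tau_0}^{\tau}\kappa\bigr)$, and $g(\tau_0)=0$ forces $g\equiv0$ in both time directions.

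\emph{Main obstacle.} The only delicate point is the face $\bar\rho=0$, where the vector field is singular. There I would use the desingularized field obtained by multiplying by $\bar\rho$, the same time reparametrization used elsewhere in the paper. Under it, $g$ satisfies $\dot g=-\bar\varepsilon^2(D\bar u+\Gamma\bar v)\,g$, whose coefficient is smooth. On $\{r=0\}$ the desingularized reduced field is polynomial in $(\bar\rho,\bar u,\bar v,\bar\varepsilon)$, hence Lipschitz, so uniqueness holds and the same conclusion extends to the closure. Trajectories of the original and desingularized fields coincide as sets wherever $\bar\rho>0$, so invariance of $\mathcal M_0$ holds for the reduced flow.
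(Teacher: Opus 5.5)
Your proposal is correct and follows the paper's proof. Both arguments rest on the identity $g'=-(\text{positive factor})\,(D\bar u+\Gamma\bar v)\,g$ with $D,\Gamma$ constant on $\{r=0\}$, and your Step~2 algebra is exactly the paper's computation.

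The only difference is the time variable. The paper works in the time rescaled by $\bar\rho/\bar\varepsilon^{2}$ of \eqref{gen_u_r=0}, where the coefficient is simply $-(D\bar u+\Gamma\bar v)$ and stays regular even on the equator $\bar\rho=\bar\varepsilon=0$; your $\bar\rho$-desingularized field vanishes identically there. You instead stay in $\tau$ and spell out the linear-ODE step that the paper leaves implicit.
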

\begin{proof}
Since $D$ and $\Gamma$ are constant along the reduced flow, direct
differentiation gives
\begin{align}
\frac{dg}{d\tau}
&=
\Gamma\frac{d\bar u}{d\tau}
-
D\frac{d\bar v}{d\tau}\\
&=
\Gamma
\left[
(1-\bar u^2)D-\bar u\bar v\Gamma
\right]
-
D
\left[
-\bar u\bar vD+(1-\bar v^2)\Gamma
\right]=
-(D\bar u+\Gamma\bar v)
(\Gamma\bar u-D\bar v)=
-(D\bar u+\Gamma\bar v)g.
\label{eq:g-evolution}
\end{align}
Thus $g=0$ is preserved by the reduced flow.
\end{proof}
The reduced problem contains two distinguished equilibrium branches.
\begin{prop}[Equilibrium branches on the blown-up sphere]
\label{prop:equilibrium-branches}
Assume $R>0$. Define
\begin{equation}
\mathcal E_-
=
\left\{
\begin{aligned}
&r=0,\qquad
\bar\rho=0,\qquad
\bar\varepsilon=0,
&\bar u=-\frac{D}{R},\qquad
\bar v=-\frac{\Gamma}{R}
\end{aligned}
\right\},
\label{eq:E-minus}
\end{equation}
and
\begin{equation}
\mathcal E_+
=
\left\{
\begin{aligned}
&r=0,\qquad
\bar\rho=0,\qquad
\bar\varepsilon=0,
&\bar u=\frac{D}{R},\qquad
\bar v=\frac{\Gamma}{R}
\end{aligned}
\right\}.
\label{eq:E-plus}
\end{equation}
The variables $(w_1,w_2,w_3,\xi)$ are free parameters on each set.
Consequently, $\mathcal E_-$ and $\mathcal E_+$ are four-dimensional
sets of equilibria of the reduced blown-up system. If $D<0$, then
\begin{equation}
-\frac{D}{R}>0,
\qquad
\frac{D}{R}<0.
\label{eq:sign-E-branches}
\end{equation}
Hence $\mathcal E_-$ lies in the positive-$\bar u$ region represented
by the entry coordinates, whereas $\mathcal E_+$ lies in the
negative-$\bar u$ region represented by the exit coordinates.
\end{prop}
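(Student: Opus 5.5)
The plan is to verify directly that every point of $\mathcal E_\pm$ annihilates the right-hand side of the reduced blown-up vector field, and then to read off the sign statement from $D<0$. First I fix what "reduced blown-up system" means here. I restrict \eqref{general_GSPT_long} to the invariant boundary $\{r=0\}$, after the usual desingularization (multiplication by a positive power of $\bar\rho$ to clear the $1/\bar\rho$ and $\bar\rho^{-\alpha-1}$ factors). On $r=0$ the equations for $w_1,w_2,w_3,\xi$ have right-hand sides proportional to $r$, so these variables are constant. This gives the claim that $(w_1,w_2,w_3,\xi)$ are free parameters, and hence, together with $D,\Gamma$ from \eqref{eq:D-Gamma-R}, constants along the reduced flow.

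The key step is to identify the reduced equations on the sub-boundary $\bar\rho=\bar\varepsilon=0$, which are the ones already used in the proof of Lemma~\ref{lem:reduced-invariant-relation}:
\begin{align*}
\frac{d\bar u}{d\tau}=(1-\bar u^2)D-\bar u\bar v\Gamma,\qquad
\frac{d\bar v}{d\tau}=-\bar u\bar vD+(1-\bar v^2)\Gamma .
\end{align*}
I will derive them from \eqref{general_GSPT_long} by setting $r=0$, multiplying by $\bar\rho/\bar\varepsilon^2$, substituting $\xi w_1-w_2=D$ and $v_\delta w_1-w_3=\Gamma$, and then letting $\bar\rho,\bar\varepsilon\to0$ with $\bar u^2+\bar v^2=1$. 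Under the same time rescaling, the $\bar\rho$ and $\bar\varepsilon$ equations carry overall factors $\bar\rho$ and $\bar\varepsilon$ respectively, since $\bar\rho'\propto \bar\rho(\ldots)$ after desingularization and $\bar\varepsilon'=-\bar\varepsilon\, r'/r$. Consequently $\bar\rho=0$ and $\bar\varepsilon=0$ are invariant, and their equations vanish there. This bookkeeping, tracking which powers of $\bar\rho$ and $\bar\varepsilon$ multiply each term and confirming that the $A$-term (carrying $r^\alpha$) drops out at $r=0$, is the main obstacle, and I would do it carefully term by term.

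It then remains to evaluate the planar field at $(\bar u,\bar v)=\pm(D,\Gamma)/R$. Here $\bar u^2+\bar v^2=1$ and $D\bar u+\Gamma\bar v=\pm R$. The $\bar u$-component equals
\[
D-\bar u(D\bar u+\Gamma\bar v)=D\mp \tfrac{D}{R}R=0,
\]
and the $\bar v$-component equals
\[
\Gamma-\bar v(D\bar u+\Gamma\bar v)=0
\]
in the same way. So each point is an equilibrium. These points also lie in $\mathcal M_0$, because $\Gamma\bar u-D\bar v=0$ there. Since the four parameters $(w_1,w_2,w_3,\xi)$ vary freely while the other coordinates are fixed functions of them (and $R>0$ keeps them well defined), each $\mathcal E_\pm$ is a smooth four-dimensional set of equilibria.

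Finally, $R>0$ and $D<0$ give $-D/R>0$ and $D/R<0$. Thus $\mathcal E_-$ lies in $\{\bar u>0\}$ and $\mathcal E_+$ in $\{\bar u<0\}$, matching the limits $P_L,P_R$ of Proposition~\ref{prop:outer-sphere-limits}, with $D<0$ supplied by Lemma~\ref{lem:sign_xiw}.
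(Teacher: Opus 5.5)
Your proposal is correct and follows essentially the paper's argument. You restrict to $r=\bar\rho=\bar\varepsilon=0$ in the $\bar\rho/\bar\varepsilon^2$-rescaled reduced system \eqref{gen_u_r=0}, note that $(w_1,w_2,w_3,\xi)$ are constant there, check that $\pm(D,\Gamma)/R$ are zeros of the planar $(\bar u,\bar v)$-field, and read off the signs from $D<0$.

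The only difference is cosmetic. The paper locates the two points by intersecting $\Gamma\bar u-D\bar v=0$ with $\bar u^2+\bar v^2=1$ and then asserts that they are equilibria, whereas you verify the vanishing explicitly through $D-\bar u(D\bar u+\Gamma\bar v)=0$. One small correction to your setup: it is the division by $\bar\varepsilon^2$, not merely multiplication by a power of $\bar\rho$, that makes these isolated equilibria on the circle; without it the whole equatorial circle would consist of equilibria.
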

\begin{proof}
On the set
\begin{equation}
r=0,
\qquad
\bar\rho=0,
\qquad
\bar\varepsilon=0,
\label{eq:equatorial-set}
\end{equation}
the spherical constraint reduces to
\begin{equation}
\bar u^2+\bar v^2=1.
\label{eq:equatorial-sphere}
\end{equation}
The additional relation
\begin{equation}
\Gamma\bar u-D\bar v=0
\label{eq:equatorial-invariant-relation}
\end{equation}
implies that $(\bar u,\bar v)$ is parallel to $(D,\Gamma)$. Therefore,
there exists $\lambda\in\mathbb R$ such that
$
(\bar u,\bar v)=\lambda(D,\Gamma).
$ Using \eqref{eq:equatorial-sphere}, we obtain
\begin{equation}
1
=
\lambda^2(D^2+\Gamma^2)
=
\lambda^2R^2,
\end{equation}
and hence $\lambda=\pm\frac{1}{R}.$ Thus the only points satisfying
\eqref{eq:equatorial-set}--\eqref{eq:equatorial-invariant-relation}
are $(\bar u,\bar v)
=\left(
-\frac{D}{R},
-\frac{\Gamma}{R}
\right),$ 
and $(\bar u,\bar v)
=\left(
\frac{D}{R},
\frac{\Gamma}{R}
\right).$ Substitution into the reduced equations shows that all directional derivatives vanish at these points. Moreover,
$w_1,w_2,w_3,$ and $\xi$ are constant in the reduced problem.
Therefore, each choice of $(w_1,w_2,w_3,\xi)$ determines one point on
each equilibrium branch, so both branches are four-dimensional.

Finally, if $D<0$, then $R>0$ gives $-D/R>0$ and $D/R<0$.
The first branch therefore lies in the positive-$\bar u$ region
represented by the entry coordinates, while the second lies in the
negative-$\bar u$ region represented by the exit coordinates.
\end{proof}
\subsection{Persistence of the singular configuration}
\label{subsec:persistence-singular-configuration}
The preceding analysis constructs the reduced outer trajectories and the
explicit reduced middle passage. We now study the persistence and matching
of these orbit segments for sufficiently small $\varepsilon>0$.

The argument separates into three parts. First, the outer unstable and
stable manifolds persist up to fixed transverse sections lying away from
the singular corner. Second, the blown-up chart dynamics provide local
transition maps through the entry, central, and exit regions. Third, the
resulting maps are matched by means of the nondegenerate scalar condition
that determines the parameter $\lambda$ of the middle orbit.

The reduced singular construction above is valid under the hypotheses
of the corresponding outer-orbit propositions.  The positive-
$\varepsilon$ persistence argument developed below is carried out for
$\alpha\in\mathbb{Q}\cap(0,1].$ For $\alpha=\frac12$ and $\alpha=1$, the central weighted vector field is smooth in the original weighted corner variables.  For a general rational exponent, we introduce a ramified weighted coordinate that converts the fractional pressure term into an integer power. The standard smooth Corner Lemma is not invoked here for irrational values of $\alpha$.

\begin{prop}[Persistence of the outer orbit segments]
\label{prop:outer-persistence}
Let the hypotheses of
Propositions~\ref{prop:left-reduced-orbit} and
\ref{prop:right-reduced-orbit} hold, and let $c$ denote the
singular-shock speed. Then there exist compact intervals $I_L$ and
$I_R$, each containing $c$ in its interior, such that
\begin{equation}
I_L
\Subset
\left(
-\infty,\lambda_1(H_L)
\right),
\qquad
I_R
\Subset
\left(
\lambda_3(H_R),+\infty
\right).
\label{eq:outer-xi-intervals}
\end{equation}
Define
\begin{equation}
S_{0,L}
=
\left\{
(H,V,\xi):
H=H_L,\;
V=0,\;
\xi\in I_L
\right\}
\label{eq:S0L-compact}
\end{equation}
and
\begin{equation}
S_{2,R}
=
\left\{
(H,V,\xi):
H=H_R,\;
V=0,\;
\xi\in I_R
\right\}.
\label{eq:S2R-compact}
\end{equation}

There exists $\kappa_0>0$ such that, for every fixed
$0<\kappa<\kappa_0$, the sections
\begin{equation}
\Sigma_L^\kappa
=
\left\{
u-\xi=\kappa
\right\},
\qquad
\Sigma_R^\kappa
=
\left\{
\xi-u=\kappa
\right\}
\label{eq:outer-kappa-sections}
\end{equation}
lie in the outer region and in the domains of the entry and exit
coordinate charts, respectively.

For every sufficiently small $\varepsilon>0$, the locally invariant
perturbation of $S_{0,L}$ possesses a local unstable manifold whose
forward image intersects $\Sigma_L^\kappa$ transversely. Similarly,
the locally invariant perturbation of $S_{2,R}$ possesses a local
stable manifold whose backward image intersects
$\Sigma_R^\kappa$ transversely. The resulting intersection manifolds
depend $C^1$-smoothly on $\varepsilon$.

At $\varepsilon=0$, the distinguished left and right reduced outer
trajectories intersect their respective sections at unique points
$Q_L^0$ and $Q_R^0$.
\end{prop}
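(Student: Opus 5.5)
The plan is to rely on Fenichel theory applied to the fast system \eqref{GSPT_fast_sys}, treating $\varepsilon$ as a parameter, together with smooth dependence of solutions on initial data and parameters over finite fast times.

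\emph{Step 1: the intervals $I_L$ and $I_R$.} By Lemma~\ref{lem:sign_xiw} and \eqref{eq:overcompressive-speed}, $c<\lambda_1(H_L)$ and $c>\lambda_3(H_R)$ strictly. I would take $I_L=[c-\sigma,c+\sigma]$ and $I_R=[c-\sigma,c+\sigma]$ with $\sigma$ smaller than half of each gap.

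\emph{Step 2: normal hyperbolicity.} On $S_{0,L}$ the three normal eigenvalues $\lambda_i(H_L)-\xi$ are bounded below by a positive constant. On $S_{2,R}$ they are bounded above by a negative constant. Hence the compact manifolds with boundary $S_{0,L}$ and $S_{2,R}$ are normally hyperbolic. Fenichel's theorems, applied to the extended system with $\varepsilon$ as a state variable, then give three things:
\begin{enumerate}
\item locally invariant perturbations of $S_{0,L}$ and $S_{2,R}$;
\item local unstable and stable manifolds $\mathcal W^u_\varepsilon$ and $\mathcal W^s_\varepsilon$;
\item $C^1$ dependence of these manifolds on $\varepsilon$.
\end{enumerate}
At $\varepsilon=0$ they reduce to \eqref{eq:left-outer-manifold-explicit} and \eqref{eq:right-outer-manifold-explicit}.

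\emph{Step 3: the sections.} To place $\Sigma_L^\kappa$ and $\Sigma_R^\kappa$, I would use the asymptotics from Propositions~\ref{prop:left-reduced-orbit} and~\ref{prop:right-reduced-orbit}.
\begin{enumerate}
\item Along the left reduced orbit, $u-c=-B_L\zeta+o(|\zeta|)$. For small $\kappa$ this orbit crosses $u-c=\kappa$ at some $\zeta_\kappa\to0^-$.
\item Near that crossing $d(u-c)/d\zeta=-B_L+o(1)<0$, so the crossing is transverse and, for $\kappa$ small, unique. This defines $Q_L^0$.
\item Because $\rho\to\infty$ and $v\to v_\delta$ there, $Q_L^0$ lies near the blown-up corner, inside the domain of the entry chart where $\bar u>0$ (Proposition~\ref{prop:equilibrium-branches}).
\item The right section and the point $Q_R^0$ are handled symmetrically, using $c-u_R(\zeta)=B_R\zeta+o(\zeta)$ and the exit chart.
\end{enumerate}
For uniqueness on the whole orbit, not just near $\zeta=0$, I would invoke the trapping regions. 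In region $\mathcal R_L$ the quantity $y=u-c$ stays between $(y_L/\mu_L)\mu$ and $y_L$. Choosing $\kappa<\kappa_0$ small enough that $u-c=\kappa$ is met only in the initial monotone segment may require refinement; this is one of the points I would check.

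\emph{Step 4: transversality at $\varepsilon=0$.} The manifold $\widehat{\mathcal W}^u_L$ contains the tangent directions $E_j^L$, which span every $H$-variation. The section $\Sigma_L^\kappa$ depends on $u$ and $\xi$ through the function $u-\xi$. Its differential is nonzero on $E_j^L$ because $\partial u/\partial(\rho u)=1/\rho\neq0$. Therefore the manifold meets the section transversely at $\varepsilon=0$.

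\emph{Step 5: carrying the intersection to $\varepsilon>0$.} The local unstable manifold reaches the neighbourhood of $Q_L^0$ in finite fast time. On that finite time interval the flow is $C^1$ in $(\text{data},\varepsilon)$, so the forward image of $\mathcal W^u_\varepsilon$ is $C^1$-close to the $\varepsilon=0$ image. Transversality is an open condition, so it persists. The implicit function theorem then gives intersection manifolds depending $C^1$ on $\varepsilon$. The right side is treated by time reversal.

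\textbf{Main obstacle.} The main difficulty is the finite-time step. The reduced orbit emanates from the equilibrium $H_L$ only as $\zeta\to-\infty$, and the rescaled time $d\tau=\rho\,d\zeta$ must be controlled. I would handle this in two stages.
\begin{enumerate}
\item Start from a point on the Fenichel local unstable manifold within a fixed distance of $S_{0,L}$.
\item Flow for a bounded $\tau$-time. This time is bounded because $\rho$ stays bounded until $\kappa$ is reached, as $1/\rho\gtrsim\kappa^2$.
\end{enumerate}
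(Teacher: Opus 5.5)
Your proposal is correct and follows essentially the paper's route. The shared steps are: strict overcompressivity to choose $I_L,I_R$; uniform normal hyperbolicity and Fenichel persistence; locating $Q_L^0,Q_R^0$ from $u-c=-B_L\zeta+o(|\zeta|)$ and $c-u=B_R\zeta+o(\zeta)$; transversality at $\varepsilon=0$; and openness of transversality plus $C^1$ finite-time dependence on a compact segment away from $\rho=+\infty$, closed by the implicit function theorem.

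The differences are minor. The paper checks transversality via the reduced vector field, which is tangent to $\mathcal W_0^u(S_{0,L})$ but crosses $\Sigma_L^\kappa$ since $\frac{d}{d\zeta}(u-c)<-B_L/2$. You instead use the $H$-directions $E_j^L$ of the graph $\{W=F(H_L)-\xi H_L\}$ and $\partial u/\partial(\rho u)=1/\rho\neq0$; both arguments are valid. The global uniqueness of $Q_L^0$ that you flag is proved in the paper only on $(\zeta_L,0)$, but it does close via the trapping region: $y>0$ for $\eta>0$ and $y\to y_L>0$ give $\inf_{\eta\ge-\zeta_L}y>0$, so any $\kappa_0$ below this infimum works.
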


\begin{proof}
The overcompressivity inequalities imply $\lambda_3(H_R)<c<\lambda_1(H_L).$ Since both inequalities are strict, compact intervals $I_L$ and $I_R$ satisfying \eqref{eq:outer-xi-intervals} may be chosen with
$c$ in their interiors.

The fast phase space is
\begin{equation}
\mathcal P
=
\left\{
(H,V,\xi):
H\in\mathbb R^3,\;
V\in\mathbb R^3,\;
\xi\in\mathbb R
\right\},
\end{equation}
and hence
\begin{equation}
\dim\mathcal P=7.
\label{eq:outer-phase-dimension}
\end{equation}
Each of the sets $S_{0,L}$ and $S_{2,R}$ is one-dimensional, since
only the variable $\xi$ is free.

At a point of $S_{0,L}$, the three eigenvalues in the normal fast
directions are $\lambda_j(H_L)-\xi,\ j=1,2,3.$ For $\xi\in I_L$,
\begin{equation}
\lambda_j(H_L)-\xi
\geq
\lambda_1(H_L)-\sup I_L
>0,
\qquad
j=1,2,3.
\label{eq:left-uniform-gap}
\end{equation}
Thus $S_{0,L}$ is uniformly normally hyperbolic and has three
unstable normal directions. Consequently,
\begin{equation}
\dim\mathcal W_0^u(S_{0,L})
=
\dim S_{0,L}+3
=
4.
\label{eq:left-unstable-dimension}
\end{equation}
Similarly, at a point of $S_{2,R}$,
\begin{equation}
\lambda_j(H_R)-\xi
\leq
\lambda_3(H_R)-\inf I_R
<0,
\qquad
j=1,2,3.
\label{eq:right-uniform-gap}
\end{equation}
Hence $S_{2,R}$ is uniformly normally hyperbolic and has three
stable normal directions. Therefore,
\begin{equation}
\dim\mathcal W_0^s(S_{2,R})
=
\dim S_{2,R}+3
=
4.
\label{eq:right-stable-dimension}
\end{equation}
Fenichel theory now yields, for all sufficiently small
$\varepsilon>0$, locally invariant perturbations
$S_{0,L}^\varepsilon$ and $S_{2,R}^\varepsilon$, together with their
local unstable and stable manifolds $\mathcal W_\varepsilon^u(S_{0,L}^\varepsilon),
\ \mathcal W_\varepsilon^s(S_{2,R}^\varepsilon).$ On compact subsets these manifolds depend $C^1$-smoothly on
$\varepsilon$.

We first consider the left reduced orbit. By
Proposition~\ref{prop:left-reduced-orbit},
\begin{equation}
u(\zeta)-c
=
-B_L\zeta+o(|\zeta|)
\label{eq:left-section-asymptotic}
\end{equation}
as $\zeta\to0^-$, where $B_L>0$. Hence $\frac{d}{d\zeta}(u-c)
=-B_L+o(1).$ There exists $\zeta_L<0$ such that
\begin{equation}
\frac{d}{d\zeta}(u-c)
<
-\frac{B_L}{2}
<0
\label{eq:left-section-monotonicity}
\end{equation}
whenever $\zeta_L<\zeta<0$.

Moreover,
\begin{equation}
u(\zeta)-c\longrightarrow0
\qquad
\text{as }\zeta\to0^-.
\end{equation}
After decreasing $\kappa_0$ if necessary, it follows from strict
monotonicity that, for every $0<\kappa<\kappa_0$, there exists a
unique $\zeta_L^\kappa\in(\zeta_L,0)$ satisfying
\begin{equation}
u(\zeta_L^\kappa)-c=\kappa.
\end{equation}
Since $\xi=c$ along the reduced outer orbit, the corresponding point
$Q_L^0$ belongs to $\Sigma_L^\kappa$. Let $\psi_L(H,V,\xi)=u-\xi-\kappa,$
so that $\Sigma_L^\kappa
=\left\{
\psi_L=0
\right\}.$ Since
$\psi_L(H,V,\xi)=u-\xi-\kappa,$ its gradient is nonzero. Therefore
$\Sigma_L^\kappa=\{\psi_L=0\}$ is a smooth codimension-one
hypersurface of $\mathcal P$. In particular, $\dim\Sigma_L^\kappa=6.$

Let $X_0$ denote the reduced fast vector field. Along the
distinguished left reduced orbit, $\xi=c$, so
\begin{equation}
\frac{d}{d\zeta}\psi_L
=
\frac{d}{d\zeta}(u-c).
\end{equation}
Hence, at $\zeta=\zeta_L^\kappa$,
\begin{equation}
\frac{d}{d\zeta}\psi_L
\Big|_{\zeta=\zeta_L^\kappa}
<
-\frac{B_L}{2}
<0.
\label{eq:left-vector-field-crossing}
\end{equation}
It follows that
\begin{equation}
X_0(Q_L^0)
\notin
T_{Q_L^0}\Sigma_L^\kappa.
\label{eq:left-vector-not-section-tangent}
\end{equation}

On the other hand, the distinguished left reduced orbit is contained
in $\mathcal W_0^u(S_{0,L})$. Since this unstable manifold is
invariant under the reduced flow, its tangent space contains the
reduced vector field at every point of that orbit. Thus
\begin{equation}
X_0(Q_L^0)
\in
T_{Q_L^0}\mathcal W_0^u(S_{0,L}).
\label{eq:left-vector-unstable-tangent}
\end{equation}

Combining
\eqref{eq:left-vector-not-section-tangent} and
\eqref{eq:left-vector-unstable-tangent}, we see that
$T_{Q_L^0}\mathcal W_0^u(S_{0,L})$ contains a direction that does
not belong to $T_{Q_L^0}\Sigma_L^\kappa$. Since
$T_{Q_L^0}\Sigma_L^\kappa$ has codimension one in
$T_{Q_L^0}\mathcal P$, the addition of any vector outside it spans
the missing normal direction. Consequently,
\begin{equation}
T_{Q_L^0}\mathcal W_0^u(S_{0,L})
+
T_{Q_L^0}\Sigma_L^\kappa
=
T_{Q_L^0}\mathcal P.
\label{eq:left-outer-section-transversality}
\end{equation}
Thus $\mathcal W_0^u(S_{0,L})$ intersects
$\Sigma_L^\kappa$ transversely.

The dimension of the transverse intersection is therefore
\begin{equation}
\dim\left(
\mathcal W_0^u(S_{0,L})
\cap
\Sigma_L^\kappa
\right)
=
4+6-7
=
3.
\label{eq:left-intersection-dimension}
\end{equation}
The uniqueness asserted in the proposition concerns the point at
which the distinguished reduced orbit meets this three-dimensional
intersection manifold, not uniqueness of the full manifold
intersection.

Choose a compact segment of the distinguished reduced left orbit
joining a neighborhood of the local unstable manifold of
$S_{0,L}$ to the point $Q_L^0\in\Sigma_L^\kappa$. Since $\kappa>0$ is fixed, this orbit segment remains a positive
distance from the singular endpoint $\rho=+\infty$. Consequently,
the reduced vector field is smooth on a compact neighborhood of the
segment.

Choose a transverse local section near the initial endpoint of the
segment (intersecting the local unstable manifold) and another
transverse section through $Q_L^0$. The finite-time flow between
these two sections is well defined on a neighborhood of the initial section and depends $C^1$-smoothly on the initial point and on $\varepsilon$. The local
unstable manifold
$\mathcal W_\varepsilon^u(S_{0,L}^\varepsilon)$ is
$C^1$-close to $\mathcal W_0^u(S_{0,L})$. Its forward image under
the finite-time flow is therefore $C^1$-close to the corresponding
reduced image near $Q_L^0$.

Since $\mathcal W_\varepsilon^u(S_{0,L}^\varepsilon)$ depends
$C^1$-smoothly on $\varepsilon$, its tangent spaces vary
continuously with $\varepsilon$. Therefore the transversality
condition \eqref{eq:left-outer-section-transversality} persists for
all sufficiently small $\varepsilon>0$. Hence
$\mathcal W_\varepsilon^u(S_{0,L}^\varepsilon)$ intersects
$\Sigma_L^\kappa$ transversely. By the parameter-dependent implicit
function theorem, the resulting three-dimensional intersection
manifold depends $C^1$-smoothly on $\varepsilon$.

We next consider the right reduced orbit. By
Proposition~\ref{prop:right-reduced-orbit},
\begin{equation}
c-u(\zeta)
=
B_R\zeta+o(\zeta)
\label{eq:right-section-asymptotic}
\end{equation}
as $\zeta\to0^+$, where $B_R>0$. Thus $\frac{d}{d\zeta}(c-u)
=B_R+o(1).$ There exists $\zeta_R>0$ such that
\begin{equation}
\frac{d}{d\zeta}(c-u)
>
\frac{B_R}{2}
>0
\label{eq:right-section-monotonicity}
\end{equation}
for $0<\zeta<\zeta_R$.

After decreasing $\kappa_0$ once more if necessary, strict
monotonicity implies that, for every $0<\kappa<\kappa_0$, there is a
unique $\zeta_R^\kappa\in(0,\zeta_R)$ satisfying $c-u(\zeta_R^\kappa)=\kappa.$ Because $\xi=c$ along the reduced right orbit, the corresponding point $Q_R^0$ lies on $\Sigma_R^\kappa$. Define
$\psi_R(H,V,\xi)=\xi-u-\kappa.$ Then $\Sigma_R^\kappa
=\left\{\psi_R=0
\right\}$ is a smooth codimension-one hypersurface of $\mathcal P$, and hence $\dim\Sigma_R^\kappa=6.$ At $Q_R^0$,
\begin{equation}
d\psi_R(X_0(Q_R^0))
=
\frac{d}{d\zeta}(c-u)
>
\frac{B_R}{2}
>0.
\label{eq:right-vector-field-crossing}
\end{equation}
Therefore,
\begin{equation}
X_0(Q_R^0)
\notin
T_{Q_R^0}\Sigma_R^\kappa.
\end{equation}

The distinguished right reduced orbit is contained in
$\mathcal W_0^s(S_{2,R})$. Since the stable manifold is invariant
under the reduced flow,
\begin{equation}
X_0(Q_R^0)
\in T_{Q_R^0}\mathcal W_0^s(S_{2,R}).
\end{equation}
Thus the stable tangent space contains a direction transverse to the
codimension-one section, and consequently
\begin{equation}
T_{Q_R^0}\mathcal W_0^s(S_{2,R})+T_{Q_R^0}\Sigma_R^\kappa=T_{Q_R^0}\mathcal P.
\label{eq:right-outer-section-transversality}
\end{equation}
Hence $\mathcal W_0^s(S_{2,R})$ intersects
$\Sigma_R^\kappa$ transversely, and
\begin{equation}
\dim\left(
\mathcal W_0^s(S_{2,R})
\cap
\Sigma_R^\kappa
\right)
=
4+6-7
=
3.
\label{eq:right-intersection-dimension}
\end{equation}

Choose a compact segment of the distinguished reduced right orbit
joining $Q_R^0$ to a neighborhood of the local stable manifold of
$S_{2,R}$. This segment also remains a positive distance from the
singular endpoint. Applying the same finite-time flow,
Fenichel-persistence, openness-of-transversality, and
parameter-dependent implicit-function arguments as on the left shows
that the backward image of
$\mathcal W_\varepsilon^s(S_{2,R}^\varepsilon)$ intersects
$\Sigma_R^\kappa$ transversely and that the resulting
three-dimensional intersection manifold depends $C^1$-smoothly on
$\varepsilon$.

This completes the proof.
\end{proof}

\begin{remark}
\label{rem:outer-persistence-not-corner}
Proposition~\ref{prop:outer-persistence} constructs only the
persistent outer pieces of the singular concatenation. The perturbed
unstable and stable manifolds are followed only to the fixed sections
$\Sigma_L^\kappa$ and $\Sigma_R^\kappa$. Since $\kappa>0$ is fixed,
these sections remain a positive distance from the singular corner.

No application of the Corner Lemma is made in the proof above. The
continuation from the fixed outer sections to the inner overlap
sections requires a separate analysis in the entry and exit charts
and will be established below using Schecter's Corner Lemma.
\end{remark}
\subsubsection{Reduced dynamics on the blown-up boundary}
\label{subsubsec:reduced-boundary-dynamics}
The blow-up transformation separates the singular point into the sphere $S^3$. To understand the limiting behavior of the trajectories, we first consider the reduced problem obtained by setting $r=0$, which describes the leading-order dynamics in the singular layer and serves as the starting point for the geometric construction of the viscous profile.

Assume that $\bar{\rho}>0$ and $\bar{\varepsilon}>0$. Setting $r=0$ and rescaling time by 
$\frac{\overline{\rho}}{\overline{\eps}^2},$
(which amounts to multiplying the right-hand side of the system by
$\bar{\rho}/\bar{\varepsilon}^2$), and, for notational simplicity,
continuing to denote the new time variable by $\tau$, yields
\begin{align}
\begin{cases}
\displaystyle
\frac{d\overline{\rho}}{d\tau}
= -\overline{u}\,\overline{\rho}
(\xi w_1-w_2)
-\overline{v}\,\overline{\rho}
\left(\frac{k_3}{k_1}w_1-w_3\right),
\\[0.8em]
\displaystyle
\frac{d\overline{u}}{d\tau}
= (1-\overline{u}^2)(\xi w_1-w_2)
-\overline{u}\,\overline{v}
\left(\frac{k_3}{k_1}w_1-w_3\right),
\\[0.8em]
\displaystyle
\frac{d\overline{v}}{d\tau}
= -\overline{v}\,\overline{u}
(\xi w_1-w_2)
+(1-\overline{v}^2)
\left(\frac{k_3}{k_1}w_1-w_3\right),
\\[0.8em]
\displaystyle
\frac{d\overline{\eps}}{d\tau}
= -\overline{u}\,\overline{\eps}
(\xi w_1-w_2)
-\overline{\eps}\,\overline{v}
\left(\frac{k_3}{k_1}w_1-w_3\right),
\\[0.8em]
\displaystyle
\frac{dw_1}{d\tau}
=
\frac{dw_2}{d\tau}
=
\frac{dw_3}{d\tau}
=
\frac{d\xi}{d\tau}
=0.
    \end{cases}
    \label{gen_u_r=0}
\end{align}
Along the reduced flow,
$w_1,\;w_2,\;w_3,\;\xi$
remain constant. Consequently, the dynamics are completely determined by
$(\bar\rho,\bar u,\bar v,\bar\varepsilon),$
with the remaining variables acting as parameters inherited from the outer solution. Our objective is to identify a distinguished orbit of the reduced system that connects the incoming and outgoing states associated with the $\delta$-shock. To accomplish this, we first identify an invariant manifold on which the reduced dynamics simplify considerably.

\subsubsection{Entry-region passage}
\label{subsubsec:entry-region-passage}

The reduced outer trajectory constructed above approaches the blown-up
boundary through the region where $u-\xi>0$. We therefore introduce the
entry directional chart, denoted by $K_{\mathrm{in}}$, corresponding to
the positive-$\bar u$ direction. This chart resolves the approach to
$(\rho_0,\varepsilon)=(0,0)$ from the side $u-\xi>0$.

The coordinates in $K_{\mathrm{in}}$ are defined by
\begin{equation}
u-\xi=r_0,\ \ \ 
\rho_0=r_0\bar\rho_0,\ \ \ 
v-v_\delta=r_0\bar v_0,\ \ \ 
\varepsilon=r_0^2\bar\varepsilon_0^2.
\label{eq:entry-coordinates}
\end{equation}
\begin{lemma}[Left outer endpoint in the entry chart]
\label{lem:left-outer-endpoint-entry}

The left outer endpoint $P_L$ is represented in the entry chart
$K_{\mathrm{in}}$ by
\begin{equation}
P_L^{\mathrm{in}}
=
\left(
0,\,
0,\,
\frac{\Gamma_L}{D_L},\,
0,\,
W_L,\,
c
\right),
\label{eq:PL-entry-chart}
\end{equation}
where the coordinates are ordered as $(r_0,\bar\rho_0,\bar v_0,\bar\varepsilon_0,W,\xi).$
\end{lemma}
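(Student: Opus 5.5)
The plan is to compute the limit of the left reduced outer trajectory of Proposition~\ref{prop:left-reduced-orbit} directly in the entry coordinates \eqref{eq:entry-coordinates}. I will then check that this agrees with $P_L$ of Proposition~\ref{prop:outer-sphere-limits} under the change of charts from the spherical coordinates to $K_{\mathrm{in}}$.

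\textbf{Step 1: the chart is well defined near the endpoint.} Along the left reduced orbit we have $\varepsilon=0$, $\xi\equiv c$ and $W\equiv W_L$, because $dW/d\tau=-\varepsilon H$ and $d\xi/d\tau=\varepsilon$ both vanish. The entry coordinate is $r_0=u-\xi=u(\zeta)-c$. By \eqref{eq:left-inner-asymptotics-rho-u} and $B_L>0$ (Lemma~\ref{lem:sign_xiw}), $r_0=-B_L\zeta+o(|\zeta|)=D_L\zeta+o(|\zeta|)$. This is strictly positive for $\zeta<0$ near $0$ and tends to $0^+$ as $\zeta\to0^-$. Hence the trajectory lies in the domain $u-\xi>0$ of $K_{\mathrm{in}}$, and the chart divisions by $r_0$ are legitimate for all such $\zeta$.

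\textbf{Step 2: the remaining coordinates.} Since $\varepsilon=0$ and $\rho(\zeta)<\infty$ for $\zeta<0$, the rescaled density is $\rho_0=\varepsilon\rho=0$. It follows that $\bar\rho_0=\rho_0/r_0=0$ and $\bar\varepsilon_0^2=\varepsilon/r_0^2=0$ identically along the orbit. For the transported variable, \eqref{eq:left-inner-asymptotics-v} gives
\[
\bar v_0(\zeta)=\frac{v(\zeta)-v_\delta}{u(\zeta)-c}=\frac{\Gamma_L\zeta+o(|\zeta|)}{D_L\zeta+o(|\zeta|)}\longrightarrow\frac{\Gamma_L}{D_L}\qquad(\zeta\to0^-).
\]
The limit is finite because $D_L\neq0$. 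Collecting the components $(r_0,\bar\rho_0,\bar v_0,\bar\varepsilon_0,W,\xi)\to(0,0,\Gamma_L/D_L,0,W_L,c)$ gives exactly \eqref{eq:PL-entry-chart}.

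\textbf{Step 3: consistency with Proposition~\ref{prop:outer-sphere-limits}.} On the overlap $\bar u>0$, comparing \eqref{eq:spherical-blowup-recalled} with \eqref{eq:entry-coordinates} gives the chart transition
\[
r_0=r\bar u,\qquad \bar\rho_0=\bar\rho/\bar u,\qquad \bar v_0=\bar v/\bar u,\qquad \bar\varepsilon_0=\bar\varepsilon/\sqrt{\bar u}.
\]
At $P_L$ we have $\bar u=-D/R>0$, since $D<0$ by Lemma~\ref{lem:sign_xiw} and $D_L=D$ by Lemma~\ref{lem:outer-W-RH}. We also have $\bar v=-\Gamma/R$ and $r=\bar\rho=\bar\varepsilon=0$. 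The transition therefore maps $P_L$ to $(0,0,\Gamma/D,0,W_L,c)$, which is the same point as in Step 2.

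\textbf{Main obstacle.} The only delicate point is to justify that the outer trajectory genuinely lies in the entry chart, so that the $o(|\zeta|)$ remainders do not spoil the quotient defining $\bar v_0$. This is what the strict sign $D_L=-B_L<0$ secures. Given the asymptotics already established in Proposition~\ref{prop:left-reduced-orbit}, the rest of the argument is bookkeeping.
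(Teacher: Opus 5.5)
Your argument is correct. Step 3 is exactly the paper's proof: the paper writes the spherical-to-entry transition $r_0=r\bar u$, $\bar\rho_0=\bar\rho/\bar u$, $\bar v_0=\bar v/\bar u$, $\bar\varepsilon_0=\bar\varepsilon/\bar u$. It then evaluates this at the point $P_L$ supplied by Proposition~\ref{prop:outer-sphere-limits}, and uses $-D/R>0$ to place $P_L$ in $K_{\mathrm{in}}$.

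Your Steps 1--2 are a genuinely different, more direct route. You bypass the spherical chart and Proposition~\ref{prop:outer-sphere-limits} altogether and read the limit off the asymptotics \eqref{eq:left-inner-asymptotics-rho-u}--\eqref{eq:left-inner-asymptotics-v}. In this form, $\bar v_0$ is simply the ratio of the leading coefficients of $v-v_\delta$ and $u-c$. This buys two things:
\begin{enumerate}
\item It shows directly that the trajectory itself, not merely its limit point, lies in the chart domain $u-\xi>0$ for $\zeta$ near $0^-$ and converges to $P_L^{\mathrm{in}}$ in the entry coordinates. The paper's route gives this only by continuity from $\bar u(P_L)>0$.
\item It produces $\Gamma_L/D_L$ with the subscripts exactly as in the statement, without needing Lemma~\ref{lem:outer-W-RH} to identify $D_L,\Gamma_L$ with the common $D,\Gamma$.
\end{enumerate}
The paper's route is shorter, but only because the limit on the sphere has already been computed.

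One small slip in Step 3 needs fixing. Since $\varepsilon=r^2\bar\varepsilon^2=r_0^2\bar\varepsilon_0^2$ and $r_0=r\bar u$, the correct transition is $\bar\varepsilon_0=\bar\varepsilon/\bar u$, not $\bar\varepsilon/\sqrt{\bar u}$. Your own Step 2 formula $\bar\varepsilon_0^2=\varepsilon/r_0^2$ agrees with the correct version. The slip is harmless here because $\bar\varepsilon=0$ at $P_L$.
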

\begin{proof}
The spherical and entry-chart coordinates are related by
\begin{equation}
r_0=r\bar u,
\qquad
\bar\rho_0=\frac{\bar\rho}{\bar u},
\qquad
\bar v_0=\frac{\bar v}{\bar u},
\qquad
\bar\varepsilon_0=\frac{\bar\varepsilon}{\bar u}.
\label{eq:spherical-to-entry}
\end{equation}
By Proposition~\ref{prop:outer-sphere-limits}, at $P_L$,
$r=0, \ \bar\rho=0, \ \bar u=-\frac{D}{R}, \ \bar v=-\frac{\Gamma}{R}, \ \bar\varepsilon=0, \ W=W_L, \ \xi=c.$ Since $D<0$, one has $-D/R>0$, so $P_L$ belongs to
$K_{\mathrm{in}}$. Moreover,
$\bar v_0
=\frac{\bar v}{\bar u}=
\frac{-\Gamma/R}{-D/R}
=\frac{\Gamma}{D}.$
Also, $r_0=0,
\ \bar\rho_0=0,
\ \bar\varepsilon_0=0.$
This proves \eqref{eq:PL-entry-chart}.
\end{proof}
For $\alpha=1$, the resulting vector field is smooth in $r_0$ up to
$r_0=0$. For $\alpha=\frac12$, the pressure term contains
$r_0^{1/2}$, and a weighted radial coordinate will be introduced below.
We first record the entry-region equations for both exponents.

Rewriting system~\eqref{GSPT_eps_sys} in the entry coordinates and
applying the chain rule yields \begin{equation}
\begin{cases}
\displaystyle
\frac{dr_0}{d\tau}
=
r_0\,\Phi_1,
\\[2ex]
\displaystyle
\frac{d\bar\varepsilon_0}{d\tau}
=
-\bar\varepsilon_0\,\Phi_1,
\\[2ex]
\displaystyle
\frac{d\xi}{d\tau}
=
r_0^2\bar\varepsilon_0^2,
\\[2ex]
\displaystyle
\frac{du}{d\tau}
=
r_0^2\bar\varepsilon_0^2
+r_0\,\Phi_1,
\\[2ex]
\displaystyle
\frac{d\bar\rho_0}{d\tau}
=
-\bar\rho_0\,\Phi_1
+r_0\bar\rho_0
-r_0\bar\varepsilon_0^2w_1,
\\[2ex]
\displaystyle
\frac{d\bar v_0}{d\tau}
=
-\bar v_0\,\Phi_1
+\frac{\bar\varepsilon_0^2}{\bar\rho_0}
\left[
\left(\frac{k_3}{k_1}+r_0\bar v_0\right)w_1-w_3
\right],
\\[2ex]
\displaystyle
\frac{dw_1}{d\tau}
=
-r_0\bar\rho_0,
\\[2ex]
\displaystyle
\frac{dw_2}{d\tau}
=
-r_0\bar\rho_0(\xi+r_0),
\\[2ex]
\displaystyle
\frac{dw_3}{d\tau}
=
-r_0\bar\rho_0
\left(\frac{k_3}{k_1}+r_0\bar v_0\right),
\end{cases}\label{eq:chart1_full}
\end{equation}
where
\begin{equation}\label{eq:Phi1}
\Phi_1
=
\frac{\bar\varepsilon_0^2}{\bar\rho_0}
\left[(\xi+r_0)w_1-w_2\right]
-
A\!\left(\frac{k_3}{k_1}+r_0\bar v_0\right)
\frac{r_0^\alpha\bar\varepsilon_0^{2\alpha+2}}
{\bar\rho_0^{\alpha+1}}
-r_0\bar\varepsilon_0^2 .
\end{equation}
\begin{lemma}[Regularity of the entry-region vector field]
\label{lem:entry-regularity}

Let
$$
\alpha=\frac{p}{q}\in\mathbb Q\cap(0,1],
\qquad
\gcd(p,q)=1.
$$
Restrict attention to a compact set on which $\bar\rho_0$ is bounded
away from zero. Introduce the ramified radial coordinate
\begin{equation}
r_0=s_0^q,
\qquad
s_0\geq0.
\label{eq:r0-s0}
\end{equation}
Then the entry-region system, written in the variables $(s_0,\bar\rho_0,\bar v_0,\bar\varepsilon_0,
w_1,w_2,w_3,\xi),$ extends smoothly to $s_0=0$.
\end{lemma}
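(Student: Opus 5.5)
We substitute $r_0=s_0^q$ into \eqref{eq:chart1_full}--\eqref{eq:Phi1} and check, term by term, that every coefficient becomes a polynomial in $s_0$ times a function smooth in the remaining variables on the compact set $K$ where $\bar\rho_0\ge\rho_*>0$. The only non-polynomial dependence on $r_0$ in \eqref{eq:Phi1} is the pressure factor $r_0^\alpha$. With $r_0=s_0^q$ it becomes
\[
r_0^\alpha=s_0^{q\alpha}=s_0^{p},
\]
an integer power. The factors $\bar\rho_0^{-1}$ and $\bar\rho_0^{-(\alpha+1)}$ are smooth on $K$ because $\bar\rho_0$ is bounded below there. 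The composite $A(k_3/k_1+s_0^q\bar v_0)$ is smooth as long as $A$ is smooth near $v_\delta$; for $A(v)=(1-v)^{-p}$ this holds since $v_\delta<1$, after shrinking $K$ so that $v_\delta+s_0^q\bar v_0<1$. Hence $\Phi_1$ is a smooth function $\widetilde\Phi_1(s_0,\bar\rho_0,\bar v_0,\bar\varepsilon_0,W,\xi)$ on $K$.

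Next we treat the radial equation. From $dr_0/d\tau=r_0\Phi_1$ and $dr_0=qs_0^{q-1}ds_0$ we get
\[
\frac{ds_0}{d\tau}=\frac{r_0\Phi_1}{qs_0^{q-1}}=\frac{s_0}{q}\,\widetilde\Phi_1,
\]
which is smooth up to $s_0=0$. This is the one step that needs care, because the division by $s_0^{q-1}$ must be absorbed exactly by the factor $r_0$. The homogeneity $dr_0/d\tau\propto r_0$ makes this work, and it also shows that $\{s_0=0\}$ stays invariant. If the radial equation had a term not proportional to $r_0$, a further time rescaling by $s_0^{q-1}$ would be needed. It does not here, so no desingularization beyond the coordinate change is required.

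The remaining equations are handled by replacing $r_0$ with $s_0^q$ everywhere:
\[
\frac{d\bar\varepsilon_0}{d\tau}=-\bar\varepsilon_0\widetilde\Phi_1,\qquad \frac{d\xi}{d\tau}=s_0^{2q}\bar\varepsilon_0^2,
\]
\[
\frac{d\bar\rho_0}{d\tau}=-\bar\rho_0\widetilde\Phi_1+s_0^q\bar\rho_0-s_0^q\bar\varepsilon_0^2w_1,
\]
\[
\frac{d\bar v_0}{d\tau}=-\bar v_0\widetilde\Phi_1+\bar\varepsilon_0^2\bar\rho_0^{-1}\Big[\big(k_3/k_1+s_0^q\bar v_0\big)w_1-w_3\Big].
\]
The $w_i$ equations are polynomial in $s_0^q$, $\bar\rho_0$, $\bar v_0$ and $\xi$. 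The equation for $u$ is redundant, since $u=\xi+r_0$. Every right-hand side is therefore a composition of polynomials in $s_0$, smooth functions of the other variables, $\bar\rho_0^{-1}$ and $A$, so it is smooth (indeed $C^\infty$) on a neighborhood of $K\cap\{s_0\ge0\}$. It extends smoothly to $s_0=0$ and, if needed, to $s_0<0$.

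For $\alpha=1$ one has $q=1$ and $s_0=r_0$, so nothing changes. For $\alpha=\tfrac12$ one has $q=2$, which recovers the weighted radial coordinate mentioned before the lemma. The main obstacle is therefore the division in the $s_0$ equation, and it is resolved by the factor $r_0$ in $dr_0/d\tau$.
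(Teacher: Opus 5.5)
Your proposal is correct and follows the paper's proof essentially step for step. The substitution $r_0=s_0^q$ turns $r_0^\alpha$ into $s_0^p$; the factor $r_0$ in $dr_0/d\tau=r_0\Phi_1$ absorbs the $s_0^{q-1}$ coming from $dr_0=qs_0^{q-1}ds_0$, giving $ds_0/d\tau=\tfrac1q s_0\widetilde\Phi_1$; and the lower bound on $\bar\rho_0$ makes every remaining right-hand side smooth.

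One caveat applies to the paper's argument as well: the factor $\bar\varepsilon_0^{2\alpha+2}$ in $\Phi_1$ is $C^\infty$ at $\bar\varepsilon_0=0$ only when $2\alpha\in\mathbb Z$. Your claim of joint $C^\infty$ smoothness on a neighborhood of $K\cap\{s_0\ge0\}$ therefore holds only on sets where $\bar\varepsilon_0$ is also bounded away from zero. Alternatively, it can be read, as the paper phrases it, as smoothness in $s_0$ up to $s_0=0$.
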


\begin{proof}
Under the substitution $r_0=s_0^q$, the fractional power appearing
in \eqref{eq:Phi1} becomes
$r_0^\alpha
=\left(s_0^q\right)^{p/q}
=s_0^p.$ Accordingly, define
\begin{equation}
\widetilde\Phi_1
=
\frac{\bar\varepsilon_0^2}{\bar\rho_0}
\left[
(\xi+s_0^q)w_1-w_2
\right]
-
A\left(
v_\delta+s_0^q\bar v_0
\right)
\frac{s_0^p\bar\varepsilon_0^{2\alpha+2}}
{\bar\rho_0^{\alpha+1}}
-
s_0^q\bar\varepsilon_0^2.
\label{eq:Phi1-weighted}
\end{equation}
Since $\frac{dr_0}{d\tau}
=r_0\Phi_1,$ we have $q s_0^{q-1}\frac{ds_0}{d\tau}
=s_0^q\widetilde\Phi_1,$ and therefore
\begin{equation}
\frac{ds_0}{d\tau}
=
\frac1q s_0\widetilde\Phi_1.
\label{eq:s0-entry}
\end{equation}
All remaining equations are obtained from
\eqref{eq:chart1_full} by replacing $r_0$ with $s_0^q$ and
$r_0^\alpha$ with $s_0^p$. Since $\bar\rho_0$ is bounded away from
zero, all right-hand sides are smooth functions of $s_0$ up to
$s_0=0$.
\end{proof}

\paragraph{Resolution of the entry endpoint.}

The directional entry coordinates regularize the vector field for
$\bar\rho_0>0$, but the endpoint selected by the reduced outer orbit
satisfies $\bar\rho_0=0,\ \bar\varepsilon_0=0.$ Consequently, the quotient
$\bar\varepsilon_0^2/\bar\rho_0$ appearing in
\eqref{eq:chart1_full} is not defined at the endpoint. We therefore
perform a second directional blow-up of
$(\bar\rho_0,\bar\varepsilon_0)$.

In the region $\bar\varepsilon_0\geq0$, introduce
\begin{equation}
\bar\varepsilon_0=q_0,
\qquad
\bar\rho_0=h_0q_0,
\qquad
q_0\geq0,
\qquad
h_0>0.
\label{eq:entry-secondary-blowup}
\end{equation}
Then
\begin{equation}
\frac{\bar\varepsilon_0^2}{\bar\rho_0}
=
\frac{q_0}{h_0}.
\label{eq:entry-secondary-quotient}
\end{equation}
Thus, the apparently singular coefficient extends continuously, and
indeed smoothly, to $q_0=0$ on compact sets on which $h_0$ remains
bounded away from zero.
\begin{lemma}[Reduced dynamics after the secondary entry blow-up]
\label{lem:entry-endpoint-desingularization}

Let $\alpha=\frac{p}{q}\in\mathbb Q\cap(0,1],
\
\gcd(p,q)=1,$ and introduce the ramified radial coordinate
$r_0=s_0^q,\ s_0\geq0,$ together with the secondary coordinates
$\bar\varepsilon_0=q_0,
\ \bar\rho_0=h_0q_0,
\ q_0\geq0,
\ h_0>0.$ On the reduced set $s_0=0$, freeze the distinguished values
\begin{equation}
W=W_L,
\qquad
\xi=c,
\end{equation}
and define
\begin{equation}
D=cw_{1,L}-w_{2,L}<0,
\qquad
\Gamma=v_\delta w_{1,L}-w_{3,L},
\qquad
z_0=\Gamma-D\bar v_0.
\end{equation}

For $q_0>0$, introduce the desingularized time $\sigma_0$ by
\begin{equation}
\frac{d\sigma_0}{d\tau}
=
\frac{q_0}{h_0},
\qquad
\frac{d}{d\sigma_0}
=
\frac{h_0}{q_0}\frac{d}{d\tau}.
\label{eq:entry-endpoint-time}
\end{equation}

Then the reduced boundary equations smoothly extend to $q_0=0$ and
are
\begin{equation}
\frac{dq_0}{d\sigma_0}
=
-Dq_0,
\qquad
\frac{dh_0}{d\sigma_0}
=
0,
\qquad
\frac{dz_0}{d\sigma_0}
=
-Dz_0.
\label{eq:entry-secondary-reduced}
\end{equation}
The normal variational equation in the ramified radial direction is
\begin{equation}
\frac{ds_0}{d\sigma_0}
=
\frac{D}{q}s_0.
\label{eq:entry-secondary-radial}
\end{equation}

Thus, the $s_0$-direction is attracting, while the $q_0$- and
$z_0$-directions are repelling in forward desingularized time. The
variable $h_0$ is a center variable for the reduced boundary system.
\end{lemma}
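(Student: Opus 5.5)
The plan is a direct computation: substitute the ramified and secondary coordinates into the entry-chart system \eqref{eq:chart1_full}, restrict to $s_0=0$, and then multiply by the desingularizing factor $h_0/q_0$.

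\textbf{Step 1: the key coefficient $\widetilde\Phi_1$.} I begin by evaluating $\widetilde\Phi_1$ from \eqref{eq:Phi1-weighted} on the reduced set. At $s_0=0$ the term $-s_0^q\bar\varepsilon_0^2$ vanishes. The pressure term carries the factor $s_0^p$ with $p\ge1$, so it vanishes as well. By \eqref{eq:entry-secondary-quotient} the remaining quotient is $\bar\varepsilon_0^2/\bar\rho_0=q_0/h_0$. With $W=W_L$ and $\xi=c$ frozen, this gives
\begin{equation*}
\widetilde\Phi_1\big|_{s_0=0}=\frac{q_0}{h_0}\,(cw_{1,L}-w_{2,L})=\frac{q_0}{h_0}\,D .
\end{equation*}
The same restriction removes the $r_0$-dependent terms in the equations for $\bar\rho_0$, $\bar v_0$, $W$ and $\xi$. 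In particular, $\frac{dW}{d\tau}=0$ and $\frac{d\xi}{d\tau}=0$ on $s_0=0$, so freezing $W$ and $\xi$ is consistent with the flow. Also $(v_\delta+r_0\bar v_0)w_1-w_3$ reduces to $\Gamma$.

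\textbf{Step 2: the three boundary equations.} Each equation follows in one line after multiplying by $h_0/q_0$.
\begin{itemize}
\item From $\frac{d\bar\varepsilon_0}{d\tau}=-\bar\varepsilon_0\widetilde\Phi_1$ and $\bar\varepsilon_0=q_0$, I get $\frac{dq_0}{d\sigma_0}=-Dq_0$.
\item From $\frac{d\bar\rho_0}{d\tau}=-\bar\rho_0\widetilde\Phi_1$ and $\bar\rho_0=h_0q_0$, the product rule gives $q_0\frac{dh_0}{d\tau}=-h_0q_0\widetilde\Phi_1-h_0\frac{dq_0}{d\tau}=0$. Hence $\frac{dh_0}{d\sigma_0}=0$.
\item The $\bar v_0$-equation becomes $\frac{d\bar v_0}{d\sigma_0}=-D\bar v_0+\Gamma$. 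Therefore $z_0=\Gamma-D\bar v_0$ satisfies $\frac{dz_0}{d\sigma_0}=-D\,z_0$.
\end{itemize}
For the radial direction, I use \eqref{eq:s0-entry} and linearize at $s_0=0$. The coefficient $\widetilde\Phi_1$ is evaluated at $s_0=0$, and the $O(s_0^{\min(p,q)})$ corrections contribute only at order $s_0^2$. This yields $\frac{ds_0}{d\sigma_0}=\frac{D}{q}s_0$.

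\textbf{Step 3: smoothness.} After multiplication by $h_0/q_0$, every reduced right-hand side is polynomial in $(q_0,z_0)$ and in $1/h_0$. Consequently the equations extend smoothly to $q_0=0$ on compact sets where $h_0$ is bounded away from zero.

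\textbf{Step 4: signs and the main obstacle.} By Lemma~\ref{lem:sign_xiw} we have $D<0$. This gives the stated character of each direction: $-D>0$ makes $q_0$ and $z_0$ repelling, $D/q<0$ makes $s_0$ attracting, and $h_0$ is neutral, hence a center variable. The only delicate point is justifying that the fractional terms drop out of the linearization. For $s_0>0$ the pressure term contributes
\begin{equation*}
s_0^p\,q_0^{\alpha+1}h_0^{-\alpha-1}\cdot\frac{h_0}{q_0}=s_0^p\,q_0^{\alpha}h_0^{-\alpha},
\end{equation*}
which is not smooth in $q_0$ when $\alpha\notin\mathbb Z$. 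I will note that this term vanishes identically on the reduced set $s_0=0$. In the $s_0$-equation it enters only through $s_0\widetilde\Phi_1$, so it is of higher order in $s_0$ and does not affect the normal variational equation. This suffices for the statement, which concerns only the reduced boundary system and its radial linearization.
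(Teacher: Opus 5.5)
Your proposal is correct and follows essentially the same route as the paper. You restrict the entry-chart system to $s_0=0$ with $W=W_L$ and $\xi=c$ frozen, substitute $\bar\varepsilon_0=q_0$, $\bar\rho_0=h_0q_0$, use the product rule to get $dh_0/d\tau=0$, multiply by $h_0/q_0$, and read off the radial rate from $\widetilde\Phi_1|_{s_0=0}=Dq_0/h_0$. Your added observation goes beyond the paper's proof: after desingularization the fractional pressure term becomes $s_0^{p}q_0^{\alpha}h_0^{-\alpha}$, which is not smooth at $q_0=0$; it vanishes on the reduced set and enters the $s_0$-equation only at order $s_0^{p+1}$. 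This correctly explains why the smoothness claim should be read only for the reduced boundary system and the radial linearization.
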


\begin{proof}
On $s_0=0$, with $W=W_L$ and $\xi=c$, with $W=W_L$ and $\xi=c$, the
reduced entry equations are
\begin{equation}
\frac{d\bar\rho_0}{d\tau}
=
-D\bar\varepsilon_0^2,
\qquad
\frac{d\bar\varepsilon_0}{d\tau}
=
-\frac{D\bar\varepsilon_0^3}{\bar\rho_0},
\qquad
\frac{dz_0}{d\tau}
=
-\frac{D\bar\varepsilon_0^2}{\bar\rho_0}z_0.
\label{eq:entry-endpoint-starting-system}
\end{equation}
Substituting $\bar\varepsilon_0=q_0,
\ \bar\rho_0=h_0q_0$ into the second equation gives
\begin{equation}
\frac{dq_0}{d\tau}
=-\frac{Dq_0^2}{h_0}.
\label{eq:entry-q0-tau}
\end{equation}
Differentiating $\bar\rho_0=h_0q_0$ yields $q_0\frac{dh_0}{d\tau}
+h_0\frac{dq_0}{d\tau}
=-Dq_0^2.$ Using \eqref{eq:entry-q0-tau}, we obtain
$\frac{dh_0}{d\tau}=0$ for $q_0>0$, and this equation extends smoothly to $q_0=0$.
Moreover, $\frac{dz_0}{d\tau}=-\frac{Dq_0}{h_0}z_0.$ Multiplication by $h_0/q_0$ therefore gives $\frac{dq_0}{d\sigma_0}=-Dq_0,
\ \frac{dh_0}{d\sigma_0}
=0,\ \frac{dz_0}{d\sigma_0}
=-Dz_0.$

By Lemma~\ref{lem:entry-regularity}, the ramified radial equation is
$\frac{ds_0}{d\tau}
=\frac1q s_0\widetilde\Phi_1.$ Along $s_0=0$,
$\widetilde\Phi_1
=\frac{D\bar\varepsilon_0^2}{\bar\rho_0}
=\frac{Dq_0}{h_0}.$ Consequently,
$\frac{ds_0}{d\sigma_0}
=\frac{h_0}{q_0}
\frac1q s_0
\frac{Dq_0}{h_0}
=\frac{D}{q}s_0.$ Since $D<0$, the $s_0$-direction is attracting, whereas the $q_0$- and $z_0$-directions have eigenvalue $-D>0$ and are
repelling. The $h_0$-direction has a zero eigenvalue.
\end{proof}
The resolution of the exit endpoint is similar, and therefore we omit the analogous details.
\begin{remark}
\label{rem:entry-secondary-trace}

The secondary blow-up replaces the single endpoint
$P_L^{\mathrm{in}}$ by the exceptional family
\begin{equation}
\mathcal E_L
=
\left\{
r_0=0,\;
q_0=0,\;
z_0=0,\;
h_0>0
\right\},
\end{equation}
with $W=W_L$ and $\xi=c$. The reduced outer orbit satisfies
$\bar\rho_0=\bar\varepsilon_0=0$ and therefore does not, by itself,
select a particular value of $h_0$.

Since
\begin{equation}
h_0
=
\frac{\bar\rho_0}{\bar\varepsilon_0}
=
\sqrt{\varepsilon}\,\rho,
\label{eq:h0-physical}
\end{equation}
the trace of the perturbed outer unstable manifold on
$\mathcal E_L$ can be identified only after determining the limiting
behavior of $\sqrt{\varepsilon}\rho$ in the entry scaling region.
\end{remark}
Restricting to $r_0=0$ gives the reduced system
\begin{equation}
    \begin{cases}
    \displaystyle
        \frac{d\overline{\rho}_0}{d\tau} = -\overline{\eps}_0^2 \big(\xi w_1 - w_2 \big) \\[1.5ex]\displaystyle
        \frac{d\overline{v}_0}{d\tau} = -\frac{\big(\xi w_1 - w_2 \big)\overline{v}_0\overline{\eps}_0^2}{\overline{\rho}_0} + \frac{\big(\frac{k_3}{k_1}w_1 - w_3 \big)\overline{\eps}_0^2}{\overline{\rho}_0} \\[1.5ex]\displaystyle
        \frac{d\overline{\eps}_0}{d\tau} = -\frac{\big(\xi w_1 - w_2 \big)\overline{\eps}_0^3}{\overline{\rho}_0} \\[1.5ex]\displaystyle
        \frac{d r_0}{d\tau}=\frac{d\xi}{d\tau} = \frac{du}{d\tau} = \frac{dw_1}{d\tau} = \frac{dw_2}{d\tau} = \frac{dw_3}{d\tau} = 0
    \end{cases}
\end{equation}
To analyze the distinguished reduced flow in the entry coordinates,
we now restrict to $W=W_L,
\ \xi=c.$ Therefore, $D=cw_{1,L}-w_{2,L}<0,
\ \Gamma=v_\delta w_{1,L}-w_{3,L}$ are constant along the reduced entry orbit.

On $r_0=0$, these quantities are constant, and the reduced system can
be written as
\begin{equation}
\begin{cases}
\displaystyle
\frac{d\bar\rho_0}{d\tau}
=
-D\bar\varepsilon_0^2,
\\[1.2ex]
\displaystyle
\frac{d\bar v_0}{d\tau}
=
\frac{\bar\varepsilon_0^2}{\bar\rho_0}
\left(\Gamma-D\bar v_0\right),
\\[1.2ex]
\displaystyle
\frac{d\bar\varepsilon_0}{d\tau}
=
-\frac{D\bar\varepsilon_0^3}{\bar\rho_0},
\\[1.2ex]
\displaystyle
\frac{dr_0}{d\tau}
=
\frac{d\xi}{d\tau}
=
\frac{du}{d\tau}
=
\frac{dw_1}{d\tau}
=
\frac{dw_2}{d\tau}
=
\frac{dw_3}{d\tau}
=
0.
\end{cases}
\label{eq:chart1_reduced}
\end{equation}
The invariant relation $\Gamma\bar u-D\bar v=0$ takes a particularly simple form in the entry region. Since $\bar u>0$ and the entry coordinates are obtained by dividing all spherical variables
by $\bar u$, one has $\bar v_0=\frac{\bar v}{\bar u}.$ Consequently, the reduced invariant relation becomes $\Gamma-D\bar v_0=0.$ Define
$z_0:=\Gamma-D\bar v_0,$ since $D$ and $\Gamma$ are constant along the distinguished reduced
entry orbit, system
\eqref{eq:chart1_reduced} gives
\begin{align}
\frac{dz_0}{d\tau}
=
-D\frac{d\bar v_0}{d\tau}
=
-\frac{D\bar\varepsilon_0^2}{\bar\rho_0}z_0.
\label{eq:z0_reduced}
\end{align}
Thus,
\begin{equation}
\mathcal N_1
=
\left\{
r_0=0,\;
z_0=0
\right\}
=
\left\{
r_0=0,\;
\bar v_0=\frac{\Gamma}{D},
\right\}
\label{eq:N1}
\end{equation}
is invariant under the reduced the entry region system.

\begin{lemma}[Normal hyperbolicity in the entry region]
\label{lem:chart1-hyperbolicity}

Let $\alpha=\frac{p}{q}\in\mathbb Q\cap(0,1],
\ \gcd(p,q)=1,$ and let $K_1$ be a compact subset of $\mathcal N_1$ on which $\bar\rho_0>0,
\
\bar\varepsilon_0>0,
\
D<0.$ After introducing $r_0=s_0^q$, the manifold $\mathcal N_1$ is
uniformly normally hyperbolic on $K_1$. The $s_0$-direction is
attracting and the $z_0$-direction is repelling.
\end{lemma}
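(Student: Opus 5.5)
The plan is to linearize the desingularized entry-chart system in the two directions normal to $\mathcal N_1$ and to show that these rates have fixed, opposite signs, uniformly on $K_1$. Write $\alpha=p/q$ and use the ramified coordinate $r_0=s_0^q$ from Lemma~\ref{lem:entry-regularity}. By that lemma the vector field is smooth up to $s_0=0$ on $K_1$, since $\bar\rho_0$ is bounded away from zero there. In these coordinates $\mathcal N_1=\{s_0=0,\ z_0=0\}$, where $z_0=\Gamma-D\bar v_0$, and $(\bar\rho_0,\bar\varepsilon_0,W,\xi)$ serve as coordinates along it.

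The first step is the invariance and block structure. The equation $\frac{ds_0}{d\tau}=\frac1q s_0\widetilde\Phi_1$ from \eqref{eq:s0-entry} shows that $\{s_0=0\}$ is invariant. By \eqref{eq:z0_reduced}, $z_0=0$ is invariant within $\{s_0=0\}$; this uses that $W$ and $\xi$ are frozen at $s_0=0$, because $dw_i/d\tau$ and $d\xi/d\tau$ carry factors of $s_0$. Consequently, at a point of $\mathcal N_1$ the Jacobian in the normal variables $(s_0,z_0)$ is block lower-triangular. Indeed, $\partial_{z_0}(ds_0/d\tau)=0$ on $s_0=0$, and any coupling $\partial_{s_0}(dz_0/d\tau)$ sits below the diagonal. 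The normal rates are therefore the diagonal entries
\[
\mu_s=\frac1q\,\widetilde\Phi_1\big|_{s_0=0}=\frac{D}{q}\,\frac{\bar\varepsilon_0^2}{\bar\rho_0},\qquad
\mu_z=-D\,\frac{\bar\varepsilon_0^2}{\bar\rho_0}.
\]
Here I use that on $s_0=0$ the term $s_0^p$ kills the pressure contribution and $s_0^q$ kills the remaining terms. Since $D<0$, and $\bar\rho_0$ and $\bar\varepsilon_0$ are bounded above and below by positive constants on the compact set $K_1$, this gives $\mu_s\le -c_0<0$ and $\mu_z\ge c_0>0$ uniformly on $K_1$. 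Hence the $s_0$-direction is attracting and the $z_0$-direction is repelling.

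The second step is to make this uniform in the sense Fenichel theory requires. Multiplying the vector field by the positive, bounded factor $\bar\rho_0/\bar\varepsilon_0^2$ (as in \eqref{eq:entry-endpoint-time}) does not change orbits, and it turns the normal rates into the constants $D/q$ and $-D$. Along $\mathcal N_1$ the tangential dynamics are $d\bar\rho_0/d\sigma=-D\bar\rho_0$ and $d\bar\varepsilon_0/d\sigma=-D\bar\varepsilon_0$, with $W$ and $\xi$ constant. This gives tangential rates $0$ and $-D$. I expect the main obstacle here: the tangential rate $-D$ coincides with the repelling normal rate, so a global spectral-gap (dominance) condition is not strict. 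I would handle this by using the compactness of $K_1$. Orbits of the reduced flow spend only a bounded time in $K_1$, so only finite-time normal contraction and expansion estimates are needed, and these follow from the signs above. This is the local, overflowing-boundary version of Fenichel's theorem, which is what is applied in the entry region. The relevant dichotomy is between the normal rates $D/q<0<-D$, which are separated by a gap of size $|D|(1+1/q)$ that is uniform on $K_1$.
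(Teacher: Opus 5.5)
Your first step is exactly the paper's proof. The paper computes $\lambda_{s_0}=D\bar\varepsilon_0^2/(q\bar\rho_0)$ and $\lambda_{z_0}=-D\bar\varepsilon_0^2/\bar\rho_0$ on $\mathcal N_1$, invokes compactness of $K_1$, and stops; you even supply the triangular structure it leaves implicit. The problem is your second step. Your suspicion there is justified, and the difficulty is in fact worse than a non-strict gap.

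On $K_1$ the manifold $\mathcal N_1$ contains no equilibria, since $d\bar\varepsilon_0/d\tau=-D\bar\varepsilon_0^3/\bar\rho_0\neq0$. At non-equilibrium points, the diagonal entries of the Jacobian in a chosen normal coordinate are not coordinate invariant. Concretely:
\begin{itemize}
\item $s_0^q\bar\varepsilon_0=r_0\bar\varepsilon_0=\sqrt{\varepsilon}$ is a first integral of the full system.
\item $z_0/\bar\varepsilon_0$ is a first integral on $\{s_0=0\}$, because there $\dot z_0=\lambda z_0$ and $\dot{\bar\varepsilon}_0=\lambda\bar\varepsilon_0$ with the same $\lambda=-D\bar\varepsilon_0^2/\bar\rho_0$.
\end{itemize}
Since $\bar\varepsilon_0$ is bounded below near $K_1$, the functions $\hat s=s_0\bar\varepsilon_0^{1/q}$ and $\hat z=z_0/\bar\varepsilon_0$ are smooth normal coordinates with $\mathcal N_1=\{\hat s=\hat z=0\}$. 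In these coordinates both normal rates vanish. The decay of $s_0$ is exactly compensated by the growth of $\bar\varepsilon_0$, because $\varepsilon$ is conserved. The growth of $z_0$ is just the tangential stretching of $\bar\varepsilon_0$.

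Consequently, your patch does not close the gap. First, $K_1$ is not overflowing invariant: along $\mathcal N_1$ backward orbits satisfy $(\bar\rho_0,\bar\varepsilon_0)\to0$ and leave $K_1$, just as forward orbits leave through large $\bar\varepsilon_0$. So the overflowing version of Fenichel's theorem does not apply. Second, the separation $|D|(1+1/q)$ between your two normal rates is not the quantity Fenichel's rate conditions involve (those compare normal with tangential growth), and it is not intrinsic either.

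What bounded transit time actually gives you is that no hyperbolicity is needed on $K_1$ at all. The passage across $K_1$ is a finite-time flow map depending smoothly on initial data and on $\varepsilon$, and that is all the later continuation between sections uses. Intrinsic rates $D/q$ and $-D$ appear only at genuine equilibria, for instance on the exceptional set $\{s_0=q_0=z_0=0\}$ produced by the secondary blow-up of Lemma~\ref{lem:entry-endpoint-desingularization}. You should therefore do one of two things: restrict the claim to what the computation really shows (uniformly signed normal Jacobian entries in the specific coordinates $(s_0,z_0)$), or replace the normal-hyperbolicity assertion on $K_1$ by a finite-time transition statement.
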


\begin{proof}
On $\mathcal N_1$, one has $\widetilde\Phi_1
=\frac{D\bar\varepsilon_0^2}{\bar\rho_0}$ at $s_0=0$. Therefore, linearization of
\eqref{eq:s0-entry} in the $s_0$-direction gives $\lambda_{s_0}
=\frac{D\bar\varepsilon_0^2}
{q\bar\rho_0}
<0.$ The transverse variable $z_0$ satisfies
$\lambda_{z_0}
=-\frac{D\bar\varepsilon_0^2}
{\bar\rho_0}
>0.$ Since $K_1$ is compact and
$\bar\rho_0$, $\bar\varepsilon_0$, and $-D$ are bounded away from
zero on $K_1$, both normal eigenvalues are uniformly bounded away
from zero. Hence $\mathcal N_1$ is uniformly normally hyperbolic,
with an attracting $s_0$-direction and a repelling $z_0$-direction.
\end{proof}
The directional components of the incoming endpoint are
\begin{equation}
(\bar u_{\mathrm{in}},\bar v_{\mathrm{in}})
=
\left(
-\frac{D}{R},
-\frac{\Gamma}{R}
\right).
\label{eq:Pin-direction}
\end{equation}

Since $D<0$, the $\bar u$-component of $P_{\mathrm{in}}$ is positive,
and therefore $P_{\mathrm{in}}$ is represented in the entry region. Moreover,
\begin{equation}
\frac{\bar v_{\mathrm{in}}}{\bar u_{\mathrm{in}}}
=
\frac{\Gamma}{D}.
\end{equation}
Thus, the incoming reduced orbit lies on $\mathcal N_1$.

On $\mathcal N_1$, system \eqref{eq:chart1_reduced} reduces to
\begin{equation}
\frac{d\bar\rho_0}{d\tau}
=
-D\bar\varepsilon_0^2,
\qquad
\frac{d\bar\varepsilon_0}{d\tau}
=
-\frac{D\bar\varepsilon_0^3}{\bar\rho_0},
\qquad
\bar v_0=\frac{\Gamma}{D}.
\label{eq:chart1_on_N1}
\end{equation}
Since $D<0$, it follows that
\begin{equation}
\frac{d\bar\rho_0}{d\tau}>0,
\qquad
\frac{d\bar\varepsilon_0}{d\tau}>0
\end{equation}
whenever $\bar\rho_0>0$ and $\bar\varepsilon_0>0$.

Furthermore,
\begin{align}
\frac{d}{d\tau}
\left(
\frac{\bar\rho_0}{\bar\varepsilon_0}
\right)
&=\frac{
\bar\varepsilon_0\frac{d\bar\rho_0}{d\tau}
-\bar\rho_0\frac{d\bar\varepsilon_0}{d\tau}
}{\bar\varepsilon_0^2}=0.
\end{align}
Hence the ratio $\bar\rho_0/\bar\varepsilon_0$ remains constant along
the reduced orbit in the entry region.

Fix $R>1$ sufficiently large and define the entry region overlap section
\begin{equation}
\Sigma_{\mathrm{in}}^R
=
\left\{
\bar\varepsilon_0=\frac{1}{R}
\right\}.
\label{eq:Sigma_in_R}
\end{equation}
For each fixed value $h_0>0$, the reduced equations on
$\mathcal N_1$ admits an orbit satisfying
$\frac{\bar\rho_0}{\bar\varepsilon_0}=h_0.$ Since
$\frac{d\bar\varepsilon_0}{d\tau}>0,$ each such orbit intersects the overlap section $\Sigma_{\mathrm{in}}^R
=\left\{
\bar\varepsilon_0=\frac1R
\right\}$ transversely at a unique point, which we denote by
$Q_{\mathrm{in}}^0(h_0)$. At this point,
\begin{equation}
\bar\rho_0=\frac{h_0}{R},
\qquad
\bar v_0=\frac{\Gamma}{D}.
\label{eq:entry-section-h0}
\end{equation}
Thus, the reduced entry passage determines a one-parameter family of
points $Q_{\mathrm{in}}^0(h_0)\subset\Sigma_{\mathrm{in}}^R,$
where $h_0=\frac{\bar\rho_0}{\bar\varepsilon_0}
=\sqrt{\varepsilon}\,\rho$ is constant along each reduced trajectory. Reduced dynamics alone does not determine the parameter $h_0$.
Rather, the distinguished member of this family corresponding to the
viscous profile is identified by the global matching argument developed
in the subsequent subsections.
\subsubsection{The positive-$\varepsilon$ passage}
The positive-$\varepsilon$ and positive-density charts cover
overlapping portions of the central blown-up region and play
complementary roles. The distinguished singular boundary orbit
responsible for the order-one change from $W_L$ to $W_R$ is analyzed
in the positive-density chart $K_\rho$. The positive-$\varepsilon$
chart $K_\varepsilon$ is retained because, for
$\varepsilon>0$, it provides a regular finite-time passage through
the region where the $\varepsilon$-direction dominates and where
$u-\xi$ changes sign. Thus, $K_\varepsilon$ is a transition chart; it
does not replace the positive-density middle orbit.

In this chart $\bar{\varepsilon}>0$, and the variable $\bar{u}_\varepsilon$ (see below) is no longer constrained to lie on the unit sphere. The chart coordinates are
\begin{equation}
        u - \xi = r_\varepsilon\overline{u}_\varepsilon \ \ \
        \rho_0 = r_\varepsilon\overline{\rho}_\varepsilon \ \ \ 
        v - \frac{k_3}{k_1} = r_\varepsilon\overline{v}_\varepsilon \ \ \ 
        \epsilon = r_\varepsilon^2
\end{equation}
Similarly, rewriting system \eqref{GSPT_eps_sys} in these coordinates and applying the chain rule yields the transformed system
\begin{equation}
\begin{cases}
\displaystyle
\frac{d\bar\rho_\varepsilon}{d\tau}
=
r_\varepsilon
\left(
\bar\rho_\varepsilon\bar u_\varepsilon-w_1
\right),
\\[1.5ex]
\displaystyle
\frac{d\bar u_\varepsilon}{d\tau}
=
\frac{
(\xi+r_\varepsilon\bar u_\varepsilon)w_1-w_2
}{
\bar\rho_\varepsilon
}
-
\frac{
r_\varepsilon^\alpha
A\!\left(
\frac{k_3}{k_1}+r_\varepsilon\bar v_\varepsilon
\right)
}{
\bar\rho_\varepsilon^{\alpha+1}
}
-r_\varepsilon,
\\[1.5ex]
\displaystyle
\frac{d\bar v_\varepsilon}{d\tau}
=
\frac{
\left(
\frac{k_3}{k_1}
+r_\varepsilon\bar v_\varepsilon
\right)w_1-w_3
}{
\bar\rho_\varepsilon
},
\\[1.5ex]
\displaystyle
\frac{dw_1}{d\tau}
=
-r_\varepsilon\bar\rho_\varepsilon,
\\[1.5ex]
\displaystyle
\frac{dw_2}{d\tau}
=
-r_\varepsilon\bar\rho_\varepsilon
\left(
\xi+r_\varepsilon\bar u_\varepsilon
\right),
\\[1.5ex]
\displaystyle
\frac{dw_3}{d\tau}
=
-r_\varepsilon\bar\rho_\varepsilon
\left(
\frac{k_3}{k_1}
+r_\varepsilon\bar v_\varepsilon
\right),
\\[1.5ex]
\displaystyle
\frac{d\xi}{d\tau}=r_\varepsilon^2,
\qquad
\frac{dr_\varepsilon}{d\tau}=0.
\end{cases}
\label{eq:chart3_full}
\end{equation}
The transition map from the entry region to the positive-$\varepsilon$ region is
\begin{equation}
r_\varepsilon=r_0\bar\varepsilon_0,
\qquad
\bar\rho_\varepsilon
=
\frac{\bar\rho_0}{\bar\varepsilon_0},
\qquad
\bar u_\varepsilon
=
\frac{1}{\bar\varepsilon_0},
\qquad
\bar v_\varepsilon
=
\frac{\bar v_0}{\bar\varepsilon_0}.
\label{eq:transition_1_to_3}
\end{equation}
Therefore, the section $\Sigma_{\mathrm{in}}^R$ corresponds in
the positive-$\varepsilon$ region to $\bar u_\varepsilon=R.$ At $Q_{\mathrm{in}}^0(h_0)$, the transition relations give
\begin{equation}
\bar u_\varepsilon=R,
\qquad
\bar\rho_\varepsilon=h_0,
\qquad
\bar v_\varepsilon
=
R\frac{\Gamma}{D}.
\label{eq:entry-data-epsilon-chart}
\end{equation}
Thus the reduced entry passage determines a one-parameter family of
initial data for the positive-$\varepsilon$ chart, parametrized by
$h_0>0$.
 
The exit section of the entry region lies in the overlap with the
positive-$\varepsilon$ region. For each fixed $h_0>0$, the smooth
coordinate transformation carries the point
$Q_{\mathrm{in}}^0(h_0)$ to the corresponding initial point in
$K_\varepsilon$. Hence the reduced entry passage determines a
one-parameter family of trajectories in the positive-$\varepsilon$
chart.

The positive-$\varepsilon$ chart describes the central passage through
the region in which the $\varepsilon$-direction is dominant. Its role
is different from that of the positive-density chart
$K_\rho$. The $O(1)$ change from $W_L$ to $W_R$ occurs along the
singular boundary orbit constructed in
Proposition~\ref{prop:singular-middle-connection}. By contrast, the flow in
$K_\varepsilon$ resolves the change in sign of $u-\xi$ while the
variables $W$ vary only by a small amount over a bounded passage time.

Setting $r_\varepsilon=0$ in
\eqref{eq:chart3_full} gives
\begin{equation}
\frac{d\bar u_\varepsilon}{d\tau}
=\frac{D}{\bar\rho_\varepsilon},
\ \ \ 
\frac{d\bar v_\varepsilon}{d\tau}
=\frac{\Gamma}{\bar\rho_\varepsilon},
\ \ \  
\frac{d\bar\rho_\varepsilon}{d\tau}
=0,
\ \ \
\frac{dk}{d\tau}
=\frac{dD}{d\tau}
=\frac{d\Gamma}{d\tau}
=\frac{d\xi}{d\tau}
=0.
\label{eq:epsilon-chart-reduced}
\end{equation}
Thus, at $r_\varepsilon=0$, the quantities
$\bar\rho_\varepsilon$, $k$, $D$, $\Gamma$, and $\xi$ remain
constant. In particular, this reduced chart does not itself produce
the change from $W_L$ to $W_R$.

If $D<0$ and $\bar\rho_\varepsilon>0$, then
\begin{equation}
\frac{d\bar u_\varepsilon}{d\tau}
=
\frac{D}{\bar\rho_\varepsilon}<0.
\end{equation}
Consequently, the reduced flow crosses the positive-$\varepsilon$
chart monotonically from positive to negative values of
$\bar u_\varepsilon$.

\subsubsection{The positive-density middle passage}
\label{subsubsec:rho-chart-representation}

The positive-density chart $K_\rho$ is defined by
\begin{equation}
\rho_0=r_\rho,
\qquad
u-\xi=r_\rho\bar u_\rho,
\qquad
v-v_\delta=r_\rho\bar v_\rho,
\qquad
\sqrt{\varepsilon}=r_\rho\bar\varepsilon_\rho.
\label{eq:rho-chart-coordinates}
\end{equation}

The entry and positive-density charts overlap directly. Equating
their coordinate representations gives
\begin{equation}
r_\rho=r_0\bar\rho_0,
\qquad
\bar u_\rho=\frac{1}{\bar\rho_0},
\qquad
\bar v_\rho=\frac{\bar v_0}{\bar\rho_0},
\qquad
\bar\varepsilon_\rho
=
\frac{\bar\varepsilon_0}{\bar\rho_0}.
\label{eq:entry-to-rho-transition}
\end{equation}
At the point $Q_{\mathrm{in}}^0(h_0)$, where $\bar\varepsilon_0=\frac1R,
\ \bar\rho_0=\frac{h_0}{R},
\ \bar v_0=\frac{\Gamma}{D},$ these relations become
\begin{equation}
\bar u_\rho=\frac{R}{h_0},
\qquad
\bar\varepsilon_\rho=\frac1{h_0},
\qquad
\bar v_\rho=\frac{R\Gamma}{Dh_0}.
\label{eq:entry-family-in-rho-chart}
\end{equation}
\subsubsection{Representation of the distinguished middle orbit in
the positive-density chart}

The distinguished inner scaling is related to the coordinates of
$K_\rho$ by
$r_\rho=\delta z^2,
\ \bar u_\rho=\frac{y}{z^3},
\ \bar v_\rho=\frac{h}{z^3},
\ \bar\varepsilon_\rho=\frac{\delta^{1/2}}{z^2}.$ Consequently, for every fixed $z>0$, the singular limit
$\delta\to0^+$ corresponds to
$r_\rho\to0,
\ \bar\varepsilon_\rho\to0.$ Thus the reduced orbit constructed in
Proposition~\ref{prop:singular-middle-connection} lies on the positive-density boundary
$r_\rho=0$, $\bar\varepsilon_\rho=0$ of the spherical blow-up.

The distinguished variables provide a regular parametrization of
this boundary orbit and determine the order-one change from $W_L$
to $W_R$. The chart $K_\rho$ is used only to identify this orbit
within the spherical blow-up atlas and to relate it to the
positive-$\varepsilon$ chart on their common domain. The construction and matching of the middle orbit are carried out in the distinguished variables.
\subsubsection{Weighted resolution of the central corner}
\label{subsubsec:weighted-central-corner}

The reduced middle orbit constructed in
Subsection~\ref{subsubsec:reduced-middle-connection}
describes the interior of the singular layer for fixed
$z>0$. To analyze the simultaneous limit
$z\to0,\ \delta\to0,$ we return to the desingularized middle system
\eqref{eq:full-middle-system}
and introduce the weighted corner blow-up.

The quotient terms become singular precisely when
$z\to0$ and $\delta\to0$ simultaneously.
To resolve this corner, we introduce an additional weighted
blow-up.

\paragraph{The weighted corner variable.}
To resolve the simultaneous limit
$z\to0$ and $\delta\to0$,
introduce the weighted variable $b=\frac{\delta}{z}.$ Equivalently,
\begin{equation}
\label{eq:corner-delta-zb}
\delta=zb.
\end{equation}

For each fixed $\delta>0$, trajectories satisfy
$zb=\delta.$ Hence the singular limit $\delta=0$ consists of the two invariant faces
$\{z=0\}\quad\text{and}\quad
\{b=0\},$ which meet at the central corner $\{z=0,\;b=0\}.$

Substituting $\delta=zb$ into
\eqref{eq:full-middle-system} gives
\begin{equation}\label{eq:corner-system-s-time}
\begin{cases}\displaystyle
z'
&=
\frac12
\left(
y-bk
\right),
\\\displaystyle
y'
&=
\frac{1}{z}
\left[
D+\frac12y^2+\frac12bky
-
b^{2\alpha}
A\left(
v_\delta+bh
\right)
\right]
-z^2b,
\\\displaystyle
h'
&=
\frac{1}{z}
\left[
\Gamma+\frac12hy+\frac12bkh
\right],
\\\displaystyle
k'
&=
-z^2,
\\\displaystyle
D'
&=
z^2b
\left(
y+bk
\right),
\\\displaystyle
\Gamma'
&=
z^2bh,
\\\displaystyle
\xi'
&=
z^2b^2.
\end{cases}
\end{equation}

Differentiating the identity $\delta=zb,$ and using that $\delta$ is constant along trajectories gives
\begin{equation}
\label{eq:corner-product-derivative}
z'b+zb'=0.
\end{equation}
Using the first equation of \eqref{eq:corner-system-s-time}, we obtain
\begin{equation}
\label{eq:corner-b-s-time}
b'
=
-\frac{b}{2z}
\left(
y-bk
\right).
\end{equation}
The hypersurface $z=0$ is therefore still singular.
The vector field still contains a common factor $1/z$.  We therefore
introduce the desingularized time $\sigma$ by
\begin{equation}
\label{eq:corner-desingularized-time}
\frac{ds}{d\sigma}=z, \ \ \frac{d}{d\sigma}
=z\frac{d}{ds}.
\end{equation}
For $z>0$, this is multiplication of the vector field by a positive
function and hence does not change the oriented trajectories.

A dot will denote differentiation with respect to $\sigma$.  The
desingularized weighted corner system is
\begin{equation}
\label{eq:corner-desingularized-system}
\begin{cases}\displaystyle
\dot z
&=
\frac{z}{2}
\left(
y-bk
\right),
\\[0.8ex]\displaystyle
\dot b
&=
-\frac{b}{2}
\left(
y-bk
\right),
\\[0.8ex]\displaystyle
\dot y
&=
D+\frac12y^2+\frac12bky
-
b^{2\alpha}
A\left(
v_\delta+bh
\right)
-z^3b,
\\\displaystyle
\dot h
&=
\Gamma+\frac12hy+\frac12bkh,
\\
\dot k
&=
-z^3,
\\[0.8ex]\displaystyle
\dot D
&=
z^3b
\left(
y+bk
\right),
\\[0.8ex]\displaystyle
\dot\Gamma
&=
z^3bh,
\\[0.8ex]\displaystyle
\dot\xi
&=
z^3b^2.
\end{cases}
\end{equation}

\begin{lemma}[Invariant parameter levels]
\label{lem:corner-invariant-delta-levels}

For system~\eqref{eq:corner-desingularized-system}, the product
$zb$ is constant along the trajectories.  Consequently, every
hypersurface $\mathcal E_\delta
=\left\{
zb=\delta
\right\},
\
\delta\geq0,$ is invariant.
\end{lemma}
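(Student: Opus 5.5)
The plan is a direct computation of the derivative of $zb$ along solutions of \eqref{eq:corner-desingularized-system}, followed by an invariance argument for the level sets. The product rule gives
\[
\frac{d}{d\sigma}(zb)=\dot z\,b+z\,\dot b .
\]
Substituting the first two equations of \eqref{eq:corner-desingularized-system}, we get
\[
\frac{d}{d\sigma}(zb)=\frac{zb}{2}(y-bk)-\frac{zb}{2}(y-bk)=0 .
\]
This holds identically on the whole phase space, including at $z=0$ and $b=0$. That matters because the desingularized field is defined there, whereas the original $s$-time field was not.

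Since the time derivative of $zb$ vanishes identically, $zb$ is a first integral of the desingularized system. Hence every solution stays on the level set it starts from, and each $\mathcal E_\delta=\{zb=\delta\}$ is invariant (forward and backward) for every $\delta\ge0$. For $\delta=0$ I will also note the finer structure: $\dot z$ is a multiple of $z$ and $\dot b$ is a multiple of $b$. So the faces $\{z=0\}$ and $\{b=0\}$ are each invariant, and therefore so is their intersection, the corner $\{z=0,\ b=0\}$. This observation will be used later in the corner analysis.

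Finally, I will check consistency with the construction. For $z>0$ the time change $ds/d\sigma=z$ is multiplication of the field by a positive function, so it only reparametrizes trajectories. The identity therefore agrees with \eqref{eq:corner-product-derivative}, and with the fact that $\delta'=0$ in \eqref{eq:full-middle-system}.

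There is no real obstacle, since the equations for $\dot z$ and $\dot b$ cancel exactly. The only point needing care is well-posedness when $\alpha$ is fractional. The term $b^{2\alpha}$ is non-Lipschitz at $b=0$, and this could threaten uniqueness of trajectories. The conservation of $zb$, however, holds for any solution in the sense of absolutely continuous curves, since it is just the product rule applied to any $C^1$ solution. So the invariance statement does not depend on uniqueness. (Smoothness, which is needed for later steps, is what the ramified coordinate provides for rational $\alpha$.)
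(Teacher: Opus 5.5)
Your proof is correct and is the same argument as the paper's: apply the product rule to $zb$ and substitute the first two equations of the desingularized system, so the terms $\pm\frac{zb}{2}(y-bk)$ cancel identically. Your extra remarks are correct refinements not needed for the lemma itself; these are the invariance of the faces $\{z=0\}$ and $\{b=0\}$, and the observation that the conservation of $zb$ does not rely on uniqueness of solutions.
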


\begin{proof}
Using the first two equations of
\eqref{eq:corner-desingularized-system}, we find
\begin{align}
\frac{d}{d\sigma}(zb)=
\dot z\,b+z\dot b=
\frac{zb}{2}
\left(
y-bk
\right)
-\frac{zb}{2}
\left(
y-bk
\right)=0.
\end{align}
Therefore, $zb$ is constant.  By
\eqref{eq:corner-delta-zb}, its value is precisely $\delta$.
\end{proof}
\paragraph{Regularity of the weighted vector field.}
All terms in \eqref{eq:corner-desingularized-system} are smooth except
possibly
\begin{equation}
\label{eq:corner-pressure-term}
b^{2\alpha}
A\left(
v_\delta+bh
\right).
\end{equation}
For $\alpha=\frac12$, this term is $bA\left(
v_\delta+bh
\right),$ and for $\alpha=1$ it is $b^2A\left(
v_\delta+bh
\right).$ Thus, if $A$ is $C^\infty$ in a neighborhood of the relevant values
of $v$, then the corner vector field is $C^\infty$ for
$\alpha\in
\left\{
\frac12,1
\right\}.$ More generally, suppose that $\alpha\in\mathbb Q\cap(0,1]$ and write $2\alpha=\frac{m}{n},$ where $m,n\in\mathbb N$ are relatively prime.  Introduce a ramified
weighted coordinate $\beta\geq0$ by $b=\beta^n.$ Then
\begin{equation}
\label{eq:corner-pressure-beta}
b^{2\alpha}
=
\left(
\beta^n
\right)^{m/n}
=
\beta^m,
\end{equation}
so the pressure term becomes
\begin{equation}
\label{eq:corner-pressure-beta-smooth}
\beta^m
A\left(
v_\delta+\beta^nh
\right),
\end{equation}
which is smooth.

Differentiating $b=\beta^n$ and using the $b$-equation in
\eqref{eq:corner-desingularized-system} gives
\begin{align}
n\beta^{n-1}\dot\beta
&=
-\frac{\beta^n}{2}
\left(
y-\beta^nk
\right).
\end{align}
For $\beta>0$, division by $n\beta^{n-1}$ gives
\begin{equation}
\label{eq:corner-beta-equation}
\dot\beta
=
-\frac{\beta}{2n}
\left(
y-\beta^nk
\right).
\end{equation}
The right-hand side extends smoothly to $\beta=0$.

Thus, in the variables $(z,\beta,y,h,k,D,\Gamma,\xi),$
the desingularized system is
\begin{equation}
\label{eq:corner-rational-system}
\begin{cases}\displaystyle
\dot z
&=
\frac{z}{2}
\left(
y-\beta^nk
\right),
\\[0.8ex]\displaystyle
\dot\beta
&=
-\frac{\beta}{2n}
\left(
y-\beta^nk
\right),
\\[0.8ex]\displaystyle
\dot y
&=
D+\frac12y^2+\frac12\beta^nky
-
\beta^m
A\left(
v_\delta+\beta^nh
\right)
-z^3\beta^n,
\\[0.8ex]\displaystyle
\dot h
&=
\Gamma+\frac12hy+\frac12\beta^nkh,
\\[0.8ex]\displaystyle
\dot k
&=
-z^3,
\\
\dot D
&=
z^3\beta^n
\left(
y+\beta^nk
\right),
\\[0.8ex]\displaystyle
\dot\Gamma
&=
z^3\beta^nh,
\\[0.8ex]\displaystyle
\dot\xi
&=
z^3\beta^{2n}.
\end{cases}
\end{equation}
The fixed-parameter relation is now
\begin{equation}
\label{eq:corner-rational-delta-level}
z\beta^n=\delta.
\end{equation}
The weighted variables are related to the original variables through the
blow-down map:
\begin{equation}
\label{eq:corner-blowdown}
\rho_0=z^3\beta^n,\ \ \
u-\xi=\beta^n y,\ \ \ 
v-v_\delta=\beta^n h,\ \ \ 
w_1= k,\ \ \ 
w_2 = \xi k-D,\ \ \
w_3 = v_\delta k-\Gamma,\ \ \ 
\varepsilon= z^3\beta^{3n}.
\end{equation}
Dividing the relations for $\rho_0$ and $\varepsilon$ gives
\begin{equation}
\label{eq:weighted-original-identities}
\rho=\beta^{-2n}.
\end{equation}
Moreover, since $z\beta^n=\delta$, one has
\begin{equation}
\label{eq:weighted-original-identities-two}
z=\delta\sqrt{\rho},
\qquad
y=\sqrt{\rho}(u-\xi),
\qquad
h=\sqrt{\rho}(v-v_\delta).
\end{equation}
Thus, on the singular level $\delta=0$, the outer regime is represented
by the face $z=0$, while the singular middle orbit is represented by the
face $\beta=0$.

\begin{remark}
\label{rem:corner-ramified-coordinate}
For $n>1$, the map $\beta\mapsto b=\beta^n$ is not a local
diffeomorphism at $\beta=0$.  It is used here as a ramified
desingularization of the nonnegative half-space $b\geq0$.  For
$\beta>0$, it is one-to-one and maps trajectories of
\eqref{eq:corner-rational-system} to trajectories of
\eqref{eq:corner-desingularized-system}.  The boundary $\beta=0$
resolves the loss of differentiability of $b^{2\alpha}$ at $b=0$.
\end{remark}

\begin{prop}[Smoothness of the rational corner system]
\label{prop:rational-corner-smoothness}

Assume that $A$ is $C^\infty$ in a neighborhood of the relevant
values of $v$.  Let $\alpha\in\mathbb Q\cap(0,1]$ and write
$2\alpha=m/n$ in lowest terms.  Then the vector field
\eqref{eq:corner-rational-system} extends to a $C^\infty$ vector
field on a neighborhood of
\begin{equation}
\label{eq:corner-rational-boundary}
\{z=0,\ \beta=0\}.
\end{equation}

For $\alpha=\frac12$ and $\alpha=1$, one may take $n=1$, so
$\beta=b$ and no ramified change of variables is needed.

\end{prop}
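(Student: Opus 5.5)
The plan is to check smoothness term by term on the right-hand side of \eqref{eq:corner-rational-system}. Each component is a finite sum of products of three kinds of factors: monomials in $(z,\beta,y,h,k,D,\Gamma)$ with nonnegative integer exponents, the constants $\frac12$ and $\frac1{2n}$, and the single composite term $\beta^m A(v_\delta+\beta^n h)$. The integers $m,n$ come from writing $2\alpha=m/n$ in lowest terms, so $m,n\in\mathbb N$ and $n\ge1$. Every monomial term, including $\beta^n$, $\beta^{2n}$, $z^3$ and products of these with $y,h,k$, is therefore a polynomial and hence $C^\infty$ on all of $\mathbb R^8$. Negative or fractional powers of $z$ or $\beta$ never appear.

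First I would record that the only possible obstruction in \eqref{eq:corner-desingularized-system} was the factor $b^{2\alpha}$, together with the division by $n\beta^{n-1}$ used to derive \eqref{eq:corner-beta-equation}.

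\begin{itemize}
\item \emph{The $\beta$-equation.} By \eqref{eq:corner-beta-equation}, $\dot\beta=-\frac{\beta}{2n}(y-\beta^n k)$. This is a polynomial defined for every $\beta\in\mathbb R$, so it gives the $C^\infty$ extension to $\beta=0$. The division was carried out only for $\beta>0$, but the resulting formula is polynomial, so no extension argument beyond continuity is needed.
\item \emph{The pressure term.} By \eqref{eq:corner-pressure-beta}, $b^{2\alpha}=\beta^m$ with $m$ a positive integer, which is a polynomial. The composition $A(v_\delta+\beta^n h)$ is $C^\infty$ wherever $v_\delta+\beta^n h$ lies in the open neighborhood on which $A$ is $C^\infty$. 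Because $v_\delta$ lies in that neighborhood and $\beta^n h\to0$ as $\beta\to0$ with $h$ bounded, continuity gives an open neighborhood $\mathcal U$ of the set \eqref{eq:corner-rational-boundary}, intersected with bounded $h$, on which the argument stays in the domain of $A$. For the model $A(v)=(1-v)^{-p}$ this only requires $v_\delta<1$.
\end{itemize}

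On $\mathcal U$, products and compositions of $C^\infty$ maps are $C^\infty$, so the whole vector field is $C^\infty$.

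The main subtlety is bookkeeping rather than analysis. The neighborhood must be chosen so that $h$ ranges over a bounded set, for instance a compact set containing the values $\pm\frac{\Gamma}{D}a$ from Proposition~\ref{prop:singular-middle-connection}. This ensures that $v_\delta+\beta^n h$ stays in the domain of smoothness of $A$ as $\beta\to0$. I would also remark that the extension is to $\beta<0$ as well, which is harmless because the physical region is $\beta\ge0$.

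For the final claim, take $\alpha=\frac12$, so $2\alpha=1=\frac11$, or $\alpha=1$, so $2\alpha=2=\frac21$. In both cases $n=1$, so $b=\beta$ and the pressure term is $bA(v_\delta+bh)$ or $b^2A(v_\delta+bh)$ respectively. These are already smooth, in agreement with the earlier discussion following \eqref{eq:corner-pressure-term}.
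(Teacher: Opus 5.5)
Your argument is correct and is essentially the paper's proof: every term of \eqref{eq:corner-rational-system} is polynomial except $\beta^m A(v_\delta+\beta^n h)$, which is $C^\infty$ as a composition of smooth maps because $m,n\in\mathbb N$. Your extra bookkeeping on the domain of $A$ is a reasonable refinement that the paper leaves implicit, but bounding $h$ is unnecessary (and slightly shrinks the conclusion): the preimage $\{v_\delta+\beta^n h\in\mathcal O\}$ of the open set $\mathcal O\ni v_\delta$ on which $A$ is smooth is already an open neighborhood of all of $\{z=0,\ \beta=0\}$.
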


\begin{proof}

All terms in \eqref{eq:corner-rational-system} are polynomial in
$z$, $\beta$, $y$, $h$, $k$, $D$, $\Gamma$, and $\xi$, except for
the composition
\begin{equation}
\beta^m
A\left(
v_\delta+\beta^nh
\right).
\end{equation}
Since $m,n\in\mathbb N$ and $A$ is $C^\infty$, this composition is
$C^\infty$.  Hence the complete vector field extends smoothly to
$z=\beta=0$.
\end{proof}

\paragraph{Corner-equilibrium manifolds.}

We now work with the rational system
\eqref{eq:corner-rational-system}.  The cases
$\alpha=\frac12$ and $\alpha=1$ are included by taking $n=1$.

On the corner $z=0,\
\beta=0,$ system~\eqref{eq:corner-rational-system} reduces to
\begin{equation}
\label{eq:corner-boundary-reduced}
\dot y=D+\frac12y^2,
\ \ \
\dot h=
\Gamma+\frac12hy,
\ \ \
\dot k=\dot D
=\dot\Gamma
=\dot\xi=0.
\end{equation}
Since $D<0$, define $a(D)=\sqrt{-2D}.$ The equilibrium conditions for
\eqref{eq:corner-boundary-reduced} are
\begin{equation}
\label{eq:corner-equilibrium-conditions}
D+\frac12y^2=0,
\qquad
\Gamma+\frac12hy=0.
\end{equation}
The first equation gives
\begin{equation}
\label{eq:corner-y-plus-minus}
y=\pm a(D).
\end{equation}

If $y=a(D)$, then
\begin{align}
h=-\frac{2\Gamma}{a(D)}=
\frac{\Gamma}{D}a(D),
\label{eq:corner-h-plus}
\end{align}
where we used $D=-a(D)^2/2$.  If $y=-a(D)$, then
\begin{align}
h=\frac{2\Gamma}{a(D)}=
-\frac{\Gamma}{D}a(D).
\label{eq:corner-h-minus}
\end{align}
Define
\begin{equation}
\label{eq:corner-P-plus}
\mathcal P_+
=
\left\{
\begin{aligned}
&z=0,\quad \beta=0,\quad
y=a(D),\\
&h=\frac{\Gamma}{D}a(D),\quad D<0
\end{aligned}
\right\}
\end{equation}
and
\begin{equation}
\label{eq:corner-P-minus}
\mathcal P_-
=
\left\{
\begin{aligned}
&z=0,\quad \beta=0,\quad
y=-a(D),\\
&h=-\frac{\Gamma}{D}a(D),\quad D<0.
\end{aligned}
\right\}.
\end{equation}
The coordinates
\begin{equation}
\label{eq:corner-center-coordinates}
(k,D,\Gamma,\xi)
\end{equation}
are free on each set.  Therefore,
\begin{equation}
\label{eq:corner-P-dimensions}
\dim\mathcal P_+
=
\dim\mathcal P_-
=
4.
\end{equation}

\begin{prop}[Normal hyperbolicity of the corner manifolds]
\label{prop:corner-normal-hyperbolicity}

Let $\alpha\in\mathbb Q\cap(0,1]$, write
$2\alpha=m/n$ in lowest terms, and assume that $D$ ranges in a
compact interval contained in $(-\infty,0)$.  Then
$\mathcal P_+$ and $\mathcal P_-$ are normally hyperbolic manifolds
of equilibria for system~\eqref{eq:corner-rational-system}.

At $\mathcal P_+$, the four normal eigenvalues are
$-\frac{a}{2n},\ \frac{a}{2},
\ \  \frac{a}{2}, \ \ a,$ where $a=\sqrt{-2D}$.  Thus $\mathcal P_+$ has one stable and three unstable normal directions. At $\mathcal P_-$, the four normal eigenvalues are $\frac{a}{2n},
\ \ -\frac{a}{2},
\ \ -\frac{a}{2},
\ \ -a.$ Thus $\mathcal P_-$ has three stable and one unstable normal
directions.
\end{prop}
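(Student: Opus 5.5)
The plan is to linearize system~\eqref{eq:corner-rational-system} directly at a point of $\mathcal P_\pm$ and read off the spectrum from a triangular structure. We order the phase variables as normal variables $(z,\beta,y,h)$ followed by center variables $(k,D,\Gamma,\xi)$. First we confirm that $\mathcal P_\pm$ consists of equilibria. At $z=\beta=0$ the equations for $\dot k,\dot D,\dot\Gamma,\dot\xi$ vanish, because each carries a factor $z^3$. The $\dot z$ and $\dot\beta$ equations vanish because of their factors $z$ and $\beta$. The $\dot y$ and $\dot h$ equations reduce to \eqref{eq:corner-boundary-reduced}, which vanish by \eqref{eq:corner-equilibrium-conditions}. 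This gives the $4$-dimensional equilibrium manifolds parametrized by $(k,D,\Gamma,\xi)$, as already noted in \eqref{eq:corner-P-dimensions}. Smoothness of the field near the corner is supplied by Proposition~\ref{prop:rational-corner-smoothness}, so the linearization makes sense.

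Next we compute the Jacobian $J$ at $(0,0,\pm a,\pm\frac{\Gamma}{D}a,k,D,\Gamma,\xi)$, column block by column block.

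The rows of $\dot k,\dot D,\dot\Gamma,\dot\xi$ vanish identically at $z=0$. Their entries are all of order $z^2$ or higher, and $3\ge 2$.

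The row of $\dot z=\frac z2(y-\beta^nk)$ has the single nonzero entry $\partial_z\dot z=y/2$. Similarly the row of $\dot\beta$ has the single nonzero entry $\partial_\beta\dot\beta=-y/(2n)$. This is the point where the ramified coordinate matters: dividing by $n\beta^{n-1}$ in \eqref{eq:corner-beta-equation} produces the factor $1/n$.

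For the $\dot y$ row, we have $\partial_y\dot y=y+\frac12\beta^nk=y$ and $\partial_z\dot y=-3z^2\beta^n=0$. The entry $\partial_\beta\dot y$ may be nonzero, for instance $-A(v_\delta)$ when $m=1$. This does not matter, because it sits below the diagonal once the columns are ordered $(z,\beta,y,h)$.

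For the $\dot h$ row, $\partial_h\dot h=y/2$, while the entries $\partial_y\dot h=h/2$ and $\partial_\beta\dot h$ again lie off the diagonal.

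Hence $J$ is block lower triangular. The lower-right block is zero and corresponds to the tangent directions of $\mathcal P_\pm$. The $4\times4$ normal block is lower triangular with diagonal entries $\frac{y}{2},\,-\frac{y}{2n},\,y,\,\frac{y}{2}$.

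Substituting $y=a$ gives the spectrum $\frac a2,-\frac a{2n},a,\frac a2$ at $\mathcal P_+$, that is, one stable and three unstable normal directions. Substituting $y=-a$ gives $-\frac a2,\frac a{2n},-a,-\frac a2$ at $\mathcal P_-$. The four zero eigenvalues correspond exactly to the tangent space of the equilibrium manifold, since that manifold is $4$-dimensional and the normal eigenvalues are nonzero.

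For uniformity we restrict $D$ to a compact interval in $(-\infty,0)$. Then $a=\sqrt{-2D}$ is bounded away from zero, and all normal eigenvalues have modulus at least $\min(a/(2n),a/2)>0$. Normal hyperbolicity is therefore uniform on compact pieces of $\mathcal P_\pm$. The other parameters $k,\Gamma,\xi$ enter only the off-diagonal entries, so bounding them poses no difficulty.

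The only delicate point is the regularity of the $\beta$-derivative of the pressure term $\beta^mA(v_\delta+\beta^nh)$ at $\beta=0$. Since $m\ge1$ is an integer, this derivative exists and is finite, equal to $A(v_\delta)$ if $m=1$ and $0$ otherwise. Its value never affects the diagonal, so the spectrum is unaffected.
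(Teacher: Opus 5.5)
Your proposal is correct and follows the paper's own argument: linearize at a point of $\mathcal P_\pm$, note that the $4\times4$ normal block is lower triangular (you order the normal variables $(z,\beta,y,h)$ where the paper uses $(\beta,z,y,h)$), and read off the diagonal entries $\frac{y}{2},\,-\frac{y}{2n},\,y,\,\frac{y}{2}$ with $y=\pm a$. One small correction is needed: because $\partial_D\dot y=1$ and $\partial_\Gamma\dot h=1$, the normal--center coupling block of the full Jacobian is nonzero, and since the center rows vanish the matrix is block \emph{upper} triangular rather than block lower triangular; this mislabel does not affect the spectrum, nor your identification of the zero eigenspace with $T\mathcal P_\pm$.
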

\begin{proof}
We first consider $\mathcal P_+$.  Fix a point
\begin{equation}
\label{eq:corner-plus-point}
p_+
=
\left(
0,0,a,h_+,k,D,\Gamma,\xi
\right)
\in\mathcal P_+,
\end{equation}
where
\begin{equation}
\label{eq:corner-h-plus-definition}
a=\sqrt{-2D},
\
h_+=\frac{\Gamma}{D}a.
\end{equation}

The equations for $z$ and $\beta$ in
\eqref{eq:corner-rational-system} contain the factors $z$ and
$\beta$, respectively.  Their linearizations at $p_+$ are
$\dot z=
\frac{a}{2}z$ and $\dot\beta
=-\frac{a}{2n}\beta.$ Thus the corresponding eigenvalues are
$\lambda_z^+=
\frac{a}{2},
\ \ \lambda_\beta^+
=-\frac{a}{2n}.$ To determine the complete normal spectrum, order the normal variables
as $(\beta,z,y,h).$ At $p_+$, the linearized normal system has block lower-triangular form
\begin{equation}
\begin{pmatrix}
\dot{\widetilde\beta}\\
\dot{\widetilde z}\\
\dot{\widetilde y}\\
\dot{\widetilde h}
\end{pmatrix}
=
\begin{pmatrix}
-\frac{a}{2n} & 0 & 0 & 0\\[1ex]
0 & \frac{a}{2} & 0 & 0\\[1ex]
c_y^+ & 0 & a & 0\\[1ex]
c_h^+ & 0 & \frac{h_+}{2} & \frac{a}{2}
\end{pmatrix}
\begin{pmatrix}
\widetilde\beta\\
\widetilde z\\
\widetilde y\\
\widetilde h
\end{pmatrix},
\label{eq:corner-plus-full-normal-linearization}
\end{equation}
where the coefficients $c_y^+$ and $c_h^+$ arise only when the terms
containing $\beta^n$ or $\beta^m$ have nonzero first derivatives at
$\beta=0$. Their precise values do not affect the spectrum.

Because the matrix is lower triangular, its eigenvalues are
$-\frac{a}{2n},
\ \frac{a}{2},
\ a,
\ \frac{a}{2}.$ The remaining four eigenvalues are zero and correspond to vectors
tangent to the equilibrium manifold $\mathcal P_+$. Therefore
$\mathcal P_+$ is normally hyperbolic, with one stable and three
unstable normal directions.

At a point $p_-\in\mathcal P_-$, the same ordering gives a block
lower-triangular normal linearization whose diagonal entries are
$\frac{a}{2n},
\ -\frac{a}{2},
\ -a,
\ -\frac{a}{2}.$ Hence $\mathcal P_-$ is normally hyperbolic with three stable and one unstable normal directions.
\end{proof}

\begin{corollary}[Dimensions of the corner invariant manifolds]
\label{cor:corner-manifold-dimensions}
The stable and unstable manifolds of the corner-equilibrium
manifolds have dimensions
\begin{equation}
\label{eq:corner-plus-manifold-dimensions}
\dim W^s(\mathcal P_+)=5,
\qquad
\dim W^u(\mathcal P_+)=7,
\end{equation}
and
\begin{equation}
\label{eq:corner-minus-manifold-dimensions}
\dim W^s(\mathcal P_-)=7,
\qquad
\dim W^u(\mathcal P_-)=5.
\end{equation}

\end{corollary}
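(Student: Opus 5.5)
The corollary follows by counting, once we combine Proposition~\ref{prop:corner-normal-hyperbolicity} with the stable/unstable manifold theorem for normally hyperbolic manifolds of equilibria (Fenichel theory). The phase space of system~\eqref{eq:corner-rational-system} has variables $(z,\beta,y,h,k,D,\Gamma,\xi)$, so it is eight-dimensional. By \eqref{eq:corner-P-dimensions}, each of $\mathcal P_\pm$ is a four-dimensional manifold of equilibria parametrized by the center coordinates $(k,D,\Gamma,\xi)$. The linearization at any of its points therefore has four zero eigenvalues tangent to the manifold and four normal eigenvalues.

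The first step is to restrict $D$ to a compact interval in $(-\infty,0)$ and work on a compact piece of $\mathcal P_\pm$. There $a=\sqrt{-2D}$ is bounded away from zero, so the normal eigenvalues computed in Proposition~\ref{prop:corner-normal-hyperbolicity} are uniformly bounded away from the imaginary axis. Proposition~\ref{prop:rational-corner-smoothness} gives that the vector field is $C^\infty$ near $\{z=\beta=0\}$. Together these put us in the setting of Fenichel's theorem: each compact piece of $\mathcal P_\pm$ has local stable and unstable manifolds, fibered over the base by the stable and unstable normal eigenspaces.

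The second step is the count itself. For $\mathcal P_+$ the normal spectrum $\{-\tfrac{a}{2n},\tfrac a2,\tfrac a2,a\}$ has one negative and three positive entries. Hence
\[
\dim W^s(\mathcal P_+)=4+1=5,\qquad \dim W^u(\mathcal P_+)=4+3=7.
\]
For $\mathcal P_-$ the spectrum $\{\tfrac{a}{2n},-\tfrac a2,-\tfrac a2,-a\}$ gives
\[
\dim W^s(\mathcal P_-)=4+3=7,\qquad \dim W^u(\mathcal P_-)=4+1=5.
\]
As a consistency check, in each case the stable and unstable dimensions sum to $8+4$, which is the phase-space dimension plus the dimension of the base.

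One point needs care. The boundary sets $z=0$ and $\beta=0$ are invariant, and $\beta\ge0$ is only a half-space. So the stable and unstable manifolds should be understood as local invariant manifolds, either of a smooth extension of the field across $\beta=0$ or intersected with $\{\beta\ge0\}$. This is legitimate because the field is smooth up to $\beta=0$ (Proposition~\ref{prop:rational-corner-smoothness}) and the face $\beta=0$ is invariant. The intersection does not change the dimensions: the $\beta$-direction is itself one of the normal eigendirections (stable at $\mathcal P_+$, unstable at $\mathcal P_-$), so the half-space restriction only cuts the corresponding manifold by the half-space $\{\beta\ge0\}$ and leaves it as a manifold with boundary of the same dimension.
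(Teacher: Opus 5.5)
Your proposal is correct and is essentially the paper's argument. The paper's proof adds the four tangent dimensions of $\mathcal P_\pm$ to the normal directions from Proposition~\ref{prop:corner-normal-hyperbolicity}: one stable and three unstable at $\mathcal P_+$, three stable and one unstable at $\mathcal P_-$. Your extra remarks are sound justifications that the paper leaves implicit: restricting $D$ to a compact interval, invoking Fenichel theory, and handling the half-space $\beta\ge0$.
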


\begin{proof}

Each of $\mathcal P_+$ and $\mathcal P_-$ has dimension four.  At
$\mathcal P_+$ there is one stable and three unstable normal
directions, while at $\mathcal P_-$ there are three stable and one
unstable normal directions.  Adding the tangent dimensions gives
the result.
\end{proof}

\paragraph{Invariant hypersurface for the Corner Lemma.}

Define $\mathcal S
=\left\{
z=0
\right\}.$ The first equation of \eqref{eq:corner-rational-system} shows that
$\mathcal S$ is invariant.  Moreover, $
\mathcal P_+\subset\mathcal S,
\quad
\mathcal P_-\subset\mathcal S.$ At $\mathcal P_+$, the $z$-direction is unstable and transverse to
$\mathcal S$.  Therefore, the restriction of the linearization to
$\mathcal S$ removes exactly one unstable eigenvalue.  In the full
eight-dimensional phase space, the normal spectrum of
$\mathcal P_+$ consists of one stable and three unstable
eigenvalues; in $\mathcal S$, it consists of one stable and two
unstable eigenvalues.

Furthermore, the stable normal eigenspace of $\mathcal P_+$ is
contained in $T\mathcal S$. Indeed, the stable eigenvalue is
$-a/(2n)$, and the associated eigenvector may contain
$(y,h)$-components when $n=1$ or $m=1$, but it has no $z$-component.
The center bundle is also tangent to $\mathcal S$. Since $\mathcal S$
is invariant, it follows that $W^s(\mathcal P_+)\subset\mathcal S$
locally. Consequently, $\mathcal P_+$ and $\mathcal S$ satisfy the
corresponding spectral and invariance hypotheses of the Corner Lemma.

At $\mathcal P_-$, the unstable normal eigenspace is contained in
$T\mathcal S$, while the $z$-direction is stable and transverse to
$\mathcal S$. After reversing time, this gives the corresponding
outgoing corner geometry.

\begin{remark}
\label{rem:corner-irrational-alpha}

When $\alpha\notin\mathbb Q$, no finite ramified substitution
$b=\beta^n$ converts $b^{2\alpha}$ into an integer power.  The
corner geometry and the formal normal eigenvalues remain unchanged,
because the pressure term vanishes on $\beta=0$, but the smoothness
hypothesis of the standard Corner Lemma is not available.  The
present persistence argument is therefore restricted to $\alpha\in\mathbb Q\cap(0,1].$ In particular, the cases $\alpha=\frac12$ and $\alpha=1$ require no
ramified coordinate and are covered directly.
\end{remark}
\subsubsection{The Corner Lemma and the matching strategy}
\label{subsubsec:corner-lemma}
\begin{lemma}[Schecter's Corner Lemma {\cite[Theorem~5.1]{Sc}}]
\label{lem:corner-lemma}

Let $X$ be a sufficiently smooth vector field on an open subset of
$\mathbb R^N$, and let $\mathcal P$ be a normally hyperbolic manifold
of equilibria. Suppose that $\mathcal S$ is a codimension-one
invariant manifold satisfying $\mathcal P\subset\mathcal S,
\ \  W^s(\mathcal P)\subset\mathcal S.$ Assume that the unstable bundle of $\mathcal P$ contains a one-dimensional subbundle transverse to $\mathcal S$.

Let $\mathcal N$ be a manifold transverse to $W^s(\mathcal P)$ along
a submanifold $\mathcal Q\subset W^s(\mathcal P)$, and assume that
$\mathcal N$ is transverse to the vector field. Let $z$ be a defining coordinate for $\mathcal S$, so that $\mathcal S=\{z=0\}.$
For sufficiently small $\eta>0$, let
\begin{equation}
\mathcal N_\eta=\mathcal N\cap\{z=\eta\}.
\end{equation}

Then the forward images of $\mathcal N_\eta$ through a fixed
neighborhood of $\mathcal P$ converge in $C^1$, as $\eta\to0^+$, to
the corresponding portion of $W^u(\mathcal Q)$.

The analogous statement holds in backward time with stable and
unstable manifolds interchanged.
\end{lemma}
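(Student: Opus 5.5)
Since this is Schecter's result \cite[Theorem~5.1]{Sc}, in the paper it suffices to cite it and check its hypotheses, which was done above for $\mathcal P_\pm$ and $\mathcal S=\{z=0\}$. If a self-contained argument is wanted, I would prove it as an exchange-lemma-type result. The plan is a smooth local normal form near $\mathcal P$, a Deng--Shilnikov estimate for the passage time, and cone estimates for the tangent planes.

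\emph{Step 1: normal form.} Because $\mathcal P$ is a normally hyperbolic manifold of equilibria, Fenichel theory gives $C^r$ coordinates $(a,b,z,c)$ near $\mathcal P$ with the following properties. The coordinate $a$ is stable, $b$ is unstable and tangent to $\mathcal S$, $z$ is the one-dimensional unstable direction transverse to $\mathcal S$, and $c$ parametrizes $\mathcal P$. In these coordinates $\mathcal P=\{a=b=z=0\}$ and $W^s(\mathcal P)=\{b=z=0\}$, which lies in $\mathcal S=\{z=0\}$ by hypothesis. We also have $W^u(\mathcal P)=\{a=0\}$. Straightening the stable and unstable fibrations, the equations take the form
\[
\dot a=A(\cdot)a,\qquad (\dot b,\dot z)=B(\cdot)(b,z),\qquad \dot c=H(\cdot)\,a\otimes(b,z).
\]
Invariance of $\mathcal S$ forces $\dot z=z\,\lambda(\cdot)$ with $\lambda$ bounded away from zero.

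\emph{Step 2: $C^0$ convergence.} Take an entry section $\{|a|=\Delta\}$ and an exit section $\{|(b,z)|=\Delta\}$. A point of $\mathcal N_\eta$ near $\mathcal Q$ has $z=\eta$ and $b=O(\eta)$; the latter holds because $\mathcal N$ is transverse to $W^s$ along $\mathcal Q$ and we restrict to the slice $z=\eta$. Its passage time is therefore $T\sim\lambda^{-1}\log(1/\eta)$. Over that time, $a=O(e^{-\mu T})$, and the $c$-drift is $\int|a||(b,z)|=O(\eta^{\gamma})$. The exit point is therefore $O(\eta^\gamma)$-close to the unstable fiber over the base point $c\in\mathcal Q$, which gives convergence to $W^u(\mathcal Q)$ in $C^0$.

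\emph{Step 3: $C^1$ convergence.} For the tangent spaces, I would use the variational equation along the passage together with an invariant cone family around $T W^u$. Tangent vectors to $\mathcal N_\eta$ split into three kinds: directions along $\mathcal Q$ (center and stable), the flow direction, and directions in $(b,z)$ inside $\{z=\eta\}$. Under the linearized flow, the $a$-components contract and the $(b,z)$-components expand, so after time $T$ the image plane lies in a cone of aperture $O(\eta^\gamma)$ around $T W^u(\mathcal Q)$. Transversality of $\mathcal N$ to the vector field ensures that the image has the correct dimension and is graph-like over $T W^u(\mathcal Q)$ at the exit section.

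\emph{Main obstacle.} The hardest step is the uniform $C^1$ control of the center components over the logarithmically long passage. A naive Gronwall bound gives growth $e^{CT}$, which is polynomial in $1/\eta$ and may not be beaten by the decay. To handle this I would first try the Jones--Kopell / Schecter device of making the center equation vanish to higher order: choose coordinates so that $\dot c=O(|a|\,|(b,z)|)$, and, after a further normalization, so that the derivatives of $\dot c$ also carry a factor $a\cdot(b,z)$. With this, the products $e^{-\mu t}e^{\lambda t}$ integrate to $O(\eta^{\gamma})$ uniformly. The backward-time statement follows by reversing time and exchanging the roles of $W^s$ and $W^u$.
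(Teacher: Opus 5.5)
Your primary route, citing \cite[Theorem~5.1]{Sc} and checking its hypotheses for $\mathcal P_\pm$ and $\mathcal S=\{z=0\}$, is exactly what the paper does: the paper gives no proof of the lemma beyond the citation. The self-contained sketch you offer as a backup, however, fails at Step~2. You claim that points of $\mathcal N_\eta$ near $\mathcal Q$ have $b=O(\eta)$ \emph{because} $\mathcal N$ is transverse to $W^s(\mathcal P)$, but transversality gives the opposite. Since $T_q\mathcal N+T_qW^s(\mathcal P)=\mathbb R^N$ and $W^s(\mathcal P)=\{b=z=0\}$, the tangent space of $\mathcal N$ projects onto the $(b,z)$-directions. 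So $b$ and $z$ are free coordinates on $\mathcal N$ near $\mathcal Q$, and after fixing $z=\eta$ the variable $b$ still fills a neighborhood of $0$ whose size does not depend on $\eta$. This matches the count $\dim\mathcal N_\eta=\dim\mathcal Q+\dim b$. It is vacuous only when the unstable bundle is just the $z$-line, whereas at $\mathcal P_+$ it is three-dimensional and $y,h$ are free on $\widehat{\mathcal N}_{L,\beta_0,\eta}$. Points with $|b|\gg\eta$ reach your exit section $\{|(b,z)|=\Delta\}$ after a time of order $\log(\Delta/|b|)$, close to $\mathcal S$. Points with $|b|$ comparable to $\Delta$ leave with $|a|$ comparable to $\Delta$, nowhere near $W^u(\mathcal Q)$. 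A simple example shows this: take $\dot a=-a$, $\dot b=b$, $\dot z=z$ in $\mathbb R^3$ with $\mathcal P=\{0\}$ and $\mathcal N=\{a=\Delta\}$. The exit points of $\mathcal N_\eta$ on $\{b^2+z^2=\Delta^2\}$ converge, as $\eta\to0^+$, to the half-circle $\{a=0,\ b^2+z^2=\Delta^2,\ z\ge0\}$ together with the two segments $\{(a,\pm\Delta,0):0\le a\le\Delta\}$. So neither your uniform passage time $T\sim\lambda^{-1}\log(1/\eta)$ nor convergence of the whole image to $W^u(\mathcal Q)$ holds.

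The missing ingredient is the corner geometry that gives the lemma its name. The exit must be taken through the face $\{z=\Delta\}$ transverse to $\mathcal S$; equivalently, you only control the part of the image bounded away from $\mathcal S$. That is what ``the corresponding portion of $W^u(\mathcal Q)$'' has to mean, e.g.\ $W^u(\mathcal Q)\cap\{z=\Delta\}$. Smallness of $b$ then comes from the dynamics, not from transversality. An orbit starting on $\mathcal N_\eta$ reaches $\{z=\Delta\}$ inside the box only if $|b|$ has stayed below $\Delta$ up to that time. This forces $|b(0)|\lesssim\eta^{\kappa}$, with $\kappa>0$ set by the ratio of the $b$- and $z$-rates; it is $O(\eta)$ only when no $b$-rate is slower than the $z$-rate. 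On this subset the passage time is uniformly $\asymp\log(1/\eta)$, and your Step~2 and Step~3 estimates then go through. Separately, the extra normalization in your last paragraph is not needed. With Deng's form $\dot c=H\,a\otimes(b,z)$ from Step~1, $\partial_c\dot c$ already carries the product factor. The other derivatives $\partial_a\dot c=O(|(b,z)|)$ and $\partial_{(b,z)}\dot c=O(|a|)$ act on variations that are contracted and expanded, respectively, and this is enough for the $O(\eta^{\gamma})$ cone estimates.
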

\begin{remark}
The precise regularity and coordinate hypotheses are those of
Schecter's Corner Lemma. In the present application, the smooth
vector field is \eqref{eq:corner-rational-system}, the invariant
hypersurface is $\mathcal S=\{z=0\}$, and the corner manifolds are
$\mathcal P_+$ and $\mathcal P_-$.
\end{remark}

\subsubsection{Outer-to-corner transversality}
\label{subsubsec:outer-corner-transversality}

We now verify the transversality hypotheses required for the
application of the Corner Lemma. The point that must be checked is
the tangent space of the lift of the extended outer unstable manifold
in the weighted corner coordinates.

For the left state, define
\begin{equation}
\label{eq:left-outer-W-functions}
\begin{aligned}
k_L(\xi)
&=
\bigl(F(H_L)-\xi H_L\bigr)_1,
\\
D_L(\xi)
&=
\xi k_L(\xi)
-
\bigl(F(H_L)-\xi H_L\bigr)_2,
\\
\Gamma_L(\xi)
&=
v_\delta k_L(\xi)
-
\bigl(F(H_L)-\xi H_L\bigr)_3.
\end{aligned}
\end{equation}
At $\varepsilon=0$, the reduced unstable manifold associated with the
left state is
\begin{equation}
\label{eq:left-reduced-manifold-recalled}
\mathcal W_0^u(S_0(H_L))
=
\left\{
\left(
H,F(H_L)-\xi H_L,\xi,0
\right):
H\in\Omega_\xi
\right\}.
\end{equation}
Thus, before the weighted blow-up, $H$ and $\xi$ are independent local
coordinates on the reduced unstable manifold.

In the weighted corner chart, the blow-down relations are
\begin{equation}
\label{eq:left-weighted-blowdown-recalled}
\begin{aligned}
\rho=\beta^{-2n},\quad u&=\xi+\beta^ny, \quad v=v_\delta+\beta^nh, \\
\varepsilon=z^3\beta^{3n},\quad k&=w_1,\quad D=\xi w_1-w_2, \quad \Gamma=v_\delta w_1-w_3.
\end{aligned}
\end{equation}
For every $\beta>0$, the transformation
\begin{equation}
\label{eq:left-H-weighted-coordinate-map}
(\beta,y,h,\xi)
\longmapsto
H
=
\begin{pmatrix}
\beta^{-2n}
\\[0.3em]
\beta^{-2n}\bigl(\xi+\beta^ny\bigr)
\\[0.3em]
\beta^{-2n}\bigl(v_\delta+\beta^nh\bigr)
\end{pmatrix}
\end{equation}
has nonsingular Jacobian with respect to $(\beta,y,h)$.
Indeed,
\begin{equation}
\label{eq:left-H-weighted-Jacobian}
\det
\frac{\partial(\rho,u,v)}
{\partial(\beta,y,h)}
=
-2n\beta^{-2n-1}\beta^n\beta^n
=
-\frac{2n}{\beta}
\neq0.
\end{equation}
Consequently, $(\beta,y,h,\xi)$ are genuine local coordinates on the
lift of the reduced unstable manifold for $\beta>0$.

We next include the perturbation direction. Since $\varepsilon=z^3\beta^{3n},$ Fenichel theory implies that the extended unstable manifold $\widehat{\mathcal W}_L^u$ is a $C^r$ manifold depending smoothly on $\varepsilon$. After composition with the smooth weighted blow-down transformation, it follows that, for each fixed $\beta>0$, the lifted manifold admits the local graph representation
\begin{equation}
\label{eq:left-extended-weighted-graph}
\begin{aligned}
k
&=
k_L(\xi)
+
z^3\beta^{3n}
K_L(z,\beta,y,h,\xi),
\\
D
&=
D_L(\xi)
+
z^3\beta^{3n}
\mathcal D_L(z,\beta,y,h,\xi),
\\
\Gamma
&=
\Gamma_L(\xi)
+
z^3\beta^{3n}
\mathcal G_L(z,\beta,y,h,\xi),
\end{aligned}
\end{equation}
where $K_L$, $\mathcal D_L$, and $\mathcal G_L$ are smooth and
bounded near the portion of the lifted manifold under consideration.
Thus the lifted extended manifold is locally parameterized by
\begin{equation}
\label{eq:left-extended-weighted-parameterization}
\Psi_L(z,\beta,y,h,\xi)
=
\left(
z,\beta,y,h,k,D,\Gamma,\xi
\right),
\end{equation}
with $k,D,\Gamma$ given by
\eqref{eq:left-extended-weighted-graph}. The distinguished reduced left outer orbit approaches
\begin{equation}
\label{eq:left-corner-limit-point}
p_+
=
\left(
0,0,a,h_+,k_L(c),D,\Gamma,c
\right)
\in\mathcal P_+,
\end{equation}
where
\begin{equation}
\label{eq:left-corner-a-h}
a=\sqrt{-2D},
\qquad
h_+=\frac{\Gamma}{D}a.
\end{equation}
This is precisely the asymptotic behavior established in
Proposition~\ref{prop:left-reduced-orbit}, namely $
\sqrt{\rho}(u-c)\longrightarrow a,
\ \ 
\sqrt{\rho}(v-v_\delta)\longrightarrow h_+.$ 
Indeed, by the blow-down relations $y=\sqrt{\rho}(u-\xi),
\ h=\sqrt{\rho}(v-v_\delta),$ the asymptotics of Proposition~\ref{prop:left-reduced-orbit}, $\sqrt{\rho}(u-c)\to a,
\ \sqrt{\rho}(v-v_\delta)\to h_+,$ are equivalent to
$y\to a,\ h\to h_+,$ in the weighted corner coordinates. Hence the lifted reduced outer orbit
approaches the equilibrium $p_+\in\mathcal P_+.$ Fix $\beta_0>0$ and define
\begin{equation}
\label{eq:left-incoming-beta-section}
\widehat{\mathcal N}_{L,\beta_0}
=
\widehat{\mathcal W}_L^u
\cap
\{\beta=\beta_0\}.
\end{equation}
Since $\dot\beta
=
-\frac{\beta}{2n}
\left(
y-\beta^nk
\right)$ and $y\to a>0$ along the distinguished incoming orbit, the section
$\{\beta=\beta_0\}$ is transverse to the flow for all sufficiently
small $\beta_0>0$. Hence
$\widehat{\mathcal N}_{L,\beta_0}$ is four-dimensional.

Fix $\beta_0>0$. Since the section
$\{\beta=\beta_0\}$ remains away from the singular corner,
the lifted manifold
$\widehat{\mathcal N}_{L,\beta_0}$
is represented by the graph
\eqref{eq:left-extended-weighted-parameterization}.
The graph functions are $C^1$ and bounded in a neighborhood of the
distinguished orbit. Since the graph correction is multiplied by
$z^3\beta^{3n}=\varepsilon,$ its derivatives with respect to $y,h,$ and $\xi$ remain
$O(z^3\beta^{3n})$, whereas differentiation with respect to $z$
produces
$O(z^2\beta^{3n})$.
Consequently, the tangent vectors have the following asymptotic form.
Since $(\beta,y,h,\xi)$ form local coordinates on the lifted manifold for
$\beta>0$, the variables $(z,y,h,\xi)$ provide local coordinates on the
section $\{\beta=\beta_0\}$.
Differentiating
\eqref{eq:left-extended-weighted-parameterization} on
$\{\beta=\beta_0\}$ gives the tangent vectors
\begin{equation}
\label{eq:left-weighted-z-vector-beta}
\partial_z\Psi_L
=
\left(
1,0,0,0,
O(z^2\beta_0^{3n}),
O(z^2\beta_0^{3n}),
O(z^2\beta_0^{3n}),
0
\right),
\end{equation}
\begin{equation}
\label{eq:left-weighted-y-vector-beta}
\partial_y\Psi_L
=
\left(
0,0,1,0,
O(z^3\beta_0^{3n}),
O(z^3\beta_0^{3n}),
O(z^3\beta_0^{3n}),
0
\right),
\end{equation}
and
\begin{equation}
\label{eq:left-weighted-h-vector-beta}
\partial_h\Psi_L
=
\left(
0,0,0,1,
O(z^3\beta_0^{3n}),
O(z^3\beta_0^{3n}),
O(z^3\beta_0^{3n}),
0
\right).
\end{equation}
Differentiation with respect to $\xi$ gives
\begin{equation}
\label{eq:left-weighted-xi-vector-beta}
\partial_\xi\Psi_L
=
\left(
0,0,0,0,
k_L'(\xi),
D_L'(\xi),
\Gamma_L'(\xi),
1
\right)
+
O(z^3\beta_0^{3n}).
\end{equation}

On the singular level $z=0$, these become
\begin{equation}
\label{eq:left-exact-reduced-tangent-vectors}
\begin{aligned}
Z_L
&=
(1,0,0,0,0,0,0,0),
\\
Y_L
&=
(0,0,1,0,0,0,0,0),
\\
H_L^{\mathrm{tan}}
&=
(0,0,0,1,0,0,0,0),
\\
X_L
&=
\left(
0,0,0,0,
k_L'(\xi),D_L'(\xi),\Gamma_L'(\xi),1
\right).
\end{aligned}
\end{equation}
Therefore,
\begin{equation}
\label{eq:left-section-tangent-limit}
T\widehat{\mathcal N}_{L,\beta_0}
\longrightarrow
\operatorname{span}
\left\{
Z_L,Y_L,H_L^{\mathrm{tan}},X_L
\right\}
\end{equation}
as the base point tends to $p_+$ and
$\beta_0\to0^+$. The convergence is in the Grassmannian of four-dimensional subspaces of
$\mathbb R^8$.

The variables $y$ and $h$ provide independent local coordinates on
\eqref{eq:left-incoming-beta-section}.  Consequently, the corresponding
tangent vectors satisfy
\begin{equation}
\label{eq:left-yh-tangent-vectors}
Y_L
=
\left(
0,0,1,0,0,0,0,0
\right),
\qquad
H_L
=
\left(
0,0,0,1,0,0,0,0
\right).
\end{equation}
Variation of the extended perturbation parameter gives the radial
weighted direction
\begin{equation}
\label{eq:left-z-tangent-vector}
Z_L
=
\left(
1,0,0,0,0,0,0,0
\right).
\end{equation}
Finally, variation of $\xi$ gives
\begin{equation}
\label{eq:left-xi-weighted-vector}
X_L
=
\left(
0,0,0,0,
k_L'(c),D_L'(c),\Gamma_L'(c),1
\right).
\end{equation}
Thus
\begin{equation}
\label{eq:left-section-limiting-tangent}
T\widehat{\mathcal N}_{L,\beta_0}
\longrightarrow
\operatorname{span}
\left\{
Z_L,Y_L,H_L,X_L
\right\}
\end{equation}
as $\beta_0\to0^+$.

\begin{prop}[Outer-to-corner transversality]
\label{prop:outer-corner-transversality}

For all sufficiently small $\beta_0>0$, the incoming section
$\widehat{\mathcal N}_{L,\beta_0}$ is transverse to
$W^s(\mathcal P_+)$ near the distinguished left corner point.

Similarly, the outgoing section obtained from
$\widehat{\mathcal W}_R^s$ is transverse to
$W^u(\mathcal P_-)$ near the distinguished right corner point.

\end{prop}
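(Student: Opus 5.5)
The plan is to verify transversality by a tangent-space count at the limiting corner point $p_+$, and then to pass to small $\beta_0>0$ by openness of transversality. Throughout we use the ordering $(z,\beta,y,h,k,D,\Gamma,\xi)$ of $\mathbb R^8$. By Proposition~\ref{prop:rational-corner-smoothness}, system \eqref{eq:corner-rational-system} is $C^\infty$ near $\{z=\beta=0\}$. By Proposition~\ref{prop:corner-normal-hyperbolicity}, $\mathcal P_+$ is a normally hyperbolic four-dimensional manifold of equilibria with exactly one stable normal eigenvalue $-a/(2n)$. Fenichel theory then gives a $C^1$ stable foliation, so $W^s(\mathcal P_+)$ is a $C^1$ five-dimensional manifold whose tangent spaces vary continuously up to $\mathcal P_+$. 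Its tangent space at $p_+$ is
\[
T_{p_+}W^s(\mathcal P_+)=T_{p_+}\mathcal P_+\oplus E^s(p_+).
\]
Here $T_{p_+}\mathcal P_+$ is spanned by the four vectors obtained by varying $(k,D,\Gamma,\xi)$, with the induced $(y,h)$-components coming from $y=a(D)$ and $h=\frac{\Gamma}{D}a(D)$. The stable direction $E^s(p_+)$ is spanned by the eigenvector of the lower-triangular matrix \eqref{eq:corner-plus-full-normal-linearization} for the eigenvalue $-a/(2n)$. This eigenvector has $\beta$-component equal to $1$, zero $z$-component, and possibly nonzero $(y,h)$-components.

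Next I combine these with the limiting tangent space \eqref{eq:left-section-limiting-tangent}, namely $\operatorname{span}\{Z_L,Y_L,H_L,X_L\}$. Modulo the span of $Z_L,Y_L,H_L$, which is the $(z,y,h)$-coordinate space, the four tangent vectors of $\mathcal P_+$ reduce to the coordinate vectors in $k,D,\Gamma,\xi$. The stable eigenvector reduces to $e_\beta$. Hence
\[
\operatorname{span}\{Z_L,Y_L,H_L\}+T_{p_+}W^s(\mathcal P_+)=\mathbb R^8 ,
\]
and $X_L$ is redundant. Because transversality is an open condition in the Grassmannian, and the base points converge to $p_+$ as $\beta_0\to0^+$, this sum remains all of $\mathbb R^8$ at nearby points for all sufficiently small $\beta_0>0$. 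Two facts feed into this: the convergence \eqref{eq:left-section-limiting-tangent} of $T\widehat{\mathcal N}_{L,\beta_0}$, and the continuity of $TW^s(\mathcal P_+)$ from the Fenichel stable fibration.

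The statement is really transversality inside the section $\{\beta=\beta_0\}$, since $\widehat{\mathcal N}_{L,\beta_0}$ lies there. To handle this I note that $\dot\beta=-\frac{\beta}{2n}(y-\beta^nk)\neq0$ near the distinguished orbit, because $y\approx a>0$. So the vector field is transverse to $\{\beta=\beta_0\}$ and is tangent to $W^s(\mathcal P_+)$. Intersecting with the section removes exactly this flow direction, and we obtain $T\widehat{\mathcal N}_{L,\beta_0}+T(W^s(\mathcal P_+)\cap\{\beta=\beta_0\})=T\{\beta=\beta_0\}$. This is seven-dimensional, consistent with the count $4+4-7=1$, which is the intersection curve carried by the distinguished orbit.

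I expect the main obstacle to be the uniformity step. The base points of $\widehat{\mathcal N}_{L,\beta_0}$ sit at $\beta=\beta_0>0$, not on $\mathcal P_+$. So I must compare the tangent space of $W^s(\mathcal P_+)$ at these points with $T_{p_+}W^s(\mathcal P_+)$, uniformly as $\beta_0\to0$. To control this, I would straighten the stable fibration by a $C^1$ change of coordinates in which $W^s(\mathcal P_+)$ becomes $\{z=0,\ (y,h)=\text{graph over }(\beta,k,D,\Gamma,\xi)\}$. The required independence then reduces to the invertibility of the $(z,y,h)$-block, which is the identity in the limit.

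The right-hand statement follows the same way after reversing time. At $\mathcal P_-$ the roles of stable and unstable are exchanged: $\dim W^u(\mathcal P_-)=5$ by Corollary~\ref{cor:corner-manifold-dimensions}, the single unstable direction is the $\beta$-eigenvector with eigenvalue $a/(2n)$, and the lifted $\widehat{\mathcal W}_R^s$ supplies $Z_R,Y_R,H_R$ via the analogue of \eqref{eq:left-extended-weighted-graph} built from $k_R,D_R,\Gamma_R$.
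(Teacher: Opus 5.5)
Your proposal is correct and follows essentially the paper's argument: a tangent-space count at $p_+$ showing $\operatorname{span}\{Z_L,Y_L,H_L\}+T_{p_+}\mathcal P_+\oplus E^s_+=\mathbb R^8$, then openness of transversality as $\beta_0\to0^+$, and time reversal at $\mathcal P_-$. The paper phrases the count by identifying $\operatorname{span}\{Z_L,Y_L,H_L\}$ with the unstable normal eigenspace $E^u_+$, whereas you reduce modulo the $(z,y,h)$-coordinate space; your additional remarks on transversality inside $\{\beta=\beta_0\}$ and on writing $W^s(\mathcal P_+)$ as a graph over $(\beta,k,D,\Gamma,\xi)$ are sound refinements of the continuity step that the paper only asserts.
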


\begin{proof}
As $\beta_0\to0^+$, the distinguished point of
$\widehat{\mathcal N}_{L,\beta_0}$ converges to the corner equilibrium
$p_+$.
Consequently, it suffices to verify transversality for the limiting
tangent spaces at $p_+$. By
Proposition~\ref{prop:corner-normal-hyperbolicity},
$T_{p_+}W^s(\mathcal P_+)
=T_{p_+}\mathcal P_+
\oplus E_+^s,$ where $E_+^s$ is the one-dimensional stable normal eigenspace. Its eigenvector has a nonzero $\beta$-component and no $z$-component.

The normal variables are ordered as $(\beta,z,y,h).$ The vectors $Z_L,Y_L,H_L^{\mathrm{tan}}$ have normal projections
$(0,1,0,0)$, $(0,0,1,0)$, $(0,0,0,1).$
The restriction of the linearization computed in
Proposition~\ref{prop:corner-normal-hyperbolicity}
to the $(z,y,h)$ variables is
\[
\begin{pmatrix}
a/2&0&0\\
0&a&0\\
0&h_+/2&a/2
\end{pmatrix}.
\]
Since the full normal linearization has exactly three unstable normal eigenvalues, this invariant subspace coincides with the unstable normal eigenspace: $E_+^u
=\operatorname{span}
\{Z_L,Y_L,H_L^{\rm tan}\}.$ Since
$T_{p_+}W^s(\mathcal P_+)
=T_{p_+}\mathcal P_+
\oplus E_+^s$ and the normal splitting is $\mathbb R^8
=T_{p_+}\mathcal P_+
\oplus E_+^s
\oplus E_+^u,$ we obtain
$T_{p_+}W^s(\mathcal P_+)
+\operatorname{span}
\left\{
Z_L,Y_L,H_L^{\mathrm{tan}}
\right\}
=
\mathbb R^8.$ Since $E_+^u
\subset
\lim_{\beta_0\to0}
T\widehat{\mathcal N}_{L,\beta_0},$ we obtain $T_{p_+}W^s(\mathcal P_+)
+\lim_{\beta_0\to0^+}
T\widehat{\mathcal N}_{L,\beta_0}
=\mathbb R^8.$ Since the tangent spaces of
$\widehat{\mathcal N}_{L,\beta_0}$
depend continuously on $\beta_0$, this transversality relation persists
for all sufficiently small $\beta_0>0$. Therefore, $\widehat{\mathcal N}_{L,\beta_0}\pitchfork W^s(\mathcal P_+).$

At $p_-$, the same calculation applies with stable and unstable
directions interchanged. The $(z,y,h)$-space is the
three-dimensional stable normal eigenspace, while the unstable normal
eigenvector has a nonzero $\beta$-component. The limiting tangent
space of the outgoing section contains $Z_R=(1,0,0,0,0,0,0,0),
\
Y_R=(0,0,1,0,0,0,0,0),
\
H_R^{\mathrm{tan}}=(0,0,0,1,0,0,0,0).$ Therefore
$T_{p_-}W^u(\mathcal P_-)
+\operatorname{span}
\left\{
Z_R,Y_R,H_R^{\mathrm{tan}}
\right\}
=
\mathbb R^8,$ and openness gives $\widehat{\mathcal N}_{R,\beta_0}
\pitchfork
W^u(\mathcal P_-)$ for all sufficiently small $\beta_0>0$. Therefore all hypotheses of
Lemma~\ref{lem:corner-lemma}
are verified at the left corner:
the manifold of equilibria
$\mathcal P_+$
is normally hyperbolic
(Proposition~\ref{prop:corner-normal-hyperbolicity}),
the invariant hypersurface
$\mathcal S=\{z=0\}$
contains
$W^s(\mathcal P_+)$,
the unstable bundle contains the transverse
$z$-direction,
and $\widehat{\mathcal N}_{L,\beta_0}
\pitchfork
W^s(\mathcal P_+).$ The corresponding backward-time hypotheses hold at
$\mathcal P_-$.
\end{proof}
\begin{corollary}[Corner passage]
\label{cor:corner-passage}

Fix $\beta_0>0$ sufficiently small and, for $\eta>0$, define
\begin{equation}
\widehat{\mathcal N}_{L,\beta_0,\eta}
=
\widehat{\mathcal N}_{L,\beta_0}
\cap
\{z=\eta\}.
\end{equation}
Then the forward images of
$\widehat{\mathcal N}_{L,\beta_0,\eta}$
through a fixed neighborhood of $\mathcal P_+$ converge in $C^1$,
as $\eta\to0^+$, to the corresponding portion of
$W^u(\mathcal Q_+)$, where $\mathcal Q_+\subset\mathcal P_+$ is the
projection of
$\widehat{\mathcal N}_{L,\beta_0}\cap W^s(\mathcal P_+)$
along the stable foliation of $W^s(\mathcal P_+)$.

Similarly, after fixing a sufficiently small right-hand section,
the corresponding backward images through the neighborhood of
$\mathcal P_-$ converge in $C^1$ to the appropriate portion of
$W^s(\mathcal Q_-)$.
\end{corollary}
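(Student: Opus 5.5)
The corollary follows by applying Lemma~\ref{lem:corner-lemma} twice: forward in time at $\mathcal P_+$ and backward in time at $\mathcal P_-$. The work consists of identifying each abstract object in the lemma with a concrete one in the weighted corner system \eqref{eq:corner-rational-system} and checking the hypotheses listed there. The ambient vector field $X$ is \eqref{eq:corner-rational-system} on an open subset of $\mathbb R^8$. By Proposition~\ref{prop:rational-corner-smoothness}, this field is $C^\infty$ near $\{z=0,\beta=0\}$ for every $\alpha\in\mathbb Q\cap(0,1]$. The manifold of equilibria is $\mathcal P=\mathcal P_+$ and the invariant hypersurface is $\mathcal S=\{z=0\}$, with defining coordinate $z$. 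The incoming manifold is $\mathcal N=\widehat{\mathcal N}_{L,\beta_0}$.

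First I verify the structural hypotheses. By Proposition~\ref{prop:corner-normal-hyperbolicity}, $\mathcal P_+$ is normally hyperbolic, with one stable normal eigenvalue $-a/(2n)$ and three unstable normal eigenvalues. Uniformity holds because $D$ ranges in a compact subinterval of $(-\infty,0)$ near the distinguished value, so $a=\sqrt{-2D}$ is bounded away from zero. The inclusion $W^s(\mathcal P_+)\subset\mathcal S$ and the existence of the one-dimensional unstable $z$-subbundle transverse to $\mathcal S$ (eigenvalue $a/2$) were recorded in the paragraph ``Invariant hypersurface for the Corner Lemma''. The lower-triangular form \eqref{eq:corner-plus-full-normal-linearization} shows the $z$-row decouples.

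Next I supply the transversality data. Proposition~\ref{prop:outer-corner-transversality} gives $\widehat{\mathcal N}_{L,\beta_0}\pitchfork W^s(\mathcal P_+)$ for small $\beta_0$. I set $\mathcal Q:=\widehat{\mathcal N}_{L,\beta_0}\cap W^s(\mathcal P_+)$, a submanifold of $W^s(\mathcal P_+)$. By a dimension count, $4+5-8=1$, so $\mathcal Q$ is one-dimensional and contains the point of the distinguished incoming orbit. Its base-point projection along the stable fibers of $W^s(\mathcal P_+)$ is $\mathcal Q_+$.

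The remaining hypothesis is that $\mathcal N$ is transverse to the flow. The section $\{\beta=\beta_0\}$ is transverse to the flow, since $\dot\beta=-\tfrac{\beta}{2n}(y-\beta^nk)\neq0$ with $y\approx a>0$. I also need $\mathcal N_\eta=\widehat{\mathcal N}_{L,\beta_0}\cap\{z=\eta\}$ to be a smooth manifold. This follows because $Z_L$ lies in the limiting tangent space \eqref{eq:left-section-limiting-tangent}, so $z$ restricted to $\widehat{\mathcal N}_{L,\beta_0}$ is a submersion near the distinguished point. The lemma then gives the stated $C^1$ convergence of forward images of $\widehat{\mathcal N}_{L,\beta_0,\eta}$ to $W^u(\mathcal Q_+)$.

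For $\mathcal P_-$, I reverse time: $t\mapsto -t$. Under reversal, the roles swap. The eigenvalues at $\mathcal P_-$ become one stable, namely $-a/(2n)$, coming from the $\beta$-direction, and three unstable, including $z$. Thus $W^u(\mathcal P_-)$ in forward time becomes the stable manifold of the reversed field, and it lies in $\mathcal S$, as noted earlier. The right half of Proposition~\ref{prop:outer-corner-transversality} supplies transversality of the outgoing section built from $\widehat{\mathcal W}_R^s$. The same application of Lemma~\ref{lem:corner-lemma} then yields the backward statement with $W^s(\mathcal Q_-)$.

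The main obstacle is making the identification of $\mathcal Q_\pm$ and the stable-fiber projection rigorous. The transversality in Proposition~\ref{prop:outer-corner-transversality} was established only for limiting tangent spaces as $\beta_0\to0^+$. I need it at a fixed small $\beta_0$ and uniformly in the base point, so that a single neighborhood of $\mathcal P_+$ works for all small $\eta$. I would handle this by choosing $\beta_0$ first, using openness of transversality in the Grassmannian. I would then shrink the neighborhood of $p_+$ so that the Fenichel stable fibration of $W^s(\mathcal P_+)$ is $C^1$ there, which is what makes the projection defining $\mathcal Q_+$ well defined and $C^1$.
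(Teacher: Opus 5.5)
Your proposal is correct and follows essentially the same route as the paper: a direct application of Schecter's Corner Lemma (Lemma~\ref{lem:corner-lemma}) at $\mathcal P_+$. The hypotheses come from normal hyperbolicity (Proposition~\ref{prop:corner-normal-hyperbolicity}), the invariant hypersurface $\{z=0\}$ containing $W^s(\mathcal P_+)$, transversality of $\widehat{\mathcal N}_{L,\beta_0}$ to the flow via $\dot\beta\neq0$ and to $W^s(\mathcal P_+)$ via Proposition~\ref{prop:outer-corner-transversality}, and time reversal for $\mathcal P_-$. Your extra bookkeeping fills in details the paper leaves implicit without changing the argument: the count $\dim\mathcal Q=1$, the submersion argument making $\widehat{\mathcal N}_{L,\beta_0,\eta}$ a manifold, and fixing $\beta_0$ before shrinking the neighborhood.
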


\begin{proof}
Proposition~\ref{prop:corner-normal-hyperbolicity}
establishes the normal hyperbolicity of
$\mathcal P_\pm$,
while the discussion preceding
Lemma~\ref{lem:corner-lemma}
verifies the invariant-hypersurface hypotheses.
The manifold
$\widehat{\mathcal N}_{L,\beta_0}$
is transverse to the vector field because
$\dot\beta\neq0$
along the distinguished incoming orbit and therefore throughout a sufficiently small neighborhood of that orbit.
Its transversality to
$W^s(\mathcal P_+)$
was established in
Proposition~\ref{prop:outer-corner-transversality}.
Since
$\mathcal S=\{z=0\}$,
the defining transverse coordinate in
Lemma~\ref{lem:corner-lemma}
is $z$. Since
$\mathcal S=\{z=0\}$,
the defining transverse coordinate in
Schecter's Corner Lemma
(Lemma~\ref{lem:corner-lemma})
is $z$.
The conclusion therefore follows by applying
Lemma~\ref{lem:corner-lemma}
to $\widehat{\mathcal N}_{L,\beta_0,\eta}
=\widehat{\mathcal N}_{L,\beta_0}
\cap
\{z=\eta\}.$ The right-hand statement follows by reversing time.
\end{proof}

\subsubsection{Middle matching condition}
\label{subsubsec:middle-transversality}

The middle matching condition is scalar. Its nondegeneracy is established
in the proof of Proposition~\ref{prop:singular-concatenation-persistence},
where it is shown that the associated matching function satisfies
$\mathcal G'(\lambda_*)\neq0.$ This nondegeneracy provides the middle transversality condition required
in the persistence argument below.

\subsubsection{Exit-chart passage}
The right reduced outer trajectory leaves the blown-up region through
the side $u-\xi<0$. We therefore introduce the exit directional chart,
denoted by $K_{\mathrm{out}}$, corresponding to the negative-$\bar u$
direction. Its coordinates are defined by
\begin{equation}
    \begin{cases}
    \displaystyle
        u - \xi = -r_1 \\\displaystyle
        \rho_0 = r_1\overline{\rho}_1 \\\displaystyle
        v - \frac{k_3}{k_1} = r_1\overline{v}_1 \\\displaystyle
        \epsilon = r_1^2 \overline{\epsilon}_1^2
    \end{cases}
\end{equation}
\begin{lemma}[Right outer endpoint in the exit chart]
\label{lem:right-outer-endpoint-exit}

The right outer endpoint $P_R$ is represented in the exit chart
$K_{\mathrm{out}}$ by
\begin{equation}
P_R^{\mathrm{out}}
=
\left(
0,\,
0,\,
-\frac{\Gamma}{D},\,
0,\,
W_R,\,
c
\right),
\label{eq:PR-exit-chart}
\end{equation}
where the coordinates are ordered as $(r_1,\bar\rho_1,\bar v_1,\bar\varepsilon_1,W,\xi).$
\end{lemma}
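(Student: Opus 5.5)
This is the mirror image of Lemma~\ref{lem:left-outer-endpoint-entry}, and I would prove it the same way. First I write down the relations between the spherical blow-up coordinates and the exit-chart coordinates. Equating $u-\xi=r\bar u=-r_1$, $\rho_0=r\bar\rho=r_1\bar\rho_1$, $v-v_\delta=r\bar v=r_1\bar v_1$ and $\varepsilon=r^2\bar\varepsilon^2=r_1^2\bar\varepsilon_1^2$ gives, on the region $\bar u<0$,
\begin{equation*}
r_1=-r\bar u,\qquad
\bar\rho_1=-\frac{\bar\rho}{\bar u},\qquad
\bar v_1=-\frac{\bar v}{\bar u},\qquad
\bar\varepsilon_1=-\frac{\bar\varepsilon}{\bar u}.
\end{equation*}
The variables $W$ and $\xi$ are not affected by the change of chart.

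Next I evaluate these relations at $P_R$, taken from Proposition~\ref{prop:outer-sphere-limits}. There $r=0$, $\bar\rho=0$, $\bar\varepsilon=0$, $\bar u=D/R$, $\bar v=\Gamma/R$, $W=W_R$ and $\xi=c$. Lemma~\ref{lem:sign_xiw} gives $D<0$, so $\bar u=D/R<0$. Hence $P_R$ lies in the domain of $K_{\mathrm{out}}$; this is also the sign statement for $\mathcal E_+$ in Proposition~\ref{prop:equilibrium-branches}. Substituting gives $r_1=0$, $\bar\rho_1=0$, $\bar\varepsilon_1=0$ and
\begin{equation*}
\bar v_1=-\frac{\Gamma/R}{D/R}=-\frac{\Gamma}{D}.
\end{equation*}
Here $D$ and $\Gamma$ may be taken as $D_R$ and $\Gamma_R$, which equal $D_L$ and $\Gamma_L$ by Lemma~\ref{lem:outer-W-RH}. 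Together with $W=W_R$ and $\xi=c$, this gives \eqref{eq:PR-exit-chart}.

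There is no real obstacle. The one point needing care is the sign convention $u-\xi=-r_1$: it forces the minus signs in the transition map, and these are what produce $-\Gamma/D$ rather than the entry-chart value $\Gamma/D$. As a cross-check, I would verify directly along the right outer orbit from Proposition~\ref{prop:right-reduced-orbit}. There $u-c=D\zeta+o(\zeta)$ and $v-v_\delta=\Gamma\zeta+o(\zeta)$, so $r_1=-(u-c)=-D\zeta+o(\zeta)>0$ for small $\zeta>0$. Then $\bar v_1=(v-v_\delta)/r_1\to-\Gamma/D$, while $\rho_0=0$ and $\varepsilon=0$ force $\bar\rho_1=\bar\varepsilon_1=0$.
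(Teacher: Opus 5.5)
Your proposal is correct and follows the paper's proof: the paper uses the same transition relations $r_1=-r\bar u$, $\bar\rho_1=-\bar\rho/\bar u$, $\bar v_1=-\bar v/\bar u$, $\bar\varepsilon_1=-\bar\varepsilon/\bar u$, evaluates them at $P_R$ from Proposition~\ref{prop:outer-sphere-limits}, and uses $D<0$ to place $P_R$ in $K_{\mathrm{out}}$ and obtain $\bar v_1=-\Gamma/D$. Your extra check along the right outer orbit just repeats the limit computation of Proposition~\ref{prop:outer-sphere-limits} directly in exit-chart coordinates; it is consistent but not needed.
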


\begin{proof}
The spherical and exit-chart coordinates are related by
\begin{equation}
r_1=-r\bar u,
\qquad
\bar\rho_1=-\frac{\bar\rho}{\bar u},
\qquad
\bar v_1=-\frac{\bar v}{\bar u},
\qquad
\bar\varepsilon_1=-\frac{\bar\varepsilon}{\bar u}.
\label{eq:spherical-to-exit}
\end{equation}
By Proposition~\ref{prop:outer-sphere-limits}, at $P_R$,
$$r=0,
\qquad
\bar\rho=0,
\qquad
\bar u=\frac{D}{R},
\qquad
\bar v=\frac{\Gamma}{R},
\qquad
\bar\varepsilon=0,
\qquad
W=W_R,
\qquad
\xi=c.
$$
Since $D<0$, one has $D/R<0$, so $P_R$ belongs to
$K_{\mathrm{out}}$. Moreover, $\bar v_1
=-\frac{\bar v}{\bar u}
=-\frac{\Gamma/R}{D/R}
=-\frac{\Gamma}{D}.$ Also, $r_1=0,
\ \bar\rho_1=0,
\  \bar\varepsilon_1=0.$ This proves \eqref{eq:PR-exit-chart}.
\end{proof}
Similarly, rewriting system \eqref{GSPT_eps_sys} in the exit coordinates and applying the chain rule yields the transformed system 
\begin{equation}
\begin{cases}
\displaystyle
\frac{dr_1}{d\tau}
=
-r_1\,\Phi_2,
\\[2ex]
\displaystyle
\frac{d\bar\varepsilon_1}{d\tau}
=
\bar\varepsilon_1\,\Phi_2,
\\[2ex]
\displaystyle
\frac{d\xi}{d\tau}
=
r_1^2\bar\varepsilon_1^2,
\\[2ex]
\displaystyle
\frac{d u}{d\tau}
=
r_1^2\bar\varepsilon_1^2
+r_1\,\Phi_2,
\\[2ex]
\displaystyle
\frac{d\bar\rho_1}{d\tau}
=
\bar\rho_1\,\Phi_2
-r_1\bar\rho_1
-r_1\bar\varepsilon_1^2w_1,
\\[2ex]
\displaystyle
\frac{d\bar v_1}{d\tau}
=
\bar v_1\,\Phi_2
+\frac{\bar\varepsilon_1^2}{\bar\rho_1}
\left[
\left(\frac{k_3}{k_1}+r_1\bar v_1\right)w_1-w_3
\right],
\\[2ex]
\displaystyle
\frac{dw_1}{d\tau}
=
-r_1\bar\rho_1,
\\[2ex]
\displaystyle
\frac{dw_2}{d\tau}
=
-r_1\bar\rho_1(\xi-r_1),
\\[2ex]
\displaystyle
\frac{dw_3}{d\tau}
=
-r_1\bar\rho_1
\left(\frac{k_3}{k_1}+r_1\bar v_1\right),
\end{cases}\label{eq:chart2_full}
\end{equation}
where
\begin{equation}\label{eq:Phi2}
\Phi_2
=
\frac{\bar\varepsilon_1^2}{\bar\rho_1}
\left[(\xi-r_1)w_1-w_2\right]
-
A\!\left(\frac{k_3}{k_1}+r_1\bar v_1\right)
\frac{r_1^\alpha\bar\varepsilon_1^{2\alpha+2}}
{\bar\rho_1^{\alpha+1}}
-r_1\bar\varepsilon_1^2 .
\end{equation}
\begin{lemma}[Regularity of the exit-region vector field]
\label{lem:exit-regularity}

Let $\alpha=\frac{p}{q}\in\mathbb Q\cap(0,1],
\
\gcd(p,q)=1.$ Restrict attention to a compact set on which $\bar\rho_1$ is bounded
away from zero. Introduce the ramified radial coordinate
\begin{equation}
r_1=s_1^q,
\qquad
s_1\geq0.
\label{eq:r1-s1}
\end{equation}
Then the exit-region system, written in the variables
$(s_1,\bar\rho_1,\bar v_1,\bar\varepsilon_1,
w_1,w_2,w_3,\xi),$ extends smoothly to $s_1=0$.
\end{lemma}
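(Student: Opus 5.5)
This lemma is the exact analogue of Lemma~\ref{lem:entry-regularity}, so I will follow the same argument with the exit-chart system \eqref{eq:chart2_full} in place of \eqref{eq:chart1_full}. The first step is to find where $r_1$ enters non-smoothly. Inspecting \eqref{eq:chart2_full} and \eqref{eq:Phi2}, every occurrence of $r_1$ is polynomial (through $r_1$, $r_1^2$, $\xi-r_1$, $v_\delta+r_1\bar v_1$) except the single factor $r_1^\alpha$ in the pressure term of $\Phi_2$. The variables $\bar\rho_1$ appear only in denominators, namely $\bar\rho_1^{-1}$ and $\bar\rho_1^{-\alpha-1}$. On the compact set where $\bar\rho_1\ge c_0>0$, these are smooth functions of $\bar\rho_1$, since $\bar\rho_1^{-\alpha-1}$ is smooth for $\bar\rho_1>0$. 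Likewise $\bar\varepsilon_1^{2\alpha+2}$ should be smooth in the relevant region; if $\bar\varepsilon_1$ may vanish there, I will instead track it through $\varepsilon=r_1^2\bar\varepsilon_1^2$, exactly as the entry chart does.

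The second step is the substitution $r_1=s_1^q$ with $\alpha=p/q$, which gives $r_1^\alpha=s_1^p$. I define
\[
\widetilde\Phi_2=\frac{\bar\varepsilon_1^2}{\bar\rho_1}\bigl[(\xi-s_1^q)w_1-w_2\bigr]-A\bigl(v_\delta+s_1^q\bar v_1\bigr)\frac{s_1^p\bar\varepsilon_1^{2\alpha+2}}{\bar\rho_1^{\alpha+1}}-s_1^q\bar\varepsilon_1^2 .
\]
This is smooth in $(s_1,\bar\rho_1,\bar v_1,\bar\varepsilon_1,W,\xi)$ up to $s_1=0$, because $A$ is smooth near $v_\delta<1$.

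The third step treats the radial equation. From $\frac{dr_1}{d\tau}=-r_1\Phi_2$ I get $q s_1^{q-1}\dot s_1=-s_1^q\widetilde\Phi_2$. For $s_1>0$ this gives
\[
\frac{ds_1}{d\tau}=-\frac1q\, s_1\widetilde\Phi_2,
\]
and the right-hand side extends smoothly to $s_1=0$.

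In the remaining equations of \eqref{eq:chart2_full}, I replace $r_1$ by $s_1^q$ and $\Phi_2$ by $\widetilde\Phi_2$. Each of them is then a polynomial in $s_1$ times smooth functions of the other variables, so the whole vector field is smooth up to $s_1=0$. For $s_1>0$ the map $s_1\mapsto s_1^q$ is a diffeomorphism, so the new system is conjugate to \eqref{eq:chart2_full} there.

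The only point needing any care is the exponent $2\alpha+2$ on $\bar\varepsilon_1$. I expect this to be harmless on the stated compact set, handled just as in the entry case. If needed, I would pass to the secondary blow-up $\bar\varepsilon_1=q_1$, $\bar\rho_1=h_1q_1$ used at the entry endpoint, but that is beyond what this lemma claims.
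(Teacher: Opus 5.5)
Your proposal is correct and is essentially the paper's proof: the substitution $r_1=s_1^q$ turns the only non-polynomial factor $r_1^\alpha$ in $\Phi_2$ into $s_1^p$, the radial equation becomes $\frac{ds_1}{d\tau}=-\frac1q s_1\widetilde\Phi_2$, and the remaining right-hand sides are smooth up to $s_1=0$ because $\bar\rho_1$ is bounded away from zero. Your caveat about $\bar\varepsilon_1^{2\alpha+2}$ is a point the paper's proof also passes over silently. It concerns regularity in $\bar\varepsilon_1$, not in $s_1$, and it is harmless wherever $\bar\varepsilon_1$ is bounded away from zero, as on the compact subsets of $\mathcal N_2$ used in the subsequent normal-hyperbolicity lemma. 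Note also that the secondary blow-up you mention would not remove that fractional power anyway.
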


\begin{proof}
Under the substitution $r_1=s_1^q$, the fractional power in
\eqref{eq:Phi2} becomes $r_1^\alpha
=\left(s_1^q\right)^{p/q}
=s_1^p.$ Define
\begin{equation}
\widetilde\Phi_2
=
\frac{\bar\varepsilon_1^2}{\bar\rho_1}
\left[
(\xi-s_1^q)w_1-w_2
\right]
-
A\left(
v_\delta+s_1^q\bar v_1
\right)
\frac{s_1^p\bar\varepsilon_1^{2\alpha+2}}
{\bar\rho_1^{\alpha+1}}
-
s_1^q\bar\varepsilon_1^2.
\label{eq:Phi2-weighted}
\end{equation}
Since $\frac{dr_1}{d\tau}
=-r_1\Phi_2,$ we obtain $q s_1^{q-1}\frac{ds_1}{d\tau}
=-s_1^q\widetilde\Phi_2,$ and hence
\begin{equation}
\frac{ds_1}{d\tau}
=
-\frac1q s_1\widetilde\Phi_2.
\label{eq:s1-exit}
\end{equation}
All remaining equations are obtained from
\eqref{eq:chart2_full} by replacing $r_1$ with $s_1^q$ and
$r_1^\alpha$ with $s_1^p$. Since $\bar\rho_1$ is bounded away from
zero, all right-hand sides extend smoothly to $s_1=0$.
\end{proof}
Restricting to $r_1 = 0$, gives the reduced system 
\begin{equation}
    \begin{cases}
    \displaystyle
        \frac{d\overline{\rho}_1}{d\tau} = \overline{\eps}_1^2 \big(\xi w_1 - w_2 \big) \\[1.5ex]\displaystyle
        \frac{d\overline{v}_1}{d\tau} = \frac{\overline{v}_1\overline{\eps}_1^2\big(\xi w_1 - w_2 \big)}{\overline{\rho}_1} + \frac{\overline{\eps}_1^2 \big(\frac{k_3}{k_1}w_1 - w_3 \big)}{\overline{\rho}_1} \\[1.5ex]\displaystyle
        \frac{d\overline{\eps}_1}{d\tau} = \frac{\big(\xi w_1 - w_2 \big)\overline{\eps}_1^3}{\overline{\rho}_1} \\[1.5ex]\displaystyle
        \frac{d r_1}{d\tau}=\frac{d\xi}{d\tau} = \frac{du}{d\tau} = \frac{dw_1}{d\tau} = \frac{dw_2}{d\tau} = \frac{dw_3}{d\tau} = 0
    \end{cases}
\end{equation}
To analyze the reduced flow in the exit region, recall
\begin{equation}
D=\xi w_1-w_2,
\qquad
\Gamma=\frac{k_3}{k_1}w_1-w_3.
\end{equation}
On $r_1=0$, these quantities are constant, and the reduced system can
be written as
\begin{equation}
\begin{cases}
\displaystyle
\frac{d\bar\rho_1}{d\tau}
=
D\bar\varepsilon_1^2,
\\[1.2ex]
\displaystyle
\frac{d\bar v_1}{d\tau}
=
\frac{\bar\varepsilon_1^2}{\bar\rho_1}
\left(
\Gamma+D\bar v_1
\right),
\\[1.2ex]
\displaystyle
\frac{d\bar\varepsilon_1}{d\tau}
=
\frac{D\bar\varepsilon_1^3}{\bar\rho_1},
\\[1.2ex]
\displaystyle
\frac{dr_1}{d\tau}
=
\frac{d\xi}{d\tau}
=
\frac{du}{d\tau}
=
\frac{dw_1}{d\tau}
=
\frac{dw_2}{d\tau}
=
\frac{dw_3}{d\tau}
=
0.
\end{cases}
\label{eq:chart2_reduced}
\end{equation}
The invariant relation
\begin{equation}
\Gamma\bar u-D\bar v=0
\end{equation}
also takes a simple form in the exit region. Since $\bar u<0$ and the exit coordinates are obtained by dividing the spherical variables
by $-\bar u$, one has $\bar v_1=
-\frac{\bar v}{\bar u}.$ Consequently, the reduced invariant relation becomes
\begin{equation}
\Gamma+D\bar v_1=0.
\label{eq:chart2_invariant_relation}
\end{equation}
Define $z_1:=\Gamma+D\bar v_1.$ Since $D$ and $\Gamma$ are constant when $r_1=0$, system
\eqref{eq:chart2_reduced} gives
\begin{align}
\frac{dz_1}{d\tau}=
D\frac{d\bar v_1}{d\tau}=
\frac{D\bar\varepsilon_1^2}{\bar\rho_1}z_1.
\label{eq:z1_reduced}
\end{align}
Thus,
\begin{equation}
\mathcal N_2
=
\left\{
r_1=0,\;
z_1=0
\right\}
=
\left\{
r_1=0,\;
\bar v_1=-\frac{\Gamma}{D}
\right\}
\label{eq:N2}
\end{equation}
is invariant under the reduced the exit region system.

\begin{lemma}[Normal hyperbolicity in the exit region]
\label{lem:chart2-hyperbolicity}

Let $\alpha=\frac{p}{q}\in\mathbb Q\cap(0,1],
\ \gcd(p,q)=1,$ and let $K_2$ be a compact subset of $\mathcal N_2$ on which $\bar\rho_1>0,
\ \bar\varepsilon_1>0,
\ D<0.$ After introducing $r_1=s_1^q$, the manifold $\mathcal N_2$ is
uniformly normally hyperbolic on $K_2$. The $s_1$-direction is
repelling and the $z_1$-direction is attracting.
\end{lemma}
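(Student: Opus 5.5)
The proof will mirror that of Lemma~\ref{lem:chart1-hyperbolicity}, with the signs reversed because the exit chart is built by dividing by $-\bar u$. Using Lemma~\ref{lem:exit-regularity}, I work in the ramified variable $s_1$ with $r_1=s_1^q$. The radial equation is \eqref{eq:s1-exit}, $\frac{ds_1}{d\tau}=-\frac1q s_1\widetilde\Phi_2$. On $s_1=0$, \eqref{eq:Phi2-weighted} collapses to $\widetilde\Phi_2=\frac{\bar\varepsilon_1^2}{\bar\rho_1}(\xi w_1-w_2)=\frac{D\bar\varepsilon_1^2}{\bar\rho_1}$. The reason is that every other term carries a positive power of $s_1$: $s_1^q$ in the first and third terms, and $s_1^p$ in the pressure term. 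That pressure term has a bounded coefficient because $\bar\rho_1$ is bounded away from zero on $K_2$ and $A$ is smooth near $v_\delta$.

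Linearizing at a point of $\mathcal N_2$ in the $s_1$-direction therefore gives
\[
\lambda_{s_1}=-\frac{D\bar\varepsilon_1^2}{q\bar\rho_1}>0,
\]
since $D<0$. This makes the radial direction repelling. For the transverse direction I use \eqref{eq:z1_reduced}, $\frac{dz_1}{d\tau}=\frac{D\bar\varepsilon_1^2}{\bar\rho_1}z_1$, which gives
\[
\lambda_{z_1}=\frac{D\bar\varepsilon_1^2}{\bar\rho_1}<0,
\]
so the $z_1$-direction is attracting.

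One point needs checking. The $z_1$ equation was derived with $r_1=0$, whereas in the full system $\frac{dz_1}{d\tau}$ also contains terms proportional to $s_1$. I will show these only add an off-diagonal entry in the $s_1$-column of the normal linearization. Since $\frac{ds_1}{d\tau}$ is itself proportional to $s_1$, the normal linearization in the variables $(s_1,z_1)$ is triangular, and its eigenvalues are just the two diagonal entries found above.

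The remaining variables $\bar\rho_1,\bar\varepsilon_1,W,\xi$ give tangential directions along $\mathcal N_2$. Finally, on the compact set $K_2$ the quantities $\bar\rho_1$, $\bar\varepsilon_1$ and $-D$ are bounded above and bounded away from zero. Hence both normal eigenvalues are uniformly bounded away from zero, with fixed signs, and this yields uniform normal hyperbolicity. I expect the only delicate part to be confirming the triangular structure, which follows because every correction term vanishes to at least first order in $s_1$.
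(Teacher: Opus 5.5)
Your proposal is correct and follows essentially the same route as the paper. You evaluate $\widetilde\Phi_2$ at $s_1=0$ to get $\lambda_{s_1}=-\frac{D\bar\varepsilon_1^2}{q\bar\rho_1}>0$, read $\lambda_{z_1}=\frac{D\bar\varepsilon_1^2}{\bar\rho_1}<0$ from the reduced $z_1$-equation, and use compactness of $K_2$ for uniformity. Your extra check, that the $O(s_1^{p})$ and $O(s_1^{q})$ corrections enter only the $s_1$-column so the normal linearization in $(s_1,z_1)$ is triangular, is a sound refinement that the paper leaves implicit.
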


\begin{proof}
On $\mathcal N_2$, one has $\widetilde\Phi_2
=\frac{D\bar\varepsilon_1^2}{\bar\rho_1}$ at $s_1=0$. Linearization of \eqref{eq:s1-exit} in the
$s_1$-direction therefore gives $\lambda_{s_1}
=-\frac{D\bar\varepsilon_1^2}
{q\bar\rho_1}>0.$ The transverse variable $z_1$ has eigenvalue $
\lambda_{z_1}
=\frac{D\bar\varepsilon_1^2}
{\bar\rho_1}<0.$ Since $K_2$ is compact and
$\bar\rho_1$, $\bar\varepsilon_1$, and $-D$ are bounded away from
zero on $K_2$, these eigenvalues are uniformly bounded away from
zero. Hence $\mathcal N_2$ is uniformly normally hyperbolic, with a
repelling $s_1$-direction and an attracting $z_1$-direction.
\end{proof}

By Proposition~\ref{prop:outer-sphere-limits}, the directional
components of the outgoing endpoint selected by the right reduced
outer orbit are
\begin{equation}
(\bar u_{\mathrm{out}},\bar v_{\mathrm{out}})
=
\left(
\frac{D}{R},
\frac{\Gamma}{R}
\right).
\label{eq:Pout}
\end{equation}
Since $D<0$, the $\bar u$-component of $P_{\mathrm{out}}$ is negative,
and therefore $P_{\mathrm{out}}$ is represented in the exit region. Moreover,
\begin{equation}
-\frac{\bar v(P_{\mathrm{out}})}
{\bar u(P_{\mathrm{out}})}
=
-\frac{\Gamma/R}{D/R}
=
-\frac{\Gamma}{D}.
\end{equation}
Thus the outgoing reduced orbit lies on $\mathcal N_2$.

On $\mathcal N_2$, system \eqref{eq:chart2_reduced} reduces to
\begin{equation}
\frac{d\bar\rho_1}{d\tau}
=
D\bar\varepsilon_1^2,
\qquad
\frac{d\bar\varepsilon_1}{d\tau}
=
\frac{D\bar\varepsilon_1^3}{\bar\rho_1},
\qquad
\bar v_1=-\frac{\Gamma}{D}.
\label{eq:chart2_on_N2}
\end{equation}
Since $D<0$, it follows that
\begin{equation}
\frac{d\bar\rho_1}{d\tau}<0,
\qquad
\frac{d\bar\varepsilon_1}{d\tau}<0
\end{equation}
whenever $\bar\rho_1>0$ and $\bar\varepsilon_1>0$.

Furthermore,
\begin{align}
\frac{d}{d\tau}
\left(
\frac{\bar\rho_1}{\bar\varepsilon_1}
\right)
&=
\frac{
\bar\varepsilon_1\frac{d\bar\rho_1}{d\tau}
-
\bar\rho_1\frac{d\bar\varepsilon_1}{d\tau}
}{
\bar\varepsilon_1^2
}
\nonumber\\
&=
0.
\end{align}
Hence the ratio $\bar\rho_1/\bar\varepsilon_1$ remains constant along
the reduced orbit in the exit region.

Fix $R>1$ sufficiently large and define the exit region overlap section
\begin{equation}
\Sigma_{\mathrm{out}}^R
=
\left\{
\bar\varepsilon_1=\frac{1}{R}
\right\}.
\label{eq:Sigma_out_R}
\end{equation}
The reduced orbit obtained by continuing the inner connection from
the positive-$\varepsilon$ region intersects $\Sigma_{\mathrm{out}}^R$ transversely at a unique
point. Denote this intersection by $Q_{\mathrm{out}}^0$.

The transition map from the positive-$\varepsilon$ region to the exit region is
\begin{equation}
r_1
=
-r_\varepsilon\bar u_\varepsilon,
\qquad
\bar\rho_1
=
-\frac{\bar\rho_\varepsilon}{\bar u_\varepsilon},
\qquad
\bar\varepsilon_1
=
-\frac{1}{\bar u_\varepsilon},
\qquad
\bar v_1
=
-\frac{\bar v_\varepsilon}{\bar u_\varepsilon}.
\label{eq:transition_3_to_2}
\end{equation}
Equating the exit-chart coordinates directly with those of
$K_\rho$ gives
\begin{equation}
r_\rho
=
r_1\bar\rho_1,
\qquad
\bar u_\rho
=
-\frac{1}{\bar\rho_1},
\qquad
\bar v_\rho
=
\frac{\bar v_1}{\bar\rho_1},
\qquad
\bar\varepsilon_\rho
=
\frac{\bar\varepsilon_1}{\bar\rho_1}.
\label{eq:rho-to-exit-transition}
\end{equation}
Therefore, the section $\Sigma_{\mathrm{out}}^R$ corresponds in
the positive-$\varepsilon$ region to $
\bar u_\varepsilon=-R.$ At the reduced intersection point $Q_{\mathrm{out}}^0$, one also has
$\bar v_\varepsilon
=
-R\frac{\Gamma}{B}.$ Thus, the reduced orbit exiting the positive-$\varepsilon$ region determines initial data for
the outgoing passage through the exit region.

The normal hyperbolicity described in
\label{lem:chart2-hyperbolicity} applies only on compact subsets
away from $\bar\varepsilon_1=0$. It provides the local geometric
control needed to continue trajectories between the overlap section
$\Sigma_{\mathrm{out}}^R$ and a section in the outer region. The
continuation from that outer section to the right state $U_R$ is
provided by the stable manifold constructed in
Proposition~\ref{prop:outer-persistence}.

\subsection{Transitions Between the Blow-Up Charts}
The complete blow-up atlas contains the entry chart
$K_{\mathrm{in}}$, the positive-density chart $K_\rho$, the
positive-$\varepsilon$ chart $K_\varepsilon$, and the exit chart
$K_{\mathrm{out}}$. The chart $K_\rho$
contains the singular orbit responsible for the $O(1)$ change in
$W$, while $K_\varepsilon$ resolves the central positive-$\varepsilon$
passage.

We first record the transition between $K_\rho$ and
$K_\varepsilon$. Equating the coordinate representations
\begin{equation}
\rho_0=r_\rho=r_\varepsilon\bar\rho_\varepsilon,
\qquad
u-\xi=r_\rho\bar u_\rho
=
r_\varepsilon\bar u_\varepsilon,
\end{equation}
\begin{equation}
v-v_\delta
=
r_\rho\bar v_\rho
=
r_\varepsilon\bar v_\varepsilon,
\qquad
\sqrt{\varepsilon}
=
r_\rho\bar\varepsilon_\rho
=
r_\varepsilon,
\end{equation}
gives
\begin{equation}
r_\varepsilon
=
r_\rho\bar\varepsilon_\rho,
\qquad
\bar\rho_\varepsilon
=
\frac{1}{\bar\varepsilon_\rho},
\qquad
\bar u_\varepsilon
=
\frac{\bar u_\rho}{\bar\varepsilon_\rho},
\qquad
\bar v_\varepsilon
=
\frac{\bar v_\rho}{\bar\varepsilon_\rho}.
\label{eq:rho-to-epsilon-transition}
\end{equation}
In contrast,
\begin{equation}
r_\rho
=
r_\varepsilon\bar\rho_\varepsilon,
\qquad
\bar\varepsilon_\rho
=
\frac{1}{\bar\rho_\varepsilon},
\qquad
\bar u_\rho
=
\frac{\bar u_\varepsilon}{\bar\rho_\varepsilon},
\qquad
\bar v_\rho
=
\frac{\bar v_\varepsilon}{\bar\rho_\varepsilon}.
\label{eq:epsilon-to-rho-transition}
\end{equation}
\begin{remark}
\label{rem:epsilon-passage-limited-role}

The reduced positive-$\varepsilon$ equations provide a finite-time
transition only on compact subsets on which
$\bar\rho_\varepsilon$ is finite and bounded away from zero. They do
not resolve the limiting region $\bar\rho_\varepsilon\to+\infty,
$ which corresponds, under
\eqref{eq:epsilon-to-rho-transition}, to $\bar\varepsilon_\rho\to0.
$ The singular middle orbit and the order-one change from $W_L$ to
$W_R$ lie on this positive-density boundary and therefore must be 
analyzed in $K_\rho$.
\end{remark}
\begin{remark}[Role of the two central charts]
\label{rem:central-chart-overlap}

The charts $K_\varepsilon$ and $K_\rho$ are overlapping coordinate
representations of the central blown-up region; they do not describe
two consecutive orbit segments. In their common domain, the
coordinate transformations
\eqref{eq:rho-to-epsilon-transition} and
\eqref{eq:epsilon-to-rho-transition} conjugate the two vector fields,
up to the corresponding positive time rescaling. Consequently, an
orbit represented in $K_\varepsilon$ and the corresponding orbit
represented in $K_\rho$ are the same geometric trajectory.

The chart $K_\varepsilon$ is useful on compact subsets on which
$\bar\rho_\varepsilon$ is finite and bounded away from zero. The
positive-density chart $K_\rho$ is required to resolve the limiting
regime $\bar\rho_\varepsilon\longrightarrow+\infty,
\quad
\bar\varepsilon_\rho
=\frac{1}{\bar\rho_\varepsilon}
\longrightarrow0,$ which contains the singular middle orbit. Thus, the central passage
must be constructed as one geometric passage, using
$K_\varepsilon$ where its coordinates remain regular and $K_\rho$
near the positive-density boundary. The corresponding chart
transition maps are coordinate changes, not additional dynamical
transition maps.
\end{remark}

The complete atlas consists of the entry chart $K_1$, the
positive-$\varepsilon$ chart $K_\varepsilon$, the positive-density
chart $K_\rho$, and the exit chart $K_2$. The charts
$K_\varepsilon$ and $K_\rho$ overlap in the central region and
represent the same geometric trajectories wherever both coordinate
systems are defined.

The sections $\Sigma_0^R
=\left\{\bar u_\varepsilon=R
\right\},
\ \Sigma_1^R
=\left\{
\bar u_\varepsilon=-R
\right\}$
are transverse to the reduced $K_\varepsilon$ flow on compact
subsets on which $\bar\rho_\varepsilon$ is finite and bounded away
from zero. The limiting central corner
$\bar\rho_\varepsilon\to+\infty$ is represented instead by
$\bar\varepsilon_\rho\to0$ in $K_\rho$.

\begin{prop}[Reduced singular concatenation and local chart passages]
\label{prop:reduced-concatenation-summary}
Let $\alpha=\frac{p}{q}\in\mathbb Q\cap(0,1],
\ \gcd(p,q)=1,$ and assume that $A$ is smooth on a neighborhood of the values attained by $v_\delta+s_1^q\bar v_1$.

Then the following statements hold.
\begin{enumerate}
\item
The reduced left and right outer orbit segments persist up to the
fixed outer sections constructed in
Proposition~\ref{prop:outer-persistence}.
\item
The entry and exit reduced systems admit the normally hyperbolic
invariant manifolds $\mathcal N_1$ and $\mathcal N_2$ on compact
subsets bounded away from their respective corner endpoints.
\item
The positive-density reduced system admits the singular middle
orbit of Proposition~\ref{prop:singular-middle-connection}, along which
$W$ changes from $W_L$ to $W_R$.
\item
On compact subsets with
$0<m\leq\bar\rho_\varepsilon\leq M<\infty$, the
positive-$\varepsilon$ flow defines finite-time transition maps
between fixed transverse $\bar u_\varepsilon$-sections.
\end{enumerate}
These conclusions construct the reduced singular configuration and
the local chart passages. The identification of the incoming and
outgoing traces in the weighted corner coordinates, together with
the required transversality verification and the resulting
positive-$\varepsilon$ persistence, is carried out in
Subsections~\ref{subsubsec:outer-corner-transversality} and
\ref{subsubsec:middle-transversality}.
\end{prop}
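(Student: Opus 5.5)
The statement is a collation of facts already established, so I would prove it item by item, citing the earlier results and checking only the gluing hypotheses.

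\emph{Item 1.} This is Proposition~\ref{prop:outer-persistence} applied under the hypotheses of Propositions~\ref{prop:left-reduced-orbit} and \ref{prop:right-reduced-orbit}. Here $A(v)=(1-v)^{-p}$ is smooth on $v<1$, and the overcompressive speed condition gives $\lambda_3(H_R)<c<\lambda_1(H_L)$. These yield the uniform spectral gaps on $S_{0,L}$ and $S_{2,R}$. Fenichel theory plus finite-time flow to the sections $\Sigma_L^\kappa$, $\Sigma_R^\kappa$ then gives the $C^1$ persistence of the transverse intersections. The distinguished points $Q_L^0,Q_R^0$ come from the monotonicity of $u-c$ near $\zeta=0$, which follows from the asymptotics $u-c=\mp B\zeta+o(|\zeta|)$ with $B_L,B_R>0$ (Lemma~\ref{lem:sign_xiw}).

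\emph{Item 2.} I would invoke Lemmas~\ref{lem:entry-regularity} and \ref{lem:exit-regularity}, which use the ramified coordinates $r_0=s_0^q$ and $r_1=s_1^q$. These are where the smoothness assumption on $A$ near $v_\delta+s_1^q\bar v_1$ enters, since it makes the vector fields smooth up to $s_0=0$ and $s_1=0$. Lemmas~\ref{lem:chart1-hyperbolicity} and \ref{lem:chart2-hyperbolicity} then give the normal hyperbolicity itself. The normal eigenvalues are $\pm D\bar\varepsilon^2/\bar\rho$ and $\pm D\bar\varepsilon^2/(q\bar\rho)$. On compact sets with $\bar\rho,\bar\varepsilon$ bounded away from zero and $D<0$, these stay uniformly away from zero, and this is exactly why the restriction to such compact sets appears in the statement.

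\emph{Item 3.} This is Proposition~\ref{prop:singular-middle-connection}, specialised to $\lambda=\lambda_*$. The value $\lambda_*$ is well defined because Lemma~\ref{lem:outer-W-RH} gives $k_L-k_R=k_1>0$, and Lemma~\ref{lem:sign_xiw} gives $D<0$. Formula \eqref{eq:middle-W-jump} then matches the generalized Rankine--Hugoniot jump. Lemma~\ref{lem:middle-coordinate-compatibility} places the orbit in $K_\rho$ on the boundary $r_\rho=\bar\varepsilon_\rho=0$. Proposition~\ref{prop:reduced-outer-middle-compatibility} makes its endpoints compatible with $P_L$ and $P_R$.

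\emph{Item 4.} I would use \eqref{eq:epsilon-chart-reduced}. At $r_\varepsilon=0$ the variable $\bar\rho_\varepsilon$ is constant and $\dot{\bar u}_\varepsilon=D/\bar\rho_\varepsilon\le D/M<0$. Hence any section $\{\bar u_\varepsilon=\pm R\}$ is crossed transversely, and the passage time is at most $2RM/|D|$. For small $r_\varepsilon$, regular perturbation on the compact set $m\le\bar\rho_\varepsilon\le M$ keeps $\dot{\bar u}_\varepsilon$ negative. The implicit function theorem then gives smooth finite-time transition maps.

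The only real obstacle is item 4 as $r_\varepsilon$ is turned on. There the fractional term $r_\varepsilon^\alpha$ in \eqref{eq:chart3_full} is not smooth at $r_\varepsilon=0$. I would handle it as in Lemma~\ref{lem:entry-regularity}, by substituting $r_\varepsilon=s^q$; this term is the only place the rationality of $\alpha$ is used.
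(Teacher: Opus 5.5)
Your proposal is correct and follows the paper's proof item by item: (1) is Proposition~\ref{prop:outer-persistence}, (2) comes from Lemmas~\ref{lem:chart1-hyperbolicity} and~\ref{lem:chart2-hyperbolicity}, (3) is Proposition~\ref{prop:singular-middle-connection}, and (4) uses $\frac{d\bar u_\varepsilon}{d\tau}=\frac{D}{\bar\rho_\varepsilon}\le\frac{D}{M}<0$ together with continuous dependence and the implicit function theorem applied to the hitting equation. The one difference is in (4): the paper claims a smooth perturbation only for $\alpha\in\{\frac12,1\}$ (and at $\alpha=\frac12$ the term $r_\varepsilon^{1/2}$ is not actually smooth in $r_\varepsilon$), whereas your substitution $r_\varepsilon=s^q$ covers every rational $\alpha$; strictly, since $r_\varepsilon$ is constant along trajectories, continuity in that parameter already suffices, and your closing claim that this is the only use of rationality overlooks the ramified coordinates $r_0=s_0^q$, $r_1=s_1^q$ you invoke in (2).
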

\begin{proof}
Statement (1) is Proposition~\ref{prop:outer-persistence}.
Statement (2) follows from
Lemmas~\ref{lem:chart1-hyperbolicity} and
\ref{lem:chart2-hyperbolicity}.
Statement (3) is Proposition~\ref{prop:singular-middle-connection}. For statement (4), fix compact constants $0<m<M<\infty$ and two values $R_+>R_-$. At $r_\varepsilon=0$,
equation~\eqref{eq:epsilon-chart-reduced} gives $\frac{d\bar u_\varepsilon}{d\tau}
=\frac{D}{\bar\rho_\varepsilon}.$ Since $D<0$ and
$m\leq\bar\rho_\varepsilon\leq M$, one has $\frac{d\bar u_\varepsilon}{d\tau}
\leq
\frac{D}{M}<0.$ Hence, the reduced orbit crosses each fixed
$\bar u_\varepsilon$-section transversely and reaches
$\{\bar u_\varepsilon=R_-\}$ from
$\{\bar u_\varepsilon=R_+\}$ in uniformly bounded time. On the specified compact set, the full vector field
\eqref{eq:chart3_full} is a smooth perturbation of the reduced vector
field for $\alpha\in\{\frac12,1\}$. Continuous dependence of
solutions and the implicit function theorem applied to the hitting
equation $\bar u_\varepsilon(\tau)=R_-$ therefore give the corresponding local transition map for all
sufficiently small $r_\varepsilon>0$. The remaining identification and transversality argument is supplied
by Proposition~\ref{prop:outer-corner-transversality}, while the
passage through the corner neighborhoods follows from
Corollary~\ref{cor:corner-passage}.
\end{proof}
\begin{prop}[Persistence of the singular concatenation]
\label{prop:singular-concatenation-persistence}

Assume that the Riemann data satisfy the overcompressive condition
and the hypotheses of the left and right reduced outer-orbit
propositions. Let $\alpha\in\mathbb Q\cap(0,1],
\quad
2\alpha=\frac{m}{n}$ in lowest terms, and assume that $A$ is sufficiently smooth in a
neighborhood of the values attained by the profile.

Then there exists $\varepsilon_0>0$ such that, for every
$0<\varepsilon<\varepsilon_0,$
the blown-up Dafermos system admits a heteroclinic orbit joining
the invariant manifold associated with the left state $H_L$ to the
invariant manifold associated with the right state $H_R$.

The heteroclinic orbit converges, as $\varepsilon\to0^+$, to the
singular concatenation consisting of the left reduced outer orbit,
the reduced middle orbit, and the right reduced outer orbit.
\end{prop}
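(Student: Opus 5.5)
The argument is a standard exchange-lemma/corner-lemma matching in the extended phase space, in which $\varepsilon$ (equivalently $\delta=z\beta^n$) is a state variable, reduced to a single scalar equation at the end.

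\textbf{Step 1: bring the outer manifolds to the corners.} By Proposition~\ref{prop:outer-persistence}, the extended left unstable manifold $\widehat{\mathcal W}_L^u$ reaches the fixed section $\Sigma_L^\kappa$ as a $C^1$ family in $\varepsilon$. Likewise, the extended right stable manifold $\widehat{\mathcal W}_R^s$ reaches $\Sigma_R^\kappa$. I follow these by the regular finite-time flow to the weighted sections $\{\beta=\beta_0\}$; this is possible because $\kappa$ and $\beta_0$ are fixed and the flow stays away from the corner. The result is the four-dimensional manifolds $\widehat{\mathcal N}_{L,\beta_0}$ and $\widehat{\mathcal N}_{R,\beta_0}$.

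\textbf{Step 2: pass through the corners.} By Proposition~\ref{prop:outer-corner-transversality}, $\widehat{\mathcal N}_{L,\beta_0}$ is transverse to $W^s(\mathcal P_+)$. Corollary~\ref{cor:corner-passage} then shows that, on each level $\{z\beta^n=\delta\}$, the forward image of $\widehat{\mathcal N}_{L,\beta_0}$ through a neighbourhood of $\mathcal P_+$ is $C^1$-close to $W^u(\mathcal Q_+)$. Here $\mathcal Q_+\subset\mathcal P_+$ is the base set. Along it $k,D,\Gamma,\xi$ vary with $\xi$ through $(k_L(\xi),D_L(\xi),\Gamma_L(\xi))$ plus $O(\varepsilon)$ corrections. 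The same argument in backward time gives a manifold $C^1$-close to $W^s(\mathcal Q_-)$, with $\mathcal Q_-$ parametrized by $(k_R(\xi),D_R(\xi),\Gamma_R(\xi))$.

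\textbf{Step 3: reduce to a scalar equation.} The two propagated manifolds are placed on a middle section $\{y=0,\ z>0\}$ of the $\beta=0$ face, where the reduced middle orbit of Proposition~\ref{prop:singular-middle-connection} crosses transversally at $\theta=\pi/2$. On the face $\beta=0$, the set $W^u(\mathcal Q_+)$ is swept out by the ellipse orbits $y^2+\lambda^2z^2=a^2$, $h=\Gamma y/D$, with $\lambda>0$ free. Along these orbits $D,\Gamma,\xi$ are conserved, and $k$ decreases by $\pi a^2/\lambda^3$ by the half-orbit analogue of \eqref{eq:middle-truncated-k-jump}. Matching the two manifolds on this section then reduces to the following conditions:
\begin{enumerate}
\item $D_L(\xi)=D_R(\xi)$ and $\Gamma_L(\xi)=\Gamma_R(\xi)$. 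At $\xi=c$ these hold by Lemma~\ref{lem:outer-W-RH}. In fact $D_L(\xi)-D_R(\xi)$ is the quadratic \eqref{shadow_speed_eq}, whose root $c=\sigma_+$ is simple because $\Delta>0$. This fixes $\xi=c$ up to $O(\varepsilon)$.
\item $h$ matches automatically, because both sides lie on $Dh-\Gamma y=0$ (Lemma~\ref{lem:middle-coordinate-compatibility}).
\item The remaining equation is $\mathcal G(\lambda,\xi;\varepsilon)=k_L(\xi)-k_R(\xi)-2\pi(-D)/\lambda^3+O(\varepsilon^{\gamma})=0$, for some $\gamma>0$ coming from the $C^1$ corner estimates.
\end{enumerate}
At $\varepsilon=0$ this system has the solution $(\lambda_*,c)$, and its Jacobian is nonsingular. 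Indeed, $\partial_\lambda\mathcal G=6\pi D/\lambda_*^4\neq0$ by \eqref{middle-lambda-nondegeneracy}, and the $\xi$-block is nondegenerate by the simplicity of the root $c$. The implicit function theorem therefore gives $(\lambda(\varepsilon),\xi(\varepsilon))$ for $0<\varepsilon<\varepsilon_0$. The corresponding intersection point lies on an orbit in $\widehat{\mathcal W}_L^u\cap\widehat{\mathcal W}_R^s$, which is the heteroclinic orbit. Its convergence to the singular concatenation follows from the $C^1$ closeness in each of the three pieces.

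\textbf{Main obstacle.} The hardest step is Step 3: making the matching quantitative enough that the error in $\mathcal G$ is $C^1$-small in $(\lambda,\xi)$. The difficulty is that the middle orbit sits on the nonhyperbolic face $\beta=0$, and Corollary~\ref{cor:corner-passage} gives convergence only as $\eta\to0^+$. My first attempt would be to choose the matching section at a fixed $z$ and appeal to the $C^1$ version of Schecter's Corner Lemma for both corners simultaneously. I would also use the ramified smoothness from Proposition~\ref{prop:rational-corner-smoothness} to ensure the flow between the corner neighbourhoods and the middle section is smooth uniformly in $\beta\ge0$.
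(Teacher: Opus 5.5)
Your Steps 1--2 are the paper's argument: outer persistence to fixed sections, outer-to-corner transversality, then Corollary~\ref{cor:corner-passage}. The paper also finishes with an implicit-function argument, but it simply asserts that the residual mismatch is the scalar $\mathcal G(\lambda,\varepsilon)$. Your Step 3, which tries to justify that reduction, does not close. The first problem is the count. Condition 1 consists of two equations, $D_L(\xi)=D_R(\xi)$ and $\Gamma_L(\xi)=\Gamma_R(\xi)$, in the single unknown $\xi$, and the second is not implied by the first. Indeed $\Gamma_R(\xi)-\Gamma_L(\xi)=k_3(\xi)-v_\delta k_1(\xi)$, with $k_1(\xi)=\xi[\rho]-[\rho u]$ and $k_3(\xi)=\xi[\rho v]-[\rho uv]$. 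This vanishes at $\xi=c$ only because $v_\delta$ was defined as $k_3/k_1$, and its slope $[\rho v]-v_\delta[\rho]$ is generically nonzero. Together with the $k$-equation you therefore have three equations in $(\lambda,\xi)$. There is no square Jacobian to invert, and under an $O(\varepsilon^\gamma)$ perturbation the roots of the $D$- and $\Gamma$-equations generically separate, so the implicit function theorem gives nothing.

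The second problem is that $h$ does not match automatically, and this is where the missing unknown sits. By \eqref{eq:middle-Q-equation} and $z'=y/2$, the quantity $q:=(Dh-\Gamma y)/z$ is a first integral of the reduced middle system. Hence for every $q\in\mathbb R$ the orbit $y^2+\lambda^2z^2=a^2$, $h=(\Gamma y+qz)/D$ runs from $\mathcal P_+$ to $\mathcal P_-$ with the same endpoints and the same $k$-jump. This family is the third unstable direction of $\mathcal P_+$ in Proposition~\ref{prop:corner-normal-hyperbolicity}: the $h$-eigenvalue $a/2$, equal to the $z$-eigenvalue. So $W^u(\mathcal Q_+)$ is four-dimensional, not the three-dimensional family with $h=\Gamma y/D$ that you describe; Lemma~\ref{lem:middle-coordinate-compatibility} concerns only the limiting directions at $z=0$. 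On $\{y=0\}$ one has $h=qa/(\lambda D)$, so $\partial_q$ is tangent to both $W^u(\mathcal Q_+)$ and $W^s(\mathcal Q_-)$. At $\varepsilon=0$ they therefore meet along the two-dimensional family $\{\xi=c,\ \lambda=\lambda_*,\ q\in\mathbb R\}$ inside the seven-dimensional face $\{\beta=0\}$, i.e.\ non-transversally. Equivalently, the matching Jacobian in $(\xi,\lambda,q)$ has a zero $q$-column in the $D$, $\Gamma$, $k$ rows.

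The $\Gamma$-equation is what must select $q$. Note that $q$ is the order-$\delta$ correction to $v$ inside the layer, since $v-v_\delta=\beta^nh=\delta q/D$ at $y=0$. Since $\Gamma'=\delta zh$ in \eqref{eq:full-middle-system} and $\int zy\,ds=[z^2]=0$, the $\Gamma$-jump across the middle layer is $-2\pi q\delta/\lambda^3+o(\delta)$. The matching can therefore only be closed after rescaling the $\Gamma$-equation by $\delta$. That in turn requires controlling the $D$-equation, and all outer and corner transition maps, to order $\delta$ in $C^1$, which is more than the qualitative $\eta\to0^+$ convergence of Corollary~\ref{cor:corner-passage} provides. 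Neither your Step 3 nor the paper's scalar $\mathcal G(\lambda,\varepsilon)$ supplies this.
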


\begin{proof}
Fenichel theory and Proposition~\ref{prop:outer-persistence} give
the persistent left and right outer manifolds up to fixed sections
away from the singular region. The entry and exit charts transport
these manifolds into the weighted corner neighborhoods.

By Proposition~\ref{prop:outer-corner-transversality}, the incoming
manifold is transverse to $W^s(\mathcal P_+)$ and the outgoing
manifold is transverse to $W^u(\mathcal P_-)$. Corollary
\ref{cor:corner-passage}, obtained from Schecter's Corner Lemma,
therefore propagates these manifolds through the corresponding
corner neighborhoods. Their images converge in $C^1$ to the
unstable and stable manifolds associated with the reduced middle
orbit.

The remaining matching is governed by the scalar function
$\mathcal G(\lambda)
=k_L-k_R-\frac{\pi a^2}{\lambda^3}.$ At the reduced matching value $\lambda=\lambda_*$, $\mathcal G(\lambda_*)=0,
\quad \mathcal G'(\lambda_*)
= \frac{3\pi a^2}{\lambda_*^4}
\neq0.$ Choose a fixed transverse section in the middle region on which the
propagated incoming and outgoing manifolds are represented locally as
$C^1$ graphs. Let $\mathcal G(\lambda,\varepsilon)$ denote the signed difference of these two graphs in the distinguished
matching coordinate. By the $C^1$ convergence supplied by
Corollary~\ref{cor:corner-passage}, together with the smooth finite-time
transition maps constructed in the remaining charts,
$\mathcal G(\lambda,\varepsilon)$ is $C^1$ near
$(\lambda_*,0)$ and satisfies $\mathcal G(\lambda,0)
=k_L-k_R-\frac{\pi a^2}{\lambda^3}.$ Therefore
$\mathcal G(\lambda_*,0)=0,
\
\partial_\lambda\mathcal G(\lambda_*,0)
=\frac{3\pi a^2}{\lambda_*^4}
\neq0.$ The implicit function theorem therefore yields a unique
$C^1$ function
$\lambda=\lambda(\varepsilon)$ for all sufficiently small
$\varepsilon\ge0$ satisfying $\mathcal G(\lambda(\varepsilon),\varepsilon)=0.$ Consequently, for every sufficiently small $\varepsilon>0$, the
propagated incoming and outgoing manifolds intersect.

The resulting orbit joins the persistent left outer manifold to the
persistent right outer manifold and therefore gives the required
heteroclinic connection. The $C^1$ convergence supplied by the
Corner Lemma, together with smooth dependence in the remaining
charts, implies convergence to the reduced singular concatenation
as $\varepsilon\to0^+$.
\end{proof}
\begin{theorem}[Existence of a Dafermos viscous profile]
\label{thm:dafermos-viscous-profile}

Assume the hypotheses of
Propositions~\ref{prop:left-reduced-orbit} and
\ref{prop:right-reduced-orbit}, together with the strict
overcompressive condition~\eqref{eq:overcompressive-speed}.

Then the reduced blown-up problem admits a singular concatenated
configuration consisting of the left reduced outer orbit, the
distinguished reduced middle orbit, and the right reduced outer orbit.

Suppose in addition that $\alpha\in\mathbb Q\cap(0,1],$ and that $A$ is sufficiently smooth in a neighborhood of the values
attained by the profile. Then there exists $\varepsilon_0>0$ such
that, for every $0<\varepsilon<\varepsilon_0$, the Dafermos
regularization admits a self-similar viscous profile connecting
$U_L$ to $U_R$.

As $\varepsilon\to0^+$, the corresponding blown-up heteroclinic
orbit converges to the reduced singular concatenation. After
blow-down, the viscous profiles converge to the isolated
overcompressive singular solution determined by the generalized
Rankine--Hugoniot relations. This limiting solution satisfies the
conservation laws in the sense of distributions and coincides with
the associated shadow-wave solution.
\end{theorem}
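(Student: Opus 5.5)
The proof will be assembled from the results already established, in three stages.

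\textbf{Stage 1: the reduced singular configuration.} By Lemma~\ref{lem:sign_xiw}, the strict condition~\eqref{eq:overcompressive-speed} gives $D_L<0$ and $D_R<0$, i.e.\ $B_L,B_R>0$. Hence Propositions~\ref{prop:left-reduced-orbit} and~\ref{prop:right-reduced-orbit} apply and yield the two outer orbits together with their quadratic-linear asymptotics at the singular endpoint. Lemma~\ref{lem:outer-W-RH} identifies $D=D_L=D_R$ and $\Gamma=\Gamma_L=\Gamma_R$, and Proposition~\ref{prop:outer-sphere-limits} places the endpoints at $P_L\in\mathcal E_-$ and $P_R\in\mathcal E_+$. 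Proposition~\ref{prop:reduced-outer-middle-compatibility} and Lemma~\ref{lem:middle-coordinate-compatibility} show that both endpoints lie on the invariant relation $Dh-\Gamma y=0$ carried by the middle orbit of Proposition~\ref{prop:singular-middle-connection}. Since $k_L-k_R=k_1>0$ (this is $w_{1,L}-w_{1,R}$ by Lemma~\ref{lem:outer-W-RH}, with $k_1=\sqrt{\Delta}>0$ for the selected root $\sigma_+$), the value $\lambda_*$ in~\eqref{eq:middle-lambda-selection} is well defined. With this choice the middle orbit realizes exactly the jump $W_R-W_L=-k_1(1,c,v_\delta)^T$ required by~\eqref{eq:middle-W-jump}. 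This proves the first assertion.

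\textbf{Stage 2: persistence.} Assuming $\alpha\in\mathbb Q\cap(0,1]$ and $A$ smooth, I invoke Proposition~\ref{prop:singular-concatenation-persistence} directly. It rests on the following inputs: Proposition~\ref{prop:outer-persistence} for the outer pieces, Proposition~\ref{prop:rational-corner-smoothness} and Proposition~\ref{prop:corner-normal-hyperbolicity} for a smooth, normally hyperbolic corner, Proposition~\ref{prop:outer-corner-transversality} and Corollary~\ref{cor:corner-passage} for the corner passages, and the nondegenerate scalar matching $\partial_\lambda\mathcal G(\lambda_*,0)\neq0$ together with the implicit function theorem. Together these give, for $0<\varepsilon<\varepsilon_0$, a heteroclinic orbit of the blown-up system lying in $\widehat{\mathcal W}^u_L\cap\widehat{\mathcal W}^s_R$ on the level $\{\varepsilon=\text{const}\}$. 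Blowing down for $\varepsilon>0$ is a diffeomorphism, since $r>0$. Undoing $W=F(H)-\xi H-V$ and the rescaling $\tau=\theta/\varepsilon$ then gives a solution $U_\varepsilon(\xi)$ of~\eqref{eq:selfsimilar} with $U_\varepsilon(\pm\infty)=U_{R/L}$. The limits at $\pm\infty$ come from convergence to the perturbed slow manifolds, on which $H$ is constant and $\xi\to\pm\infty$. This is the claimed self-similar viscous profile.

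\textbf{Stage 3: identification of the limit.} Away from $\xi=c$ the orbit is $C^1$-close to the outer orbits, which in slow time collapse to $U_L$ for $\xi<c$ and $U_R$ for $\xi>c$. I will therefore show that $U_\varepsilon\to U_L\mathbf 1_{\xi<c}+U_R\mathbf 1_{\xi>c}$ locally uniformly off $\xi=c$. For the mass, integrating the first equation of~\eqref{eq:selfsimilar} across an interval $[c-\kappa,c+\kappa]$ shows that $\int\rho_\varepsilon\,d\xi$ over the layer tends to the jump of $w_1$, namely $k_1$. The same argument applied to $w_2$ and $w_3$ gives the momenta $ck_1$ and $v_\delta k_1$. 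The pressure term vanishes in the limit by the $O(\varepsilon^\alpha)$ estimate of Section~\ref{sec:singular}. Hence, in $(x,t)$, $\rho_\varepsilon\rightharpoonup\rho_0+tk_1\delta(x-ct)$, which is exactly~\eqref{eq:delta_sys_def} with $w(t)=tk_1$ and $u_\delta=c$. Passing to the limit in the weak form of~\eqref{GSPT_sys}, the viscous terms $\varepsilon t\,\partial_{xx}$ vanish against test functions. The generalized Rankine--Hugoniot relations~\eqref{delta_ODEs} then hold, and the shadow-wave identification follows from Section~\ref{sec:singular}.

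\textbf{Main obstacle.} The delicate step is Stage 3: converting $C^1$ convergence in blown-up coordinates into weak convergence of $\rho_\varepsilon$. This requires showing that the part of the mass not captured by the middle orbit, which lives in the corner and chart transition regions, tends to zero. My first attempt would be to use the monotonicity of $w_1$ ($dw_1/d\tau=-\varepsilon\rho<0$) to express the mass in any $\xi$-window exactly as a difference of $w_1$-values at its endpoints. Those endpoint values are controlled by $C^0$ convergence of $W$ alone, which sidesteps any pointwise control of $\rho$ in the corners.
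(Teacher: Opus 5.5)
Your Stages 1 and 2 coincide with the paper's proof. The paper likewise assembles the reduced concatenation from Propositions~\ref{prop:left-reduced-orbit}, \ref{prop:right-reduced-orbit}, \ref{prop:singular-middle-connection} and Lemma~\ref{lem:outer-W-RH}, then invokes Proposition~\ref{prop:singular-concatenation-persistence} and blows down. Your positivity check is cleaner done directly: $k_1=w_{1,L}-w_{1,R}=\rho_Ly_L+\rho_Ry_R>0$, which covers $\sigma_0$ as well as $\sigma_+$.

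The genuine difference is the final assertion. The paper only says that the distributional and shadow-wave conclusions ``follow from Section~\ref{sec:singular}''. That section merely checks that a $\delta$-shock carrying the generalized Rankine--Hugoniot data is a distributional solution agreeing with the shadow wave. It never shows that the viscous profiles converge to it, and $C^1$ convergence in blown-up coordinates does not by itself give weak convergence of $\rho_\varepsilon$. Your Stage~3 supplies this missing link.

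The exact identity $dW/d\xi=-H$ is the right device. It turns layer mass and momenta into endpoint values of $W$ at $\xi=c\pm\kappa$. You then need only $C^0$ control there, plus the fact that the whole fast, corner and middle passage occupies a $\xi$-window of width $O(\varepsilon^{2/3})$. This holds because $d\xi/ds=\delta^2$ over a bounded $s$-interval in the middle, and $\int z\,d\sigma$ stays bounded through the corners.

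One step should be tightened. The limits of $\rho_\varepsilon$, $\rho_\varepsilon u_\varepsilon$, $\rho_\varepsilon v_\varepsilon$ and of the pressure do not yield the limits of $\rho u^2$ and $\rho uv$, which you need to pass to the limit in the weak form. Use the identity $F(H)=W+\xi H+\varepsilon H_\xi$ instead. It gives $W_\varepsilon\to F(H_0)-\xi H_0$ in $L^1_{\mathrm{loc}}$ and $\varepsilon H_\xi\to0$ in distributions. Hence $F(H_\varepsilon)\rightharpoonup F(H_0)+ck_1(1,c,v_\delta)^T\delta(\xi-c)$, which handles every flux component at once. It also makes the separate pressure estimate unnecessary. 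That estimate was misapplied anyway: the $O(\varepsilon^\alpha)$ heuristic of Section~\ref{sec:singular} assumes a width-$\varepsilon$ layer, whereas here $\rho\sim\varepsilon^{-2/3}$ over a width $\varepsilon^{2/3}$.

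With that fix, your route actually proves the convergence claim that the paper asserts without argument, at the cost of a short extra computation.
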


\begin{proof}
The left and right reduced outer-orbit propositions, together with
Proposition~\ref{prop:singular-middle-connection} and
Lemma~\ref{lem:outer-W-RH}, give the reduced
singular concatenation. The persistence of this concatenation for sufficiently small positive $\varepsilon$ follows from
Proposition~\ref{prop:singular-concatenation-persistence}. That proposition constructs a heteroclinic orbit of the blown-up
system joining the persistent invariant manifolds associated with
$H_L$ and $H_R$. The heteroclinic orbit of the blown-up system blows down to a solution of the Dafermos profile equations connecting the left and right states. The distributional and shadow-wave conclusions follow from Section~\ref{sec:singular}.
\end{proof}

\begin{remark}[Physical interpretation of the reduced middle orbit]
Along the interior of the reduced middle orbit one has $z>0$,
whereas $z=0$ only at its entry and exit endpoints. Since $\rho=\frac{1}{b^2},\ u=\xi+by,\ v=v_\delta+bh,$ the singular limit $b\to0$ corresponds to $\rho\longrightarrow+\infty,\ u\longrightarrow c,\ v\longrightarrow v_\delta,$ where we used that $\xi=c$ on the reduced problem. Thus the reduced middle orbit represents the infinite-density layer connecting the
limiting states of the left and right reduced outer orbits.
\end{remark}

\section{Conclusion} \label{conclusion}
In this work, we studied the Riemann problem for a three-equation generalized Chaplygin gas whose pressure coefficient depends on an additional transported variable. We derived a complete classification of Riemann solutions for $0<\alpha\le1$. Depending on the Riemann data, the solutions consist of combinations of shock waves, rarefaction waves, contact discontinuities, and overcompressive singular solutions.

We studied the singular solutions using both the classical Dirac-delta framework and the shadow-wave construction. We verified that the $\delta$-shock solutions satisfy the governing system in the sense of distributions and compared them with the corresponding shadow wave solutions. In addition, we considered the Dafermos maximum entropy dissipation principle for representative Riemann data with competing singular solutions. Although a complete analytical characterization of the entropy dissipation criterion remains open, numerical computations suggest that it may provide a useful admissibility criterion for competing singular solutions.

To investigate profiles corresponding to an isolated overcompressive
$\delta$-shock, we considered a Dafermos regularization and analyzed
the resulting singularly perturbed dynamical system by spherical
blow-up. We constructed the reduced singular concatenation and
established persistence of the outer orbit segments. A weighted central-corner blow-up resolves the loss
of normal hyperbolicity at the endpoints of the singular middle
orbit. For rational $\alpha\in(0,1]$, the ramified weighted vector
field is smooth and admits two normally hyperbolic manifolds of
corner equilibria. We verified the outer-to-corner transversality
hypotheses, applied Schecter's Corner Lemma, and used the
nondegeneracy of the scalar middle matching condition to obtain a
heteroclinic orbit for sufficiently small positive viscosity.
Consequently, the isolated overcompressive singular solution is realized as the zero-viscosity limit of a family of self-similar Dafermos viscous profiles.

Future work includes extending the viscous profile analysis to interactions between $\delta$-shock waves and classical waves, including shock waves, rarefaction waves, and contact discontinuities, as well as developing an analytical characterization of the entropy dissipation criterion for competing singular solutions.\\

\noindent
{\bf Acknowledgments.} This work is supported by the National Science Foundation under Grant Number DMS-2349040 (PI: Tsikkou). Any opinions, findings, and conclusions or recommendations expressed in this
material are those of the authors and do not necessarily
reflect the views of the National Science Foundation. \\

\noindent
The authors gratefully thank Prof. Marko Nedeljkov for suggesting the problem, for many insightful discussions and invaluable guidance throughout the development of this work, and for generously sharing his expertise on shadow waves and singular shocks. His encouragement, collaboration, and friendship over the years have been greatly appreciated. The authors also thank Evan Halloran, Ryan Lin, and Emily Peng for the contribution of their MATLAB code, which was used as the structural basis to the numerical analysis completed in this paper.\\

\noindent
{\bf Authors' contributions.}
Thomas Allen: Conceptualization (supporting); Investigation (equal); Writing - original draft (equal); Writing - review \& editing (equal). Ella Kim: Conceptualization (supporting); Data curation (lead); Investigation (equal); Writing - original draft (equal); Writing - review \& editing (equal). Jonathan Nunes: Conceptualization (supporting); Investigation (equal); Writing - original draft (equal); Writing - review \& editing (equal). Rita Xing: Conceptualization (supporting); Investigation (equal); Writing - original draft (equal); Writing - review \& editing (equal). Charis Tsikkou: Conceptualization (lead); Acquisition of funding (lead); Supervision (lead); Writing - original draft (equal); Writing - review \& editing (equal).\\

\noindent
{\bf Data Availability.} The data that support the findings of
this study are available from the corresponding author
upon reasonable request.\\

\noindent
{\bf Conflict of Interest Statement.}
The authors declare that they have no conflict of interest.
\printbibliography
\end{document}